\numberwithin{equation}{chapter}
\newtheorem{definition}{Definition}
\newtheorem{fact}{Fact}
\newtheorem{theorem}{Theorem}
\newtheorem{lemma}{Lemma}
\newtheorem{corollary}[theorem]{Corollary}
\newtheorem{proposition}{Proposition}
\newtheorem{example}{Example}
\newtheorem{remark}{Remark}
\newcommand{\such}{\text{ such that }}
\newcommand{\eqdef}{\triangleq}
\newcommand{\cB}{\mathcal{B}}
\newcommand{\cF}{\mathcal{F}}
\newcommand{\mathset}[1]{\left\{#1\right\}}
\newcommand{\abs}[1]{\left|#1\right|}
\newcommand{\parenv}[1]{\left( #1 \right)}
\newcommand{\sparenv}[1]{\left[ #1 \right]}
\newcommand{\tends}[1]{\operatorname*{\longrightarrow}\limits_{#1}}
\newcommand{\norm}[1]{\left\lVert#1\right\rVert}
\DeclareMathOperator{\fin}{Fin}
\DeclareMathOperator{\topent}{h}
\DeclareMathOperator{\Irre}{Irr}
\DeclareMathOperator{\Pf}{Pf}
\DeclareMathOperator{\sgn}{sgn}
\DeclareMathOperator{\PM}{PM}
\DeclareMathOperator{\PC}{PC}
\DeclareMathOperator{\ima}{Img}
\DeclareMathOperator{\Int}{Int}
\DeclareMathOperator{\fix}{Fix}
\DeclareMathOperator{\Is}{Iso}
\DeclareMathOperator{\Linspan}{Span} 
\DeclareMathOperator{\Aut}{Aut} 
\DeclareMathOperator{\SL}{SL} 
\DeclareMathOperator{\GL}{GL} 
\renewcommand{\Bbb}{\mathbb}
\newcommand{\C}{{\Bbb C}}
\newcommand{\N}{{\Bbb N}}
\newcommand{\R}{{\Bbb R}}
\newcommand{\Z}{{\Bbb Z}}
\newcommand{\Pro}{{\Bbb P}}
\newcommand{\E}{{\Bbb E}}
\newcommand*{\fullref}[1]{\hyperref[{#1}]{\autoref*{#1}}}
\title{	
	\large{BEN--GURION UNIVERSITY OF THE NEGEV}\\[8pt]
	\large{FACULTY OF ENGINEERING SCIENCES}\\[8pt]
	\footnotesize{SCHOOL OF ELECTRICAL AND COMPUTER ENGINEERING}\\[72pt]
	\Large{PERMUTATIONS WITH RESTRICTED MOVEMENT}\\[60pt]
	\footnotesize{THESIS SUBMITTED IN PARTIAL FULFILLMENT OF THE REQUIREMENTS FOR THE M.Sc. DEGREE}\\[24pt]}	
\author{
	\footnotesize{By: Dor Elimelech}\\[24pt]\\[40pt]}
\date{September 2019}
\begin{document}
\maketitle

\begin{center}
	\textbf{Abstract}
\end{center}
We study restricted permutations of sets which have a geometrical structure.  The study of restricted permutations is motivated by their application in coding for flash memories, and their relevance in different applications of networking technologies and various channels.  We generalize the model of $\Z^d$-permutations with restricted movement suggested by Schmidt and Strasser in 2016, to restricted permutations of graphs, and study the new model in a symbolic dynamical approach. We show a correspondence between restricted permutations and perfect matchings. We use the theory of perfect matchings for investigating several two-dimensional cases, in which we compute the exact entropy and propose a polynomial-time algorithm for counting admissible patterns. We prove that the entropy of $\Z^d$-permutations restricted by a set with full affine dimension depends only on the size of the set. We use this result in order to compute the entropy for a class of two-dimensional cases. We discuss the global and local admissibility of patterns, in the context of restricted $\Z^d$-permutations. Finally, we review the related models of injective and surjective restricted functions.

\newpage
\begin{flushleft}
	\Huge\textbf{Index terms}
\end{flushleft}
Permutations, SFT, perfect matchings, entropy.

\newpage
\begin{flushleft}
	\textbf{Acknowledgments}
\end{flushleft}
I would like to evince my sincerest gratitude to my supervisors, Prof. Moshe Schwartz and Prof. Tom Meyerovitch, who kindly accepted me as a master student and guided me throughout the learning process of this thesis. I would like to thank them for challenging me when I needed challenging, and supporting me when I needed supporting. Their input and patience has been invaluable in helping me to learn how to do research, and to navigate some of the more emotionally challenging aspects of this project.

\tableofcontents

\newpage
\begin{flushleft}
	\Huge\textbf{Notation}
\end{flushleft}
\begin{itemize}
\item $\left[n \right]$ - the set of integers $\left\{ 0,1,2,\dots,n-1\right\} $.

\item $\left[m,n\right]$ the set of integers $\left\{ m,m+1,m+2,\dots,n\right\} $. 

\item $[n] \eqdef [n_1]\times [n_2] \times \cdots \times [n_d]\subset \mathbb{Z}^d$ - the cylinder set defined by a multi-index $n=\parenv{n_1,n_2,\dots,n_d}\in \mathbb{N}^d$.

\item $\fin(S)$ - the set finite subsets of a set $S$.

\item $a\bmod b=\left(a_{1}\bmod b_{1},a_{2}\bmod b_{2},\dots,a_{d}\bmod b_{d}\right)$ the modulus of $a\in \Z^d$ from $b\in \N^d$.

\item  $\sigma_n:\Z^d \to \Z^d$ the shift by $n$ operation: $\sigma_n(m)\eqdef n+m$.

\item  $A+B$ - the sum set:  $A+B\eqdef \mathset{a+b: a\in A, b\in B}$. 

\item $S(A)$ - the set of permutations of $A$.
\end{itemize}


\listoffigures


\mainmatter

\begin{spacing}{1.5}

\chapter{Introduction}
\label{CHA:introduction}
	In the last decade, permutations have received increased attention in the field of communication \cite{JiaMatSchBru08,KovPop14,KobVuk13,WalWeb08,LanSchYaa17, Kra94,ShaZeh91,PerCufWei08, KloLinTsaTze10,ShiTsa10, TamSch10,ZhoSchJiaBru15,TamSch12, SchTam11, FarSchBru16a,WanMazWor15,KarSch17 , YehSch12b, YehSch16, LanSchYaa17}. This is mainly due to their recent application in coding for flash memories, and their relevance in different applications of networking technologies and various channels.

Flash memories are non-volatile storage devices that are both electrically programmable and electrically erasable. The wide use of flesh memories is motivated by their high storage density and relative low cost. One of the most significant disadvantages of flash memories is the asymmetry between cell programming (charge placement) and cell erasing (charge removal). While cell programing is relatively a simple and fast operation, cell erasing is a difficult and complicated task.

The rank modulation coding scheme was proposed \cite{JiaMatSchBru09} in order to overcome this problem.  In the rank modulation coding scheme, information is stored in the form of permutations. More precisely, information is stored in the permutation suggested by sorting a group of cells by their relative charge values, instead of in the charge values themselves. The study of permutations and their  use in coding, which appears in the literature as early as the works  \cite{Sle65, ChaKur69}, was reignited by their latest use in the rank modulation scheme.

Permutations also play an important role in communication of information in the presence of synchronization errors, often modelled by permutation channels. Under this setup, a vector of symbols is transmitted in some order, but due to synchronization errors, the symbols received are
not necessarily in the order in which they were transmitted. Such  models were studied in \cite{KovPop14,KobVuk13,WalWeb08,LanSchYaa17} (permutation channels), \cite{Kra94,ShaZeh91} (the
bit-shift magnetic recording channel), and \cite{PerCufWei08} (the trapdoor channel).

So far, the research of permutations in the context of coding was focused on one-dimensional permutations. That is, permutations of the integer set $\mathset{1,2,\dots,n}$. Such a model overlooks  aspects concerning the positioning of objects in space. However, memory cells in flash technology can be ordered in two-dimensional or three-dimensional geometric structures. Motivated by this observation, we investigate permutations of sets with non-trivial geometric form.

In typical settings, permutation codes are considered  in the framework of metric spaces. One popular  metric for permutation spaces is the $\ell_\infty$ metric, also known by the name infinity metric.  Spaces of permutations with infinity metric have been used for error-correction \cite{KloLinTsaTze10,ShiTsa10, TamSch10,ZhoSchJiaBru15},  code relabeling \cite{TamSch12} anticodes \cite{SchTam11}, covering codes \cite{FarSchBru16a,WanMazWor15,KarSch17}, snake-in-the-box codes \cite{YehSch12b, YehSch16}, and codes of the limited permutation channel \cite{LanSchYaa17}.

Balls in a metric spaces and their parameters are a key elements in coding theory, as many coding-theoretic problems may be viewed as packing or covering of a metric space by balls. The extensive use of the infinity metric and the significance of balls for coding motivated the study of $\ell_\infty$-ball sizes \cite{ShiTsa11,Sch09,SchVon17, Klo08, Klo09}. It was already observed in \cite{Klo08} that the size of $\ell_\infty$-balls does not depend in the center of the ball and therefore it is sufficient to study the balls centred in the identity permutation. Permutations inside a ball centred in the identity permutation were called in \cite{Klo08, Klo09} by the name permutation with limited displacement, since they are exactly the permutations which satisfy $\abs{\pi(n)-n}\leq r$, where $r$ stands for the radius of the ball.

In this work, we generalize the concept of limited permutations by considering spaces of permutations of vertex sets of directed graphs, we label such permutations as restricted permutations. This generalization allows us to explore permutations of general sets, with restrictions that take into account geometrical structure, which can not be modelled in the standard settings of metric space. Given a graph $G=(V,E)$, we consider permutations of $V$ that respect the graph structure of $G$.  That is, permutations satisfying $(v,\pi(v))\in E$ for any vertex $v\in V$. We say that such permutations are restricted by $G$, and denote the set of such  permutations by $\Omega(G)$.

In Chapter \ref{CHA:preliminaries}, we show that under some assumptions, such permutation spaces can be interpreted as a topological dynamical systems. We discuss the important specific case of permutations of $\Z^d$ with movement restricted by some finite set. That is, permutations satisfying $\pi(n)-n\in A$ for all $n\in \Z^d$, where $A\subseteq \Z^d$ is some finite set. The concept of restricted $\Z^d$ permutations was introduced by Schmidt and Strasser in \cite{SchStr17}. They have shown that such permutations spaces are shifts of finite type (SFT). They have studied their topological and dynamical properties.

SFTs are mathematical structures from the field of symbolic dynamics,  which are often used in order to describe and study constrained coding problems. One-dimensional constrained  codes over permutations space  were studied in \cite{BuzYaa16,SalDol13}. In our work, we focus on multidimensional SFTs defined by restricted permutations of $\Z^d$. We study their topological entropy, which in the terminology of constrained coding, is called the capacity of the constrained system.

In Chapter \ref{CHA:PM}, we find a natural correspondence between restricted permutations of graphs and perfect matchings. Theorem \ref{th:GenCan} says that for any graph, there exists a bijection between restricted permutations and perfect matchings of a certain canonically derived bipartite graph. It also proved that this bijection respects the underlying dynamical structure (whenever there is such). In Theorem \ref{th:PerToPM} we describe a correspondence between restricted permutations of a bipartite graph, and  pairs of perfect matchings of  the original graph. We use those connections in order to find the exact topological entropy in a couple of two-dimensional cases. We do that by appealing to the theory of perfect matching of $\Z^2$-periodic planar graphs \cite{TemFis60,Fis66,ChoKenPro01,Fis61,Kas67,Kas61,Ken00,KenOkoShe06,Kas63}.

Chapter \ref{CHA:Entropy} is devoted to the study of entropy of $\Z^d$-permutations restricted by some finite set. We prove an important invariance property of the entropy under affine transformations. This property is later used in the proof of Theorem \ref{th:AffEqEnt}, where we present an exact expression for the entropy of permutations restricted by sets which consists of three elements. 	


\chapter{Preliminaries}
\label{CHA:preliminaries}
A topological dynamical system is a pair $(X,H)$, where $X$ is a topological compact space (which is usually also a metric space), and $H$ is a semigroup, acting on $X$ by continuous transformations. In this work we investigate topological dynmical systems defined by permutations of graphs.  

\begin{definition}
Let $G=(V,E)$ be a directed graph. A permutation, $\pi\in S(V)$, is said to be restricted by $G$ if for all $v\in V$, 
\[(v,\pi(v))\in E.\] 
We define $\Omega(G)$ to be the set of all permutations restricted by $G$. Formally,
\[ \Omega(G)\eqdef \mathset{\pi\in S(V) : \pi \text{ is restricted by }G}.\] 
Similarly, for an undirected graph $G=(V,E)$, $\pi\in S(V)$ is said to be restricted by $G$ if for all $v\in V$, 
\[\mathset{v,\pi(v)}\in E,\]  
and $\Omega(G)$ is defined to be the set of all permutations restricted by $G$.
\end{definition}

We observe that restricted permutations of an undirected graph can be equivalently defined by restricted permutations of directed graphs. If $G=(V,E)$ is an undirected graph then $\pi\in S(V)$ is restricted by $G$ if and only if it restricted by the directed graph $ G'=(V,E')$, where 
\[  E'\eqdef \mathset{(v,u)\in V\times V : \mathset{v,u}\in E}.\]


We focus on the case where $G=(V,E)$ is a countable directed graph, which is also locally finite (that is, any vertex has finite degree). Let $H$  be a group, acting on $G$ by graph isomorphisms. That is, there is a homomorphism $H\to \Is(G)$, where $\Is(G)$ is the group of graph isomorphisms of $G$. We will show that with the right settings, the set of $G$-restricted permutations is a compact topological space with $H$ acting on it continuously.

We consider $\Omega(G)$ as a topological space, with the pointwise convergence topology (where $G$ has the discrete topology). We claim that $\Omega(G)\subseteq V^V$ is a compact  space. For any vertex $v\in V$, denote the set of neighbours of $v$ in $G$ by $N(v)$, that is
\[ N(v)\eqdef \mathset{u\in V :(v,u)\in E}.\] 
We consider the set of functions $f:V\to V$ for which $(v,f(v))\in E$ for all $v\in V$. When we think of this set as a subset of $V^V$ (the set of all function from $V$ to $V$), we note that it is exactly the set $\prod_{v\in V}N(v)\subseteq V^V$. For any $v$, $N(v)$ is compact as a finite set.  Thus, by Tychonoff's Theorem \cite{Bou95}, $\prod_{v\in V}N(v)\subseteq V^V$ is compact as a product of compact spaces.

From the definition of restricted permutations, it immediately follows that $\Omega(G)\subseteq \prod_{v\in V}N(v) $. Hence, in order to show that $\Omega(G)$ is compact, it is sufficient to show that it is closed. Indeed, let $f\in \prod_{v\in V}N(v) \setminus \Omega(G)$, that is, $f$ is not a permutation of $V$. If there exists distinct $v_1,v_2\in V$ such that $f(v_1)= f(v_2)=u$, then the cylinder set 
\[ U\eqdef \mathset{g\in \prod_{v\in V}N(v) : g(v_1)=g(v_2)=u},\]
which is open by the definition of product topology, contains $f$. Clearly $U$ separates between $f$ and and $\Omega(G)$, as it contains only non injective functions. If $f$ is injective, then it is not onto $V$. In that case, there exists $v'\in V$ such that $f(v)\neq v'$ for any $v\in N(v')$. It is easy to verify that the set  
\[ \mathset{g\in \prod_{v\in V}N(v) : \forall v\in N(v'), g(v)\neq v'},\] 
which is also a cylinder set, separates between $f$ and and $\Omega(G)$. This shows that $\prod_{v\in V}N(v) \setminus \Omega(G)$ is open and completes the proof of the claim.

We consider the group action of $H$ on $\Omega(G)$ by conjugation, induced by the action of $H$ on $G$. That is, for $\pi\in\Omega(G)$ and $h\in H$, the action of $h$ on $\pi$ is defined to be
\[ \pi^h(v)\eqdef h(\pi(h^{-1}(x))). \]  
We claim that the action $\pi\to\pi^h$ is a continuous group action on $\Omega(G)$.

Note that $\pi^h$ is a permutation of $V$ as a composition of permutations. Recall that $H$ acts on $G$ by a graph isomorphism. Therefore
\begin{align*}
 (x,(\pi^h)(x))\in E &\iff (h^{-1}(x),h^{-1}((\pi^h)(x))\in E\\
&\iff (h^{-1}(x),\pi(h^{-1}(x))\in E.
\end{align*}
We recall that $\pi$ is restricted by $G$ and therefore $(h^{-1}(x),\pi(h^{-1}(x))\in E$ for any $x$. Thus, $\pi^h\in \Omega(A)$. In order to prove that it is indeed a group action, we need to show that $\pi^{hg}=\parenv{\pi^{g}}^h$ for all $\pi\in \Omega(G)$. Indeed, 
\begin{align*}
(\pi^{hg})(x)=(h\cdot g) (\pi (g^{-1}h^{-1}(x))=h(g(\pi(g^{-1}(h^{-1}(x))))=h(\pi^{g}(h^{-1}(x)))=\parenv{\pi^{g}}^h(x).
\end{align*}

It remains to show that the action is continuous. That is, for any $h\in H$, the action of $h$ on $\Omega(G)$ is a continuous map. Let $(\pi_n)_n$ be a pointwise convergent sequence in $\Omega(G)$, Let $\pi$ be the limit. Since $V$ is equipped with the discrete topology, for any $v\in V$ the sequence $\parenv{\pi_n(h^{-1}v)}_n$ is constant and equals $\pi(h^{-1}v)$ for sufficiently large $n$. Thus, the sequence $\parenv{h\pi_n(h^{-1}v)}_n$ is constant and equals $h\pi(h^{-1}v)=\parenv{\pi^{h}} (v)$ for sufficiently large $n$. That is, $(\pi^{h}_n)_n$ pointwise converge to $\pi^{h}$.

\begin{example}
\label{ex:Honeycomb}
Consider the undirected graph $L_H=(V_H,E_H)$ where
\[ V_H = \mathset{v + m\cdot (\sqrt{3},0)+n\cdot \parenv{\frac{\sqrt{3}}{2},\frac{3}{2}}:m,n\in \Z, v\in \mathset{\parenv{\frac{\sqrt{3}}{2},\frac{1}{2}},(\sqrt{3},1)}}\]   
and any vertex $v$ is connected to its three closest neighbours in $V_H$. That is, a vertex of the form $v=\parenv{\frac{\sqrt{3}}{2},\frac{1}{2}}+ m\cdot (\sqrt{3},0)+n\cdot \parenv{\frac{\sqrt{3}}{2},\frac{3}{2}}$ is connected in $E_H$ to $v+u$ where $u\in\mathset{\parenv{\pm \frac{\sqrt{3}}{2},\frac{1}{2}}, (0,-1) }$.

This graph is the well known two-dimensional honeycomb lattice (see Figure \ref{fig:Honey_basic}). We have $\Z^2$ acting on $L_H$ by translations of the fundamental domain. By this we mean that $n=(n_1,n_2)\in \Z^2$ acts on a vertex $v\in V_H$ by 
 \[ n(v)\eqdef v + n_1\cdot (\sqrt{3},0)+n_2 \cdot \parenv{\frac{\sqrt{3}}{2},\frac{3}{2}}, \] 
see Figure \ref{fig:Honey_fondemnetal}. We note that $L_H$ is a bipartite graph.

\begin{figure}
    \centering
    \subfigure[]
    {
        \includegraphics[scale=.13]{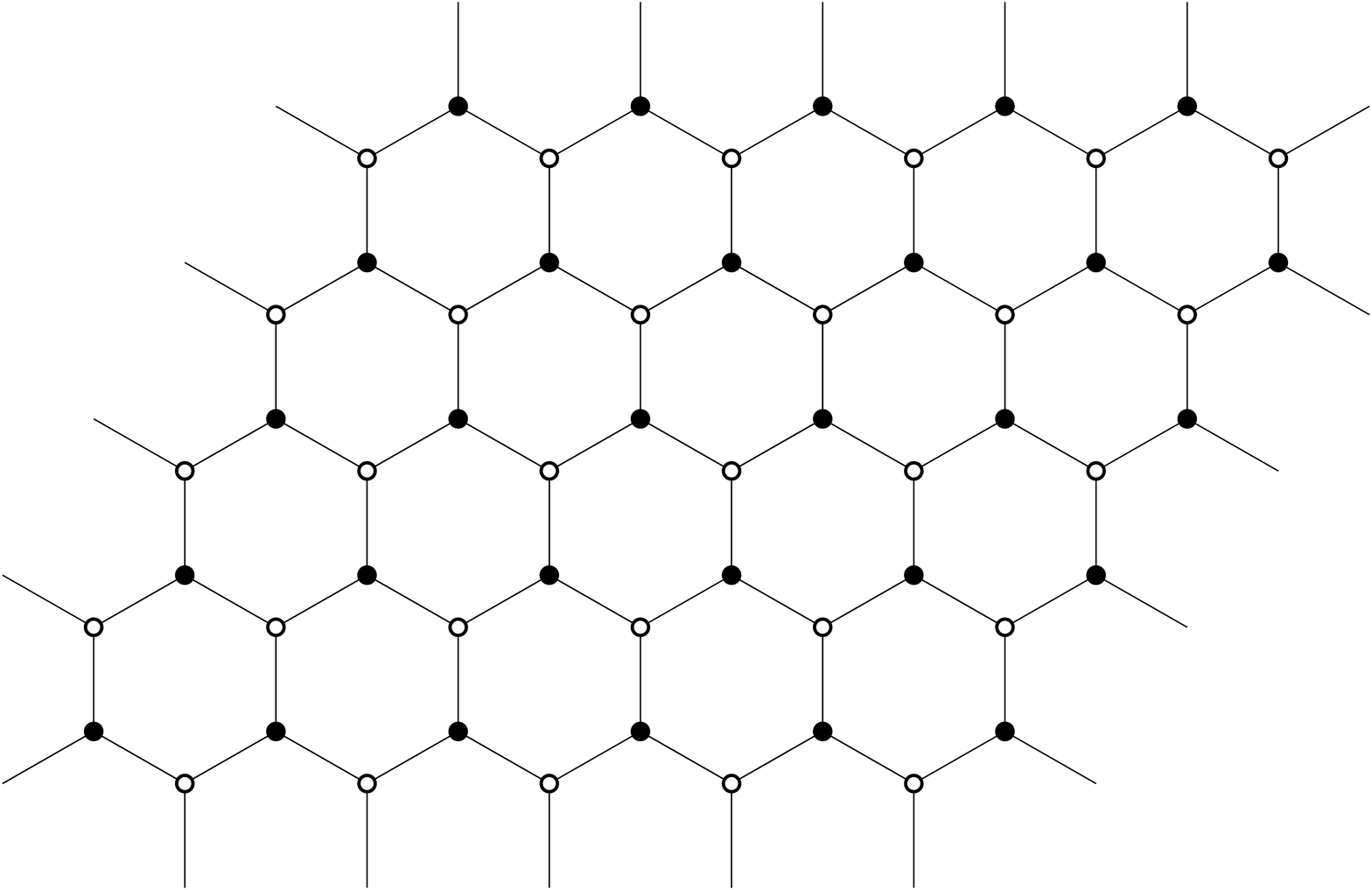}
        \label{fig:Honey_basic}
    }
    \subfigure[]
    {
        \includegraphics[scale=.37]{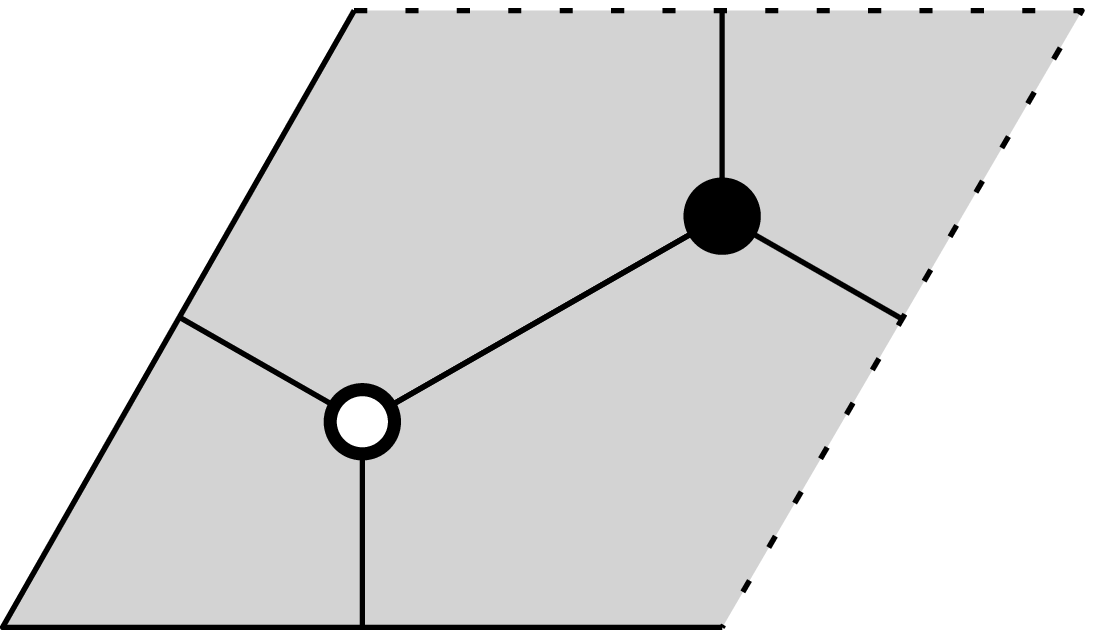}
        \label{fig:Honey_fondemnetal}
    }
    \caption
    {
        (a) The two-dimensional honeycomb lattice.
        (b) The fundamental domain.
    }
    \label{fig:Honey}
\end{figure}
\end{example}

\begin{example}
\label{ex:GenToZd1}
Let $\Gamma$ be a countable discrete group and $A\subseteq \Gamma$ be a finite non-empty set.  Consider the graph $G=(\Gamma,E)$, where 
\[ E \eqdef \mathset{(\gamma,\gamma a) : \gamma \in \Gamma, a\in A }.\] 
We have $\Gamma$ acting on $G$ by multiplication from the left. That is, $\alpha \in \Gamma$ acts on $\gamma\in \Gamma$ by $\alpha \cdot \gamma$. Note that for any $\gamma_1, \gamma_2\in A$ we have 
\begin{align*}
(\gamma_1,\gamma_2)\in E &\iff \exists a\in A \such \gamma_1=\gamma_2  a \\
&\iff  \exists a\in A \such \alpha \gamma_1=\alpha \gamma_2  a \\
& \iff (\alpha(\gamma_1),\alpha(\gamma_2))\in E.
\end{align*} 
This shows that $\Gamma$ acts by graph isomorphisms. Since $\Gamma$ is countable and $A$ is finite, $G$ is a locally finite graph. 
\end{example}

Consider the case when we choose the group from Example \ref{ex:GenToZd1} to be $\Gamma=\Z^d$ for some $d\in \N$, and we take $A\subseteq \Z^d$ to be some finite set. In that case we have the graph  $G_A\eqdef(\Z^d,E_A)$, where 
\[ E_A\eqdef \mathset{(n,m)\in \Z^d\times \Z^d : m-n\in A},\] 
and $\Z^d$ acting on $(\Z^d,E_A)$ by translations. That is, $n$ acts on $m$ by $\sigma_{n}(m)\eqdef m+n$.

If a permutation of $\Z^d$ is restricted by $G_A$, we say that it is restricted by the set $A$.
\begin{example}
\label{ex:GenToZd}
Let $d=2$ consider the sets $A_+\eqdef \mathset{(\pm 1,0),(0,\pm 1)}\subseteq \Z^2,$ and $ A_L\eqdef \mathset{(0,0),(1,0),(0, 1)}\subseteq \Z^2$.
For a permutation $\pi \in \Omega(G_{A_L})$, the orbit of an element is  $\parenv{\pi^{\circ n}(m)}_{n\in \Z}$, where $\pi^{\circ n}$ is the composition of $\pi$ - $n$ times for positive $n$ and the composition of $\pi^{-1}$ - $n$ times for negative $n$. We note that the orbit of any point is either a single point or a bi-infinite sequence.

We can represent each infinite orbit of $\pi$ by a polygonal path in $\Z^2$, moving either north or east at each step. We can characterize $\pi$ by the configuration of non-intersecting polygonal paths in $\Z^2$ defined by its bi-infinite orbits. On the other hand, any set of such polygonal paths defines an element in $\Omega(G_{A_L})$ (see Figure \ref{fig:Paths_L}). This case was first revisited by Schmidt and Strasser in \cite{SchStr17}.

In a similar fashion, we can represent a permutation in $\Omega(G_{A_+})$ by its orbits. In that case, orbits can be infinite, or finite with size grater then one. Each permutation in $\Omega(G_{A_+})$ is correspond to a covering of $\Z^2$ by substitutions (orbits of size 2) and  polygonal paths moving north, south, east or west at each step (see Figure \ref{fig:Paths_+}).
In Figure \ref{fig:Graph_PL} we exhibit the directed graph associated with $G_{A_+}$ and $G_{A_L}$.

\begin{figure}
    \centering
    \subfigure[]
    {
        \includegraphics[scale=.4]{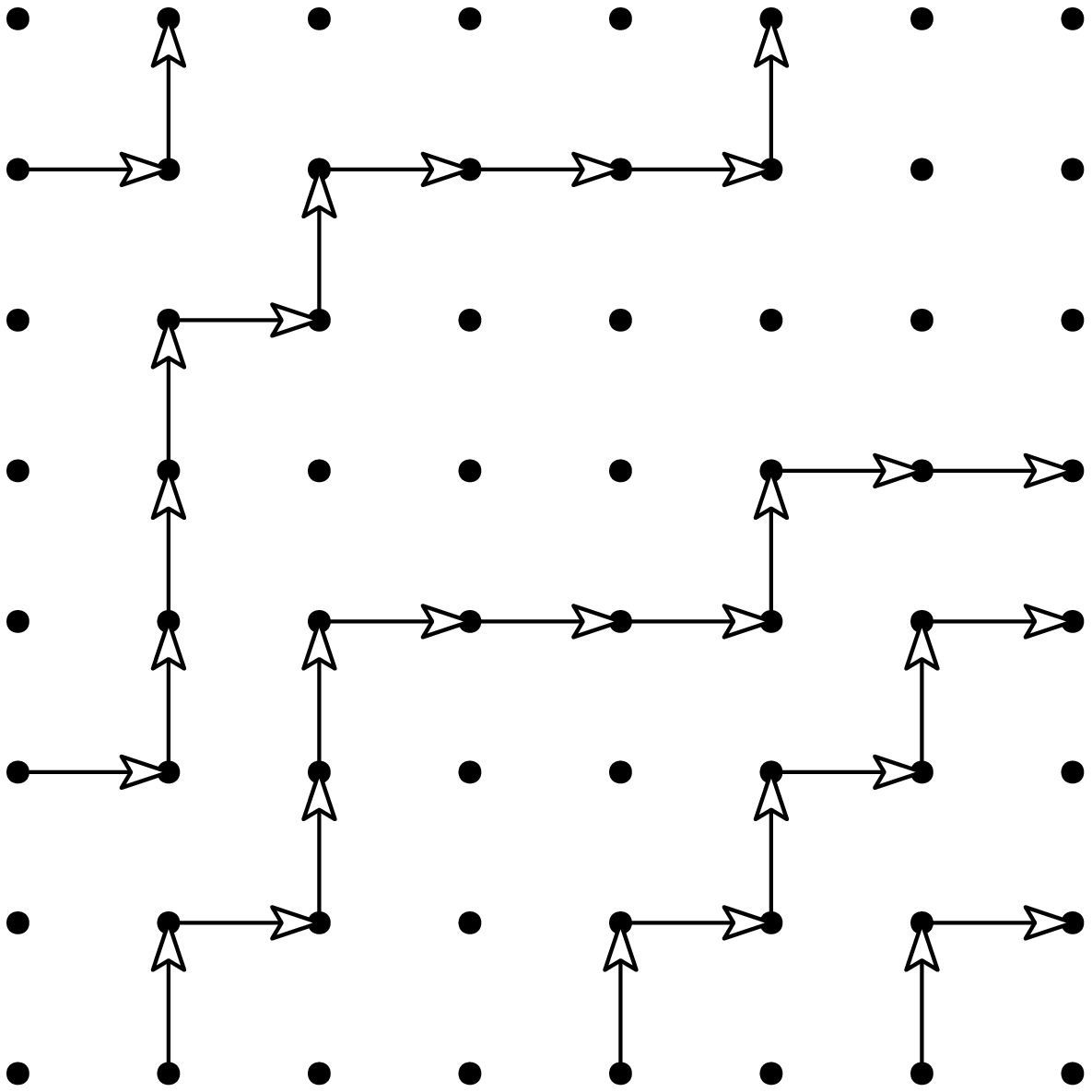}
        \label{fig:Paths_L}
    }  \hspace{7mm}
    \subfigure[]
    {
        \includegraphics[scale=.4]{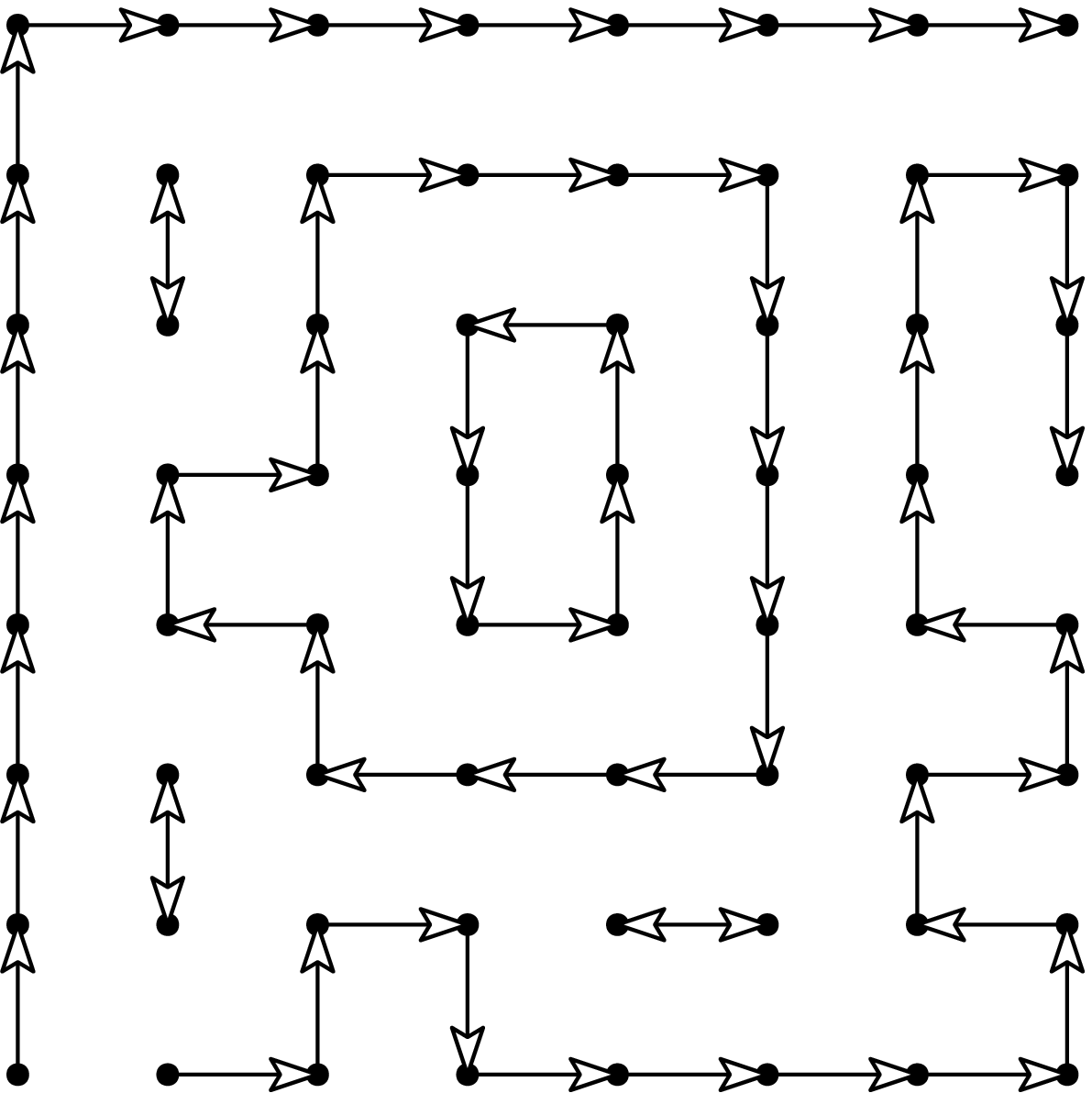}
        \label{fig:Paths_+}
    }
    \caption
    {
        (a) Paths configuration corresponding to an elements in $\Omega(G_{A_L})$.
        (b) Paths configuration corresponding to an elements in $\Omega(G_{A_+})$.
    }
    \label{fig:Paths}
\end{figure}

\begin{figure}
 \centering
  \includegraphics[width=150mm, scale=0.65]{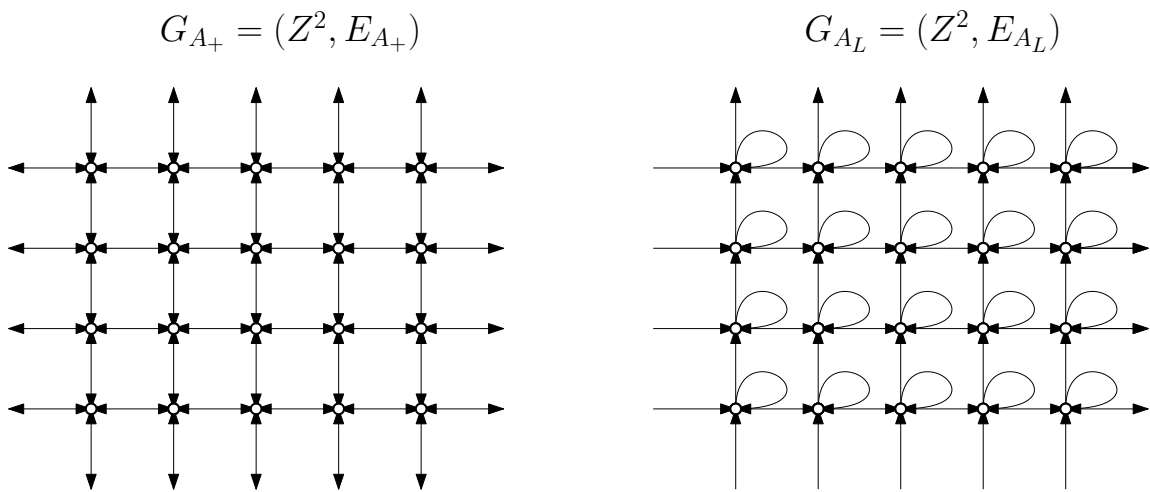}
  \caption{The graphs corresponding to $A_+$ and $A_L$.}
  \label{fig:Graph_PL}
\end{figure}
\end{example}

An important special case of dynamical systems is a subshift of finite type (SFT). Given a finite set, $\Sigma$, and an integer $d\in\mathbb{N}$, we consider the set $\Sigma^{\Z^d}$. An SFT, $\Omega \subseteq \Sigma^{\Z^d}$, is a subset of $\Sigma^{\Z^d}$, which is defined by a set of forbidden patterns. That is, there exists a finite set of forbidden patterns, $F\subseteq \bigcup_{B\in \fin(\Z^d)} \Sigma^B$, such that  
\[ \Omega=\mathset{\omega\in \Sigma^{\Z^d} : \forall n\in \Z^d \text{ and } B\in\fin(\Z^d), (\omega\circ \sigma_n)(B)\notin F },\]
where $\omega\left(B\right)$ is the restriction of $\omega$ to the
coordinates contained in the set $A$ and $\fin(\Z^d)$ denotes the set of all finite subsets of $\Z^d$. Throughout this  work, by abuse of notation, for $\omega \in \Sigma^{\Z^d}$ and $n\in \Z^d$, we will denote the composition $\omega \circ \sigma_n$ by $\sigma_n(\omega)$.

For an SFT, $\Omega\subseteq \Sigma^{\Z^d}$,  $\Z^d$ acts on $\Omega$ by translations. That is, $n\in \Z^d$ acts on an element $\omega\in \Omega$ by $n\omega=\omega \circ \sigma_{-n}$. For a multi-index $n\in \N^d$, the set of $[n_1]\times \cdots \times [n_d]$ patterns appearing in elements of $\Omega$ is denoted by $B_n(\Omega)$. Formally,
\[B_n(\Omega) \eqdef \mathset{P\in \Sigma^{[n]}  : \exists \omega\in \Omega \such \omega([n])=P}. \]

The topological entropy of an SFT is defined to be 
\[ \topent (\Omega) \eqdef \limsup_{n\to \infty} \frac{\log\abs{B_n(\Omega)}}{\abs{[n]}},\]
where we define that a sequence  $(n_k)_{k=1}^{\infty}\subseteq \N^d$ converge to $\infty$ if $n_k(i)\tends{k} \infty$ for all $1\leq i \leq d$. 

\begin{remark}
Entropy is defined and studied in a much more general settings of topological dynamical systems and sofic groups actions. Despite that, in this work we interested in SFTs, and therefore we will use the equivalent definition of entropy for SFTs, presented above. See \cite{KerHan11} for a detailed discussion and a general definition.
\end{remark}

\begin{fact} 
\label{prop:fekete} (\cite{HocMey10}, Section 2.2) The limit defining the topological entropy exists and \[ \topent(\Omega)=\inf_{n\in \N^d} \frac{\log \abs{B_n(\Omega)}}{\abs{[n]}}.\]
\end{fact}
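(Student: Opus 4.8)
The statement is a multidimensional Fekete/subadditivity lemma, so the plan is to extract a submultiplicativity property of the pattern counts and then run a tiling argument. Write $\phi(n)\eqdef\log\abs{B_n(\Omega)}$ for $n\in\N^d$ (assume $\Omega\ne\emptyset$; otherwise both sides are $-\infty$ and there is nothing to prove). The key claim is that, for each coordinate $i\in\mathset{1,\dots,d}$ and each fixed choice of the remaining coordinates $(n_j)_{j\ne i}$, the function $t\mapsto\phi(n_1,\dots,n_{i-1},t,n_{i+1},\dots,n_d)$ is subadditive on $\N$. To see this, fix $i$ and split the $i$-th interval $[s+t]$ as the disjoint union $[s]\,\sqcup\,(s+[t])$; correspondingly the box with $i$-th side length $s+t$ is the disjoint union of the box with $i$-th side length $s$ and a translate (by $s$ in direction $i$) of the box with $i$-th side length $t$. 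Any pattern $P$ on the big box is therefore uniquely determined by the pair consisting of its restrictions to the two pieces; the restriction to the first piece lies in the corresponding $B$-set, and, because $\Omega$ is shift invariant, the restriction to the translated piece re-based at the origin also lies in the corresponding $B$-set. Counting pairs gives $\abs{B_{(\dots,s+t,\dots)}(\Omega)}\le\abs{B_{(\dots,s,\dots)}(\Omega)}\cdot\abs{B_{(\dots,t,\dots)}(\Omega)}$, i.e. the subadditivity of $\phi$ in each coordinate separately.

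Next I would dispose of the easy half. From $B_n(\Omega)\subseteq\Sigma^{[n]}$ we get $0\le\phi(n)/\abs{[n]}\le\log\abs{\Sigma}$, so $c\eqdef\inf_{n\in\N^d}\phi(n)/\abs{[n]}$ is a well-defined real number, and trivially $\liminf_{n\to\infty}\phi(n)/\abs{[n]}\ge c$. Since $\limsup\ge\liminf$ always holds, it then suffices to prove $\limsup_{n\to\infty}\phi(n)/\abs{[n]}\le c$: this forces $\limsup=\liminf=c$, which is exactly the assertion that the limit defining $\topent(\Omega)$ exists and equals $\inf_{n}\phi(n)/\abs{[n]}$ (the $\limsup$ in the definition being taken along the directed set $n\to\infty$).

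For the upper bound, fix $\varepsilon>0$ and choose $N=(N_1,\dots,N_d)\in\N^d$ with $\phi(N)/\abs{[N]}<c+\varepsilon$. Given a general $n\in\N^d$, write $n_i=q_iN_i+r_i$ with $0\le r_i<N_i$, and apply the coordinate-wise subadditivity one coordinate at a time: split the first coordinate of $\phi(n)$ into $q_1$ blocks of length $N_1$ plus the remainder $r_1$, then split the second coordinate of the surviving main term, and so on. This produces the main term together with exactly $d$ error terms, one for each coordinate, and bounding every box still carrying a ``remainder'' coordinate by the trivial estimate $\phi(k)\le\abs{[k]}\log\abs{\Sigma}$ one checks (by peeling, with no dimensional constant needed) that
\[ \phi(n)\le q_1\cdots q_d\,\phi(N)+\parenv{\sum_{i=1}^{d} N_i\prod_{j\ne i}n_j}\log\abs{\Sigma}.\]
Dividing by $\abs{[n]}=n_1\cdots n_d$ and using $q_1\cdots q_d\le\abs{[n]}/\abs{[N]}$ gives
\[ \frac{\phi(n)}{\abs{[n]}}\le\frac{\phi(N)}{\abs{[N]}}+\parenv{\sum_{i=1}^{d}\frac{N_i}{n_i}}\log\abs{\Sigma}<c+\varepsilon+\parenv{\sum_{i=1}^{d}\frac{N_i}{n_i}}\log\abs{\Sigma}.\]
Letting $n\to\infty$ kills the last sum, so $\limsup_{n\to\infty}\phi(n)/\abs{[n]}\le c+\varepsilon$; since $\varepsilon>0$ was arbitrary, $\limsup\le c$, which completes the argument.

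The only real work is the bookkeeping in the tiling step: organizing the iterated application of coordinate-wise subadditivity so that, apart from the main $q_1\cdots q_d\,\phi(N)$ term, every term involves a box with one coordinate bounded by some $N_i$, hence contributes at most $O\parenv{\abs{[n]}\cdot\sum_i N_i/n_i}$ rather than $O(\abs{[n]})$. Peeling one coordinate at a time (equivalently, an induction on $d$) keeps the constants trivial; everything else is routine. An alternative would be to invoke the one-dimensional Fekete lemma successively in each coordinate and then reconcile the resulting iterated limit and iterated infimum with the joint ones, but the direct tiling argument avoids that reconciliation and seems cleaner.
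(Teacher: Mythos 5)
Your proof is correct: the coordinate-wise submultiplicativity of $n\mapsto\abs{B_n(\Omega)}$ (via restriction of patterns and shift invariance) plus the tiling/Fekete peeling argument is exactly the standard argument, and your bookkeeping of the error terms $N_i\prod_{j\neq i}n_j\log\abs{\Sigma}$ checks out (the step $q_1\cdots q_d\,\phi(N)\le\parenv{\abs{[n]}/\abs{[N]}}\phi(N)$ uses $\phi(N)\ge 0$, which your nonemptiness assumption guarantees). The paper itself offers no proof — it states this as a Fact cited to \cite{HocMey10}, Section 2.2 — and your argument is essentially the standard one that reference relies on, so there is nothing to reconcile.
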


Two SFT'S, $\Omega_1,\Omega_2\subseteq \Z^{d}$, are said to be conjugated if there exists a homeomorphism $\Phi:\Omega_1\subseteq \Z^{d_1}\to \Omega_2\subseteq \Z^{d_2}$ that commutes with the action of $\Z^d$. Such a map is called a conjugacy. 

\begin{fact} 
\label{fact:conj} (\cite{KerHan11}, Chpater 1) If $\Omega_1\subseteq \Sigma_1^{\Z^{d}}$ and $\Omega_2\subseteq \Sigma_2^{\Z^{d}}$ are conjugated, then $\topent(\Omega_1)=\topent(\Omega_2)$.
\end{fact}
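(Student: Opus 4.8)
The plan is to reduce the assertion to a pattern-counting estimate, using the Curtis--Hedlund--Lyndon theorem as the engine. Recall that any continuous map between subshifts over $\Z^d$ that commutes with the $\Z^d$-action is a sliding block code: there is a radius $r\in\N$ and a local rule so that the image symbol at a coordinate is a function of the source configuration restricted to the box of ``radius'' $r$ around that coordinate. I would apply this both to the conjugacy $\Phi\colon\Omega_1\to\Omega_2$ and to $\Phi^{-1}$ (again a homeomorphism commuting with the action), and take a single $r$ large enough to serve as a block radius for both.

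Next I would convert the locality of $\Phi$ into a surjection of pattern sets. Write $n+2r\mathbf{1}\eqdef(n_1+2r,\dots,n_d+2r)$, and consider $n$ with $n_i>2r$ for all $i$ (harmless, since we let $n\to\infty$). Define $\bar\Phi\colon B_{n+2r\mathbf{1}}(\Omega_1)\to B_n(\Omega_2)$ by: given a pattern $P\in B_{n+2r\mathbf{1}}(\Omega_1)$, choose any $\omega\in\Omega_1$ realizing $P$ on the box $[n]+[-r,r]^d$ (a translate of $[n+2r\mathbf{1}]$), and set $\bar\Phi(P)\eqdef(\Phi\omega)([n])$. Because $(\Phi\omega)(x)$ for $x\in[n]$ depends only on $\omega$ restricted to $x+[-r,r]^d\subseteq[n]+[-r,r]^d$, the value $\bar\Phi(P)$ does not depend on the chosen $\omega$, so $\bar\Phi$ is well defined. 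It is onto: any $Q\in B_n(\Omega_2)$ is realized by some $\eta\in\Omega_2$, and then $\omega\eqdef\Phi^{-1}(\eta)\in\Omega_1$ satisfies $(\Phi\omega)([n])=\eta([n])=Q$, so the restriction of $\omega$ to $[n]+[-r,r]^d$ is a $\bar\Phi$-preimage of $Q$. Hence $\abs{B_n(\Omega_2)}\leq\abs{B_{n+2r\mathbf{1}}(\Omega_1)}$.

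Then I would pass to the limit. By Fact~\ref{prop:fekete} the $\limsup$ defining topological entropy is an honest limit, so
\[
\topent(\Omega_2)=\lim_{n\to\infty}\frac{\log\abs{B_n(\Omega_2)}}{\abs{[n]}}
\leq\lim_{n\to\infty}\parenv{\frac{\log\abs{B_{n+2r\mathbf{1}}(\Omega_1)}}{\abs{[n+2r\mathbf{1}]}}\cdot\frac{\abs{[n+2r\mathbf{1}]}}{\abs{[n]}}}.
\]
As every coordinate of $n$ tends to infinity, so does every coordinate of $n+2r\mathbf{1}$, so the first factor tends to $\topent(\Omega_1)$ (again by Fact~\ref{prop:fekete}); the second factor equals $\prod_{i=1}^{d}(1+2r/n_i)$ and tends to $1$. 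Hence $\topent(\Omega_2)\leq\topent(\Omega_1)$, and running the same argument with $\Phi^{-1}$ in place of $\Phi$ gives the reverse inequality, whence equality.

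The main obstacle is the first step — verifying that a conjugacy is a sliding block code with a uniform finite radius. This is exactly the content of the Curtis--Hedlund--Lyndon theorem, and its proof uses compactness of $\Omega_1$, finiteness of the alphabet $\Sigma_2$ (so that for each symbol $a\in\Sigma_2$ the set $\mathset{\omega\in\Omega_1 : (\Phi\omega)(0)=a}$ is clopen), and shift-equivariance to turn the resulting finite open cover into a single radius $r$; if one prefers not to cite it, this compactness argument can be written out in a few lines. Everything afterwards is routine bookkeeping with boundary erosion of boxes, which disappears in the entropy limit because $n+2r\mathbf{1}$ differs from $n$ only by an additive constant in each coordinate.
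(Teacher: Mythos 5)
Your argument is correct. Note, however, that the paper does not actually prove this statement: it is recorded as a Fact and attributed to \cite{KerHan11}, so there is no internal proof to compare against. What you have written is the standard self-contained argument: by the Curtis--Hedlund--Lyndon theorem (compactness of $\Omega_1$, finiteness of $\Sigma_2$, shift-equivariance) the conjugacy $\Phi$ and its inverse are sliding block codes of some common radius $r$, locality of the block map yields the surjection $B_{n+2r\mathbf{1}}(\Omega_1)\to B_n(\Omega_2)$ and hence $\abs{B_n(\Omega_2)}\leq\abs{B_{n+2r\mathbf{1}}(\Omega_1)}$, and the boundary erosion of size $2r$ per coordinate is negligible in the limit because, by Fact~\ref{prop:fekete}, the entropy is an honest limit and $\abs{[n+2r\mathbf{1}]}/\abs{[n]}\to 1$; symmetry in $\Phi$ and $\Phi^{-1}$ gives equality. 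The only points worth making explicit if you write this up in full are (i) the silent identification of patterns on $[n]+[-r,r]^d$ with patterns on $[n+2r\mathbf{1}]$ via translation, which is justified by shift-invariance of $\Omega_1$, and (ii) that $\Phi^{-1}$ is automatically continuous and shift-commuting, so the same radius argument applies to it. Compared with simply citing the literature as the paper does, your route has the advantage of being elementary and of exhibiting the quantitative pattern-count inequality, which is in the same spirit as the counting arguments the paper uses elsewhere (e.g.\ in Theorems~\ref{th:PlusLo} and~\ref{th:PlusUp}).
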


The model of $\Z^d$ permutations restricted by some finite set, presented in Example \ref{ex:GenToZd}, which will be the main focus of this work, was introduced by Schmidt and Strasser in \cite{SchStr17}. A permutation $\pi\in S(\Z^d)$ which is restricted by some finite $A\subseteq \Z^d$ can be identified with an element $\omega_\pi \in A^{\Z^d}$, where $\omega_\pi(n)=\pi(n)-n\in A$. This identification induces an embedding of $\Omega(G_A)$ in $A^{\Z^d}$, which we denote by $\Omega(A)$. Formally,
\[ \Omega(A)\eqdef \mathset{\omega_\pi : \pi\in \Omega(G_A)}.\]

From now on, we will use this notation in order to describe $\Z^d$-restricted permutations.
With this embedding, the action of $\Z^d$ on $\Omega(G_A)$ translates to a shift operation in $\Omega(A)$. To see that, we compute
\begin{align*}
\omega_{\pi^m}(n)&=(\pi^m)(n)-n=\pi^{\sigma_m}(n)-n=(m+\pi(n-m))-n\\
&=\pi(n-m)-(n-m)=\omega_\pi(\sigma_{-m}(n))=\omega_\pi\circ \sigma_{-m}(n) =(m\omega_\pi)(n). 
\end{align*}

In their work \cite{SchStr17}, Schmidt and Strasser have shown that $\Omega(A)$ (with the shift operation) is an SFT for any finite $A\subseteq \Z^d$. They investigated the dynamical properties of such SFTs and their entropy, in general, and in some specific examples. We will focus on studying the entropy, mostly in the two-dimensional cases. 

\begin{definition}
Given a finite restricting set $A\subseteq \Z^d$ and $n=(n_1,n_2,\dots,n_d)\in \N^d$, a function $f:[n]\to \Z^d$ is said to be a permutation of the $n_1\times \cdots \times n_d$ discrete torus if $\tilde{f}:[n]\to [n]$ defined by
\[ \tilde{f}(m)=f(m) \bmod n,\]
is a permutation of $[n]$. If $f$ is restricted by $A$,  we say that $f$ is a restricted permutation (by $A$) of the torus.
\end{definition}

\begin{definition}
Let $\Omega\subseteq \Sigma^{\Z^d}$ be a d-dimensional SFT over some finite alphabet $\Sigma$. For  a subgroup $\Gamma\subseteq \Z^d$ of finite index, we denote the set of $\Gamma$ periodic points by
\[ \fix_\Gamma(\Omega)\eqdef \mathset{\omega\in \Omega : \omega \circ \sigma_n=\omega \text{ for all }n\in \Gamma}.  \]
\end{definition}
Given a finite restricting set $A\subseteq \Z^d$ and $n=(n_1,n_2,\dots,n_d)\in \N^d$, consider the group 
\[ \Gamma_n\eqdef n_1\Z\times n_2 \Z \times \cdots \times n_d\Z \subseteq \Z^d.\]

We observe that elements in $\fix_{\Gamma_n}(\Omega(A))$ correspond to restricted permutations of the $n_1\times\cdots\times n_d$ discrete torus, in the usual manner. 
We identify $\omega\in \fix_{\Gamma_n}(\Omega(A))$ with the function defined by the restriction of $\omega$ to $[n]$, denoted by $f_\omega$, which is, a restricted permutation of the torus.  That is, $\tilde{f}_\omega\eqdef f_\omega \bmod n$  is a permutations of $[n]$. 

\begin{definition}
The periodic entropy of an SFT $\Omega\subseteq \Sigma^{\Z^d}$ is defined to be 
\[ \topent_p(\Omega)\eqdef \limsup_{n\to \infty}\frac{\log \abs{\fix_{\Gamma_n}(\Omega)}}{\abs{[n]}}. \] 
\end{definition}

\begin{fact} \label{prop:GenPerEnt}
(\cite{LinMar85}, Proposition 4.1.15, Theorem 4.3.6)  For an SFT $\Omega\subseteq \Sigma^{Z^d}$, 
\[ \topent_p(\Omega)\leq \topent(\Omega).\]
Furthermore, if $d=1$, equality holds.
\end{fact}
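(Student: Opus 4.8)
The plan is to get the inequality $\topent_p(\Omega)\le\topent(\Omega)$ for free from the definitions, and then to obtain the reverse inequality when $d=1$ by reducing to an edge shift and invoking the Perron--Frobenius theory of nonnegative integer matrices. For the inequality, fix $n\in\N^d$ and look at the restriction map $\omega\mapsto\omega([n])$ on $\fix_{\Gamma_n}(\Omega)$. Since $[n]$ is a fundamental domain for the action of $\Gamma_n$ on $\Z^d$, a $\Gamma_n$-periodic configuration is determined by its values on $[n]$, so this map is injective; and since $\fix_{\Gamma_n}(\Omega)\subseteq\Omega$, its image lies in $B_n(\Omega)$. Hence $\abs{\fix_{\Gamma_n}(\Omega)}\le\abs{B_n(\Omega)}$ for every $n$, and dividing by $\abs{[n]}$ and passing to the $\limsup$ as $n\to\infty$ gives $\topent_p(\Omega)\le\topent(\Omega)$; by Fact~\ref{prop:fekete} the right-hand side is in fact a genuine limit.

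For the case $d=1$ I would first record that $n\mapsto\abs{\fix_{\Gamma_n}(\cdot)}$ is a conjugacy invariant: a conjugacy $\Phi$ commutes with $\sigma_n$, hence restricts to a bijection between the $\Gamma_n$-periodic points of the two systems, so $\topent_p$ is preserved under conjugacy exactly as $\topent$ is in Fact~\ref{fact:conj}. By the standard higher-block recoding, every one-dimensional SFT is conjugate to an edge shift $X_G$ for some finite directed graph $G$; writing $A$ for the adjacency matrix of $G$, it therefore suffices to prove $\topent_p(X_G)\ge\topent(X_G)$. Here the two counting functions are explicit: $\abs{B_n(X_G)}=\mathbf 1^{\top}A^n\mathbf 1$ is the number of paths of length $n$ in $G$, and $\abs{\fix_{\Gamma_n}(X_G)}=\operatorname{tr}(A^n)$ is the number of closed paths of length $n$.

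Let $\lambda$ denote the spectral radius of $A$. Decomposing $G$ into its strongly connected components and bounding $A^n$ from above via its Jordan form and from below via a dominant component, one obtains $\topent(X_G)=\log\lambda$. For the periodic count observe that every closed path lies inside a single strongly connected component, so $\operatorname{tr}(A^n)$ splits as a sum of traces of the irreducible principal blocks of $A$; choosing a block with spectral radius $\lambda$ reduces matters to an irreducible matrix. For an irreducible matrix of period $p$, the Perron--Frobenius theorem identifies the eigenvalues of modulus $\lambda$ as exactly $\lambda\zeta$ with $\zeta^{p}=1$, each simple; hence along the subsequence $n=kp$ these contribute in phase and $\operatorname{tr}(A^{kp})=p\lambda^{kp}+o(\lambda^{kp})$, so that $\limsup_{n}\frac1n\log\operatorname{tr}(A^n)\ge\log\lambda$. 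Combining this with the inequality already proved gives $\topent_p(X_G)=\log\lambda=\topent(X_G)$, and transporting back along the conjugacy settles $d=1$.

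The only real obstacle is this last lower bound: written out as $\operatorname{tr}(A^n)=\sum_i\mu_i^{n}$, the trace can suffer large cancellations among the eigenvalues of maximal modulus, and it is precisely the period-$p$ structure of the peripheral spectrum supplied by Perron--Frobenius that prevents this along a suitable subsequence. This is also the ingredient with no higher-dimensional analogue, which is why for $d\ge2$ only the inequality survives.
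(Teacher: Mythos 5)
The paper gives no proof here --- the statement is quoted as a Fact from Lind and Marcus --- and your argument is correct and essentially the standard proof from that reference: the injective restriction map $\omega\mapsto\omega([n])$ on $\fix_{\Gamma_n}(\Omega)$ gives $\topent_p(\Omega)\leq\topent(\Omega)$, and for $d=1$ recoding to an edge shift together with the Perron--Frobenius description of the peripheral spectrum and trace asymptotics along multiples of the period gives the reverse inequality. The only slight imprecision is that $\abs{B_n(X_G)}$ counts only those length-$n$ paths that extend to bi-infinite walks, so in general $\abs{B_n(X_G)}\leq\mathbf{1}^{\top}A^{n}\mathbf{1}$ rather than equality; since only this upper bound is needed to conclude $\topent(X_G)\leq\log\lambda$, your argument is unaffected.
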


\begin{remark}
 The inequality from Fact \ref{fact:conj} holds in the more general settings of shift spaces, in any dimension.  While equality holds in the one dimensional case, it can fail badly for general $\Z^d$ shift spaces when $d>1$, since there exists $\Z^d$ shift spaces with positive entropy and no periodic points (see \cite{HocMey10}, Section 9).
\end{remark}

Let $n\in \N^d$ and a permutation $f\in S([n])$, that is, a closed permutation of the $n_1\times\cdots \times n_d$ array. We observe that $f$ is also a toral permutation, as $\tilde{f}=f \bmod n$  is a permutations of $[n]$, since $f=\tilde{f}$. Thus, for some finite $A\subseteq \Z^d$, denoting
\[ B_n^f(A)\eqdef \mathset{f\in S([n]):\forall m\in [n], f(m)-m\in A },\] 
we have that $B_n^f(A)$ is a subset of toral permutations, restricted by $A$.
We conclude that $\abs{B_n^f(A)} \leq \abs{\fix_{\Gamma_n}(\Omega(A))}. $
Given a finite restricting set $A\subseteq \Z^d$ we define the closed entropy of $\Omega(A)$ to be 
\[ \topent_c(\Omega(A))\eqdef \limsup_{n\to\infty} \frac{\log\abs{B_n^f(A)}}{\abs{[n]}}.\]
Followed by this observation and Fact \ref{prop:GenPerEnt}, we have
 \[\topent_c(\Omega(A)) \leq \topent_p(\Omega(A))\leq \topent(\Omega(A)).\]

We now have three entropy-like quantities associated to permutations restricted by a fixed finite subset of $\Z^d$: periodic permutations (permutations of a torus), closed permutations and general permutations of $\Z^d$. In the next chapters, we will further study the relations between them.


\chapter{Restricted Permutations and Perfect Matchings}
\label{CHA:PM}
	A perfect matching of an undirected graph, $G=(V,E)$, is a subset of edges not containing self loops, $M\subseteq{E}$, in which every vertex is covered by exactly one edge. That is, for every vertex $v\in V$ there exists a unique edge $e_v \in M$ (which is not a self loop), for which $v\in e_v$. We denote the set of perfect matchings of a graph by $\PM(G)$. If $W:E\to \C$ is a weighting function on the edges, it naturally induces a score function on perfect matchings by
\[ W(M)\eqdef \prod_{e\in M }W(e).\] 
The weighted perfect matchings of $G$ with respect to $W$ is defined to be 
\[ \PM(G,W)\eqdef \sum_{M\in \PM(G)} W(M )=\sum_{M\in \PM(G)} \prod_{e\in M} W(e).\] 
We note that for the constant function $W \equiv 1$, $\PM(G,1)$ is just the number of perfect matchings of $G$.

 In \cite{Kas61, Kas63}, Kasteleyn presented an ingenious method for computing the weighted perfect matching of finite planar graphs. This method was used by Kasteleyn himself in order to compute the exponential growth rate of the number of perfect matchings of the two-dimensional square lattice. In 2006, Kenyon, Okounkov, and Sheffield \cite{KenOkoShe06} computed the exponential growth rate of perfect matchings (and much more than that) of $\Z^2$-periodic bipartite planar graphs. In their work, they were also using Kasteleyn's method.

In this chapter we show two different characterizations of restricted permutations by perfect matchings (Theorem \ref{th:GenCan},  Theorem \ref{th:PerToPM}). We use the results on perfect matchings $\Z^2$-periodic bipartite planar graphs in order to compute the topological entropy of restricted permutations in a couple of two-dimensional cases (see Sections \ref{A_L} and \ref{A_+}). We show a use of Kasteleyn's method and present a polynomial-time algorithm for computing the exact number of $n \times n$-possible patterns in one specific case. Finally, we show a natural generalization of this algorithm (see Section \ref{A_L}).

	\section{General Correspondence}
	\label{PM_char}
Let $G=(V,E)$ be a directed graph. Consider the undirected graph $G'=(V',E')$ defined by 
\[ V'\eqdef \mathset{I,O}\times V, \ \text{ and }\ E'\eqdef \mathset{\mathset{(O,v),(I,u) } ~:~ (v,u)\in E}. \]
Edges in $G'$ will be used to encode functions from $V$ to $V$ which are restricted by the original graph $G$. An edge of the form $\mathset{(v,O),(u,I)}$ will represent a mapping of $v$ to $u$. In Theorem \ref{th:GenCan} we will show that perfect matchings of $G'$ correspond to restricted permutations of $G$.

Assume that a group $H$ is acting on $G$ by graph isomorphisms, one can define an action of $H$ on $G'$ by 
\[ h(a,v)=(a,hv), \ a\in \mathset{I,O}, \ v\in V.\]
Unsurprisingly, this action is a group action on $G'$ and each element $h\in H$ acts on $G'$ by graph isomorphism as for $(O,v),(I,u)\in G'$, 
\begin{align*}
\mathset{(O,v),(I,u)}\in E' &\iff (v,u)\in E\\
& \iff (hv,hu)\in E\\
& \iff \underset{\mathset{h(O,v),h(I,u)}}{\underbrace{\mathset{(O,hv),(I,hu)}}}\in E'.
\end{align*}
This shows that $H$ acts on $G'$ by graph isomorphisms.

\begin{example}
Let $G=G_{A_L}$ be the graph described in Example \ref{ex:GenToZd}. We recall that $\Omega(G_{A_L})$, also denoted by $\Omega(A_L)$, is the set of $\Z^2$-permutations restricted by the set $A_L\eqdef \mathset {(0,0),(0, 1), (1,0)}$. In that case, the graph $G'$ consists of two copies $\Z^2$ with edges between vertices whose difference are in $A_L$ (see Figure \ref{fig:CanonL}).
\begin{figure}
 \centering
  \includegraphics[width=120mm, scale=0.5]{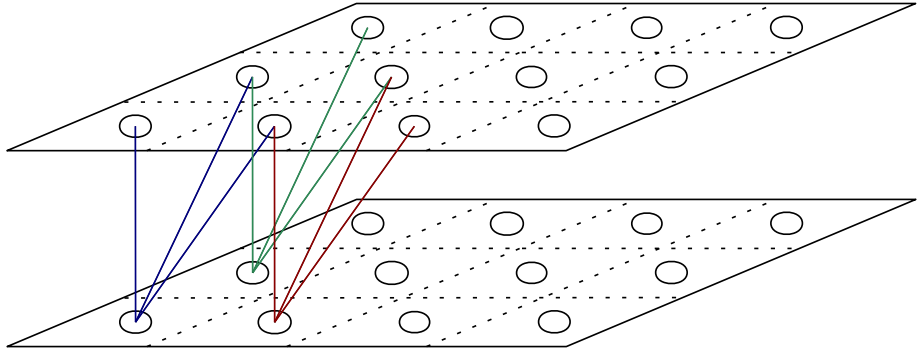}
  \caption{The graph $G_{A_L}'$}
  \label{fig:CanonL}
\end{figure} 
\end{example}

\begin{theorem}
\label{th:GenCan}
There is a bijection, $\Psi$, between elements of $\Omega(G)$ and  $\PM(G')$.
If  a group $H$ acts on $G$ by graph isomorphisms, then the action of $H$ on $G'$ induces a group action of $H$ on $\PM(G')$ such that following diagram commutes
\begin{align*}
\includegraphics[keepaspectratio=true,scale=1.1]{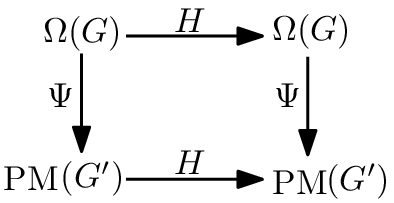}
\end{align*}
\end{theorem}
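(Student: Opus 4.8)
The plan is to define the bijection $\Psi$ explicitly, verify it is well-defined and bijective, and then check equivariance. Given $\pi \in \Omega(G)$, recall $\pi \in S(V)$ satisfies $(v,\pi(v)) \in E$ for all $v \in V$. I would set
\[
\Psi(\pi) \eqdef \mathset{\, \mathset{(O,v),(I,\pi(v))} ~:~ v \in V \,} \subseteq E'.
\]
The first step is to show $\Psi(\pi) \in \PM(G')$. Each such pair is indeed an edge of $G'$ because $(v,\pi(v)) \in E$. Every vertex $(O,v)$ lies in exactly one edge of $\Psi(\pi)$, namely the one indexed by $v$ (uniqueness on the $O$-side is immediate since the edge indexed by $v'$ contains $(O,v')$). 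Every vertex $(I,u)$ lies in exactly one edge: it lies in the edge indexed by $v$ iff $\pi(v) = u$, and since $\pi$ is a bijection there is exactly one such $v = \pi^{-1}(u)$. Since $G'$ is bipartite with parts $\mathset{O}\times V$ and $\mathset{I}\times V$, and $\Psi(\pi)$ covers every vertex exactly once, it is a perfect matching, with no self-loops (the bipartite structure precludes them).

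For the inverse, given $M \in \PM(G')$, each vertex $(O,v)$ is covered by a unique edge, which must have the form $\mathset{(O,v),(I,u)}$ for a unique $u \in V$ with $(v,u) \in E$; define $\pi_M(v) \eqdef u$. Then $\pi_M : V \to V$ satisfies $(v,\pi_M(v)) \in E$ by construction, so it remains to show $\pi_M$ is a permutation. It is injective: if $\pi_M(v_1) = \pi_M(v_2) = u$ then $(I,u)$ lies in both the edge indexed by $v_1$ and the edge indexed by $v_2$, forcing $v_1 = v_2$ since $M$ covers $(I,u)$ only once. It is surjective: for any $u \in V$, the vertex $(I,u)$ is covered by some edge $\mathset{(O,v),(I,u)} \in M$, whence $\pi_M(v) = u$. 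Hence $\pi_M \in \Omega(G)$, and one checks directly that $\Psi(\pi_M) = M$ and $\pi_{\Psi(\pi)} = \pi$, so $\Psi$ is a bijection. If $G$ is undirected one runs the same argument with $\mathset{v,\pi(v)} \in E$ in place of $(v,\pi(v)) \in E$.

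For the equivariance claim, recall $H$ acts on $\Omega(G)$ by conjugation, $\pi^h(v) = h(\pi(h^{-1}v))$, and on $G'$ (hence on $\PM(G')$, since each $h$ is a graph isomorphism of $G'$) by $h(a,v) = (a,hv)$, which sends a matching $M$ to $h\cdot M \eqdef \mathset{\mathset{h(O,v),h(I,u)} : \mathset{(O,v),(I,u)} \in M}$. I would then compute $\Psi(\pi^h)$ directly: its edges are $\mathset{(O,v),(I,\pi^h(v))} = \mathset{(O,v),(I,h\pi(h^{-1}v))}$ for $v \in V$; reindexing via $v = hw$ this set equals $\mathset{\mathset{(O,hw),(I,h\pi(w))} : w \in V\}$, which is precisely $h \cdot \Psi(\pi)$. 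Thus $\Psi(\pi^h) = h\cdot\Psi(\pi)$, i.e. the diagram commutes. I do not expect a serious obstacle here: the content is entirely the unwinding of definitions, and the only point requiring mild care is keeping the $O$-side versus $I$-side roles straight when verifying the perfect-matching property and when reindexing in the equivariance computation.
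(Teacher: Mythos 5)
Your proposal is correct and follows essentially the same route as the paper's proof: the same explicit map $\Psi(\pi)=\mathset{\mathset{(O,v),(I,\pi(v))}:v\in V}$, the same inverse construction reading off the unique edge covering each $(O,v)$, and the same reindexing computation for equivariance. No gaps; the verification details (coverage of the $O$- and $I$-sides, injectivity/surjectivity of the inverse, and the substitution $v=hw$) match the paper's argument step for step.
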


\begin{proof}
Consider the function $\Psi:\Omega(G)\to 2^{E'}$ defined by
\[ \Psi(\pi)\eqdef \mathset{\mathset{(O,v),(I,\pi(v))}~:~v\in V}.\]
Since $\pi$ is restricted by $G$, for any $v\in V$, $(v,\pi(v))\in E$. Thus, by the definition of $E'$, $\mathset{(O,v),(I,\pi(v))}\in E'$. This shows that $\Psi(\pi)\subseteq E'$.

We now show that  $\Psi(\pi)$ is a perfect matching of $G'$. Let $x$ be a vertex in $V'$. If $x$ is of the form $(O,v)$, $v\in V$, by the definition of $\Psi(\pi)$, $\mathset{(O,v),(I,\pi(v))}\in E'$ is the unique edge in $\Psi(\pi)$ containing $(O,v)$. If $x$ is of the form $(I,u)$, $u\in V$, we have that $\mathset{(O,\pi^{-1}(u)),(I,u)}\in E'$. Assume to the contrary the there exists another edge containing $(I,u)$ in $\Psi(\pi)$. That is, $\mathset{(O,v'),(I,u)}\in \Psi(\pi)$, $v'\neq \pi^{-1}(u)$. From the definition of $\Psi(\pi)$ it follows that $\pi(v')=u$, this is a contradiction as $\pi$ is injective. 

For a perfect matching $M\in \PM(G')$ and $v\in V$ let $M(v)\in V$ be the unique vertex such that $\mathset{(O,v),(I,M(v))}\in M$. Consider the function $\Phi:\PM(G')\to \Omega(G)$ defined by \[ \Phi(M)(v)=M(v),\] we show that $\Phi$ is well defined and it is the inverse function of $\Psi$. This will show that $\Psi$ is a bijection. From the Definition of $ \Phi(M)$, for any $v\in V$, $\mathset{(O,v),(I,M(v))}\in M\subset E'$, which implies that $(v,M(v))\in E$ and indeed the function $\Phi(\pi)$ is restricted by $G$. 

Now we show that $\Phi(M)$ is a permutation. Let $v_1,v_2$ be two distinct vertices in $V$. If $ \Phi(M)(v_1)= \Phi(M)(v_2)=u$, from the definition of $\Phi(M)$ we have $\mathset{(O,v_1),(I,u)}\in M$ and $\mathset{(O,v_2),(I,u)}\in M$. This is a contradiction since $M$ is a perfect matching of $G'$. This shows that $\Phi(M)$ is injective.

Let $u\in V$ and let $v$ be the unique vertex such that $M(v)=u$ (such exists since $M$ is a perfect matching and $G'$ is bipartite). Clearly, $\Psi(M)(v)=u$. This shows that $\Phi(M)$ is surjective. 

Let $v\in V$, from the definition of $\Psi$, $\Phi(\Psi(\pi))(v)$ is the unique vertex $u\in V$ such that $\mathset{(O,v),(I,u)}\in \Psi(\pi)$. From the definition of $\Psi(\pi)$, $u=\pi(v)$. This show that $\Phi \circ \Psi $ is the identity on $\Omega(G)$. Similarly, we have that $\Psi \circ \Phi $ is the identity on $\PM(G')$.

For the second part of the proof, let $H$ be a group, acting on $G$ by graph isomorphisms. 
For any $h\in H$, the isomorphic action of $h$ on $G'$ defines a map $h:E'\to E'$ by 
\[h(\mathset{(O,v),(I,u)})\eqdef \mathset{h(O,v),h(I,u)}\in E'.\]
It is easy to verify that this function maps perfect matchings of $G'$ to other perfect matchings of $G'$. It remains to show that the diagram commutes. Indeed, 
\begin{align*}
\Psi(\pi^{h})&=\mathset{\mathset{(O,v),(I,(\pi^{h})(v))}~:~v\in V}\\
&=\mathset{\mathset{(O,v),(I,h(\pi(h^{-1}v)))}~:~v\in V}\\
&=\mathset{\mathset{(O,h(h^{-1}v),(I,h(\pi(h^{-1}v)))}~:~v\in V}\\
&=\mathset{h\parenv{\mathset{(O,h^{-1}v),(I,\pi(h^{-1}v))}}~:~v\in V}\\
&=h\parenv{\mathset{\mathset{(O,u),(I,\pi(u))}~:~u\in h^{-1}V}}\\
\underset{h^{-1}V=V}{\Rsh} &=h \underset{\Psi(\pi)}{\underbrace{\parenv{\mathset{\mathset{(O,u),(I,\pi(u))}~:~u\in V}}}}=h(\Psi(\pi)).
\end{align*}
\end{proof}

	\subsection{Permutations of $\Z^2$ Restricted by $A_L$ }
		\label{A_L}
Permutations of $\Z^2$ restricted by the set $A_L=\mathset{(0,0),(0,1),(1,0)}$ were first studied by Schmidt and Strasser in \cite{SchStr17}. They proved that the topological entropy and the periodic entropy are equal in that case and speculated that it is around $\log(1.38)$. In this part of the work, we will show a connection between permutations of $\Z^2$ restricted by $A_L$ and perfect matchings of the honeycomb lattice. We will use this connection in order to derive an exact expression for the topological entropy (and periodic entropy) of $\Omega(A_L)$. In the second part we show a polynomial-time algorithm for computing the exact number of patterns in $B_{n,n}(A_L)$, and discuss a natural generalization of this algorithm. 

\subsubsection{The Honeycomb Lattice}
By Theorem \ref{th:GenCan}, we can  (bijectively) encode elements from $\Omega(A_L)$ by perfect matchings of the graph $G_{A_L}'$. If we draw the $G_{A_L}'$ on the plane, we may see that it is in fact the well known honeycomb lattice, $L_H$.
The honeycomb lattice is a $\Z^2$-periodic bipartite planar graph (see Figure \ref{fig:Honey_basic}, where different colors of vertices represent the two disjoint and independent sets). By this, we mean that it can be embedded in the plane so that translations of the fundamental domain in $\Z^2$ act by color-preserving isomorphisms of $L_{H}$ -- isomorphisms which map black vertices to black vertices and white to white.

For $n\in \N$, let $L_{H,n}$ be the quotient of $L_{H}$ by the action of $n\Z^2$, which is a finite bipartite on the $n\times n$ torus (see Figure \ref{fig:Honey_quotient}). A perfect matching of $L_{H,n}$ corresponds to a permutation of the $n\times n$ torus, restricted by $A$, in same manner as in Theorem \ref{th:GenCan}. Thus, 
\[ \abs{\fix_{n\Z^2}}=\abs{\PM(L_{H,n})}.\]

Kenyon, Okounkov and Sheffield \cite{KenOkoShe06} found an exact expression for the exponential growth rate of the number of toral perfect matchings of $\Z^2$-periodic bipartite planar graph. We use the following result which is a direct application of their work.
\begin{figure}
 \centering
  \includegraphics[width=120mm, scale=0.25]{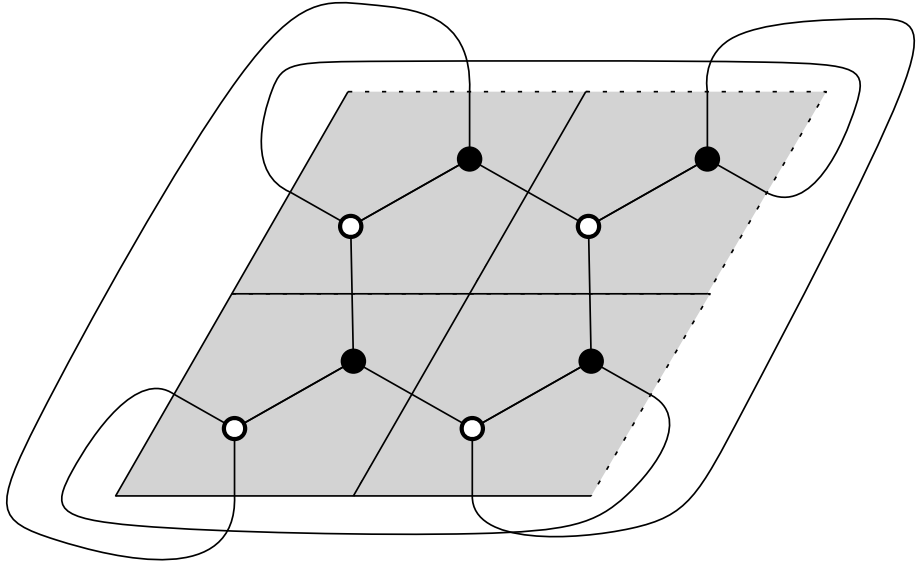}
  \caption{The quotient of $L_{H}$ by the action of $2\Z^2$ }
  \label{fig:Honey_quotient}
\end{figure}
\begin{proposition} \cite{KenOkoShe06, Ken00}
\label{prop:HoneyEnt}
\[ \lim_{n\to \infty} \frac{\log\abs{\PM(L_{H,n})}}{n^2}=\frac{1}{4\pi^2}\intop_0^{2\pi}\intop_0^{2\pi}\log\abs{1+e^{ix}+e^{iy}}dxdy.\] 
\end{proposition}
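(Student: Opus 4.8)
The plan is to obtain Proposition~\ref{prop:HoneyEnt} as a direct application of the Kenyon--Okounkov--Sheffield asymptotics for toral dimer partition functions \cite{KenOkoShe06}, once the characteristic polynomial of the honeycomb lattice has been identified. The point established just before the statement is that $\abs{\PM(L_{H,n})}$ is the number of dimer covers of the $n\times n$ toral quotient of the $\Z^2$-periodic bipartite planar graph $L_H$ of Example~\ref{ex:Honeycomb}, so the task reduces to computing the exponential growth rate of a sequence of dimer counts.

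\textbf{Step 1: the characteristic polynomial.} I would fix a fundamental domain of $L_H$ containing exactly one white vertex and one black vertex, chosen so that the three edges leaving the white vertex reach the black vertex, its translate by the first $\Z^2$-generator, and its translate by the second generator. A Kasteleyn sign function on $L_H$ exists because $L_H$ is planar and bipartite (for the honeycomb one may take all edge signs $+1$, since the product of signs around each hexagonal face is then $+1$, as required). Fourier transforming the resulting periodic Kasteleyn operator gives a $1\times 1$ matrix $K(z,w)$, because there is exactly one vertex of each colour per fundamental domain, and reading off the three edge-translations yields the characteristic polynomial $P(z,w) = 1 + z + w$ with unit edge weights (up to a unit monomial and a choice of signs of $z,w$, both immaterial below). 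A different fundamental domain would only replace $P$ by a monomial multiple, and integrating $\log\abs{z^a w^b P(z,w)}$ over the unit torus gives the same value as integrating $\log\abs{P(z,w)}$.

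\textbf{Step 2: KOS asymptotics.} By Kasteleyn's theorem on the torus, $\abs{\PM(L_{H,n})}$ equals a signed combination of four determinants $\det K_{\theta,\phi}$, $\theta,\phi \in \mathset{0,1}$, coming from the four periodic/antiperiodic boundary conditions, and each such determinant factors as a product of $P(z,w)$ over the appropriate $n$-th roots of unity and half-shifts thereof. Taking logarithms, dividing by $n^2$, and letting $n\to\infty$, the normalized log of each determinant is a Riemann sum converging to $\frac{1}{4\pi^2}\intop_0^{2\pi}\intop_0^{2\pi}\log\abs{P(e^{ix},e^{iy})}\,dx\,dy$, and KOS show that the signed sum of the four terms has the same exponential growth rate as this common value, and that the growth rate is a genuine limit, not merely a $\limsup$. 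Substituting $P(z,w) = 1 + z + w$ then gives the claimed formula.

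\textbf{The main obstacle.} The delicate point is that the spectral curve $\mathset{1+z+w = 0}$ meets the unit torus $\mathset{\abs{z} = \abs{w} = 1}$, namely at $z = w = e^{\pm 2\pi i/3}$, so $\log\abs{1 + e^{ix} + e^{iy}}$ has logarithmic singularities and the convergence asserted in Step~2 is not the naive Riemann-sum statement: one must control the contribution of frequencies near the singular locus and verify that no determinant is forced to vanish. This is exactly the case analysed in \cite{KenOkoShe06}, with the honeycomb as the running example (see also \cite{Ken00}), where the singular contributions are shown to be negligible after normalization, using that $1+z+w$ is a real Harnack polynomial with a real node there and that $\log\abs{1 + e^{ix} + e^{iy}}$ is integrable. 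I would cite those works for this step and confine the new verification to pinning down the normalization of $P$ in Step~1 and the identification of $L_{H,n}$ with the toral graph to which KOS applies.
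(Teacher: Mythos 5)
Your proposal is correct and takes essentially the same route as the paper: the paper gives no independent proof, stating the proposition as a direct citation of \cite{KenOkoShe06, Ken00}, and your argument is exactly the standard derivation those references carry out (one vertex of each colour per fundamental domain, characteristic polynomial $P(z,w)=1+z+w$, Kasteleyn's four toral determinants, and the KOS treatment of the singular locus). One small slip worth fixing: for $P(z,w)=1+z+w$ the spectral curve meets the unit torus at the conjugate pair $(z,w)=\parenv{e^{2\pi i/3},e^{-2\pi i/3}}$ and $\parenv{e^{-2\pi i/3},e^{2\pi i/3}}$, not at $z=w=e^{\pm 2\pi i/3}$; this does not affect the argument, since that step is delegated to the cited works.
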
 
The connection between the periodic entropy and the topological entropy of $\Omega(A_L)$ was investigated by Schmidt and Strasser in their first work on restricted movement. They  proved the following proposition:
\begin{proposition} \cite{SchStr17}
\label{prop:PerFreeL}
\[ \limsup_{n\to \infty}\frac{\abs{\fix_{n\Z^2}(\Omega(A))}}{n^2}=\lim_{n\to \infty}\frac{\abs{\fix_{4n^3\Z^2}(\Omega(A))}}{(4n^3)^2}=\topent(\Omega(A)).\]
\end{proposition}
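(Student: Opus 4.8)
The inequality ``$\le\topent(\Omega(A_L))$'' in both equalities is free: by Fact~\ref{prop:GenPerEnt}, $\topent_p(\Omega(A_L))\le\topent(\Omega(A_L))$, and since $\topent_p(\Omega(A_L))$ is a $\limsup$ over all rectangular tori it dominates the $\limsup$ over square tori, and in particular the $\limsup$ over the sub-sequence of $4n^3\times 4n^3$ tori. So it remains to prove the single lower bound
\[
\liminf_{n\to\infty}\frac{\log\abs{\fix_{4n^3\Z^2}(\Omega(A_L))}}{(4n^3)^2}\ \ge\ \topent(\Omega(A_L)).
\]
Granting this, the middle quantity in the statement (whose $\limsup$ is $\le\topent(\Omega(A_L))$) converges to $\topent(\Omega(A_L))$, and the leftmost quantity, being a $\limsup$ over the larger index set $\{m\in\N\}$, is $\ge$ the above $\liminf$ and $\le\topent(\Omega(A_L))$, hence also equals $\topent(\Omega(A_L))$. (This is exactly the inequality one then combines with Proposition~\ref{prop:HoneyEnt} to pin down the numerical value of $\topent(\Omega(A_L))$.)

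By Fact~\ref{prop:fekete}, $\log\abs{B_{n,n}(\Omega(A_L))}\ge n^2\,\topent(\Omega(A_L))$ for every $n$, so it suffices to show: for every $\epsilon>0$ and every large $n$ there is a subfamily $\cP_n\subseteq B_{n,n}(\Omega(A_L))$ with $\abs{\cP_n}\ge e^{(\topent(\Omega(A_L))-\epsilon)n^2}$ and a fixed ``scaffold'' filling out the complement of a family of $16n^4$ pairwise disjoint translated copies of $[0,n)^2$ that tile the $4n^3\times 4n^3$ torus, such that plugging an \emph{arbitrary} member of $\cP_n$ into each of the copies always produces an element of $\fix_{4n^3\Z^2}(\Omega(A_L))$. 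Since distinct choices give distinct periodic points, this yields $\abs{\fix_{4n^3\Z^2}(\Omega(A_L))}\ge\abs{\cP_n}^{16n^4}\ge e^{(\topent(\Omega(A_L))-\epsilon)(4n^3)^2}$, as required. To build $\cP_n$ I would use the path picture of Example~\ref{ex:GenToZd}: a pattern in $B_{n,n}(\Omega(A_L))$ is a system of pairwise non-crossing monotone (north/east) path-segments in $[0,n)^2$ together with isolated fixed points, and it determines a \emph{boundary profile} --- the positions along the four sides at which a path enters or leaves, together with the numbers of east- and north-steps. There are only $e^{O(n)}$ possible profiles, so by pigeonhole some profile $\beta$ occurs for at least $e^{-O(n)}\abs{B_{n,n}(\Omega(A_L))}$ patterns, which is $\ge e^{(\topent(\Omega(A_L))-\epsilon)n^2}$ for $n$ large; this class (trimmed by at most a factor $e^{o(n^2)}$ if needed) is $\cP_n$. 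When all copies carry patterns of the common profile $\beta$, the path-segments of adjacent copies meet at the prescribed positions and concatenate, across the torus, into pairwise non-crossing monotone cycles plus fixed points; by Theorem~\ref{th:GenCan} this is precisely a perfect matching of the honeycomb torus $L_{H,4n^3}$, i.e.\ an element of $\fix_{4n^3\Z^2}(\Omega(A_L))$.

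The hard part --- and the core of the Schmidt--Strasser argument --- is making the previous paragraph actually go through: one must pigeonhole on an invariant coarse enough to leave $e^{(\topent(\Omega(A_L))-o(1))n^2}$ patterns, yet fine enough that patterns sharing it genuinely glue cyclically. Two points need care. First, the ``incoming'' and ``outgoing'' crossing data on a pair of opposite sides must be reconciled for the cyclic gluing to close up, and since monotone paths cannot reverse direction this cannot be patched locally; one must either argue that a single wrap-around-compatible profile carries all but an $e^{o(n^2)}$ fraction of the patterns --- which seems to require structural input on $B_{n,n}(\Omega(A_L))$, e.g.\ through the honeycomb-dimer / lozenge-tiling correspondence and the near-$(\tfrac13,\tfrac13,\tfrac13)$ slope of entropy-typical patterns --- or insert universal ``winding corridors'' between the copies. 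Second, those corridors must be independent of the copies' interiors, collision-free, and of total width $o(n)$ per copy so that the number of freely fillable copies stays $\ge 16n^4\bigl(1-o(1)\bigr)$; the lengths of the windings needed to resynchronize the boundary data are controlled by denominators bounded by $n$, and it is exactly this bookkeeping that inflates the period from $n$ to a multiple of $n$ of cubic order, $4n^3$ being a convenient uniform choice. I expect this reconciliation-plus-corridor step to be the main obstacle; the rest is the soft counting sketched above.
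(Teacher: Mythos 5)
The statement you are proving is not actually proved in the thesis: it is quoted from Schmidt and Strasser \cite{SchStr17}, and the only indication of their argument is the remark following Proposition~\ref{prop:PerFreeL}, namely that periodic points are manufactured by \emph{reflecting} polygonal patterns across the boundary (an argument conceptually akin to reflection positivity), not by pigeonholing on boundary profiles. So there is no in-paper proof to match yours against; judged on its own terms, your proposal reduces correctly to the lower bound (the upper bounds via Fact~\ref{prop:GenPerEnt} and the monotonicity of the $\limsup$'s are fine, as is the observation that a common-profile gluing of $16n^4$ freely chosen $n\times n$ patterns would give $\abs{\fix_{4n^3\Z^2}(\Omega(A_L))}\geq\abs{\cP_n}^{16n^4}$), but the decisive step is missing.

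Concretely: pigeonholing on a boundary profile only guarantees that \emph{adjacent} copies agree where one copy's exit data meets the next copy's entry data if the profile is already wrap-compatible, i.e.\ the east-exit positions of a pattern coincide with its own west-entry positions and likewise north/south. A generic profile carried by $e^{(\topent-\epsilon)n^2}$ patterns need not have this property, and because the paths are monotone (north/east only) a mismatch cannot be absorbed locally: a path that exits too high on the east side can never be steered back down. Both of your proposed repairs --- arguing that a single wrap-compatible profile dominates via the lozenge-tiling slope picture, or inserting winding corridors of total width $o(n)$ whose length bookkeeping would explain the $4n^3$ period --- are stated as expectations, not arguments, and you say so yourself. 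Until one of them (or a reflection construction in the spirit of \cite{SchStr17}, which sidesteps the wrap-compatibility issue by making the glued pattern symmetric by fiat) is carried out, the inequality $\liminf_{n}\frac{\log\abs{\fix_{4n^3\Z^2}(\Omega(A_L))}}{(4n^3)^2}\geq\topent(\Omega(A_L))$ is not established, and with it neither equality in the proposition. So this is a genuine gap, located exactly where you predicted it would be.
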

\begin{remark} 
The proof of Proposition \ref{prop:PerFreeL} presented in \cite{SchStr17} by Schmidt and Strasser involves arguments regarding forming periodic points using reflections of polygonal patterns. Although using different machinery, the idea behind their proof is conceptually similar to the principle of reflection positivity, used by Meyerovitch and Chandigotia in \cite{MeyCha19} in order to explain that the topological entropy and the periodic entropy of the square lattice dimer model are equal. This suggests that the principle of reflection positivity may be used in order to prove that periodic entropy and topological entropy are equal in the more general case of perfect matchings of bipartite planar $\Z^2$-periodic graphs. 
\end{remark} 
We combine the results presented above with the observation about the correspondence between perfect matchings of the honeycomb lattice to $A_L$-restricted permutations to obtain:
\begin{theorem}
\label{th:SolL}
\[ \lim_{n\to\infty}\frac{\log\abs{\fix_{n\Z^2}(\Omega(A))}}{n^2}  =\topent_p(\Omega(A_L))=\topent(\Omega(A_L))=\frac{1}{4\pi^2}\intop_0^{2\pi}\intop_0^{2\pi}\log\abs{1+e^{ix}+e^{iy}}dxdy.\] 
\end{theorem}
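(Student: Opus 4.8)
The plan is to chain together the three ingredients already assembled in this subsection: the bijection of Theorem \ref{th:GenCan}, the dimer computation of Proposition \ref{prop:HoneyEnt}, and the periodic-point identity of Proposition \ref{prop:PerFreeL}. Throughout, write $I \eqdef \frac{1}{4\pi^2}\intop_0^{2\pi}\intop_0^{2\pi}\log\abs{1+e^{ix}+e^{iy}}\,dx\,dy$ for the target value, and recall that the set $A$ appearing in the statement is $A_L$.

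First I would make the correspondence on the torus precise. Applying Theorem \ref{th:GenCan} with $H=\Z^2$ acting on $G_{A_L}$ by translations, the bijection $\Psi$ intertwines the two $\Z^2$-actions, hence restricts to a bijection between $\fix_{n\Z^2}(\Omega(A_L))$, the $n\Z^2$-periodic restricted permutations, and the $n\Z^2$-invariant perfect matchings of $G_{A_L}'\cong L_H$. An $n\Z^2$-invariant perfect matching of $L_H$ is the same datum as a perfect matching of the quotient graph $L_{H,n}$, so $\abs{\fix_{n\Z^2}(\Omega(A_L))}=\abs{\PM(L_{H,n})}$ for every $n\in\N$, as already noted before Proposition \ref{prop:HoneyEnt}.

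Next, Proposition \ref{prop:HoneyEnt} asserts that $\lim_{n\to\infty} n^{-2}\log\abs{\PM(L_{H,n})}$ exists and equals $I$; combining this with the previous step yields $\lim_{n\to\infty} n^{-2}\log\abs{\fix_{n\Z^2}(\Omega(A_L))}=I$, which is the first equality of the theorem. Proposition \ref{prop:PerFreeL} identifies $\topent(\Omega(A_L))$ with $\lim_{n\to\infty}(4n^3)^{-2}\log\abs{\fix_{4n^3\Z^2}(\Omega(A_L))}$, and since the full-sequence limit already exists this subsequential limit is again $I$, so $\topent(\Omega(A_L))=I$. Finally, for the periodic entropy, Fact \ref{prop:GenPerEnt} gives $\topent_p(\Omega(A_L))\le \topent(\Omega(A_L))=I$, while restricting the $\limsup$ defining $\topent_p$ to the diagonal multi-indices $n=(m,m)$ gives $\topent_p(\Omega(A_L))\ge \lim_{m\to\infty} m^{-2}\log\abs{\fix_{m\Z^2}(\Omega(A_L))}=I$; hence $\topent_p(\Omega(A_L))=I$ too.

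There is no deep obstacle here once the quoted results are granted; the one step needing genuine care is the identification in the second paragraph, namely that the commuting diagram of Theorem \ref{th:GenCan} really descends to the quotient, so that $\Psi$ carries the $n\Z^2$-fixed permutations exactly onto $\PM(L_{H,n})$ (and that the planar drawing of $G_{A_L}'$ is indeed the honeycomb lattice $L_H$). The remainder is bookkeeping: passing between $\limsup$ and $\lim$ using existence of the limit, and passing between the square torus $n\Z^2$ and the general multi-index definition of periodic entropy.
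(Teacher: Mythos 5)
Your proposal is correct and follows essentially the same route as the paper: it combines the Theorem \ref{th:GenCan}-style identification $\abs{\fix_{n\Z^2}(\Omega(A_L))}=\abs{\PM(L_{H,n})}$, the dimer limit of Proposition \ref{prop:HoneyEnt}, the Schmidt--Strasser identity of Proposition \ref{prop:PerFreeL}, and the inequality $\topent_p\le\topent$ from Fact \ref{prop:GenPerEnt}, exactly the ingredients the paper chains together (only in a slightly different order). No gaps.
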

\begin{proof}
From Proposition \ref{prop:GenPerEnt}, we know that $\topent_p(\Omega(A))\leq \topent(\Omega(A))$. 
On the other hand, by Proposition \ref{prop:PerFreeL},
\begin{align*}
 \topent_p(\Omega(A))&=\limsup_{n_1,n_2\to\infty}\frac{\log\abs{\fix_{\Gamma_{(n_1,n_2)}}(\Omega(A))}}{n_1 n_2}\\
 &\geq \lim_{n\to\infty} \frac{\log\abs{\fix_{4n^3\Z^2}(\Omega(A))}}{(4n^3)^2}=\topent(\Omega(A)).
\end{align*}
This shows that 
\[\topent(\Omega(A))=\topent_p(\Omega(A))=\lim_{n\to \infty}\frac{\abs{\fix_{4n^3\Z^2}(\Omega(A))}}{(4n^3)^2}. \] 
Using Proposition \ref{prop:HoneyEnt} and the equivalence between perfect matchings of $L_{H,n}$ and periodic restricted permutations, we conclude
\begin{align*}
\frac{1}{4\pi^2}\intop_0^{2\pi}\intop_0^{2\pi}\log\abs{1+e^{ix}+e^{iy}}dxdy&=\frac{\log\abs{\PM(L_{H,n})}}{n^2}\\
&=\frac{\log\abs{\fix_{n\Z^2}(\Omega(A))}}{n^2}\\
&=\lim_{n\to \infty}\frac{\abs{\fix_{4n^3\Z^2}(\Omega(A))}}{(4n^3)^2}=\topent_p(\Omega(A)).
\end{align*}
This completes the proof of the theorem.
\end{proof}
\begin{remark}
Theorem \ref{th:SolL} provides a complete solution to the question raised in the work by Schmidt and Strasser \cite{SchStr17}, whether it is true that $ \lim_{n\to\infty} \frac{\log\abs{\fix_{n\Z^2}(\Omega(A))}}{n^2}$ exists (and equal to $\topent(\Omega(A))$). 
\end{remark}

\subsubsection{Counting Patterns in Polynomial-Time}
We saw that there exists a natural correspondence between permutations restricted by $A_L$ and perfect matchings of the honeycomb lattice. unfortunately, this correspondence does not translate to a matching between elements in $B_{n,n}(A)$  and perfect matching of finite subsets of the honeycomb lattice. Thus, we do not have a canonical way to use the powerful tools known for counting perfect matchings, as we desire. In this part of the work we will see a construction that allows us to find a correspondence between  elements from $B_{n,n}(A)$ and perfect matchings of a some slightly different graph. Finally, we will obtain a generalization of this idea to a more general case.

\begin{definition}
An orientation of an undirected graph with no parallel edges $G=(V,E)$ is an assignment of a direction to each of the edges of the graph. Formally, an orientation of $G=(V,E)$ is a directed graph $G'=(V',E')$ such that $V'=V$, $E=\mathset{\mathset{u,v}:(u,v)\in E'}$ and $(u,v)\in E'$ implies $(v,u)\notin E'$. 
\end{definition}

\begin{definition}
Let $G=(V,E)$ be an undirected graph and consider two perfect matchings in the graph $M,M'\in \PM(G)$. Denote by $M\oplus M'$ the symmetric difference operation between sets. A Pfaffian orientation of
$G$ is an orientation such that for any two perfect matchings in the graph $M,M'\in \PM(G)$, any
cycle in $M\oplus M'$ and any traversal direction of the cycle there is an odd number of edges oriented in agreement with it. If there exists a Pfafian orientation for $G$, it is said to be Pfafian orientable.  
\end{definition}
\begin{proposition} \label{prop:PaffOr} \cite{Kas67} 
Let $G=(V,E)$ be a planar. An orientation of $G$ for which every clockwise walk on a face of the graph has an odd number of edges agreeing is a Pfaffian orientation. Furthermore, there is a polynomial-time algorithm which finds such an orientation (in particular, such an orientation always exists).
\end{proposition}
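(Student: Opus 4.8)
The plan is to prove the two assertions separately: first that the clockwise-odd condition already forces the Pfaffian property, and then that an orientation with this property can be constructed in polynomial time (which in particular yields existence).

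For the first part I would fix a planar embedding of $G$ and establish a parity lemma for cycles: if $C$ is a simple cycle and $v(C)$ is the number of vertices of $G$ lying strictly inside $C$, then for $C$ of even length the number of edges of $C$ agreeing with a traversal is congruent to $v(C)+1 \pmod 2$, for either sense of traversal. I would prove this by summing, over all bounded faces $F$ enclosed by $C$, the quantity $a(F)$ equal to the number of edges of $\partial F$ that agree with a clockwise traversal of $\partial F$. By the hypothesis each $a(F)$ is odd, so the sum is congruent mod $2$ to the number $r$ of enclosed faces. Re-grouping the same sum by edges: each edge strictly inside $C$ borders two enclosed faces and is traversed once in each sense by their clockwise boundary walks, hence contributes exactly $1$; and each edge of $C$ borders exactly one enclosed face and — since that face and the region bounded by $C$ lie on the same side of the edge — contributes $1$ precisely when the edge agrees with the clockwise traversal of $C$. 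If $m'$ is the number of interior edges and $c$ the number of clockwise-agreeing edges of $C$, this gives $m' + c \equiv r \pmod 2$; applying Euler's formula to the subgraph consisting of $C$ together with its interior gives $r = 1 + m' - v(C)$, whence $c \equiv v(C) + 1 \pmod 2$. For an even cycle the two senses of traversal give the same parity, proving the lemma.

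Then I would take two perfect matchings $M, M' \in \PM(G)$ and recall that $M \oplus M'$ is a vertex-disjoint union of simple cycles, each of even length and alternating between $M$-edges and $M'$-edges. For such a cycle $C$, the vertices on $C$ are $M$-matched to one another along $C$, so every vertex off $C$ is $M$-matched to another vertex off $C$; since edges of the embedding do not cross $C$, a vertex strictly inside $C$ is $M$-matched to a vertex strictly inside $C$, so $v(C)$ is even. The lemma then gives that an odd number of edges of $C$ agree with any traversal, which is exactly the defining condition of a Pfaffian orientation.

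For the second part, the construction behind existence is itself the algorithm. Compute a planar embedding and the face structure in polynomial time, pick a spanning tree $T$ of $G$, and use the standard duality fact that the cotree edges biject with the bounded faces: their duals form a spanning tree $T^*$ of the dual graph, which I root at the outer face, assigning to each bounded face $F$ its parent edge $e_F$ in $T^*$. Orient the edges of $T$ arbitrarily, then process the bounded faces in order of non-increasing depth in $T^*$; when $F$ is reached, every edge of $\partial F$ other than $e_F$ is either a tree edge or the parent edge of an already-processed child face, hence already oriented, so $e_F$ can (and must) be oriented to make $a(F)$ odd. This produces a clockwise-odd orientation in polynomial time and proves existence.

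I expect the main obstacle to be the cycle-parity lemma: the double counting that matches the "sum over enclosed faces" with the "sum over edges" needs the clockwise conventions for a face and for the enclosing cycle to be reconciled carefully (including degenerate cases such as bridges inside $C$), and it is precisely there that Euler's formula converts a statement about faces into one about enclosed vertices. Once the lemma is available, the reduction to even alternating cycles and the planarity argument that $v(C)$ is even are routine, as is the greedy orientation procedure.
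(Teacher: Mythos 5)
Your proposal is correct, but there is nothing in the paper to compare it against: the paper states this proposition as an imported fact, citing Kasteleyn \cite{Kas67}, and gives no proof. What you have written is essentially the classical Kasteleyn/Lov\'asz--Plummer argument, and all three of its pieces are sound: the double count of agreeing traversals over the enclosed faces (with bridges correctly contributing $1$ because they are traversed once in each sense), the conversion $r=1+m'-v(C)$ via Euler's formula, and the observation that for a cycle of $M\oplus M'$ the interior vertices are $M$-matched among themselves, so $v(C)$ is even and the agreeing count is odd in either direction, which is exactly the paper's definition of a Pfaffian orientation. The tree--cotree greedy construction (root the dual spanning tree at the outer face, process bounded faces by decreasing depth, fix each face's parity with its parent cotree edge) is the standard polynomial-time algorithm and is argued correctly, including the fact that orienting $e_F$ cannot disturb faces already processed.

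One small caveat you should make explicit: your Euler-formula step uses that the subgraph consisting of $C$ together with everything embedded inside it is connected, and your algorithm picks a spanning tree of $G$; both implicitly assume $G$ connected. If $G$ is connected this is automatic (an interior vertex reaches $C$ by a path that cannot cross $C$ except at its vertices, so the initial segment stays in the closed disk). For disconnected $G$ the lemma as you stated it can fail when another component is nested inside $C$, but the proposition survives by simply treating each connected component separately, with its induced embedding: perfect matchings, the cycles of $M\oplus M'$, and the parity count all decompose over components, and the spanning tree is replaced by a spanning forest. With that remark added, the proof is complete.
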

\begin{definition}
Let $n\in \mathbb{N}$ and $A$ be a skew-symmetric $2n\times 2n$ matrix with complex values (namely, satisfying $A=-A^*$). The Pfaffian of A is defined by the equation
\begin{align*}
\Pf (A) \eqdef\frac{1}{2^n\cdot n!} \sum_{\sigma \in S(2n)}  \sgn(\sigma)\prod_{i=1}^n a_{\sigma(2i-1),\sigma(2i)}.
\end{align*} 
\end{definition}
If a matrix $A$ is skew-symmetric, the Pfaffian can be calculated by the formula
\begin{align*}
\parenv{\Pf(A)}^2=\det (A)
\end{align*}
 (\cite{Tes00}, Appendix A.1), and since determinants are efficiently computable, Pfaffians are efficiently computable as well (up to their sign).
\begin{proposition}
\label{prop:Kas}
\cite{Kas61} Let $G=(V,E)$ be a finite  undirected graph, $W:E\to \mathbb{C}$ be a weighting function and $G'=(V,E')$ a Pfaffian orientation of $G$. Then the weighted number of perfect matchings of $G$ can be calculated by 
\begin{align*}
\PM(G,W)=\pm \Pf(A_G),
\end{align*}
where $A_G$ is the $\abs{V}\times \abs{V}$ adjacency matrix defined by 
\begin{align*}
A_G (i,j)=\begin{cases}
W\parenv{\mathset{i,j}} & \text{If } (i,j)\in E'\\
-W\parenv{\mathset{i,j}} & \text{If } (j,i)\in E'\\
0 & \text{Otherwise}.
\end{cases}
\end{align*}  
\end{proposition}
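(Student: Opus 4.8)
The plan is to expand $\Pf(A_G)$ combinatorially as a signed sum over pairings, observe that only the perfect matchings of $G$ survive, and then use the Pfaffian-orientation hypothesis to show that all of them appear with one and the same sign; this is Kasteleyn's classical argument.

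First I would record the standard pairing expansion of the Pfaffian: for a skew-symmetric $2n\times 2n$ matrix $A$,
\[ \Pf(A)=\sum_{\mu}\sgn(\sigma_\mu)\prod_{\{i,j\}\in\mu}a_{ij}, \]
where $\mu$ ranges over all pairings of $\mathset{1,\dots,2n}$ and $\sigma_\mu\in S(2n)$ is any permutation listing the elements of $\mu$ pair by pair. This is well defined: transposing the two members of one pair multiplies both $\sgn(\sigma_\mu)$ and the corresponding entry $a_{ij}$ by $-1$, while permuting the pairs among themselves is an even permutation (each block has even length). Applying this with $A=A_G$ and using $A_G(i,j)=0$ whenever $\{i,j\}\notin E$, the only nonzero terms come from pairings $M$ that are perfect matchings of $G$. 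Writing $\eta(i,j)=+1$ if $(i,j)\in E'$ and $\eta(i,j)=-1$ if $(j,i)\in E'$, the term of such an $M$ equals $\epsilon(M)\,W(M)$, where $\epsilon(M)\eqdef\sgn(\sigma_M)\prod_{(p,q)}\eta(p,q)$, the product being over the consecutive pairs $(p,q)$ of $\sigma_M$, and $\epsilon(M)\in\mathset{+1,-1}$. Thus $\Pf(A_G)=\sum_{M\in\PM(G)}\epsilon(M)W(M)$, and the proposition reduces to the assertion that $\epsilon$ is constant on $\PM(G)$ (if $\PM(G)=\emptyset$, both sides are $0$).

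To prove constancy I would fix a reference matching $M_0\in\PM(G)$ and, for arbitrary $M\in\PM(G)$, decompose the symmetric difference $M\oplus M_0$ into vertex-disjoint alternating cycles $C_1,\dots,C_r$. Flipping one cycle at a time gives perfect matchings $M_0,\,M_0\oplus C_1,\,\dots,\,M_0\oplus C_1\oplus\cdots\oplus C_r=M$, so it is enough to prove $\epsilon(M')=\epsilon(M'')$ whenever $M''=M'\oplus C$ for a single alternating cycle $C=(v_1,v_2,\dots,v_{2\ell})$. Here I would realize $\sigma_{M'}$ by listing the cycle vertices in the order $v_1,v_2,\dots,v_{2\ell}$ and then a fixed ordering of the common edges $M'\cap M''$, and $\sigma_{M''}$ by listing them as $v_2,v_3,\dots,v_{2\ell},v_1$ followed by the same tail. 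Then $\sigma_{M''}$ differs from $\sigma_{M'}$ by a cyclic shift of the first $2\ell$ entries, a single $2\ell$-cycle, so $\sgn(\sigma_{M''})=-\sgn(\sigma_{M'})$; and since all $\eta$-values are $\pm1$, the quotient of the two $\eta$-products is
\[ \big(\eta(v_1,v_2)\eta(v_3,v_4)\cdots\eta(v_{2\ell-1},v_{2\ell})\big)\big(\eta(v_2,v_3)\eta(v_4,v_5)\cdots\eta(v_{2\ell},v_1)\big)=\prod_{k=1}^{2\ell}\eta(v_k,v_{k+1}), \]
with indices read cyclically. Hence $\epsilon(M')/\epsilon(M'')=-\prod_{k=1}^{2\ell}\eta(v_k,v_{k+1})$.

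Finally I would invoke the hypothesis: the cycle $C$ is exactly $M'\oplus M''$, a symmetric difference of two perfect matchings, so by the definition of a Pfaffian orientation the number of its edges agreeing with the traversal $v_1\to v_2\to\cdots\to v_{2\ell}\to v_1$ is odd; as $C$ has even length, the number of disagreeing edges is also odd, so $\prod_{k=1}^{2\ell}\eta(v_k,v_{k+1})=-1$ and $\epsilon(M')=\epsilon(M'')$. Iterating over $C_1,\dots,C_r$ gives $\epsilon(M)=\epsilon(M_0)$ for all $M\in\PM(G)$, so $\Pf(A_G)=\epsilon(M_0)\sum_{M\in\PM(G)}W(M)=\epsilon(M_0)\,\PM(G,W)$, that is, $\PM(G,W)=\epsilon(M_0)\,\Pf(A_G)=\pm\,\Pf(A_G)$. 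The main obstacle is the sign bookkeeping of the third paragraph — verifying that flipping a $2\ell$-cycle changes $\sgn(\sigma)$ by exactly the sign of a single $2\ell$-cycle, that the quotient of $\eta$-products collapses to the cyclic product $\prod_k\eta(v_k,v_{k+1})$ regardless of which orientation of each edge was used to build $A_G$ and regardless of any ordering conventions, together with the routine check that each intermediate flip stays within $\PM(G)$ so that the Pfaffian condition is applicable at every step.
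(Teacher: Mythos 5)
The paper does not prove this proposition at all; it is quoted as an external result from Kasteleyn \cite{Kas61}, so there is no internal argument to compare against. Your proof is a correct reconstruction of Kasteleyn's classical argument: the pairing expansion of the Pfaffian, the reduction of the sign question to a single alternating cycle via the decomposition of $M\oplus M_0$, and the parity count forced by the Pfaffian-orientation hypothesis are all handled correctly, including the delicate cancellation of $\sgn(\sigma_M)$ against the cyclic product of orientation signs.
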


The idea for counting the number of $n\times n$ patterns in $B_{n,n}(\Omega(A_L))$ is to construct a graph $G_n$, and a weighting function $W$, such that $\PM(G_n,W)=\abs{B_{n,n}(\Omega(A_L))}$. We take the $2n\times 2n$ quotient graph, $L_{H,2n}$, described in the previous part of this section, and we change it in the boundary. To each vertex in the boundary, we connect a gadget (or two in some cases). Finally, we will set the weights such that $\PM(G_n,W)=\abs{B_{n,n}(\Omega(A_L))}$.
For $n\in\mathbb{N}$, consider the graph $G_n=(V_n,E_n)$, where $V_n$ is the vertex set of size $4n^2+4n-1$ defined by the union of
\begin{align*}
P_n\eqdef \bigcup_{k=1}^{n^2} \mathset{I_k,O_k}
\end{align*}
and
\begin{align*}
U_n\eqdef \bigcup_{k=1}^{4n-1} \mathset{T_{k,1},T_{k,2},T_{k,3},T_{k,4}}.
\end{align*}
The set of edges is the union of 3 types of edges; edges between vertices of $P_n$, edges between vertices of $U_n$, and crossing edges between $P_n$ and $U_n$:
\begin{align*}
&E_{P_1}\eqdef\mathset{ \mathset{I_k,O_k} : 1 \leq k \leq n^2}\\
&E_{P_2}\eqdef \mathset{\mathset{O_k,I_{k+1}}:1\leq k \leq k+1 \leq n^2}\\
&E_{P_3}\eqdef\mathset{\mathset{O_k,I_{k-n}}:1\leq k-n \leq k \leq n^2}\\
&E_P\eqdef\bigcup_{i=1,2,3}E_{P_i}.
\end{align*}
\begin{align*}
&E_{U_1}\eqdef\mathset{ \mathset{T_{k,i},T_{k,j}} : 1 \leq k \leq 4n-1, 1\leq i<j \leq 4}\\
&E_{U_2}\eqdef \mathset{\mathset{T_{k,3},T_{k+1,2}}:1\leq k < k+1 \leq 4n-1}
E_S=\bigcup_{i=1,2}E_{U_i}.
\end{align*}
\begin{align*}
&E_{P,U,1}\eqdef\bigcup_{1\leq k \leq n}\mathset{ \mathset{O_k,T_{k,1}},\mathset{O_{n\cdot k},T_{n+1+k,1}}, \mathset{I_{n^2+1-k},T_{2n+k,1}}}\\
&E_{P,U,2}\eqdef \bigcup_{1\leq k\leq n-1}\mathset{\mathset{I_{n_2-n-k\cdot n},T_{3n+k}}}
E_{P,S}=\bigcup_{i=1,2}E_{P,U,i}.
\end{align*}
\begin{align*}
E_n\eqdef E_P\cup E_S \cup E_{P,S}.
\end{align*}
The graph described above consists of two types of gadgets, connected to each other. The first gadget is the complete graph $K_2$ with two vertices  $\mathset{I_k,O_k}$, we will call it the $IO$ gadget (see figure  \ref{fig:L-gad}). We have $n^2$ such $IO$ gadgets, representing the vertices of the $n\times n$ square lattice. In this part, we will not enumerate those vertices by coordinates of $\mathbb{Z}^2$ as it will easier for later analysis to enumerate them with consecutive natural numbers. Thus, we enumerate the $IO$ gadgets by $\mathset{IO_1,IO_2,\dots, IO_{n^2}}$ and arrange them in the square lattice as follow  
\begin{align*}
\begin{matrix}
IO_1 &\ IO_2&\  \cdots &\ IO_n\\
IO_{n+1} &\ IO_{n+2} &\ \cdots &\ IO_{2n}\\
\vdots &\ \vdots &\ \ddots &\ \vdots \\
IO_{n^2-n+1} &\ IO_{n^2-n+2} &\ \cdots &\ IO_{n^2}.
\end{matrix}
\end{align*}
The $O$-vertex of any $IO$ gadget is connected to the to its right and upper $IO$ neighbour gadgets via its $I$-vertex . The second gadget is the complete graph with 4 vertices - $K_4$. We will call this gadget $T$ (see figure  \ref{fig:L-gad}). We arrange the $4n-1$ $T$-gadgets that we have in a chain, wrapped around the $n\times n$ square lattice, ordered as follows:
 \begin{align*}
\begin{matrix}
 \ &\ T_1 &\ T_2 &\ \cdots &\ T_n &\ \ \\
T_{4n-1} &\ IO_1 &\ IO_2&\  \cdots &\ IO_n &\ T_{n+1}\\
T_{4n-2} &\ IO_{n+1} &\ IO_{n+2} &\ \cdots &\ IO_{2n} &\ T_{n+2}\\
\vdots &\  \vdots &\ \vdots &\ \ddots &\  \vdots &\ \vdots \\
T_{3n+1} &\ IO_{n^2-2n+1} &\ IO_{n^2-2n+2} &\ \cdots &\ IO_{n^2-n} &\ T_{2n-1}\\
\ &\ IO_{n^2-n+1} &\ IO_{n^2-n+2} &\ \cdots &\ IO_{n^2} &\  T_{2n}\\
\ &\ T_{3n} &\ T_{3n-1} &\ \cdots &\ T_{2n+1} &\  \\
\end{matrix}
\end{align*}
 The $T$-gadgets are connected between them by an edge $\mathset{T_{k,3},T_{k+1,2}}$ forming the chain. Finally, we connect each of the $IO$ gadgets in the boundary of the lattice to the its neighbouring $T$ gadget in the lattice. The connection is by an edge, connecting the vertex from the $IO$ gadget which is on the boundary of the square (it might be $I$ or $O$) with  $T_{k,1}$ ($T_k$ is the lattice neighbouring in the chain). Any $IO$ gadget in the boundary is connected to exactly one $T$ gadget, except for three cases: \begin{itemize}
\item The upper left corner gadget, $IO_1$ is is connected to $T_1$ by $O_1$ and to $T_{4n-1}$ by $I_1$. 
\item The upper right corner gadget, $IO_n$ is is connected to $T_n$ and  to $T_{n+1}$ by $O_n$.
\item The lower right corner gadget, $IO_{n^2}$ is is connected to $T_{2n}$ by $O_{n_2}$ and to $T_{2n+1}$ by $I_{n^2}$.
\end{itemize}
See Figure \ref{fig:L-PM} for an depiction of $G_4$.
\begin{figure}
 \centering
  \includegraphics[width=70mm, scale=0.5]{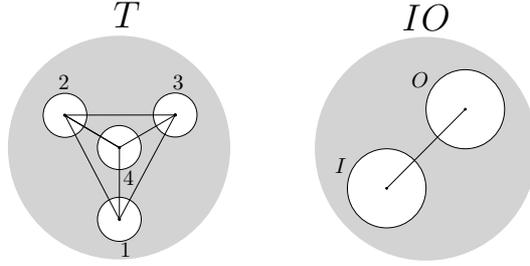}
  \caption{The $T$ and $IO$ gadgets. }
  \label{fig:L-gad}
\end{figure}

\begin{figure}
 \centering
  \includegraphics[width=80mm, scale=0.5]{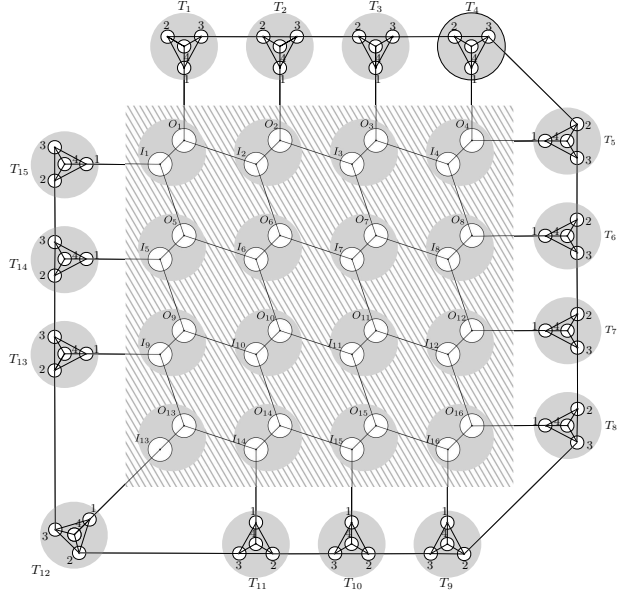}
  \caption{The graph $G_4$. }
  \label{fig:L-PM}
\end{figure}
\begin{definition}
Let $G=(V,E)$ be an undirected graph and $V'\subseteq V$ be a subset of vertices. A set of edges $C\subseteq E$ is said to be a perfect cover of $V'$ if the following are satisfied
\begin{itemize}
\item Any vertex $v\in V'$ has an edge $e_v\in C$ such that $v\in e_v$.
\item No two different edges in $C$ share a vertex.
\item For any edge, $e \in C$, the intersection $e\cap V'$ is non empty.  
\end{itemize} 
Denote the set of all perfect covers of $V'$ in $G$ by $\PC(V',G)$.
\end{definition}
Given an undirected graph $G=(V,E)$ we will sometimes identify subsets of edges with assignments of zeros and ones to the edges in $E$. That is, given $C\subseteq E$, we will identify it with its indicator function $\mathbb{I}_C\in \mathset{0,1}^{E}$. By an abuse of notation, we denote $C(e)\eqdef \mathbb{I}_C(e)$ for $e\in E$. With this identification, if $V'\subseteq V$ is a subset of vertices , $C\in \PC(V',G)$  if and only if  the following are satisfied :
\begin{itemize}
\item For each $v\in V'$ ,
\[ \abs{\mathset{e\in E_n:v\in e \text{ and } C(e)=1}}=1.\]

\item For any edge, $e\in E$,  $C(e)=1$ implies $e\cap V'\neq \emptyset$.
\end{itemize} 

\begin{lemma}
\label{lem:K4}
Let $n,k\in\mathbb{N}$ such that $1\leq k \leq 4n - 1$. For any $C\in \PC(T_k,G_n)$ and $C' \in \PC(V_n\setminus T_k,G_n)$:
\begin{enumerate}
\item The number of edges in $C$ connecting it to other gadgets is even. 
\item If $C'$ has no edges meeting $T_k$ then there are exactly three elements in $\PM(G_n)$ containing $C'$. If $C'$ has exactly one edge that intersects $T_k$, then there is exactly one element in $\PM(G_n)$ containing $C'$.
\end{enumerate} 
\end{lemma}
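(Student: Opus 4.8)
The plan is to reduce both parts to a purely local analysis of one $K_4$-gadget $T_k=\{T_{k,1},T_{k,2},T_{k,3},T_{k,4}\}$ together with the edges of $G_n$ incident to it. The structural facts I would read off from the definition of $G_n$ are: the six edges of $E_{U_1}$ with first index $k$ make $G_n$ restricted to $T_k$ a copy of $K_4$, which has exactly three perfect matchings; every edge of $G_n$ incident to a vertex of $T_k$ but not internal to $T_k$ -- call it a connecting edge of $T_k$ -- is either a chain edge from $E_{U_2}$, incident to $T_{k,2}$ or $T_{k,3}$, or a crossing edge from $E_{P,U,1}\cup E_{P,U,2}$, incident to $T_{k,1}$; in particular $T_{k,4}$ has no connecting edge, so $T_k$ has at most three connecting edges. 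This last point holds uniformly in $k$: the corner gadgets $IO_1,IO_n,IO_{n^2}$ and the chain ends $T_1,T_{4n-1}$ only change how many connecting edges there are and at which of $T_{k,1},T_{k,2},T_{k,3}$ they sit.

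For part 1, by definition $C\in\PC(T_k,G_n)$ is a matching, each of whose edges meets $T_k$, covering each of the four vertices of $T_k$ exactly once. Writing $i$ for the number of internal edges of $C$ and $e$ for the number of its connecting edges and counting the incidences of edges of $C$ with the four vertices of $T_k$ gives $2i+e=4$; hence $e$ is even, and since $T_k$ has at most three connecting edges, $e\in\{0,2\}$. That is all of part 1.

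For part 2, fix $C'\in\PC(V_n\setminus T_k,G_n)$: it is a matching covering every vertex of $V_n\setminus T_k$ exactly once with every edge meeting $V_n\setminus T_k$, so the edges of $C'$ that touch $T_k$ are exactly its connecting edges at $T_k$, each covering one vertex of $T_k$. Let $U\subseteq T_k$ be the set of vertices of $T_k$ not covered by $C'$. The key step is the bijection $M\mapsto M\setminus C'$ from $\{M\in\PM(G_n):C'\subseteq M\}$ onto $\PM(G_n[U])$, where $G_n[U]$ is the subgraph induced on $U$: if $C'\subseteq M\in\PM(G_n)$ then every vertex of $V_n\setminus U$ is already covered by $C'$, so every edge of $M\setminus C'$ has both endpoints in $T_k$ and avoids $T_k\setminus U$, i.e.\ lies in $U$, whence $M\setminus C'\in\PM(G_n[U])$; conversely $C'$ covers $V_n\setminus U$ exactly once and any $M_0\in\PM(G_n[U])$ covers $U$ exactly once, so $C'\cup M_0\in\PM(G_n)$ contains $C'$, and the two maps are mutually inverse. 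Hence the number of $M\in\PM(G_n)$ containing $C'$ equals $|\PM(G_n[U])|$: if $C'$ has no edge meeting $T_k$ then $U=T_k$, $G_n[U]=K_4$, and the number is $3$; and if $C'$ meets $T_k$ in exactly two connecting edges then $|U|=2$, $G_n[U]$ is a single edge of $K_4$, and the number is $1$.

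The one point that demands care is the parity behind which hypotheses actually occur. Applying part 1 to $M$ itself -- its edges incident to $T_k$ form a cover in $\PC(T_k,G_n)$ -- shows that every $M\in\PM(G_n)$ has an even number, hence $0$ or $2$, of connecting edges at $T_k$; since the connecting edges of $C'$ at $T_k$ are among those of $M$, a $C'$ meeting $T_k$ in a single connecting edge extends to no perfect matching at all, consistently with $|U|=3$ and $\PM(K_3)=\emptyset$. Thus the two cases with a nonempty extension set are exactly ``no connecting edge'' (count $3$) and ``two connecting edges'' (count $1$), which is the substance of part 2. Beyond setting this up, the argument is routine definition-chasing; the main obstacle, such as it is, is the uniform verification of the structural facts about connecting edges at the exceptional corner and chain-end gadgets.
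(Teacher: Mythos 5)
Your proof is correct and follows essentially the same route as the paper's: part 1 is the same parity count of incidences at the four vertices of $T_k$, and part 2 is the paper's forced-extension case analysis, merely repackaged as the bijection $M\mapsto M\setminus C'$ onto $\PM(G_n[U])$, so that the counts $3$, $1$, $0$ come from $\PM(K_4)$, $\PM(K_2)$, $\PM(K_3)$, with the added benefit of not assuming $T_k$ has exactly three connecting edges. You are also right that the case actually needed later (and the one the paper's own proof treats) is that of \emph{two} edges of $C'$ meeting $T_k$; the statement's ``exactly one edge'' is a slip, since a single such edge leaves three vertices of $T_k$ to be matched internally, which is impossible.
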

\begin{proof}
For convenience, in this proof we will identify subsets edges with assignments of zeros and ones to the edges as described above.
\begin{enumerate}
\item Assume to the contrary that  the number of edges in $C$ connecting it to other gadgets, denoted by $l$, is odd. Recalling that $T_k$ contains $4$ vertices, we have that the number of vertices in $T_k$ connected with each other by edges from $C$ is exactly $4-l$, which is also an odd number. Since $C$ is a perfect cover, any vertex is connected by edges in $C$ to exactly one other vertex, meaning that the number of vertices in $T_k$ connected with each other by edges from $C$ is even (they come in pairs). This is a contradiction.  
\item Let $C' \in \PC(V_n\setminus T_k,G_n)$, denote the edges connecting between $T_k$ and $V\setminus T_k$  by $e_{o,1},e_{o,2},$ and $e_{o,3}$.
\begin{itemize}
\item If $\abs{\mathset{e_{o,1},e_{o,2},e_{o,3}}\cap C'}=2$, without loss of generality, $C'(e_{o,1})=C'(e_{o,2})=1$ and $C'(e_{o,3})=0$. We will now extend $C'$ to a perfect matching of $G_n$, and we will see that there is only one way to do it. As in the previous part of the proof, all the other edges connected to $T_{k,1}$ and $T_{k,2}$ must be assigned with $0$ in $C'$ (otherwise $T_{k,1}$ or $T_{k,2}$ would be covered with more than one edge), meaning that $T_{k,4}$ must be connected in with $T_{k,3}$ in $C'$, that is $C'(e_{3,4})=1$. Now each vertex in $T_k$ is covered by exactly one edge from $C'$ and the remaining edges must be assigned with $0$. It is easy to see now that the extended $C'$ is indeed a perfect matching of $G_n$ as any vertex is covered by exactly one edge from $C'$.
\item   If $\abs{\mathset{e_{o,1},e_{o,2},e_{o,3}}\cap C'}=0$ we have three options to set the edge covering $T_{k,4}$ each defines another extension of $C'$. If we set $C'(e_{1,4})=1$, since $T_{k,2}$ and $T_{k,3}$ are not covered by edges coming outside of $T_k$, we must have $C'(e_{2,3})=1$ which force us to assign $0$ to the remaining edges. The other cases where we set $C'(e_{2,4})=1$ and $C'(e_{3,4})=1$ are proven the same.
\end{itemize}
See a visualization of all of the described cases in Figure \ref{fig:K4lemma2}.
\begin{figure}
 \centering
  \includegraphics[width=160mm, scale=0.5]{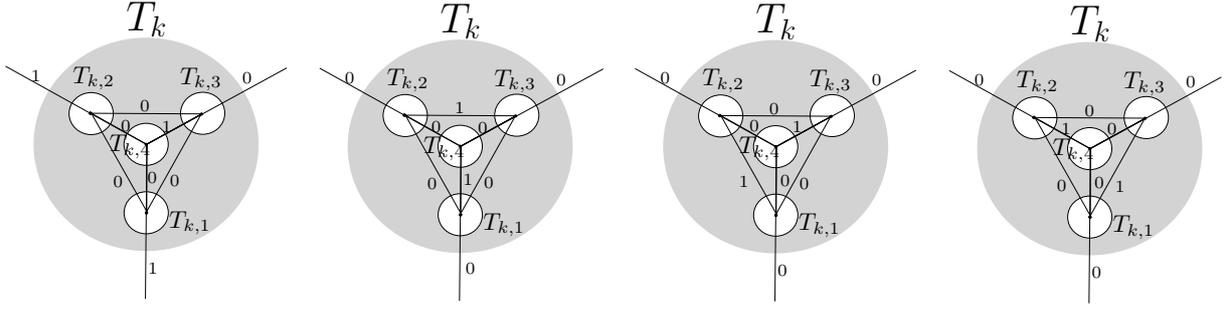}
  \caption{The possible extensions of a perfect covering to $T_k$. }
  \label{fig:K4lemma2}
\end{figure}

\end{enumerate}
\end{proof}

\begin{lemma}
\label{lem:PerToPC}
Perfect covers of $P_n$ in $G_n$ correspond bijectively with elements in $B_{n,n}(\Omega(A_L))$. That is,
\begin{align*}
\abs{\PC(P_n,G_n)}=\abs{B_{n,n}(\Omega(A_L))}.
\end{align*}
\end{lemma}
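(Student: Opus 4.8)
The plan is to produce an explicit bijection $\Phi\colon\PC(P_n,G_n)\to B_{n,n}(\Omega(A_L))$. First I would observe that, since every edge of a cover $C\in\PC(P_n,G_n)$ must meet $P_n$, the cover uses only edges of $E_P\cup E_{P,S}$ (never an internal edge of a $T$-gadget), and each such edge is of one of three kinds: it lies inside a single $IO$-gadget, $\{I_k,O_k\}$ (covering both its vertices, meaning cell $k$ is fixed); it joins the $O$-vertex of one gadget to the $I$-vertex of the gadget one step east or one step north in the array layout (meaning that cell makes the corresponding move); or it joins a boundary $O$- or $I$-vertex to a $T$-gadget (meaning either a top/right boundary cell moves out of the box, or a bottom/left boundary cell is the image of a cell outside the box). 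As $C$ is a matching covering $P_n$, each $O_k$ meets exactly one edge of $C$, and I define $\Phi(C)=P_C\in A_L^{[n]\times[n]}$ by reading that edge: $P_C(k)=(0,0)$ if it is an $IO$-edge, $(1,0)$ if it is an east step or a boundary edge exiting the right side, and $(0,1)$ if it is a north step or a boundary edge exiting the top, with the two boundary options at the top-right corner gadget being genuinely distinct edges.

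Next I would verify $P_C\in B_{n,n}(\Omega(A_L))$ by building a global configuration that restricts to it. The ``successor'' map $k\mapsto k+P_C(k)$, restricted to the $k$ whose successor stays in $[n]\times[n]$, is injective, since two cells with a common successor $m$ would force two $C$-edges to meet $I_m$; hence $[n]\times[n]$ decomposes into isolated fixed points (the cells with $P_C(k)=(0,0)$) and pairwise non-crossing monotone north/east directed paths, and a short argument shows each such path enters the box through the bottom or left boundary and exits through the top or right boundary. I would then extend $\omega\eqdef P_C$ to $\Z^2$ by prolonging every path's exit end by a straight ray going north or east (according to the side through which it left) and its entrance end by a straight ray arriving from the south or west, declaring all remaining cells fixed. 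A short check shows these exterior rays occupy pairwise disjoint regions of $\Z^2\setminus([n]\times[n])$, so the extended path family is still non-crossing; therefore $m\mapsto m+\omega(m)$ is a bijection of $\Z^2$ with every displacement in $A_L$, i.e.\ $\omega\in\Omega(A_L)$, and $\omega|_{[n]\times[n]}=P_C$, so $P_C\in B_{n,n}(\Omega(A_L))$ and $\Phi$ is well defined.

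Finally I would prove $\Phi$ is a bijection. For injectivity, $\Phi(C)$ pins down the edge at every $O_k$, hence also the edge at every $I_k$: $I_k$ is covered by $\{I_k,O_k\}$ exactly when $P_C(k)=(0,0)$, by the horizontal (resp.\ vertical) edge from its in-box west (resp.\ south) neighbour exactly when that neighbour's value is east (resp.\ north), and otherwise by the unique boundary edge available at $I_k$; since every edge of $C$ contains some $O_k$ or $I_k$, this recovers $C$. For surjectivity, given $P=\omega|_{[n]\times[n]}$ with $\omega\in\Omega(A_L)$, I would build $C$ by the same recipe and use bijectivity of $m\mapsto m+\omega(m)$ --- which says that for each cell $u$ exactly one of ``$u$ fixed'', ``west neighbour steps east to $u$'', ``south neighbour steps north to $u$'' holds --- to conclude every $I_k$ is covered exactly once, by an in-box edge when the relevant neighbour lies in $[n]\times[n]$ and by a boundary edge (which exists, since $u$ is then forced onto the bottom or left boundary) otherwise; together with the tautological covering of the $O_k$'s this gives $C\in\PC(P_n,G_n)$ with $\Phi(C)=P$. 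The step I expect to be the main obstacle is the boundary bookkeeping --- verifying that the $P_n$--$T$ edges provide exactly one ``slot'' for each way a monotone path can cross each side of the square, enough that every admissible pattern is hit but with none spurious that would violate injectivity --- which requires a careful treatment of the four corner gadgets (where one $IO$-gadget carries two boundary edges) and of the match between the $T$-gadget chain indexing and the four sides. The global-extension argument in the second paragraph is the conceptual core, but once one adopts the non-crossing monotone-path picture of Example~\ref{ex:GenToZd} it is routine.
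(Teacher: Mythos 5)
Your construction is correct and follows essentially the same route as the paper's proof: the same dictionary between the unique edge at each $O$-vertex (the $IO$-edge, an east/north in-box edge, or a boundary edge to a $T$-gadget) and the symbols $(0,0),(1,0),(0,1)$, the same verification that the resulting pattern is injective with no in-box collisions, and the same explicit inverse construction for bijectivity, with the boundary/corner bookkeeping treated at about the same level of detail as in the paper. The only real difference is that where the paper delegates well-definedness (global admissibility of the pattern) to Proposition \ref{prop:compli2}, reducing it to injectivity of $f_v$ together with coverage of $[1,n-1]\times[1,n-1]$, you re-derive that step directly through the monotone north/east path decomposition and the straight-ray extension to $\Z^2$, which is in substance the same extension argument that underlies that proposition.
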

\begin{proof}
The proof of this lemma follows the same idea as the proof of Theorem \ref{th:GenCan}. Given a perfect cover, $C\in \PC(P_n,G_n) $ we construct a function $\pi_C$ from the $n \times n$ square lattice (call it $Q_n$), restricted by $A_L$, which can be extended to a restricted permutation of $\mathbb{Z}^2$.  Recall that in the construction $G_n$, we had $n^2$  $IO$ gadgets positioned on the lattice points in $[n]\times [n]$, enumerated by $1,2,\dots,n^2$. For an index $j\in [n]\times [n]$, there exists a unique index,  $1\leq J \leq n^2$, such that the gadget $IO_J$ sits in the coordinate $j\in [n]\times [n]$. We identify the index $j\in [n]\times [n]$ with its matching index $J\in [1,n^2]$. Let $j$ be a vertex in the $n\times n$ square lattice, The function $\pi_C(j)$ will be determined by the unique edge in $C$ which covers the vertex $O_{J}$, denoted by $e_J$. 
\begin{itemize}
\item If $e_J$ connects $O_J$ to  the $I$ vertex of its right neighbour in $G_n$, $J+1$, (or to the $T$ gadget on its right in case that $j$ is on the right edge of the square) we set $\pi_C(j)=j+ (1,0)$.
\item If $e_J$ connects $O_J$ to  the $I$ vertex of its neighbour from above in $G_n$, $J-n$, (or to the $T$ gadget above it in case that $j$ is on the upper edge of the square) we set $\pi_C(j)=j+ (0,1)$.
\item Otherwise, $O_J$ is connected to $I_J$ and we set $\pi_C(j)=j$.
\end{itemize}
See Figure \ref{fig:PerToPC} for a visualization.

By Proposition \ref{prop:compli2}, in order to show that $\pi_C$ can be extended to a restricted permutation $\mathbb{Z}^2$, it is sufficient to show that the function defined is injective and that its image covers the $(n-1)\times (n-1)$ square obtained when we remove the lower and left edges of the $n\times n$ square, that is, $Q_{n}^{int}\eqdef [1,n-1]\times[1,n-1]$.

Indeed, assume to the contrary that there exist two distinct lattice points, $j_1$ and $j_2$, such that $\pi_C(j_1)=\pi_C(j_2)=j_3$. From the construction of $\pi_C$ it follows that $O_{J_1}$ and $O_{J_2}$ are connected to $I_{J_3}$, by edges from $C$. That is a contradiction, as $C$ is a perfect cover. For an index $j$ in $Q_{n}^{int}$, note that $I_J$ is covered by exactly one edge in $C$, as $C$ is a perfect cover of $G_n$, denote it by $\mathset{I_j,V}$. Recall that $j\in Q_{n}^{int}$ so by the construction of $G_n$, it can only be connected to $O$-type vertices of the $n\times n$ square. Therefore $V=O_I$ for some $I$ and $\pi_C(i)=j$.

In order to complete the proof it remains to show that the map $C\to\pi_C$ is a bijection. Let $\pi\in B_{n,n}(\Omega(A_l))$ be a function defined on $Q_{n}$. For any index $1\leq J \leq n^2$, let $j\in [n]\times [n]$ be its matching index (as described at the beginning of the proof). Denote $j_\pi\eqdef \pi(j)$ and let $J_\pi$ be the index identified with $j_\pi$.  
By Proposition \ref{prop:compli2}, $\pi$ is injective and its image covers $Q_{n}^{int}$. For an index on the upper and right edges of $Q_{n}$,  $i$,  such that $\pi(i)\notin Q_n$ let $k(i)$ the index of the $T$-gadget placed in $\pi(i)$. For an index $j\in Q_n\setminus \ima(\pi)$, not covered by the image of $\pi$, let $k(j)$ be the index of the unique $T$-gadget connected to $I_j$ (such exists as $j$ can only be in $Q_n\setminus Q_{n}^{int}$). 
First define $C_{\pi,P}$ and $C_{\pi,P,U}$ by
\begin{align*}
C_{\pi,P}\eqdef\bigcup_{j\in Q_n \text{ s.t. } \pi(j)\in Q_n}\mathset{O_{J},I_{J\pi}}
\end{align*} 
and
\begin{align*}
C_{\pi,P,U}\eqdef\parenv{\bigcup_{j\in Q_n \text{ s.t. } \pi(j)\notin Q_n}\mathset{O_{J},T_{k(j),1}}}\bigcup \parenv{\bigcup_{i\in Q_n \setminus\ima(\pi(Q_{n}))}\mathset{I_{I},T_{k(i),1}}}.
\end{align*} 
Finally, define 
\begin{align*}
C_\pi\eqdef C_{\pi,P}\cup C_{\pi,P,U}.
\end{align*}
It is easy to verify that $C$ is a perfect cover of $P_n$ as $\pi$ is injective and covers $Q_{n}^{int}$ with its image, and that 
\begin{align*}
C_{\pi_C}=C,
\end{align*}
meaning that the map $C\to\pi_C$ is a bijection. 
\end{proof}
\begin{figure}
 \centering
  \includegraphics[width=160mm, scale=0.5]{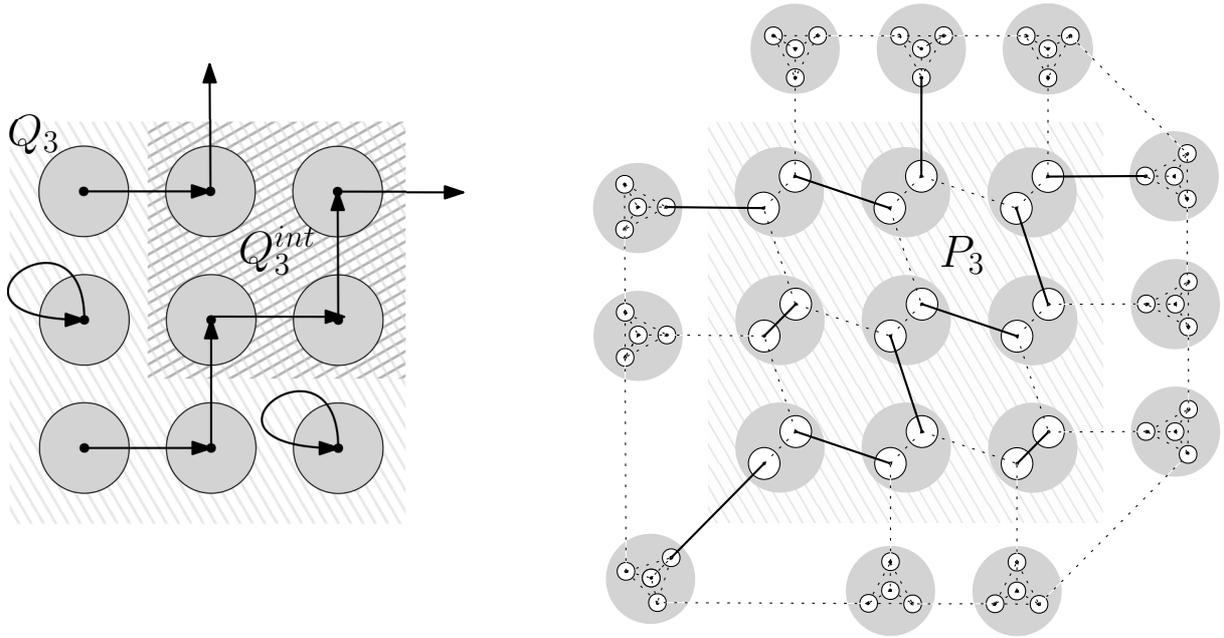}
  \caption{The correspondence between an element in $B_{3,3}(\Omega(A_L))$ and a perfect cover of $P_3$ in $G_3$. }
  \label{fig:PerToPC}
\end{figure}

\begin{lemma}
\label{lem:CovToPM}
Let $C\in \PC(P_n,G)$, then
\begin{enumerate}
\item Any two perfect matchings $M',M''\in \PM(G_n)$ containing $C$,  agree on the edges connecting the $T$-gadgets with other gadgets (when we refer to $M'$ and $M''$ as indicator functions).
\item There are exactly $3^{z(C)}$ perfect matchings of $G_n$ containing $M'$, where $z(C)$ is the number of $T$-gadgets with all of their incoming edges assigned with $0$ in perfect matchings containing $C$.
\end{enumerate}
\end{lemma}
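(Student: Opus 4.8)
\emph{Proof idea.} The plan is to show that a perfect cover $C\in\PC(P_n,G)$ leaves essentially no freedom in a perfect matching extending it, apart from the interiors of the $K_4$-gadgets that end up meeting no external edge. The first, purely local, observation is that the edges incident to $P_n$ are forced: if $M\in\PM(G_n)$ with $M\supseteq C$, then $M\cap(E_P\cup E_{P,S})=C$, because every edge of $E_P\cup E_{P,S}$ meets $P_n$ and each vertex of $P_n$ already lies on a unique edge of $C\subseteq M$, so any further such edge of $M$ would give some $P_n$-vertex two incident matching edges. In particular the crossing edges $E_{P,S}$ used by any $M\supseteq C$ are exactly those of $C$, which is half of part~(1).

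It remains to pin down the chain edges $\{T_{k,3},T_{k+1,2}\}$, and this I would do by propagating a parity constraint along the path $T_1-T_2-\cdots-T_{4n-1}$ of $T$-gadgets. For each $k$ let $c_k\in\{0,1\}$ be the number of crossing edges at $T_{k,1}$ lying in $C$ (at most one, since $C$ is a perfect cover; and, by the previous paragraph, equal to the number of external edges of $M$ at $T_{k,1}$). Fixing $M\supseteq C$, let $b_k\in\{0,1\}$ record whether $M$ uses the chain edge between $T_k$ and $T_{k+1}$. Applying Lemma~\ref{lem:K4}(1) to the perfect cover of $T_k$ in $G_n$ cut out by the edges of $M$ meeting $T_k$ shows that each $T_k$ is met by an even number of external matching edges, i.e. $c_1+b_1$, $c_k+b_{k-1}+b_k$ $(2\le k\le 4n-2)$, and $c_{4n-1}+b_{4n-2}$ are all even. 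Since the $c_k$ depend only on $C$, the first equation fixes $b_1$, the middle ones fix $b_2,\dots,b_{4n-2}$ successively, and the last equation is automatically consistent: summing all external incidences counts every crossing edge of $C$ once and every used chain edge twice, so the total has the parity of the number of crossing edges of $C$, which is even because $|P_n|$ is even. Thus all crossing and chain edges used are determined by $C$, proving part~(1); and the same construction produces at least one extension, so every such $C$ admits one.

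For part~(2) we now know the external-edge pattern of every $T$-gadget is common to all $M\supseteq C$; call a $T$-gadget \emph{free} if it meets none of these edges, and let $z(C)$ be the number of free gadgets -- precisely the quantity in the statement. A matching $M\supseteq C$ is then determined by its restriction to the internal $K_4$-edges of the $T$-gadgets, and since distinct $T$-gadgets are vertex-disjoint these restrictions may be chosen independently: a non-free gadget meets two external edges, forcing its two remaining vertices to be matched to each other (one way), while a free gadget may be completed by any of the three perfect matchings of $K_4$. A routine check confirms that every such combined choice is in fact a perfect matching of $G_n$, so the number of perfect matchings of $G_n$ containing $C$ equals $3^{z(C)}$.

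I expect the real work to lie in part~(1), specifically in correctly identifying the external incidences of each $T$-gadget: one must account for the three corner $IO$-gadgets $IO_1$, $IO_n$, $IO_{n^2}$ with their additional crossing edges, and use that the $T$-gadgets are strung on a path rather than a cycle, so the propagation has a well-defined start and its terminal equation is a mere consistency check (guaranteed by the evenness of $|P_n|$) rather than an obstruction. The local rigidity of the first paragraph and the independence count of the third are then essentially bookkeeping.
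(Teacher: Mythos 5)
Your proposal is correct and follows essentially the same route as the paper: the edges meeting $P_n$ are forced to coincide with $C$, the chain edges between consecutive $T$-gadgets are then determined by propagating the even-incidence condition of Lemma~\ref{lem:K4}(1) along the chain (the paper phrases this as an induction on the chain index, spelling out the resulting interval pattern), and the count $3^{z(C)}$ comes from Lemma~\ref{lem:K4}(2) applied independently to the vertex-disjoint $T$-gadgets. The only cosmetic difference is how the evenness needed for the final consistency is justified: you use the parity of $|P_n|$, while the paper counts vertices mapped outside the square against vertices without a pre-image; both are valid.
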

\begin{proof}
\begin{enumerate}
\item Let $M$ be a perfect matching containing $C$. We will show that the assignment of edges connecting the $T$-gadgets with other gadgets in $M$ is uniquely determined by $C$. First, we note that the assignment of edges connecting $T$-gadgets and $IO$-gadgets is determined. For an edge connecting $T$ and $IO$ gadgets, if $e\in C\subset M$ we have that $M(e)=1$. Otherwise $M(e)=0$ as the vertex from $P_n$ in $e$ is already covered by another edge $e'\in C\subset M$.

Let $1\leq i_1 < i_2 < \cdots < i_p \leq 4n-1$ be the indices of the $T$-gadgets connected to $IO$-gadgets in $M$ (which are determined by $C$, as described above). We saw in the proof of Lemma \ref{lem:PerToPC} that each perfect cover of $P_n$ represents an injective function from the $n\times n$ square (denoted by $Q_n$) to itself with its image covering the $(n-1)\times (n-1)$ square remaining when we remove the lower and left edges (denoted by $Q_n^{int}$).  In this representation, vertices mapped outside of the square and vertices in the square with no pre-image are represented by edges connecting the $O$-vertices and $I$-vertices with $T$-gadgets respectively. In such an injective map, the number of  of vertices mapped outside of the square and the number of vertices in the square with no pre-image must be equal. Thus, the number of edges connecting the $O$-vertices and $I$-vertices with $T$-gadgets are equal. We conclude that $p$ is even, since it is the sum of those equal numbers.

For $1\leq l \leq 4n-2$, denote the edge connecting $T_{l}$ with $T_{l+1}$ by $e_l$. We claim that for all $1\leq l \leq 4n-2$, we have that $M(e_l)=1$ if there exists $1\leq j\leq \frac{p}{2}$ such that $i_{2j-1}\leq l < i_{2j}$ and  $M(e_l)=0$ otherwise. We will prove this claim by induction on $l$.

For $l=1$, note that $T_1$ is connected only two gadgets, $T_2$ and $IO_1$. By Lemma \ref{lem:K4}, the number of edges connecting $T_1$ with other gadgets assigned with $1$ must by even. Thus, $M(e_1)=1$ if and only if $i_1=1$. In case that $i_1=1$, we have $i_1 \leq 1 \leq i_2$ and  indeed $M(e_1)=1$. Otherwise, $1=l<i_1$ and $M(e_1)=0$. Assume that the claim is true for $1\leq l-1\leq 4n-3 $. We split into cases:
\begin{itemize}
\item If $l<i_1$, $T_l$ is not connected to an $IO$-gadget in $M$ and by the induction $M(e_{l-1})=0$. By Lemma \ref{lem:K4}, $M(e_l)=0$ as the number of edges in $M$ connecting to $T_1$ must be even.
\item If $l>i_p$, $T_l$ is not connected to an $IO$-gadget in $M$ and by the induction $M(e_{l-1})=0$. By Lemma \ref{lem:K4}, $M(e_l)=0$ as the number of edges in $M$ connecting to $T_1$ must be even.
\item If $l=i_{2j-1}$ for some $1\leq j \leq \frac{p}{2}$, $T_l$ is connected to an $IO$-gadget in $M$ and  by the induction assumption $M(e_{l-1})=0$. Again, by Lemma \ref{lem:K4}, $M(e_l)=1$ as the number of edges in $M$ connecting to $T_1$ must be even.
\item If $i_{2j-1}<l<i_{2j}$ for some $1\leq j \leq \frac{p}{2}$, $T_l$ is not connected to any $IO$-gadget and by the induction $M(e_{l-1})=1$. Hence, similarly to the previous case, $M(e_l)=1$.
\item If $l=i_{2j}$ for some $1\leq j \leq \frac{p}{2}-1$, $T_l$ is connected to an $IO$-gadget in $M$ and  by the induction assumption $M(e_{l-1})=1$. Thus $M(e_l)=1$.
\item Otherwise $i_{2j}<l<i_{2j+1}$ for some $1\leq j < \frac{p}{2}$, meaning that $T_l$ is not connected to any $IO$-gadget. By the induction step, $M(e_{l-1})=0$ and therefore $M(e_l)=0$ as well.
\end{itemize}   
\item  By the first part of this lemma, perfect matchings containing $C$ can only be different in the inner edges of the $T$-gadgets, this explains why $z(C)$ is well defined. By Lemma \ref{lem:K4}, for a $T$ gadgets with $2$ incoming edges assigned with 1, there is only one way to choose inner edges such that each of its vertices is covered by exactly one edge. For a $T$ gadgets with no incoming edges assigned with 1, there is three options to choose inner edges having any of its vertices covered by exactly one edge. Thus, we deduce that there are exactly $3^{z(C)}$ perfect matchings containing $C$.
\end{enumerate}
\end{proof}
\begin{theorem}
\label{th:PMcount}
For any $n\in\N$,
\[ \abs{B_{n,n}(\Omega(A_L))}=\PM(G_n,W), \]
where $W:E_n\to \R$ is the weighting function given by
\[W(e)= \begin{cases}
\frac{1}{3} & \text{if }e\in \mathset{\mathset{T_{k,i},T_{k,j}}: 1\leq k \leq 4n-1 \text{ and }1\leq i < j \leq 3 }\\
1 & \text{otherwise.}
\end{cases} \]
\end{theorem}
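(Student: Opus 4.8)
The plan is to evaluate $\PM(G_n,W)=\sum_{M\in\PM(G_n)}\prod_{e\in M}W(e)$ by partitioning the perfect matchings of $G_n$ according to the perfect cover of $P_n$ they induce, and then feeding this partition into Lemmas \ref{lem:K4}, \ref{lem:PerToPC}, \ref{lem:CovToPM}. First I would note that every $M\in\PM(G_n)$ gives a perfect cover $C_M\eqdef\mathset{e\in M : e\cap P_n\neq\emptyset}$ of $P_n$: each vertex of $P_n$ lies in exactly one edge of $M$, those edges pairwise avoid sharing a vertex, and each of them meets $P_n$ by construction, so $C_M\in\PC(P_n,G_n)$. Conversely, if $C\in\PC(P_n,G_n)$ and $C\subseteq M\in\PM(G_n)$, then $C$ already covers all of $P_n$, so $C_M=C$; hence the sets $\mathcal{M}_C\eqdef\mathset{M\in\PM(G_n):C\subseteq M}$, as $C$ ranges over $\PC(P_n,G_n)$, partition $\PM(G_n)$, and by Lemma \ref{lem:CovToPM}(2) each $\mathcal{M}_C$ is nonempty with $\abs{\mathcal{M}_C}=3^{z(C)}$.

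Next I would compute $\sum_{M\in\mathcal{M}_C}W(M)$ for a fixed $C$. Every edge of $C$ meets $P_n$, hence is not an inner edge of a $T$-gadget, hence has weight $1$; so $W(M)=\prod_{e\in M\setminus C}W(e)$, and $M\setminus C$ consists only of edges with both endpoints in $U_n$, namely chain edges $\mathset{T_{k,3},T_{k+1,2}}$ (weight $1$) and inner edges of the $K_4$-gadgets. By Lemma \ref{lem:CovToPM}(1) the chain edges used by $M$, as well as the $T$-to-$IO$ edges, are identical for all $M\in\mathcal{M}_C$; the only variation is the inner perfect matching chosen on each $T$-gadget, which must cover $T_{k,4}$ together with whichever of $T_{k,1},T_{k,2},T_{k,3}$ are left uncovered by the (now fixed) external edges. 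For a $T$-gadget carrying two external edges — which by Lemma \ref{lem:K4}(1) meet it at two of $T_{k,1},T_{k,2},T_{k,3}$ — the inner matching is forced to be the single edge joining the third of these to $T_{k,4}$, whose weight is $1$. For each of the $z(C)$ "free" $T$-gadgets with no external edges, the inner matching is one of the three perfect matchings of $K_4$, namely $\mathset{\mathset{T_{k,1},T_{k,2}},\mathset{T_{k,3},T_{k,4}}}$, $\mathset{\mathset{T_{k,1},T_{k,3}},\mathset{T_{k,2},T_{k,4}}}$, or $\mathset{\mathset{T_{k,1},T_{k,4}},\mathset{T_{k,2},T_{k,3}}}$, and each of these uses exactly one weight-$\tfrac13$ edge (among $\mathset{T_{k,1},T_{k,2}},\mathset{T_{k,1},T_{k,3}},\mathset{T_{k,2},T_{k,3}}$) and one weight-$1$ edge (among the three meeting $T_{k,4}$), so it contributes a factor $\tfrac13$. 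Hence $W(M)=\parenv{\tfrac13}^{z(C)}$ for every $M\in\mathcal{M}_C$, and $\sum_{M\in\mathcal{M}_C}W(M)=3^{z(C)}\cdot\parenv{\tfrac13}^{z(C)}=1$.

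Summing over $C$ then gives $\PM(G_n,W)=\sum_{C\in\PC(P_n,G_n)}\sum_{M\in\mathcal{M}_C}W(M)=\abs{\PC(P_n,G_n)}=\abs{B_{n,n}(\Omega(A_L))}$, the last equality by Lemma \ref{lem:PerToPC}. The substantive content is not this bookkeeping but the two facts already isolated into the lemmas — especially Lemma \ref{lem:CovToPM}(1), that the chain edges within a fiber $\mathcal{M}_C$ are globally determined by $C$ via the induction along the $T$-chain; granted that, the weights were reverse-engineered so that the $3^{z(C)}$ completions of each cover carry total weight exactly $1$, collapsing the weighted matching count onto the number of perfect covers of $P_n$, hence onto $\abs{B_{n,n}(\Omega(A_L))}$. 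The only place I would double-check carefully is the claim that each of the three $K_4$-completions of a free gadget splits into one light and one heavy edge (it does, since the three light edges span the triangle $\mathset{T_{k,1},T_{k,2},T_{k,3}}$ and every perfect matching of $K_4$ contains exactly one of them).
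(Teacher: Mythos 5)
Your proposal is correct and follows essentially the same route as the paper: decompose $\PM(G_n,W)$ over the perfect covers of $P_n$ induced by each matching, use Lemma \ref{lem:CovToPM} to get $3^{z(C)}$ completions each of weight $\parenv{\tfrac13}^{z(C)}$ (via the observation that a free $T$-gadget contributes exactly one weight-$\tfrac13$ triangle edge), and finish with Lemma \ref{lem:PerToPC}. The only difference is presentational — you verify the partition of $\PM(G_n)$ into fibers $\mathcal{M}_C$ explicitly, which the paper asserts in passing.
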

\begin{proof}
Let $C$ be a perfect cover of $P_n$ and $M\subset E_n$  be a perfect matching of $G_n$ containing $C$. In the proof of Lemma \ref{lem:K4} we see that for any gadget $T_k$ with no incoming edges assigned with $1$ in $M$, exactly one edge in  $\mathset{\mathset{T_{k,i},T_{k,j}}: 1\leq i < j \leq 3 }$ is assigned with $1$. It was also shown that for a gadget $T_k$,  with $2$ incoming edges assigned with $1$ in $M$, all of the edges in  $\mathset{\mathset{T_{k,i},T_{k,j}}: 1\leq i < j \leq 3 }$ assigned with $0$. Therefore we have,
\[ \prod_{e\in M}W(e)=\parenv{\frac{1}{3}}^{z(C)},  \]
where $z(M)$ is the number of $T$-gadgets with all of their incoming edges assigned with $0$ in perfect matchings containing $C$ (which is well defined by Lemma \ref{lem:CovToPM}). By Lemma \ref{lem:PerToPC}, any perfect cover $C\in \PC(P_n,G_n)$ has a unique $\pi\in B_{n,n}(\Omega(A_L))$ such that $C=C_\pi$. By Lemma \ref{lem:CovToPM},  $C=C_\pi$  is contained in exactly  $3^{z(C)}$ distinct perfect matchings of $G_n$. Clearly, every perfect matching of $G_n$ contains a perfect cover of $P_n$. Thus we have,
\begin{align*}
\PM(G_n,W)&=\sum_{M\in \PM(G_n)} \prod_{e\in M}W(e)\\
&=\sum_{C \in \PC(P_n,G_n)\text{ }}  \sum_{\underset{C\subseteq M}{M\in \PM(G_n)}} \prod_{e\in M}W(e)\\
&=\sum_{\pi\in B_{n,n}(\Omega(A_L))\text{ }}  \sum_{\underset{C_\pi \subseteq M}{M\in \PM(G_n)}} \prod_{e\in M}W(e)\\
&=\sum_{\pi\in B_{n,n}(\Omega(A_L))\text{ }}  \sum_{\underset{C_\pi \subseteq M}{M\in \PM(G_n)}} \parenv{\frac{1}{3}}^{z(C_\pi)}\\
&=\sum_{\pi\in B_{n,n}(\Omega(A_L))}  3^{z(C_\pi)} \cdot \parenv{\frac{1}{3}}^{z(C_\pi)}=\abs{B_{n,n}(\Omega(A_L)).}
\end{align*}
\end{proof}
\begin{theorem} There exists a polynomial-time algorithm for computing $\abs{B_{n,n}(\Omega(A_L))}$.
\label{th:Alg} 
\end{theorem}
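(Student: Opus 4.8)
The plan is to reduce the counting of $B_{n,n}(\Omega(A_L))$ to a single determinant evaluation, by feeding the graph $G_n$ of Theorem~\ref{th:PMcount} into the Kasteleyn--Pfaffian machinery of Propositions~\ref{prop:PaffOr} and \ref{prop:Kas}. By Theorem~\ref{th:PMcount} we have $\abs{B_{n,n}(\Omega(A_L))} = \PM(G_n,W)$, so it suffices to evaluate $\PM(G_n,W)$ in time polynomial in $n$.

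First I would note the routine facts that $G_n$ has $\abs{V_n} = O(n^2)$ vertices and $O(n^2)$ edges, that it is a simple graph, and that both $G_n$ and the weighting $W$ are given by the explicit description preceding Theorem~\ref{th:PMcount}, hence can be written down in polynomial time. The step requiring genuine care — and the one I expect to be the main obstacle — is verifying that $G_n$ is planar, since both propositions rely on it. This should follow directly from the construction: the $n^2$ $IO$-gadgets lie on the grid $[n]\times[n]$, each $O$-vertex being joined only to the $I$-vertices of its right and upper grid neighbours, which is a crossing-free grid-like pattern; and the $4n-1$ $T$-gadgets, each a copy of the planar graph $K_4$, are strung into one chain routed through the unbounded face around the outside of the grid, each joined by a single edge (two edges at the three exceptional corner gadgets) to the nearest boundary $IO$-gadget. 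Realising this as in Figure~\ref{fig:L-PM} yields a plane embedding, so the hypotheses of Proposition~\ref{prop:PaffOr} are met.

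Granting planarity, I would run the polynomial-time algorithm of Proposition~\ref{prop:PaffOr} to obtain a Pfaffian orientation $G_n'=(V_n,E_n')$ of $G_n$, form the corresponding real skew-symmetric Kasteleyn matrix $A_{G_n}$ of Proposition~\ref{prop:Kas} (a polynomial-size matrix with entries in $\mathset{0,\pm 1,\pm \tfrac{1}{3}}$), and combine Proposition~\ref{prop:Kas} with the identity $\parenv{\Pf(A)}^2=\det(A)$ for skew-symmetric $A$ to get
\[ \parenv{\abs{B_{n,n}(\Omega(A_L))}}^2 = \parenv{\PM(G_n,W)}^2 = \parenv{\Pf(A_{G_n})}^2 = \det\parenv{A_{G_n}}. \]
Since $\abs{B_{n,n}(\Omega(A_L))}$ is a non-negative integer, $\det(A_{G_n})$ is a perfect square and $\abs{B_{n,n}(\Omega(A_L))} = \sqrt{\det(A_{G_n})}$. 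The final step is the standard observation that the determinant of an $O(n^2)\times O(n^2)$ matrix with bounded-bit rational entries is computable exactly in polynomial time (for instance, clear denominators and apply a fraction-free integer determinant algorithm), after which extracting the non-negative integer square root is immediate. Chaining these steps gives the claimed algorithm; every ingredient other than the planarity verification is a direct invocation of a result already in hand.
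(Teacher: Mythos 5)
Your proposal follows essentially the same route as the paper's own proof: invoke Theorem~\ref{th:PMcount} to replace $\abs{B_{n,n}(\Omega(A_L))}$ by $\PM(G_n,W)$, obtain a Pfaffian orientation of the planar graph $G_n$ via Proposition~\ref{prop:PaffOr}, build the weighted skew-symmetric Kasteleyn matrix of Proposition~\ref{prop:Kas}, and recover the count as the square root of its determinant, computed in polynomial time. Your added remarks on verifying planarity of $G_n$ and on exact rational determinant computation only make explicit what the paper leaves implicit, so the argument is correct and matches the paper's proof.
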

\begin{proof}
Let $n\in \N$, we enumerate the vertices of $G_{n}$ by $\mathset{1,2,\dots,2n^2+4(4n-1)}$. Let, $W$ be the weighting function from Theorem \ref{th:PMcount}. By Proposition \ref{prop:PaffOr}, we may apply a polynomial-time algorithm to find a Pfaffian orientation for the $G_n$ (as it is planar). Denote it by $E\subseteq V^2$. Given such an orientation we consider the $(2n^2+4(4n-1))\times (2n^2+4(4n-1))$ weighted adjacency matrix, $A_G$ defined by  
\[ A_G(i,j)=\begin{cases}
W(i,j) & \text{If } (i,j)\in E'\\
W(i,j) & \text{If } -(j,i)\in E'\\
0 & \text{otherwise.} 
\end{cases} \]
We note that by its definition, $A_G$ is skew-symmetric and thus by Proposition \ref{prop:Kas} and Theorem \ref{th:PMcount} we obtain 
\[ \abs{B_{n,n}(\Omega(A_L))}=\PM(G_n,W)=\Pf(A_G)=\sqrt{\abs{\det(A_G)}}.\]
The complexity of computing $\det(A_G)$ is $O\parenv{\parenv{2n^2+4(4n-1)}^3}=O(n^6)$ integer operations.
\end{proof}

The algorithm described above, providing a method for counting the exact number of elements in $B_{n,n}(A_L)$, may be generalized for counting the number of perfect covers of sub-graphs of any planar graphs. That is, given a locally finite planar graph $G=(V,E)$ and a finite subset of vertices, $V'$, we may use a similar approach in order to count (in polynomial-time) the number of perfect covers of $V'$ inside $G$. This may be useful when we try to give an upper bound on the entropy of an SFT. 
\begin{theorem}
\label{th:GenAlg}
Let $G=(V,E)$ be a locally finite planar graph and $V'\subseteq V$ be a finite subset of vertices with even size that can be separated from $V\setminus V'$ by a simply connected domain (in some planar representation of $G$). Then, there exists a polynomial time algorithm for computing $\abs{\PC(V',G)}$.
\end{theorem}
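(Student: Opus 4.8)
The plan is to follow, in a graph-independent form, the construction behind Theorems~\ref{th:PMcount} and~\ref{th:Alg}: starting from $G$ and $V'$ I would build a \emph{finite} planar graph $G^{*}$ together with a weighting $W\colon E(G^{*})\to\R$ such that $\PM(G^{*},W)=\abs{\PC(V',G)}$, and then invoke Kasteleyn's method (Propositions~\ref{prop:PaffOr} and~\ref{prop:Kas}) to evaluate $\PM(G^{*},W)$ in polynomial time through a Pfaffian/determinant, exactly as in the proof of Theorem~\ref{th:Alg}.

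The graph $G^{*}$ is obtained as follows. Fix a planar drawing of $G$ and a simply connected domain $U$ with $V'\subseteq U$ and $(V\setminus V')\cap U=\emptyset$, as provided by the hypothesis; possibly after modifying the drawing near $\partial U$ we may assume that every edge of $G$ joining $V'$ to $V\setminus V'$ crosses the Jordan curve $\partial U$ exactly once. Let $e_{1},\dots,e_{D}$ be these ``dangling'' edges, listed in the cyclic order of their crossing points on $\partial U$. Then $G^{*}$ consists of (i) a copy of the induced subgraph $G[V']$, drawn inside $U$; (ii) $D$ disjoint copies $T_{1},\dots,T_{D}$ of $K_{4}$ --- the \emph{$T$-gadgets}, with vertices $T_{k,1},\dots,T_{k,4}$ --- placed, in this same cyclic order, in the region $\R^{2}\setminus\overline{U}$ and joined consecutively by the edges $\{T_{k,3},T_{k+1,2}\}$, $1\le k\le D-1$, so that the $T$-gadgets form a \emph{path} with a gap between $T_{D}$ and $T_{1}$ (as in the $A_L$ construction); and (iii) for each $k$, the edge obtained from $e_{k}=\{v,u\}$, $v\in V'$, by moving its outer endpoint from $u$ to $T_{k,1}$. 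The subgraph $G[V']$ is planar as a subgraph of $G$, and --- here is where simple-connectedness is used --- $\R^{2}\setminus\overline{U}$ is a topological annulus, so the path of $T$-gadgets together with the rerouted stubs can be embedded there without crossings; hence $G^{*}$ is planar. Local finiteness of $G$ and finiteness of $V'$ give $D\le\sum_{v\in V'}\deg_{G}(v)<\infty$, so $G^{*}$ is finite with $\abs{V'}+4D$ vertices, an even number since $\abs{V'}$ is even.

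Next I would prove the two combinatorial lemmas generalizing Lemmas~\ref{lem:K4} and~\ref{lem:CovToPM}. A perfect cover of $V'$ in $G$ uses only edges of $G[V']$ and dangling edges, so it transports verbatim to a perfect cover of $V'$ in $G^{*}$, and vice versa; hence $\abs{\PC(V',G)}=\abs{\PC(V',G^{*})}$. Fix $C\in\PC(V',G^{*})$. Each edge of $C$ lying in $G[V']$ covers two vertices of $V'$ and each rerouted dangling edge covers exactly one, so the number of dangling edges of $C$ is congruent to $\abs{V'}$ modulo $2$, hence \emph{even}. Exactly as in Lemma~\ref{lem:K4}(1), in any perfect matching of $G^{*}$ the number of ``external'' edges incident to a given $T$-gadget (its rerouted dangling edge, if any, together with the up-to-two chain edges to $T_{k\pm1}$) is even, so it equals $0$ or $2$; running the induction along the path of $T$-gadgets as in Lemma~\ref{lem:CovToPM}(1) shows that the even parity of the dangling edges of $C$ is precisely what makes $C$ completable, and that the chain edges are then forced. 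A $T$-gadget with no external edge present is matched internally in $3$ ways, each using exactly one edge of the triangle $\{T_{k,1},T_{k,2},T_{k,3}\}$, while one with two external edges present is completed uniquely; therefore $C$ extends to exactly $3^{z(C)}$ perfect matchings of $G^{*}$, where $z(C)$ is the analogue of the quantity in Lemma~\ref{lem:CovToPM}(2) --- the number of $T$-gadgets carrying no dangling edge of $C$ and not separating two consecutive dangling edges of $C$ along the path.

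Finally I would set $W(e)=\tfrac13$ on every triangle edge $\{T_{k,i},T_{k,j}\}$ with $1\le i<j\le 3$ and $W(e)=1$ otherwise. Then each perfect matching extending a given $C$ has $W$-weight $(\tfrac13)^{z(C)}$, and summing first over the $3^{z(C)}$ extensions of $C$ and then over $C$, exactly as in Theorem~\ref{th:PMcount},
\[
\PM(G^{*},W)=\sum_{C\in\PC(V',G^{*})}3^{z(C)}\Bigl(\tfrac13\Bigr)^{z(C)}=\abs{\PC(V',G^{*})}=\abs{\PC(V',G)}.
\]
Since $G^{*}$ is planar, Proposition~\ref{prop:PaffOr} yields a Pfaffian orientation in polynomial time; forming the associated skew-symmetric weighted adjacency matrix $A_{G^{*}}$ and applying Proposition~\ref{prop:Kas} gives $\abs{\PC(V',G)}=\PM(G^{*},W)=\abs{\Pf(A_{G^{*}})}=\sqrt{\abs{\det A_{G^{*}}}}$, computable with $O\bigl((\abs{V'}+4D)^{3}\bigr)$ integer operations; since $D=O(\Delta\,\abs{V'})$ with $\Delta=\max_{v\in V'}\deg_{G}(v)$, this is polynomial in the size of the portion of $G$ incident to $V'$, which proves the theorem. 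The step I expect to be the genuine obstacle is the planarity bookkeeping for $G^{*}$: one must really exploit that $U$ is \emph{simply connected} (so that its exterior is annular) in order to route the path of $T$-gadgets and the rerouted dangling edges without crossings, and one must check that the parity/propagation induction of Lemmas~\ref{lem:K4} and~\ref{lem:CovToPM} goes through with the path (rather than cycle) boundary conventions; the remaining steps are routine adaptations of Section~\ref{A_L}.
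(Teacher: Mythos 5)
Your overall strategy (build a finite planar gadget graph, weight the triangle edges of each $K_4$ by $\tfrac13$, and evaluate the weighted perfect matching count via a Pfaffian orientation) is exactly the paper's, but your construction of the auxiliary graph has a genuine flaw: you attach one $T$-gadget \emph{per dangling edge}, whereas the paper attaches one $T$-gadget \emph{per external boundary vertex} $s_i\in S$ (the vertices of $V\setminus V'$ adjacent to $V'$), joining $T_{i,1}$ to \emph{all} neighbours of $s_i$ in $V'$. This difference matters because a perfect cover, by definition, forbids two of its edges from sharing \emph{any} vertex, including a vertex outside $V'$. In your $G^{*}$, two dangling edges of $G$ that share the same external endpoint $u$ are rerouted to two \emph{distinct} gadget vertices $T_{k,1}$, $T_{k',1}$, so both can appear simultaneously in a perfect cover of $V'$ in $G^{*}$ even though they cannot coexist in a perfect cover in $G$. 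Hence your claimed identity $\abs{\PC(V',G)}=\abs{\PC(V',G^{*})}$ fails, and your algorithm overcounts. A minimal counterexample: let $G$ be the path $v_1 - u - v_2$ with $V'=\mathset{v_1,v_2}$ (even, and separable by a disk containing $v_1,v_2$ but not $u$). Then $\abs{\PC(V',G)}=0$, since the only edges covering $v_1$ and $v_2$ both use $u$, but your $G^{*}$ has the perfect cover $\mathset{\mathset{v_1,T_{1,1}},\mathset{v_2,T_{2,1}}}$, so your procedure returns $1$.

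The repair is precisely the paper's construction: keep $G[V']$, replace each $s_i\in S$ by a single $K_4$ gadget $T_i$ whose vertex $T_{i,1}$ is joined to every $V'$-neighbour of $s_i$, and chain the gadgets by $\mathset{T_{i,3},T_{i+1,2}}$ in the cyclic (clockwise) order given by the planar embedding; since $T_{i,1}$ can be covered by at most one matching edge, the mutual exclusivity of dangling edges at $s_i$ is restored and the bijection between perfect covers of $V'$ in $G$ and in the gadget graph holds. With that change, the rest of your argument is sound and matches the paper: the parity of the number of boundary edges in a cover (even because $\abs{V'}$ is even), the forced propagation of chain edges along the \emph{path} of gadgets as in Lemmas~\ref{lem:K4} and~\ref{lem:CovToPM}, the count of $3^{z(C)}$ extensions, the weight $\tfrac13$ on the edges $\mathset{T_{k,i},T_{k,j}}$ with $1\le i<j\le 3$ giving $\PM(\hat G,W)=\abs{\PC(V',G)}$, and the polynomial-time evaluation via Propositions~\ref{prop:PaffOr} and~\ref{prop:Kas}.
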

\begin{proof}
The proof of this theorem is just a natural generalization of the proof of Theorem \ref{th:Alg}. Given $G$ and $V'$, let $S$ be the set of all vertices in $V\setminus V'$ connected to some vertex in $V'$. Since $G$ is locally finite and $V'$ is a finite set, $S$ is finite as well. Let $s_1,\dots,s_n$ be a clockwise order enumeration of $S$ (with respect to the planar representation of $G$ in which $V'$ and $V'\setminus V$ are separated by a simply connected domain). We construct a new weighted graph, $\hat{G}=(\hat{V},\hat{E},W)$, such that $\PC(V',G)=\PM(\hat{G},W)$. In the new graph, $\hat{G}$, vertices of $V'$ and edges between them remain as in the original graph. We replace any vertex $s_i\in S$ by a $T$-gadget (see Figure \ref{fig:L-gad}),$T_i$, and add edges connecting $T_{i,1}$ with all the vertices from $V'$ connected to $s_i$ in the original graph $G$. Finally, we add the edges connecting between the $T$-gadgets, 
\[ \mathset{T_{i,3},T_{i+1,2}}_{i=1}^{n-1},\]
(See Figure \ref{fig:GenralAlg}).
\begin{figure}
 \centering
  \includegraphics[width=150mm, scale=0.65]{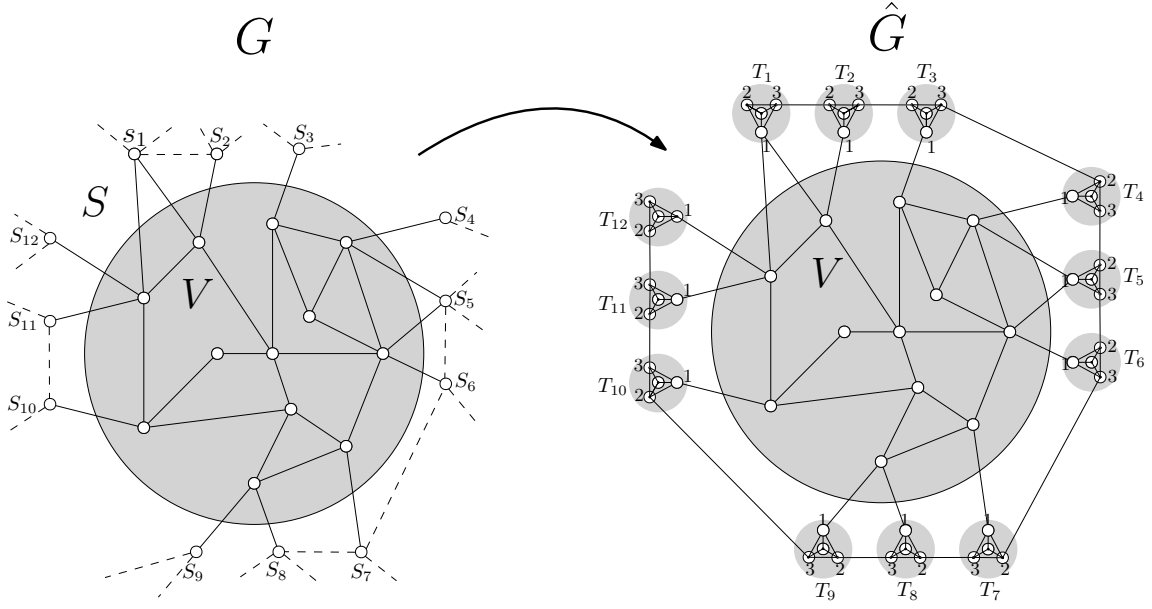}
  \caption{The construction of $\hat{G}$ from $G$.}
  \label{fig:GenralAlg}
\end{figure}
By construction of the $\hat{G}$, it is clear that perfect covers of $V'$ in $\hat{G}$ correspond bijectively with perfect covers of $V'$ in $G$ and therefore 
\[ \abs{\PC(V',\hat{G})}=\abs{\PC(V',G)}.\]
We  carefully repeat the steps of the proof of Lemma \ref{lem:CovToPM}, and claim that any perfect cover, $C\in \PC(V,\hat{G})$, is contained in exactly $3^{z(C)}$ perfect matchings of $\hat{G}$, where $z(C)$ is half of the number of edges in $C$ intersecting $S$, which is an even number since $|V'|$ is assumed to be even. In fact, this is the only assumption on $V'$ used in the proof of Lemma \ref{lem:CovToPM}. We set the weighting function,
\[W(e)= \begin{cases}
\frac{1}{3} & \text{if }e\in \mathset{\mathset{T_{k,i},T_{k,j}}: 1\leq k \leq n \text{ and }1\leq i < j \leq 3 }\\
1 & \text{otherwise.}
\end{cases} \]
We repeat the same argument as in Theorem \ref{th:PMcount} and obtain
\begin{align*}
\PM(V',\hat{G})&=\sum_{M\in \PM(\hat{G})}\prod_{e\in M}W(e)\\&=\sum_{C\in \PC(V',\hat{G})}\sum_{\underset{C\subseteq M}{M\in \PM(\hat{G})}}\prod_{e\in M}W(e)\\
&=\sum_{C\in \PC(V',\hat{G})}\sum_{\underset{C\subseteq M}{M\in \PM(\hat{G})}}\frac{1}{3^{z (C)}}\\
&=\sum_{C\in \PC(V',\hat{G})} 1= \abs{\PC(V',\hat{G})}.
\end{align*}
The assumption that $V'$ can be separated from $V\setminus V'$ by a simply connected domain ensures that the $T$ gadgets may be connected such that $\hat{G}$ stays planar.     
The final part of the proof, showing that $\PM(\hat{G})$ can be computed with complexity of $O(|V|^3)$ integer operations, are the same as in the proof of Theorem \ref{th:Alg}, relying on Kasteleyn's result.
\end{proof}	
	
	\section{Alternative Correspondence For Bipartite Graphs}
	\label{PM_alt}
In the first section of this chapter we described a general correspondence of restricted permutations by perfect matching. This correspondence proved to be useful for studying cases where the corresponding graph is a $\Z^2$-periodic bipartite planar graph. Unfortunately, this is usually not the case. In this section, we find an alternative correspondence of restricted movement permutations and perfect matchings, for the case where the original graph is bipartite. We use this correspondence in order to study  restricted permutations of the graph $G_{A_+}$ presented in Example \ref{ex:GenToZd}.
\begin{theorem}
\label{th:PerToPM}
Let $G=(V\uplus U,E)$ be a directed bipartite graph. There is an embedding of $\Omega(G)$ inside $\PM(G')\times \PM(G')$, where $G'=(V,E')$ is the undirected graph obtained by erasing the directions from the edges in $E$. Formally,
\[ E'=\mathset{\mathset{v,u}: (v,u)\in E}.\] 
That is, there exists an injective $\Phi:\Omega(G)\to \PM(G')^2$. If a group $H$ acts on $G$ by graph isomorphisms then it induces an action on $\PM(G')$ and the following diagram commutes: 
   \begin{align*}
\includegraphics[keepaspectratio=true,scale=1.15]{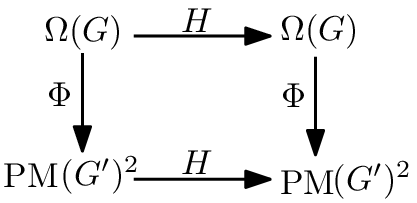}
\end{align*}
Furthermore, if $G$ is symmetric (that is, $(v,u)\in E$ implies $ (u,v)\in E$), $\Phi$ is a bijection.
\end{theorem}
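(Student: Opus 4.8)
The plan is to define $\Phi$ by the obvious ``split the permutation along the bipartition'' recipe and then to check well-definedness, injectivity, equivariance, and (under the symmetry hypothesis) surjectivity, mostly by bookkeeping in the style of the proof of Theorem~\ref{th:GenCan}. The structural fact driving everything is that, since $G$ is bipartite with parts $V$ and $U$, every $\pi\in\Omega(G)$ sends $V$ into $U$ and $U$ into $V$ (an edge of $G$ never stays inside a part), so as a bijection of $V\uplus U$ it restricts to bijections $\pi|_V\colon V\to U$ and $\pi|_U\colon U\to V$. Accordingly I set
\[ \Phi(\pi)\eqdef\parenv{\mathset{\mathset{v,\pi(v)}:v\in V},\ \mathset{\mathset{u,\pi(u)}:u\in U}}. \]
Each coordinate is a perfect matching of $G'$ (understood on the vertex set $V\uplus U$): its edges lie in $E'$ because $(v,\pi(v))\in E$ forces $\mathset{v,\pi(v)}\in E'$ (resp.\ for $u\in U$); it covers every vertex of $V$ (resp.\ $U$) exactly once by construction; and it covers every vertex of $U$ (resp.\ $V$) exactly once because $\pi|_V$ (resp.\ $\pi|_U$) is a bijection. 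Injectivity is then immediate: from $\Phi(\pi)=(M_1,M_2)$ one recovers $\pi(v)$ as the unique $M_1$-partner of $v\in V$ and $\pi(u)$ as the unique $M_2$-partner of $u\in U$, so $\Phi$ is an embedding.

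For the commuting diagram I would let $H$ act on $G'$ through the same (color-preserving) vertex maps it uses on $G$, which induces $h\cdot M\eqdef\mathset{h(e):e\in M}$ on $\PM(G')$ and hence the diagonal action on $\PM(G')^2$. Then $\Phi(\pi^h)=h\cdot\Phi(\pi)$ follows from the same manipulation as in Theorem~\ref{th:GenCan}: from $\pi^h(v)=h(\pi(h^{-1}v))$ one gets $\mathset{v,\pi^h(v)}=h\parenv{\mathset{h^{-1}v,\pi(h^{-1}v)}}$, and reindexing by $v\mapsto h^{-1}v$ (which permutes $V$, since $h$ is color-preserving) turns the first coordinate of $\Phi(\pi^h)$ into $h$ applied to the first coordinate of $\Phi(\pi)$; the second coordinate is the identical computation with $U$ in place of $V$. (If one allows $H$ to interchange $V$ and $U$, the same computation shows that such an $h$ must also swap the two factors of $\PM(G')^2$, which I would remark on.)

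Finally, for the symmetric case I would reverse the construction: given any $(M_1,M_2)\in\PM(G')^2$, define $\pi$ by taking $\pi(v)$ to be the $M_1$-partner of $v$ for $v\in V$ and $\pi(u)$ to be the $M_2$-partner of $u$ for $u\in U$; because $G'$ is bipartite this $\pi$ is a bijection of $V\uplus U$, and symmetry of $G$ guarantees that each undirected edge $\mathset{v,\pi(v)}\in E'$ lifts to the directed edge $(v,\pi(v))\in E$ (and likewise for $u$), so $\pi\in\Omega(G)$ and $\Phi(\pi)=(M_1,M_2)$. The point to watch throughout is the orientation bookkeeping: a perfect matching of $G'$ need not respect the directions in $E$, so for a general directed bipartite $G$ the image of $\Phi$ is only the set of pairs $(M_1,M_2)$ in which every edge of $M_1$ is oriented $V\to U$ in $E$ and every edge of $M_2$ is oriented $U\to V$ in $E$ --- the symmetry hypothesis is precisely what makes this condition vacuous and hence $\Phi$ onto. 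Keeping this orientation condition straight (together with the color-preservation hypothesis in the equivariance step) is the only delicate part; everything else is routine verification.
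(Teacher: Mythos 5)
Your proposal is correct and takes essentially the same route as the paper's proof: the same splitting $\Phi(\pi)=\parenv{M_\pi^1,M_\pi^2}$ along the bipartition, the same verification that each coordinate is a perfect matching of $G'$, the same equivariance computation, and the same inverse construction (partner in $M_1$ on $V$, partner in $M_2$ on $U$) in the symmetric case. Your extra remarks --- the color-preservation caveat for the $H$-action and the explicit description of the image of $\Phi$ for a general directed $G$ --- are accurate refinements of points the paper leaves implicit.
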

\begin{proof}
Let $\pi\in \Omega(G)$ be a restricted permutation. Consider
\begin{align*}
&M_{\pi}^1\eqdef \mathset{\mathset{v,\pi(v)}:v\in V} \\
\end{align*}
and
\begin{align*}
&M_{\pi}^2\eqdef \mathset{\mathset{u,\pi(u)}:u\in U}.
\end{align*} 
Clearly, $M_{\pi}^1,M_{\pi}^2\subseteq E'$ are subsets of size $2$ since $\pi$ is restricted by $G$, and $G$ has no self loops. In particular $(x,\pi(x))\in E$ for all $x\in V\uplus U$. First, we verify that indeed $M_{\pi}^1$ and $M_{\pi}^2$ belong to $\PM(G')$. Let $v\in V$. Since $G$ is bipartite and $\pi$ is restricted by $G$, $\pi(v')\in U$ for any $v'\in V$ and in particular $\pi(v')\neq v$. Now, directly from the definition of $M_{\pi}^1$, it follows that $\mathset{v,\pi(v)}$ is the unique edge to cover $v$ in $M_{\pi}^1$. For $u \in U$, note that $v=\pi^{-1}(u) \in V$ and $\mathset{\pi^{-1}(u),u}\in E'$ as $\pi$ is restricted by $G$. We note that $\mathset{v,u}\in M_{\pi}^1$ as $\pi(v)=u$.

Assume to the contrary that $u$ is covered by another edge $\mathset{v',u}\in M_\pi^1$. It follows that $v'\in V$ as $G$ is bipartite and thus $\pi(v')=u$. This is a contradiction as $\pi$ is a permutation. The proof that $M_\pi^2\in \PM(G')$ is symmetric.

Define $\Phi(\pi)=(M_{\pi}^1,M_{\pi}^2)$. We now turn to to prove that $\Phi$ is a bijection. Let $\pi_1,\pi_2\in \Omega(G)$ be two distinct restricted permutations. Since $\pi_1\neq \pi_2$, there exists $x\in V\uplus U$ such that $\pi_1(x)\neq \pi_2(x)$, without loss of generality $x\in V$. From the definition of $M_{\pi_1}^1$ and  $M_{\pi_2}^1$, we have that 
\[ \mathset{x,\pi_1(x)}\in M_{\pi_1}^1\text{ and }   \mathset{x,\pi_2(x)}\in M_{\pi_2}^1. \]
Since $M_{\pi_1}^1$ and $M_{\pi_2}^1$ are perfect matchings of $G'$, $\mathset{x,\pi_2(x)}\notin M_{\pi_1}^1 $
as $x$ is already covered by the edge $\mathset{x,\pi_1(x)}$ in $M_{\pi_1}^1$. Thus, $ M_{\pi_1}^1\neq  M_{\pi_2}^1$ and in particular $\Phi(\pi_1)\neq \Phi(\pi_2)$. Let $H$ be a group acting on $G$ by graph isomorphisms. For any $v\in V$, $u\in U$, and $h\in H$ we have 
\begin{align*}
\mathset{u,v}\in E' &\iff (u,v)\in E \text{ or } (v,u)\in E\\ 
&\iff  (hu,hv)\in E \text{ or } (hv,hu)\in E\\ 
&\iff \mathset{hu,hv}\in E'.
\end{align*}
This shows that $H$ acts on $G'$ by isomorphisms and similarly to the proof of Theorem \ref{th:GenCan}, this action induces a group action of $G$ on $\PM(G')$ by 
\[ h(M)\eqdef \mathset{\mathset{hv,hu}~:~\mathset{v,u}\in M}.\]

In order to complete the first part of the theorem, it remains to show that for any $\pi\in \Omega(G)$ and $h\in H$ we have $\Phi(h(\pi))=h(\Phi(\pi))$. That is, $(hM_\pi^1,hM_\pi^2)=(M_{\pi^h}^1,M_{\pi^h}^2)$. Indeed,
\begin{align*}
hM_\pi^1&=\mathset{\mathset{hv,hu} ~:~ \mathset{v,u}\in M_\pi^1}\\
&=\mathset{\mathset{hv,h\pi(v)}~:~v\in V}\\
&=\mathset{\mathset{hv,h\pi(h^{-1}(hv))}~:~v\in V}\\
&=\mathset{\mathset{hv,(\pi^{h})(hv))}~:~v\in V}\\
&=\mathset{\mathset{u,(\pi^{h})(u))}~:~u\in hV}\\
\underset{hV=V}{\Rsh} &=\mathset{\mathset{u,(\pi^{h})(u))}~:~u\in V}=M_{\pi^{h}}^1.
\end{align*}
Symmetrically, we show that $hM_\pi^2=M_{\pi^h}^2$, which completes the first part of the proof.

Assume furthermore that $G$ is symmetric, we need to show that the map $\pi \to (M_\pi^1,M_\pi^2)$ is invertible. Given two perfect matchings, $M_1,M_2\in \PM (G')$, for any $x\in V\uplus U$ and $i \in \mathset{1,2}$, denote by $M_i(x)$ the unique vertex in $V\uplus U$ such that $\mathset{x,M_i(x)}\in M_i'$. Define $\pi_{M_1,M_2}:V\uplus U \to V\uplus U$ by 
\[ 
\pi_{M_1,M_2}(x)=\begin{cases}
M_1(x) & \text {if } x\in V\\
M_2(x) & \text {if } x\in U
\end{cases}.
\] 
Note that for any $x\in V\uplus U$,
\[ \mathset{x,\pi_{M_1,M_2}(x)} \in \mathset{\mathset{x,M_1(x)},\mathset{x,M_2(x)}} \subseteq E'.\]
Since $G$ is symmetric, $(x,M_1(x)),(x,M_2(x))\in E$ and therefore $(x,\pi_{M_1,M_2}(x))\in E$. That is, $\pi_{M_1,M_2}$ is restricted by $G$. Assume that $\pi_{M_1,M_2}(x)=\pi_{M_1,M_2}(x')$, since $G$ is bipartite and $\pi_{M_1,M_2}$ is restricted by $G$ we have that $x,x'\in V$ or $x,x'\in U$, without the loss of generality let us assume that $x,x'\in V$. From the definition of $\pi_{M_1,M_2}$, it follows that 
\[ M_1(x)=\pi_{M_1,M_2}(x)=\pi_{M_1,M_2}(x')=M_1(x'),\]
and therefore $x=x'$ (as $M_1$ is a perfect matching of $G$). This shows that $\pi_{M_1,M_2}$ is injective. Let $x\in U$, we note that $M_1(x)\in V$ and $M_1(M_1(x))=x$. From the definition $\pi_{M_1,M_2}$, it follows that 
\[ \pi_{M_1,M_2}(M_1(x))=x.\]
Similarly, if $x\in V$, $\pi_{M_1,M_2}(M_2(x))=x$ and $\pi_{M_1,M_2}$ is onto $V\uplus U$. Define $\Psi:\PM(G')^2\to \Omega(G)$ by 
\[ \Psi(M_1,M_2)=\pi_{M_1,M_2}.\]
It is easy to see that $\Psi\circ \Phi$ and $\Phi\circ \Psi$ are the identity functions on $\PM(G')^2$ and $\Omega(G)$ respectively.
\end{proof}
\begin{corollary}
\label{cor:undirected-PM}
For any $G$ undirected  bipartite graph, there is a bijection between $\Omega(G)$ and $\PM(G)^2$. 
\end{corollary}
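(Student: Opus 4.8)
The plan is to obtain this statement as an immediate specialization of Theorem \ref{th:PerToPM}. Given an undirected bipartite graph $G=(V\uplus U,E)$, I would first replace it by the directed graph $\vec G=(V\uplus U,\vec E)$ in which every undirected edge $\mathset{v,u}$ is represented by both arcs $(v,u)$ and $(u,v)$. By construction $\vec G$ is symmetric in the sense required by Theorem \ref{th:PerToPM}, i.e. $(v,u)\in\vec E$ implies $(u,v)\in\vec E$. Moreover, by the observation made in Chapter \ref{CHA:preliminaries} that a permutation is restricted by an undirected graph precisely when it is restricted by the associated directed graph, we have $\Omega(G)=\Omega(\vec G)$ (as sets, and as dynamical systems whenever a group $H$ acts).

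Next I would note that the undirected graph $G'$ attached to $\vec G$ in the statement of Theorem \ref{th:PerToPM} — namely $G'=(V\uplus U,\mathset{\mathset{v,u}:(v,u)\in\vec E})$ — is exactly $G$ again, since the arcs of $\vec G$ are precisely the two orientations of the edges of $G$. Then, because $\vec G$ is symmetric, the final clause of Theorem \ref{th:PerToPM} yields that the map $\Phi$ constructed there is a bijection $\Omega(\vec G)\to\PM(G')^2=\PM(G)^2$. Composing with the identification $\Omega(G)=\Omega(\vec G)$ gives the claimed bijection; and if $H$ acts on $G$ by graph isomorphisms, the commuting diagram in Theorem \ref{th:PerToPM} shows that $\Phi$ also intertwines the induced $H$-actions on $\Omega(G)$ and on $\PM(G)^2$.

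There is no real obstacle here: all the substance lives in Theorem \ref{th:PerToPM}, and the only thing to check is the routine bookkeeping that symmetrizing $G$ does not change $\Omega(G)$ and that the ``erase the directions'' operation applied to $\vec G$ returns $G$ itself. Accordingly I expect the actual write-up to be only a sentence or two long.
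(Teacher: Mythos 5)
Your proposal is correct and matches the paper's own proof: the paper likewise identifies the undirected bipartite $G$ with the symmetric directed graph obtained by replacing each edge with both arcs, notes that erasing directions returns $G$, and invokes the symmetric case of Theorem \ref{th:PerToPM}. The only minor point left implicit in both write-ups is that bipartiteness guarantees the absence of self loops, which the symmetrized graph must avoid.
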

\begin{proof}
Recall that we may identify it with a symmetric directed graph not containing self loops, $\tilde{G}$, such that $\Omega(G)=\Omega(\tilde{G})$. We note that $(\tilde{G})'=G$. Thus, by applying Theorem \ref{th:PerToPM} on $\tilde{G}$ we deduce that there is a bijection between $\Omega(G)$ and $\PM(G)^2$.
\end{proof}
\begin{example}
Consider the two-dimensional (undirected) honeycomb lattice from Example \ref{ex:Honeycomb}, denoted it by $L_H$. By Corollary \ref{cor:undirected-PM}, restricted permutations of the honeycomb lattice correspond with pairs of perfect matchings of $L_H$.  In Section \ref{A_L} we have shown that perfect matchings of the honeycomb lattice are in 1-1 correspondence with permutations of $\Z^2$ restricted by the set $A_L$. Combining the results, we conclude that restricted permutations of the honeycomb lattice correspond with pairs of $\Z^2$ permutations restricted by $A_L$. Formally, we define the action of $\Z^2$ on $\Omega(A_L)\times \Omega(A_L) $ by 
\[ n(\pi_1,\pi_2)\eqdef (\pi_1^n,\pi_2^n).\]
It is easy to verify that it is indeed a continuous group action on $\Omega(A_L)\times \Omega(A_L)$ (with the product topology). There exists a bijection $\Phi:\Omega(L_H)\to \Omega(A_L)\times \Omega(A_L)$ which commutes with the action of $\Z^2$. That is, for any $n\in \Z^2$ and $\pi\in \Omega(L_H)$,
\[ \Phi(\pi^n)=\parenv{(\Phi \pi)_1)^n,(\Phi \pi)_2)^n}.\] 
See Figure \ref{fig:HoneycombPerToPMtoAL} for an illustration of the correspondence of a restricted permutation of the honeycomb lattice, pairs of perfect matchings and permutations of $\Z^2$ restricted by $A_L$.
\begin{figure}
 \centering
  \includegraphics[width=150mm, scale=0.65]{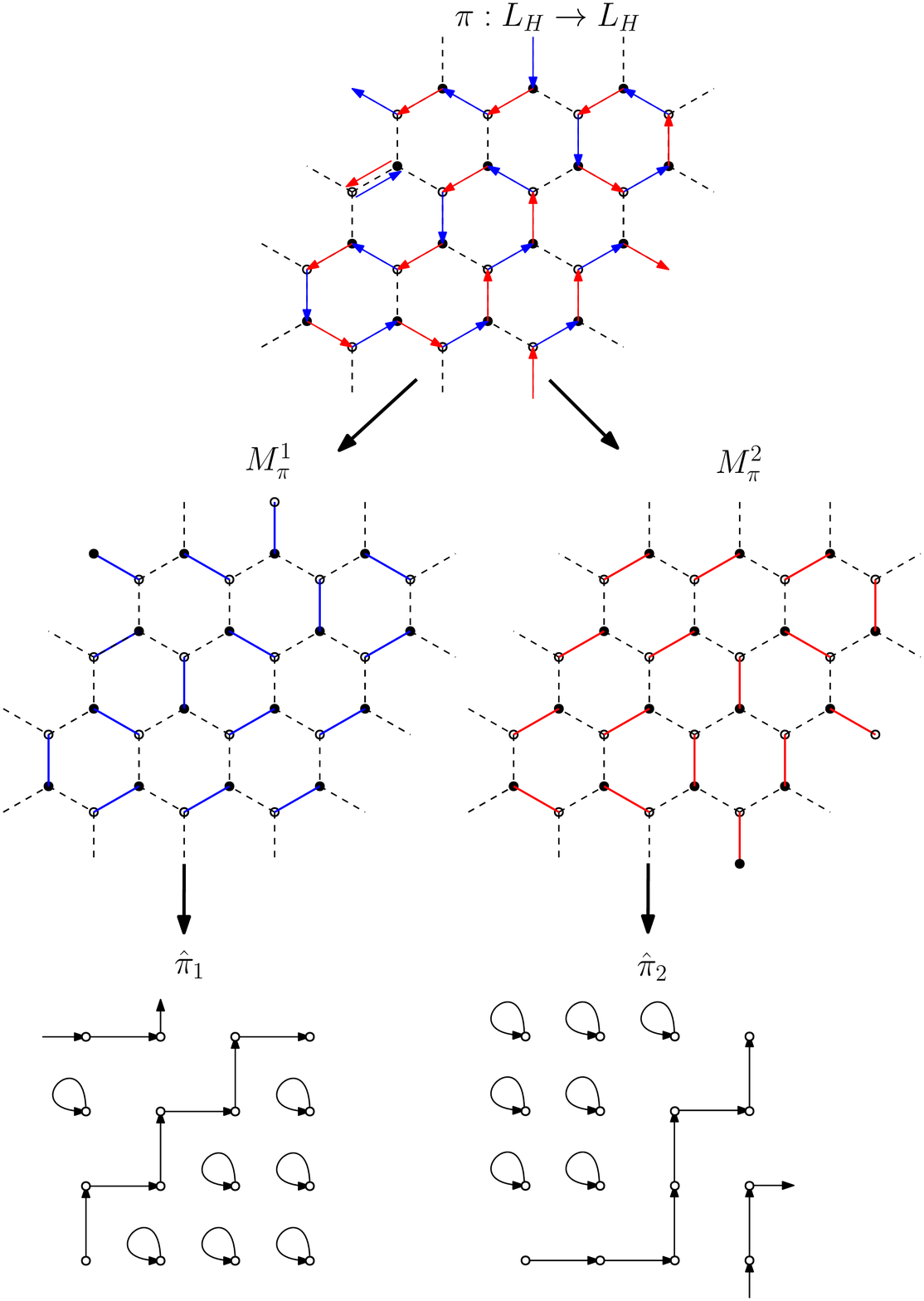}
  \caption{The correspondence between a restricted permutation of the honeycomb lattice, perfect matchings and permutations of $\Z^2$ restricted by $A_L$.}
  \label{fig:HoneycombPerToPMtoAL}
\end{figure}
\end{example}
		
	\subsection{Permutations of $\Z^2$ Restricted by $A_+$ }
		\label{A_+}
In this section, we consider the case of permutations of $\Z^2$ restricted by the set $A_+=\mathset{(0,\pm 1),(\pm 1,0)}$, presented in Example \ref{ex:GenToZd}. In that case, the corresponding graph described in Theorem   \ref{th:GenCan} (the general correspondence) is $\Z^2$-periodic bipartite graph, but in this $\Z^2$-periodic presentation it has intersecting edges (see Figure \ref{fig:Gen_A+}). Thus, we can not use the results from the theory of $\Z^2$-periodic bipartite planar graphs as in the case of $\Omega(A_L)$. Fortunately, the graph $G_{A_+}$ (see Figure \ref{fig:Graph_PL}) is $\Z^2$-periodic, bipartite, and planar (when we think of it as an undirected graph by removing the directions from the edges). Using the alternative correspondence to perfect matchings, we have that restricted permutation of $G_{A_+}$ correspond to pairs of perfect matchings of the square lattice in $\Z^2$, which we denote by $L_S$. 
\begin{figure}
 \centering
  \includegraphics[width=80mm, scale=0.5]{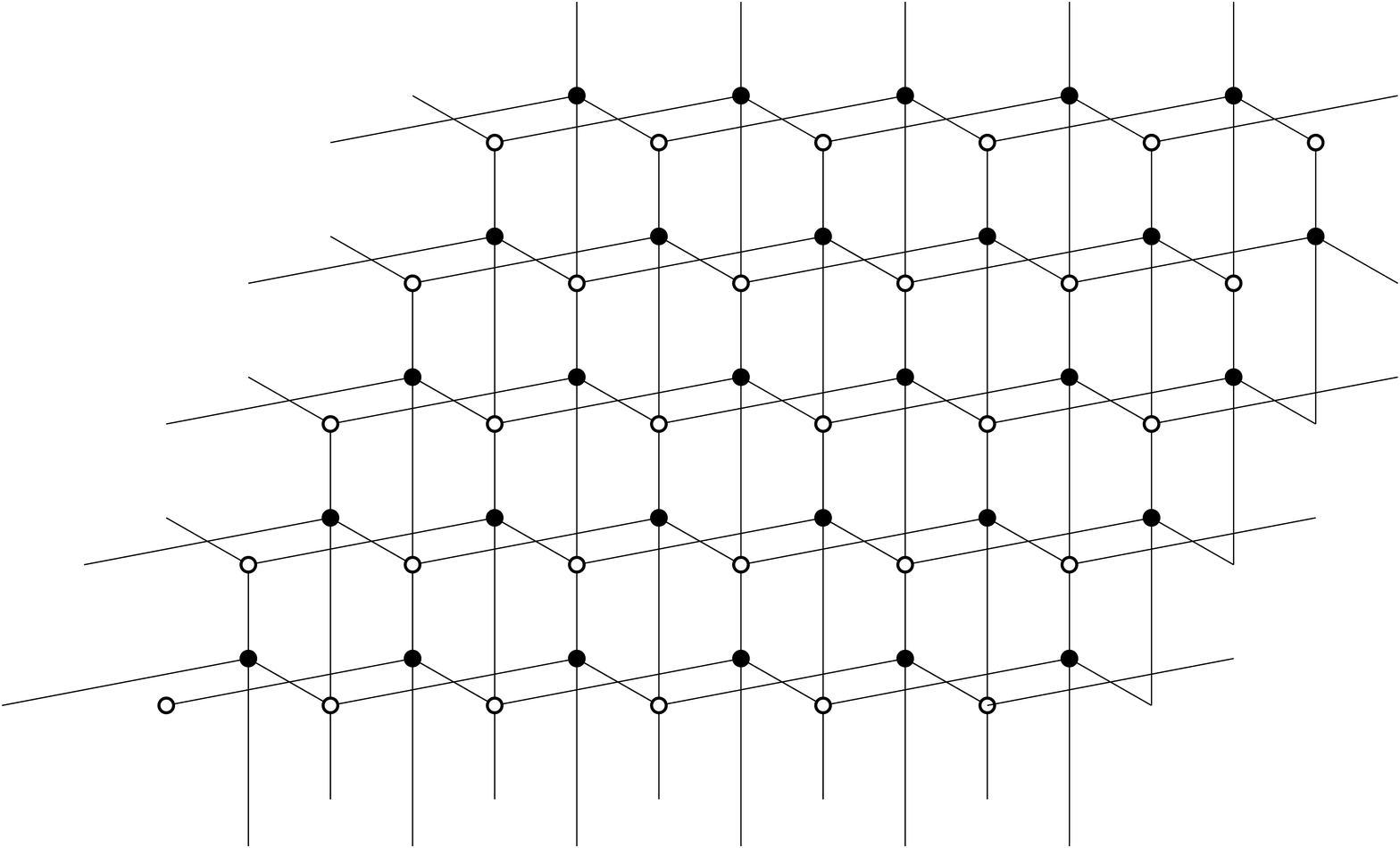}
  \caption{The corresponding graph for $G_{A_+}$ from Theorem \ref{th:GenCan}.  }
  \label{fig:Gen_A+}
\end{figure}

 The problem of finding the exponential growth rate of perfect matchings (also called Dimer coverings) of the square lattice , also known as the square lattice Dimer problem or Domino Tiling Problem, was studied thoroughly in the last century  (see \cite{Fis61, Kas61, ChoKenPro01, Fis66, TemFis60}). 
\begin{figure}
 \centering
  \includegraphics[width=80mm, scale=0.5]{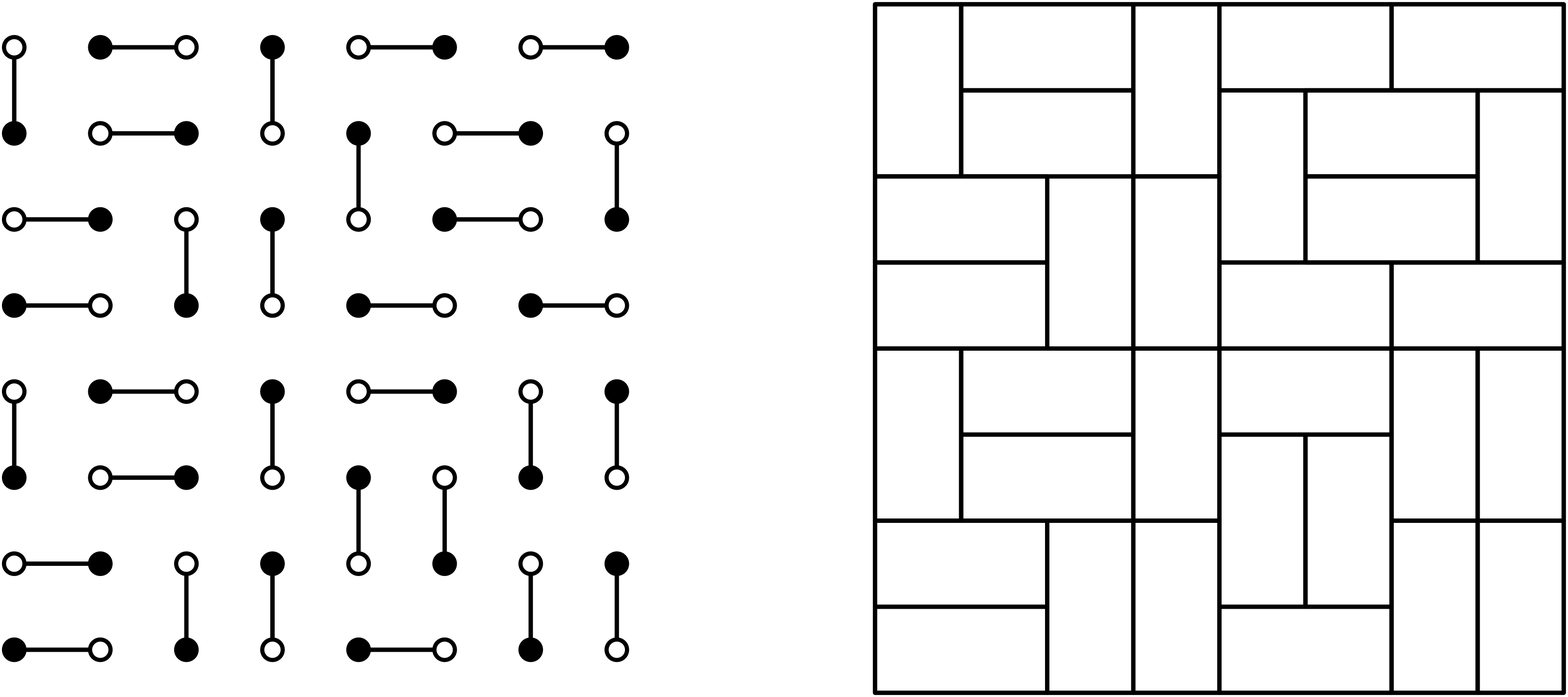}
  \caption{Dimer covering / tiling of the plane.}
  \label{fig:DimerEx}
\end{figure}

We start by showing that $\PM(L_S)$ can be formulated as a two-dimensional SFT, we later show that $\Omega(A_+)$ is conjugated to the cartesian product of this SFT with itself. We use some well known results regarding the square lattice Dimer model in order to find the (topological, periodic and closed) entropy. Finally, we discuss methods for counting patterns in polynomial-time.

Given $M\in \PM(L_S)$ and $n\in \Z^2$, there exists a unique element in $ \Z^2$ such that $\mathset{n,M(n)}\in M$, which we denote by $M(n)$. Furthermore, from the definition of the square lattice $M(n)\in n-A_+$. For all $n\in \Z^2$, we define 
\[\omega_M(n)\eqdef M(n)-n.\]
It is easy to verify that $\omega_M\in A_+^{\Z^2}$ is an injective embedding of $\PM(L_S)$ in $A_+^{\Z^2}$. Since $M$ is a perfect matching, it has the property that $M(M(n))=n$ for all $n\in \Z^2$. This property may be interpreted as
\begin{align*}
n=M(M(n))&\underset{\Downarrow}{=}M(n)+\omega_M(M(n))\\
\omega_M(n)=M(n)-n=-&\omega_M(M(n))=-\omega_M(n+\omega_M(n)).
\end{align*} 
Consider the set 
\[ \Omega_D\eqdef \mathset{\omega\in A_+^{\Z^2}: \forall n\in \Z^2, \omega(n)=-\omega(n+\omega(n))}. \]

We saw that for any $M\in \PM(L)$, $\omega_M\in \Omega_D$. It is not difficult to verify that any element $\omega\in \Omega_D$ defines a perfect matching of $S$ by 
\[ M_\omega=\mathset{\mathset{n,n+\omega(n)}:n\in \Z^2}.\] 
Furthermore, $M_{\omega_M}=M$, meaning that we can encode the elements of $\PM(L_S)$ by the elements in $\Omega_D$ bijectively.

In order to check whether an element $\omega\in A_+^{\Z^2}$ is in $\Omega_D$. it is sufficient to check that the condition $\omega(n)=-\omega(n+\omega(n))$ is satisfied in all of the coordinates $n\in \Z^2$. We note that it is sufficient to check that this condition is not violated in any $2\times 2$ sub-array. Thus, $\Omega_D$ is the SFT defined by the set of forbidden patterns $F_D$ defined as 
\[ F_D\eqdef \mathset{w\in A_+^{[2]\times [2]} :\begin{array}{l}
   \exists n\in [2]\times [2] \such w+w(n)\in [2]\times [2] \\ \text{ and } w(n)\neq-w(n+w(n))\end{array}}. \]

We consider the topological space $\Omega_D^2\eqdef \Omega_D\times \Omega_D$, equipped with the product topology (the product of the topology of $\Omega_D$ with itself). Claraly, $\Omega_D^2$ is a compact space (as $\Omega_D$ is compact). We have $\Z^2$ acting on $\Omega_D^2$ by 
\[ n(\omega_0,\omega_1)\eqdef (n\omega_0,n\omega_1),\]
which is a continuous operation as the action of $\Z^2$ on $\Omega_D$ is continuous. That is, the pair $(\Omega_D^2,\Z^2)$ forms a topological dynamical system. When we identify $\PM(L_S)\times \PM(L_S)$ with $\Omega_D^2$, by Theorem \ref{th:PerToPM}, there is a bijection $\Phi:\Omega(A_+)\to\Omega_D^2$ which commutes with the action of $\Z^2$.

By the construction of the bijection described in the proof  of Theorem \ref{th:PerToPM}, we observe that we can view $\Omega(A_+)$ as a subset of $A_+^{\Z^2}$ (see Chapter \ref{CHA:preliminaries}). $\Phi$ is given by $\Phi(\omega)=(\omega_0,\omega_1)$ where
\[ \omega_i(n,m)=\begin{cases}
\omega(n,m) & \text{if } (n+m) \bmod 2=i \\
-\omega\parenv{\pi_\omega^{-1}(n,m)} & \text{otherweise,}
\end{cases} \] 
and $\pi_\omega$ is the permutation identified with $\omega$ .
 Furthermore, this action is a homeomorphism as the pre-image and image of a cylinder sets are cylinder sets. This shows that $\Omega(A_+)$ and $\Omega_D^2$ are topologically conjugated. 
\begin{theorem}
\label{th:PlusEnt}
\begin{align*}
\topent(\Omega(A_+))=2 \topent \parenv{\Omega_D}
\end{align*}
\end{theorem}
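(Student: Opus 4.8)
The plan is to reduce the statement to the additivity of topological entropy under Cartesian products, using the conjugacy that was established in the paragraph immediately preceding the theorem. First I would invoke that conjugacy: it was shown there that the map $\Phi$ is a homeomorphism from $\Omega(A_+)$ onto $\Omega_D^2 = \Omega_D \times \Omega_D$ which commutes with the $\Z^2$-action, i.e.\ a conjugacy of $\Z^2$-SFTs. By Fact \ref{fact:conj} this already yields $\topent(\Omega(A_+)) = \topent(\Omega_D^2)$, so it remains to prove the product formula $\topent(\Omega_D \times \Omega_D) = 2\topent(\Omega_D)$.

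The key combinatorial observation is that the pattern complexity of a product is the product of the pattern complexities. Concretely, identifying $(A_+ \times A_+)^{\Z^2}$ with $A_+^{\Z^2} \times A_+^{\Z^2}$ coordinatewise, a pattern $P \in B_n(\Omega_D^2)$ over the alphabet $A_+ \times A_+$ is exactly a pair $(P_0,P_1)$ of patterns over $A_+$, one per coordinate; and since the two coordinates of an element of $\Omega_D^2$ can be chosen independently from $\Omega_D$, the pair $(P_0,P_1)$ is realizable in $\Omega_D^2$ if and only if each $P_i$ is realizable in $\Omega_D$. Hence $B_n(\Omega_D^2)$ is in bijection with $B_n(\Omega_D) \times B_n(\Omega_D)$, so $\abs{B_n(\Omega_D^2)} = \abs{B_n(\Omega_D)}^2$.

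Taking logarithms, dividing by $\abs{[n]}$, and passing to the limit as $n \to \infty$ (the limit exists by Fact \ref{prop:fekete}, and in any case the factor of $2$ pulls out of the $\limsup$ in the definition of entropy) gives
\[
\topent(\Omega_D^2) = \lim_{n\to\infty}\frac{\log\abs{B_n(\Omega_D)}^2}{\abs{[n]}} = 2\lim_{n\to\infty}\frac{\log\abs{B_n(\Omega_D)}}{\abs{[n]}} = 2\topent(\Omega_D),
\]
which, combined with $\topent(\Omega(A_+)) = \topent(\Omega_D^2)$, completes the proof.

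There is essentially no serious obstacle here; the statement is a routine consequence of the earlier conjugacy. The only points that deserve a line of care are (i) confirming that the $\Phi$ constructed above is genuinely a conjugacy of $\Z^2$-SFTs (continuity, invertibility, and commutation with the shift) so that Fact \ref{fact:conj} applies, and (ii) making precise the identification of a pattern over $A_+ \times A_+$ with a pair of patterns over $A_+$, so that the equality $B_n(\Omega_D^2) = B_n(\Omega_D) \times B_n(\Omega_D)$ is literally correct rather than just morally so. Neither presents any difficulty.
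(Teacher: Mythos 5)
Your proposal is correct, and it reaches the conclusion by a slightly different factoring of the argument than the thesis does. The thesis does not invoke Fact \ref{fact:conj} at all: it transports box patterns of $\Omega(A_+)$ directly through the conjugacy, defining a map $\phi:B_{n_1,n_2}(\Omega(A_+))\to B_{n_1,n_2}(\Omega_D)\times B_{n_1,n_2}(\Omega_D)$ by restricting $\Phi(\omega)=(\omega_0,\omega_1)$ to the box, and then checks well-definedness (the pattern of $\omega$ on the box determines the patterns of $\omega_0,\omega_1$ there, by the local form of $\Phi$), injectivity and surjectivity, obtaining $\abs{B_{n_1,n_2}(\Omega(A_+))}=\abs{B_{n_1,n_2}(\Omega_D)}^2$ and computing the $\limsup$ directly. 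You instead split the work into two standard pieces: entropy is a conjugacy invariant (Fact \ref{fact:conj}, applied to the conjugacy $\Omega(A_+)\cong\Omega_D^2$ established just before the theorem), plus the trivial product identity $\abs{B_n(\Omega_D\times\Omega_D)}=\abs{B_n(\Omega_D)}^2$ over the alphabet $A_+\times A_+$. What your route buys is that the only pattern counting you do is for the product system, where realizability of a pair of patterns is immediate; you thereby sidestep the more delicate step in the thesis, namely the claim that the induced map on box patterns is well defined (which rests on $\Phi$ being a sliding-block-type map and needs a little care near the boundary of the box). What the thesis's route buys is slightly more information: it exhibits an explicit bijection at the level of finite patterns of $\Omega(A_+)$ itself, which is reused later for the periodic/closed entropy statements and the polynomial-time counting remarks. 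Both arguments hinge on the same conjugacy $\Phi$ from Theorem \ref{th:PerToPM}, so the difference is one of packaging rather than of substance, and your version is complete as stated.
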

\begin{proof}
Let $n_1,n_2 \in\mathbb{N}$, we will define $\phi: B_{n_1,n_2}(\Omega(A_+))\to B_{n_1,n_2}(\Omega_D)\times B_{n_1,n_2}(\Omega_D) $ as follow: for $v\in B_{n_1,n_2}(\Omega(A_+))$ let $\omega\in\Omega(A_+)$ such that $\omega\parenv{[n_1]\times[n_2]}=v$, define $\phi(v)=(v_0,v_1)$ as 
\begin{align*}
(v_0,v_1)=(\omega_0\parenv{[n_1]\times[n_2]},\omega_1\parenv{[n_1]\times[n_2]})
\end{align*}
where $(\omega_0,\omega_1)=\Phi(\omega)$ and $\Phi$ is the conjugation function $\Omega(A_+)\to \Omega_D^2$. First, we claim that $\phi$ is well defined. By the construction of $\Phi$, if for $\omega,\omega'\in \Omega(A_+)$ we have 
\[ \omega\parenv{[n_1]\times[n_2]}=\omega'\parenv{[n_1]\times[n_2]} \] 
then 
\[ (\omega_0\parenv{[n_1]\times[n_2]},\omega_1\parenv{[n_1]\times[n_2]})=(\omega_0'\parenv{[n_1]\times[n_2]},\omega_1'\parenv{[n_1]\times[n_2]}). \] 
This shows that the definition of $\phi$ does not depend on the choice of $\omega$.
Let $v, v'\in B_{n_1,n_2}(\Omega(A_+))$ such that $v\neq v'$, then there exists $i=(i_1,i_2)\in [n_1]\times[n_2]$ for which $v(i)\neq v'(i)$. If $(i_1+i_2) \bmod 2 =0$, from the definition of $f$ we have 
\begin{align*}
v_0(i)=v(i)\neq v'(i)=v_0'(i)
\end{align*}
Similarly we show that if If $(i_1+i_2) \bmod 2 =1$, $v_1(i)\neq v_1'(i)$ and $\phi(v)\neq \phi(v')$. This proves that $\phi$ is injective.

Let $v_0,v_1\in B_{n_1,n_2}(\Omega_D)$, there exists $\omega_0,\omega_1\in\Omega_D$ such that 
$\omega_0\parenv{[n_1]\times[n_2]}=v_0$ and $\omega_1\parenv{[n_1]\times[n_2]}=v_1$ and $\omega=\Phi^{-1}(\omega_0,\omega_1)$. Let  $v\eqdef \omega([n_1]\times[n_2])$. From the definition of $\phi$, it follows that $\phi(v)=(v_0,v_1)$. We have shown that $\phi$ is a bijection, and therefore 
\begin{align*}
\abs{ B_{n_1,n_2}(\Omega(A_+))}=\abs{ B_{n_1,n_2}(\Omega_D)\times  B_{n_1,n_2}(\Omega_D)}=\abs{ B_{n_1,n_2}(\Omega_D)}^2.
\end{align*}
Finally, 
\begin{align*}
\topent(\Omega(A_+))&=\limsup_{n_1,n_2\to\infty} \frac{\log\parenv{\abs{ B_{n_1,n_2}(\Omega(A_+))}} }{n_1\cdot n_2}\\
&=\limsup_{n_1,n_2\to\infty} \frac{\log\parenv{\abs{ B_{n_1,n_2}(\Omega_D)}^2} }{n_1\cdot n_2}\\
&=2\cdot \limsup_{n_1,n_2\to\infty} \frac{\log\parenv{\abs{ B_{n_1,n_2}(\Omega_D)}} }{n_1\cdot n_2}=2\topent(\Omega_D).
\end{align*}
\end{proof}

The equivalence between the double Dimer model and $\Omega(A_+)$ we have just proved may also be used in order to find the periodic and closed entropy of $\Omega(A_+)$.  For $n\in \N$, let $L_{S,n}=(V_n,E_n)$ be the $n\times n$ square sub-graph of the square lattice, that is 
\[ V_n\eqdef [n]\times [n], \ E_n\eqdef \mathset{\mathset{v_0,v_1}\in ([n]\times [n])^2 : \norm{v_0-v_1}_1=1 } .\]
Let $L_{S,n}^T=(V_n^T,E_n^T)$ be the $n\times n$ square lattice on the torus, 
\[V_n^T\eqdef [n]\times [n], \ E_n^T\eqdef \mathset{\mathset{v_0,v_1}\in ([n]\times [n])^2 : \norm{(v_0-v_1)\bmod (n,n)}_1=1 }. \]
In Kasteleyn's original work \cite{Kas61}, an exact formula for $\abs{\PM(L_{S,2n}^T)}$ was given, which later used to show that
\[ \lim_{n\to \infty} \frac{\log_2\abs{\PM(L_{S,2n}^T)}}{4n^2}=\frac{1}{4}\cdot \intop_0^1 \intop_0^1(4-2\cos(2\pi x) -2\cos (2\pi y))dxdy.\]
It is also shown in \cite{Kas61}, that the exponential growth rate of $\abs{\PM(L_{S,n})}$ is the same as  $\abs{\PM(L_{S,n}^T)}$, That is 
\[ \lim_{n\to \infty} \frac{\log_2\abs{\PM(L_{S,2n}^T)}}{4n^2}= \lim_{n\to \infty} \frac{\log_2\abs{\PM(L_{S,2n})}}{4n^2}.\]

We saw that a permutation of $\Z^2$ restricted by $A_+$ correspond to a pair of dimer coverings of $L_S$. For $n\in \N$, we consider the restriction of the bijection $\Phi: \Omega(A_+) \to \PM(L_S)\times \PM(L_S)$ to $\fix_{n\Z^2}(\Omega(A_+))$. In a similar fashion as in the proof of Theorem \ref{th:PlusEnt}, this restriction is a bijection between periodic permutations from $\fix_{n\Z^2}(\Omega(A_+))$ and pairs of periodic points in $\fix_{n\Z^2}(\Omega_D^2)=\fix_{n\Z^2}(\Omega_D)\times \fix_{n\Z^2}(\Omega_D)$, which in the perspective of perfect matings, represent elements in $\PM(L_n^T)$. That is, there is a bijection between $\fix_{n\Z^2}(\Omega(A_+))$ and $\PM(L_{S,n}^T)\times \PM(L_{S,n}^T)$. We recall that elements in $B_{n,n}^f(\Omega(A_+))$ (closed permutations of $[n]\times [n]$), represent a subset of $\fix_{n\Z^2}(\Omega(A_+))$. Hence, we similarly have a bijection between closed permutations of $[n]\times [n]$ and a subset of $\PM(L_{S,n}^T)\times \PM(L_{S,n}^T)$. This subset is exactly $\PM(L_{S,n})\times \PM(L_{S,n})$.

By a similar calculation as in the proof of Theorem \ref{th:PlusEnt} we obtain,
\[ \topent_c(\Omega(A_+))=\topent_p(\Omega(A_+))=2 \lim_{n\to \infty} \frac{\log_2\abs{\PM(L_{n}^T)}}{n^2} \] 
In their work, Meyerovitch and Chandgotia  \cite{MeyCha19} explain the well known result 
\[ \topent (\Omega_D)=\lim_{n\to \infty} \frac{\log_2\abs{\PM(L_{S,2n}^T)}}{4n^2}. \]
In their proof, they use a principle called reflection positivity, relying on the symmetry of the uniform measure on perfect matchings,  with respect to reflection along
some hyperplanes. 
We combine these results and obtain:
\begin{theorem}
\label{th:Sol+}
\[ \topent_c(\Omega(A_+))=\topent_p(\Omega(A_+))=\topent(\Omega(A_+))=\frac{1}{2}\cdot \intop_0^1 \intop_0^1(4-2\cos(2\pi x) -2\cos (2\pi y))dxdy. \]
\end{theorem}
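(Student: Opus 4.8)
The plan is to assemble the statement from results already established in this chapter together with the classical theory of the square-lattice dimer model; no genuinely new argument is required. First I would treat the topological entropy: Theorem~\ref{th:PlusEnt} gives $\topent(\Omega(A_+))=2\topent(\Omega_D)$, where $\Omega_D$ is the SFT coding perfect matchings of the square lattice; the Meyerovitch--Chandgotia reflection-positivity argument \cite{MeyCha19} identifies $\topent(\Omega_D)$ with the periodic dimer growth rate $\lim_{n\to\infty}\frac{\log_2\abs{\PM(L_{S,2n}^T)}}{4n^2}$; and Kasteleyn's classical computation \cite{Kas61} evaluates this limit as one quarter of the double integral appearing in the statement. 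Multiplying by two then yields the claimed value of $\topent(\Omega(A_+))$.

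Next I would handle the periodic and closed entropies, using the two bijections recorded just before the theorem: $\fix_{n\Z^2}(\Omega(A_+))$ is in bijection with $\PM(L_{S,n}^T)^2$ and $B_{n,n}^f(\Omega(A_+))$ with $\PM(L_{S,n})^2$. These reduce both $\topent_p(\Omega(A_+))$ and $\topent_c(\Omega(A_+))$ to $2\lim_{n\to\infty}\frac{\log_2\abs{\PM(L_{S,n}^T)}}{n^2}$, where the limit is effectively along even sizes, since a torus with an odd number of vertices has no perfect matching and hence contributes $-\infty$ to the limsup. Applying Kasteleyn's asymptotics once more, together with his observation that the $2n\times 2n$ box and torus have equal exponential growth rate, this limit equals the same quarter-integral, so $\topent_c(\Omega(A_+))=\topent_p(\Omega(A_+))=\topent(\Omega(A_+))$; this is consistent with the general inequalities $\topent_c\le\topent_p\le\topent$ recalled in Chapter~\ref{CHA:preliminaries} and Fact~\ref{prop:GenPerEnt}.

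The hard part here is really just careful bookkeeping, since Theorem~\ref{th:PlusEnt}, the dimer-model bijections, the Kasteleyn evaluation, and the Meyerovitch--Chandgotia result are all in hand. The one point demanding attention is matching the rectangular tori $\Gamma_{(n_1,n_2)}$ appearing in the definition of $\topent_p$ to the square tori for which Kasteleyn's asymptotics are usually quoted, and, relatedly, discarding the odd-sized tori that admit no perfect matching; both are routine features of the square-lattice dimer model, and neither requires a new idea.
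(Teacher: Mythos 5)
Your proposal follows essentially the same route as the paper: it combines Theorem~\ref{th:PlusEnt}, the bijections of $\fix_{n\Z^2}(\Omega(A_+))$ with $\PM(L_{S,n}^T)^2$ and of closed permutations with $\PM(L_{S,n})^2$, the Meyerovitch--Chandgotia identification of $\topent(\Omega_D)$ with the periodic dimer growth rate, and Kasteleyn's evaluation and box/torus comparison, exactly as the text preceding the theorem does. Your explicit handling of the rectangular and odd-sized tori via the general inequalities $\topent_c\le\topent_p\le\topent$ is a minor point of extra care that the paper leaves implicit, but it is not a different argument.
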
  

\begin{remark}
 The correspondence between Dimer coverings and restricted permutations described in this part of the work can be visualized by the general correspondence to perfect matchings of $G_{A_+}'$, described in Section \ref{PM_char}. We recall that $G_A'$ is composed from two copies of $\Z^2$. Given a permutation of $\Z^2$ restricted by $A_+$, we may consider the correspondent perfect matching in $G_A'$. If we draw the two copies of $\Z^2$ such that the odd vertices (i.e., vertices with odd sum of components) of one copy are drawn with the even vertices of the other copy, we get a visualization of its two corresponding dimer coverings described in Theorem \ref{th:PlusEnt} (See Figure \ref{fig:DimD})
 \begin{figure}
 \centering
  \includegraphics[scale=0.75]{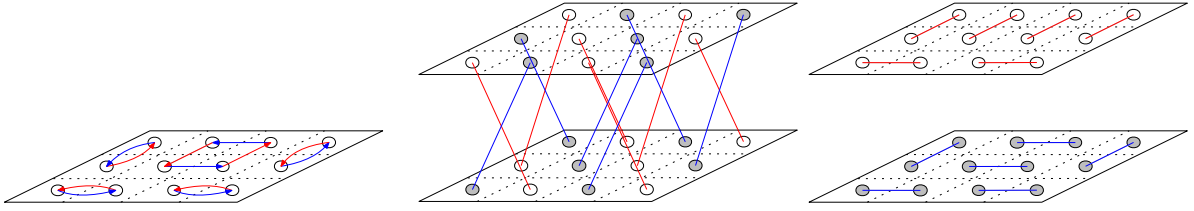}
  \caption{The correspondence between restricted permutation and Dimer coverings.}
  \label{fig:DimD}
\end{figure}
\end{remark}
\begin{remark}
Kasteleyn provided an exact formula for $\abs{\PM(L_{S,2n}^T)}$ \cite{Kas61}. This formula can be used in order to compute the exact number of toral permutations restricted by $A_+$, as $\abs{\fix_{n\Z^2}(\Omega(A_+))}=\abs{\PM(L_{S,2}^T)}^2$. In order to get a more complete picture, we want to be able to compute the exact number of patterns in $B_n(\Omega(A_+))$ and in $B_n^f(\Omega(A_+))$ (closed permutations of $[n]$ restricted by $A_+$), for any given $n\in\N^d$. We already know that 
\[ \abs{B_n^f(\Omega(A_+))}=\abs{\PM(L_{S,n})}^2. \]
Since $L_{S,n}$ is a finite planar graph, using Kasteleyn's method, $\abs{\PM(L_{S,n})}$ is computable in polynomal-time. Therefore, $\abs{B_n^f(\Omega(A_+))}$ is computable in polynomial-time as well. For the computation of $\abs{B_n(\Omega(A_+))}$, we recall that by Theorem \ref{th:PlusEnt}, $\abs{B_n(\Omega(A_+))}=\abs{B_n(\Omega_D)}^2$. Elements of $B_n(\Omega_D)$ represent perfect coverings of $[n]$ in $L_S$. By Theorem \ref{th:GenAlg}, $\abs{\PC([n],L_S)}$ is computable in polynomial-time, and therefore 
$\abs{B_n(\Omega(A_+))}=\abs{\PC([n],L_S)}^2$ as well.
\end{remark}

\chapter{Entropy}
\label{CHA:Entropy}
In this chapter, we investigate the entropy of dynamical systems defined by permutations with restricted movement. We focus on permutations of $\Z^d$ restricted by some finite set $A\subseteq \Z^d$. We start by proving some basic properties of the topological entropy for such SFTs and use them in order to find the topological entropy whenever $d=2$ and $\abs{A}=3$. Later, we further study the entropy of the important one-dimensional case where $A=[-k,k]$. This case was presented and studied by Schmidt and Strasser in \cite{SchStr17}. We use the results presented in \cite{SchStr17} in order to show equality between the closed, periodic and topological entropy. We discuss the topic of global and local admissibility of patterns, and use the results of this section in order bound the entropy in some specific case where $\abs{A}=5$.  In the last part, we review two related models of injective and surjective restricted functions of graphs.

	\section{Properties}
	\label{EntProperties}
We show that the entropy of $\Z^d$-permutations, restricted by some finite set $A$, is invariant under the operation of an injective affine transformation on $A$. Furthermore, we prove that conjugacy holds in the case of volume preserving affine transformation. That is, an affine transformation of the form $x\to Mx+b$, where $\det(M)=\pm 1$.

\begin{fact}
\label{fact:ShiftInv}
(\cite{SchStr17}, Proposition 1.1) Let $d\geq 1$, and $A\subseteq\mathbb{Z}^{d}$ be a finite set. For any $b\in \Z^d$, $\Omega(A)$ and $\Omega(A+b)$ are topologically conjugate (where $A+b$ denotes $\sigma_b(A)$). 
\end{fact}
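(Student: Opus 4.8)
The plan is to exhibit the conjugacy explicitly as the map that shifts every coordinate value by $b$. Concretely, I would define $\Phi\colon\Omega(A)\to (A+b)^{\Z^d}$ by $\Phi(\omega)(n)\eqdef \omega(n)+b$ for every $n\in\Z^d$. The first thing to check is that $\Phi$ actually lands in $\Omega(A+b)$: if $\omega=\omega_\pi$ for a permutation $\pi$ restricted by $A$, then for each $n$ we have $n+\Phi(\omega)(n)=n+\omega(n)+b=\pi(n)+b=(\sigma_b\circ\pi)(n)$, so $\Phi(\omega)$ is precisely the displacement encoding of $\sigma_b\circ\pi$. Since $\sigma_b$ is a bijection of $\Z^d$ and $\pi$ is a permutation, $\sigma_b\circ\pi$ is again a permutation of $\Z^d$, and $(\sigma_b\circ\pi)(n)-n=\omega(n)+b\in A+b$ for all $n$; hence $\Phi(\omega)\in\Omega(A+b)$.

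Next I would verify that $\Phi$ is a homeomorphism. The pointwise map $a\mapsto a+b$ is a bijection of the finite discrete set $A$ onto $A+b$, hence a homeomorphism; $\Phi$ is simply the induced product map over all coordinates of $\Z^d$, restricted to $\Omega(A)$, so it is continuous, and carrying out the identical construction with $-b$ in place of $b$ produces a continuous two-sided inverse $\Omega(A+b)\to\Omega(A)$ (that this inverse lands in $\Omega(A)$ follows from the same computation: $\omega'\mapsto\omega'-b$ sends the encoding of a permutation $\pi'$ restricted by $A+b$ to the encoding of $\sigma_{-b}\circ\pi'$, which is a permutation restricted by $A$). Thus $\Phi$ is a homeomorphism of the two shift spaces.

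Finally I would check shift-equivariance. Recalling that the $\Z^d$-action is $(n\omega)(m)=\omega(m-n)$, we get $\Phi(n\omega)(m)=(n\omega)(m)+b=\omega(m-n)+b=\Phi(\omega)(m-n)=(n\Phi(\omega))(m)$ for all $m\in\Z^d$, so $\Phi$ intertwines the actions of every $n\in\Z^d$. Combined with the previous step, this exhibits $\Phi$ as a topological conjugacy and finishes the proof.

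I do not expect a serious obstacle here: the only step that needs a moment's thought is confirming that translating every displacement vector by $b$ preserves the property of encoding a genuine permutation, and this is immediate once one observes that the induced operation on permutations is just post-composition with the bijection $\sigma_b$ of $\Z^d$. It is worth stressing, however, that this conjugacy is \emph{not} induced by any isomorphism of the underlying graphs $G_A$ and $G_{A+b}$ (these are generally non-isomorphic — for instance $G_A$ has self-loops exactly when $0\in A$ while $G_{A+b}$ does exactly when $-b\in A$), so one genuinely has to argue through the displacement encoding rather than invoke Theorem \ref{th:GenCan}.
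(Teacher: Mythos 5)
Your argument is correct: relabeling the alphabet by $\omega\mapsto\omega+b$, which at the level of permutations is post-composition with the bijection $\sigma_b$, is a shift-commuting homeomorphism of $\Omega(A)$ onto $\Omega(A+b)$, and your checks of well-definedness, the two-sided inverse, and equivariance are complete (your side remark that $G_A$ and $G_{A+b}$ need not be isomorphic, so one must argue through the displacement encoding, is also accurate). The thesis itself states this as a Fact imported from \cite{SchStr17} (Proposition 1.1) without giving a proof, so there is no in-paper argument to compare against; your construction is the standard one and is in the same spirit as the explicit conjugacy $\Phi(\pi)=M\circ\pi\circ M^{-1}$ used in the paper's proof of Proposition \ref{prop:EntProp}.
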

\begin{proposition}
\label{prop:EntProp}
Let $d\geq 1$, and $A\subseteq\mathbb{Z}^{d}$ be a finite set.
For any group isomorphism $M\in \Aut(\Z^d)$, $\Omega(A)$ (with the usual action of $\Z^d$) is topologically conjugated to $\Omega(M(A))$ when an elements $a\in\Z^d$ acts by $\sigma_{Ma}$ and
\[ M(A) \eqdef \mathset{M(a)~:~a\in A}. \] 
\end{proposition}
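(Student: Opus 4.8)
The plan is to transport the conjugacy from the level of permutations, by conjugating a permutation of $\Z^d$ through the automorphism $M$. Given $\pi\in\Omega(G_A)$, set $\pi^{M}\eqdef M\circ\pi\circ M^{-1}$, which is again a permutation of $\Z^d$ as a composition of bijections. For every $n\in\Z^d$,
\[ \pi^{M}(n)-n=M\parenv{\pi(M^{-1}n)}-M\parenv{M^{-1}n}=M\parenv{\pi(M^{-1}n)-M^{-1}n}\in M(A), \]
so $\pi^{M}$ is restricted by $M(A)$; note this is really an isomorphism $G_A\to G_{M(A)}$, not an automorphism of $G_A$, which is why the group-action machinery of Chapter~\ref{CHA:preliminaries} does not apply verbatim. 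Passing to the $\omega$-encodings $\omega_\pi(n)=\pi(n)-n$, this defines a map $\Phi:\Omega(A)\to\Omega(M(A))$ by $\Phi(\omega)(n)\eqdef M\parenv{\omega(M^{-1}n)}$; a direct check shows that the permutation attached to $\Phi(\omega_\pi)$, namely $n\mapsto n+\Phi(\omega_\pi)(n)$, equals $\pi^{M}$, so $\Phi$ indeed maps into $\Omega(M(A))$ and not merely into $M(A)^{\Z^d}$.

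First I would establish that $\Phi$ is a homeomorphism. Since $M^{-1}\in\Aut(\Z^d)$ and $M^{-1}(M(A))=A$, the same construction applied to $M^{-1}$ yields a map $\Psi:\Omega(M(A))\to\Omega(A)$, $\Psi(\eta)(n)\eqdef M^{-1}\parenv{\eta(Mn)}$, and a one-line computation gives $\Psi\circ\Phi=\mathrm{id}$ and $\Phi\circ\Psi=\mathrm{id}$, so $\Phi$ is a bijection. For continuity, observe that $\Phi(\omega)(n)$ depends on $\omega$ only through the single coordinate $\omega(M^{-1}n)$, composed with the fixed alphabet bijection $M|_{A}\colon A\to M(A)$; hence if $\omega_k\to\omega$ pointwise then $\Phi(\omega_k)\to\Phi(\omega)$ pointwise, and symmetrically for $\Psi$. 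Thus $\Phi$ is continuous with continuous inverse (alternatively, a continuous bijection between compact Hausdorff spaces).

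Finally I would check that $\Phi$ intertwines the actions. On $\Omega(A)$ the element $a\in\Z^d$ acts by the usual shift, $(a\omega)(n)=\omega(n-a)$, while on $\Omega(M(A))$ we let $a$ act by $\sigma_{Ma}$, i.e.\ $(a\cdot\eta)(n)=\eta(n-Ma)$. Then, using $M^{-1}n-a=M^{-1}(n-Ma)$,
\begin{align*}
\Phi(a\omega)(n)&=M\parenv{(a\omega)(M^{-1}n)}=M\parenv{\omega(M^{-1}n-a)}\\
&=M\parenv{\omega(M^{-1}(n-Ma))}=\Phi(\omega)(n-Ma)=\parenv{a\cdot\Phi(\omega)}(n),
\end{align*}
so $\Phi$ is a conjugacy of the two $\Z^d$-systems. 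The argument is essentially bookkeeping; the only point I regard as the main (minor) obstacle is keeping the two $\Z^d$-actions straight, namely recognizing that conjugating the spatial action $\pi\mapsto\pi^{\sigma_a}$ through $M$ produces exactly the $M$-twisted shift $a\mapsto\sigma_{Ma}$ on the target — which is precisely why the statement must phrase the target action that way. With that identification the displayed computation closes the proof, and since the entropy of a $\Z^d$-action is unchanged by reparametrizing $\Z^d$ through an automorphism, one also obtains $\topent(\Omega(A))=\topent(\Omega(M(A)))$.
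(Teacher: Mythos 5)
Your proposal is correct and follows essentially the same route as the paper: conjugate $\pi$ by $M$ to get $\pi^{M}=M\circ\pi\circ M^{-1}$, translate to the map $(\Phi\omega)(n)=M\parenv{\omega(M^{-1}n)}$ on displacement configurations, check bijectivity via the inverse built from $M^{-1}$, continuity, and the intertwining $\Phi\circ\sigma_a=\sigma_{Ma}\circ\Phi$. The only cosmetic difference is that you argue continuity through pointwise convergence (or compactness) while the paper notes that $\Phi$ and $\Phi^{-1}$ map cylinder sets to cylinder sets; the substance is identical.
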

\begin{proof} Given $\pi\in \Omega(A)$, we consider the function given by 
$\Phi(\pi)=\pi^M$. That is, $(\Phi(\pi))(n)\eqdef M\parenv{\pi(M^{-1}n)}$. Since $M\in \Aut(\Z^d)$, we have $M^{-1}n\in \Z^d$ for all $n\in \Z^d$ and $\Phi(\pi)$ is well defined. Furthermore, $M^{-1}\in \Aut(\Z^d)$ and therefore $\Phi(\pi)=M\circ \pi \circ M^{1}$ is also a permutation of $\Z^d$ (as a composition of bijective function). For $n\in \Z^d$, we compute
\begin{align*}
\Phi(\pi)(n)-n&=M\pi(M^{-1}n)-n\\
&=M\pi(M^{-1}n)-M(M^{-1}n)\\
&=M \underset{\in A}{\underbrace{\parenv{\pi(M^{-1}n)-M^{-1}n}}}\in M(A).
\end{align*}
This shows that $\Phi$ indeed maps $\Omega(A)$ to $\Omega(M A)$. When we identify the elements of $\Omega(A)$ and $\Omega(M( A))$ with elements of $A^{\Z^d}$ and $M (A)^{\Z^d}$, the calculation above shows that $\Phi$ acts by $(\Phi \omega)(n)=M \omega (M^{-1}n)$.

Clearly, $\Phi$ is invertible as its inverse is given by $\Phi^{-1}(\omega)=M^{-1}\circ \omega \circ M$, and it is an homeomorphism as  it is easy to verify that $\Phi$ and $\Phi^{-1}$ maps cylinder sets to cylinder sets. Now, we note that 
\begin{align*}
\sigma_a(\Phi(\omega))(n)&=\phi(\omega)(n+a)\\
&=M \omega(M^{-1}(n+a))\\
&=M (\sigma_{M^{-1}a}(\omega))(M^{-1}n)=\Phi(\sigma_{M^{-1}a}(\omega))(n)
\end{align*} 
This shows that $\Phi \circ \sigma_{ a}=\sigma_{Ma} \circ \Phi$ and completes the proof.
\end{proof}

Let $A\in\Z^d$ be some finite set, $M\in \SL(d,\Z)$ and $a\in \Z^d$, where $\SL(d,\Z)$ denotes the set of all $d\times d$ matrix with integer entries and determinant with absolute value of $1$. Combining the two parts of Proposition \ref{prop:EntProp} and Fact \ref{fact:ShiftInv}, we have that $\Omega(A)$ is topologically conjugated to $\Omega(M\cdot A+b)$ when an element $a\in\Z^d$ acts by $\sigma_{Ma}$ and $M\cdot A \eqdef \mathset{M\cdot a~:~a\in A}$. Since entropy is invariant under conjugacy, we have that $\topent(\Omega(A))=\topent(\Omega(A))=\topent(\Omega(M'\cdot A+b))$. For a matrix $M'\in \GL(d,\Z)$ (an invertible $d\times d$ matrix with integer entries), $\Omega(A)$ and $\Omega(M\cdot A+b)$ are not necessarily conjugate. However, we will now show that the equality of the entropies does hold anyway.

\begin{proposition}
\label{prop:MatEnt}
Let $d\geq 2$ and let $A\subseteq \Z^2$ be a finite set. For any $d\times d$ invertible integer matrix we have 
\[ \topent(\Omega(A))=\topent(\Omega(M\cdot A)).\]
\end{proposition}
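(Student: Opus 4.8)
The plan is to reduce, via the Smith normal form, to the case where $M$ is diagonal with positive integer entries, and then to exploit the fact that any permutation restricted by $M\cdot A$ must preserve every coset of the sublattice $M\Z^d$; this splits $\Omega(M\cdot A)$ into a product of $\abs{\det M}$ copies of $\Omega(A)$, and the equality of the entropies then follows by counting patterns on boxes whose side lengths are divisible by the diagonal entries. More precisely, since $M$ is an invertible integer matrix, $q\eqdef\abs{\det M}$ is a positive integer and the Smith normal form gives $M=UNV$ with $U,V$ unimodular and $N=\operatorname{diag}(N_1,\dots,N_d)$ for positive integers $N_i$ with $N_1\cdots N_d=q$. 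By Proposition \ref{prop:EntProp} and the conjugacy-invariance of entropy (Fact \ref{fact:conj}) one has $\topent(\Omega(A))=\topent(\Omega(V\cdot A))$ and $\topent(\Omega(M\cdot A))=\topent(\Omega(U\cdot(N\cdot V\cdot A)))=\topent(\Omega(N\cdot V\cdot A))$, so after replacing $A$ by $V\cdot A$ it suffices to prove $\topent(\Omega(A))=\topent(\Omega(N\cdot A))$ for a diagonal $N=\operatorname{diag}(N_1,\dots,N_d)$.

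Next I would set up the coset decomposition. Put $L\eqdef N\Z^d=N_1\Z\times\cdots\times N_d\Z$, an index-$q$ sublattice of $\Z^d$, with fundamental domain $\Lambda\eqdef[N_1]\times\cdots\times[N_d]$, so that $\Z^d=\bigsqcup_{v\in\Lambda}(v+L)$. If $\pi$ is a permutation of $\Z^d$ restricted by $N\cdot A$, then $\pi(n)-n\in N\cdot A\subseteq L$ for all $n$, so $\pi$ maps each coset $v+L$ bijectively onto itself. Writing $n=v+Nm$ with $v\in\Lambda$ and $m\in\Z^d$, and setting $\psi_v(m)\eqdef N^{-1}\parenv{\pi(v+Nm)-(v+Nm)}\in A$, the self-map $m\mapsto m+\psi_v(m)$ of $\Z^d$ is, up to the affine change of variables $m\mapsto v+Nm$, exactly the restriction $\pi|_{v+L}$; hence it is a permutation of $\Z^d$ precisely when $\pi|_{v+L}$ is a bijection, and $\pi$ is a permutation precisely when every $\pi|_{v+L}$ is. I would conclude that $\pi\mapsto(\psi_v)_{v\in\Lambda}$ is a bijection from $\Omega(G_{N\cdot A})$ onto $\Omega(G_A)^{\Lambda}$, which in the $\omega$-coordinates of the standard embeddings $\Omega(N\cdot A)\subseteq(N\cdot A)^{\Z^d}$ and $\Omega(A)\subseteq A^{\Z^d}$ reads $\omega\mapsto\parenv{m\mapsto N^{-1}\omega(v+Nm)}_{v\in\Lambda}$, a homeomorphism carrying cylinder sets to cylinder sets and intertwining the $L$-shift action with the diagonal shift on $\Omega(A)^{\Lambda}$.

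Finally I would count patterns. For multi-indices of the form $(N_1k_1,\dots,N_dk_d)$ the box $[N_1k_1]\times\cdots\times[N_dk_d]$ is the disjoint union over $v\in\Lambda$ of $v+N\cdot\parenv{[k_1]\times\cdots\times[k_d]}$; under the decomposition above, a pattern of a configuration on this box is the same datum as an arbitrary (unconstrained among one another) tuple of patterns of the $\psi_v$ on $[k_1]\times\cdots\times[k_d]$, and a box pattern is globally extendable if and only if each of its coset pieces is. This gives
\[ \abs{B_{(N_1k_1,\dots,N_dk_d)}(\Omega(N\cdot A))}=\abs{B_{(k_1,\dots,k_d)}(\Omega(A))}^{q}. \]
Since the limit defining the topological entropy exists (Fact \ref{prop:fekete}), it may be evaluated along the multi-indices $(N_1k_1,\dots,N_dk_d)$ as $k_i\to\infty$, and using $N_1k_1\cdots N_dk_d=q\,k_1\cdots k_d$ the exponent $q$ cancels against the volume factor $q$:
\[ \topent(\Omega(N\cdot A))=\lim_{k\to\infty}\frac{q\log\abs{B_{(k_1,\dots,k_d)}(\Omega(A))}}{q\,k_1\cdots k_d}=\topent(\Omega(A)). \]
Together with the reduction step, this proves the proposition.

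The main obstacle is essentially just recognizing and exploiting the coset structure, namely that $N\cdot A\subseteq N\Z^d$ forces permutations restricted by $N\cdot A$ to respect the partition of $\Z^d$ into cosets of $N\Z^d$; after that the work is bookkeeping. The two points deserving care are that the correspondence between box patterns and tuples of box patterns respects extendability to global configurations, and that the box side lengths are chosen divisible by the $N_i$ so the box splits cleanly along cosets. One could instead run the coset decomposition directly for a general invertible $M$ with $L=M\Z^d$, but then the fundamental domain is not a box and one would need independence of the topological entropy from the choice of F\o lner exhaustion; passing to the diagonal form via the Smith normal form avoids this and keeps everything at the level of boxes and Fact \ref{prop:fekete}.
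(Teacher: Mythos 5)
Your proposal is correct, and it reaches the conclusion by a genuinely different route than the paper. Both arguments hinge on the same core observation: since $M\cdot A\subseteq M\Z^d$, any permutation restricted by $M\cdot A$ preserves each coset of $M\Z^d$, which splits $\Omega(M\cdot A)$ into $\abs{\det M}$ independent copies of $\Omega(A)$. The difference is in how the entropy is then extracted. The paper works with a general invertible $M$ directly: it takes the parallelepiped $P$ spanned by $Me_1,\dots,Me_d$ as a fundamental domain, matches patterns on the non-rectangular shapes $F_n=\bigcup_j (M\cdot[n]+v_j)$ with $k$-tuples of box patterns of $\Omega(A)$, and then must invoke Theorem A of \cite{BalBolQua02} (entropy along convex shapes) to know that the entropy of $\Omega(M\cdot A)$ can be computed along the sets $F_n$. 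You instead first reduce to a diagonal $N$ via the Smith normal form, absorbing the unimodular factors with Proposition \ref{prop:EntProp}; then the cosets of $N\Z^d$ tile a box $[N_1k_1]\times\cdots\times[N_dk_d]$ by translated boxes, the pattern count $\abs{B_{(N_1k_1,\dots,N_dk_d)}(\Omega(N\cdot A))}=\abs{B_{(k_1,\dots,k_d)}(\Omega(A))}^{q}$ follows from the independence of the cosets, and only Fact \ref{prop:fekete} is needed to evaluate the limit along this subfamily of boxes. So your route trades the external convex-shapes theorem for the (elementary) Smith normal form reduction, which is arguably more self-contained; the paper's route avoids the reduction and handles $M$ in one step at the cost of the heavier citation. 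One shared subtlety, which the paper also glosses over in its $\det(M)=\pm1$ case: the conjugacy of Proposition \ref{prop:EntProp} intertwines the standard shift with the action re-indexed by the unimodular matrix, so strictly speaking one also uses that topological entropy of a $\Z^d$-SFT is unchanged when the acting copy of $\Z^d$ is re-parametrized by an automorphism of determinant $\pm1$; your use of this in the reduction step is at the same level of rigor as the paper's own use.
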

\begin{proof}
If $\det(M)=\pm 1$, $M^{-1}$ is also an integer matrix and therefore by Proposition \ref{prop:EntProp} the desired equality holds. Otherwise, since $M$ is an integer matrix, the set $H\eqdef M\Z^d=\mathset{Mn~:~n\in \Z^d}$ is a subgroup of $\Z^d$ of index $k=|\det(M)|$. Let $P$ be the $d$-dimensional parallelepiped formed by the vectors $M e_1,\dots,M e_d$ where $e_1,\dots,e_d$ are the vectors of the standard basis. That is, 
\[ P\eqdef \mathset{\sum_{i=1}^d p_i\cdot Me_i~:~ p_1,p_2,\dots,p_d\in [0,1)}. \] 
Note that $\parenv{\biguplus_{v\in H}(P+v)\cap \Z^d}=\Z^d$ and theretofore any coset of $H$ has a unique vector in $P$. We enumerate the cosets of $H$ by $\mathset{H_i=H+v_i}_{i=1}^{k}$, where $v_i$  is the unique vector in $H_i \cap P$. We now define a map from $\Omega(M\cdot A)$ to $\Omega(A)^k$. Given $\omega\in \Omega(M\cdot A)\subseteq (M\cdot A)^{\Z^d}$, we define $\Phi(\omega)\eqdef (\omega_1,\omega_2,\dots, \omega_k)$, where $\omega_i\eqdef M^{-1}\circ \omega \circ \sigma_{v_i} \circ  M$, and $\sigma_{v_i}$ is the regular shift by $v_i$ in $\Z^d$. That is,
\[ \omega_i(n) =M^{-1} \omega(Mn+v_i). \]
Clearly, $\omega(Mn+v_i)\in M\cdot A$ and therefore $\omega_i(n) =M^{-1} \omega(Mn+v_i)\in A$ and $\omega_i\in A^{\Z^d}$ for all $1\leq i \leq k$.  Now we need to show that $\omega_j\in \Omega(A)$ for all $j$. That is, $\pi_{\omega_j}$ is indeed a permutation of $\Z^d$  (where as usual, $\pi_{\omega_j}$ is defined by $\pi_{\omega_j}(n)=n+\omega_j(n)$). 
\begin{itemize}
\item Injectivity - let $n,n'\in \Z^d$, since $\pi_\omega$ is a permutation of $\Z^d$ we have:
\begin{align*}
&\pi_{\omega_j}(n)=\pi_{\omega_j}(n')\\
&\iff n+\omega_j(n)=n'+\omega_j(n')\\
&\iff M( n+\omega_j(n))=M(n'+\omega_j(n'))\\
&\iff M(n)+v_j+ M(\omega_j(n))= M(n')+v_j+ M(\omega_j(n'))\\
&\iff M(n)+v_j+ M\parenv{M^{-1}(\omega(M(n)+v_j))}= M(n')+v_j+ M\parenv{M^{-1}(\omega(M(n')+v_j))}\\
&\iff \underset{\pi_\omega(M(n)+v_j)}{\underbrace{M(n)+v_j+\omega(M(n)+v_j)}}=\underset{\pi_\omega(M(n')+v_j)}{\underbrace{M(n')+v_j+\omega(M(n')+v_j})}\\
&\iff M(n')+v_j=M(n')+v_j\iff n=n'.
\end{align*}
\item Surjectivity - let $n\in \Z^d$. There exists $m\in \Z^d$ such that $\pi_{\omega}(m)=M(n)+ v_j$. Since $\pi_\omega$ is restricted by $M\cdot  A$, $m$ belongs to the same coset as $M(n)+v_j$, which is $H_j$.
Thus, $m$ is of the form $M(m')+v_j$ for some $m'\in \Z^d$. We have 
\begin{align*}
\pi_{\omega_j}(m')&=m'+\omega_j(m')\\
&=m'+M^{-1}(\omega(M(m')+v_j))\\
&=M^{-1}(\underset{m}{\underbrace{M(m' ) +v_j}} +\underset{\omega(m)}{\underbrace{\omega(M(m')+v_j)}})-	M^{-1}(v_j)\\
&=M^{-1}(\underset{\pi_\omega(m)}{\underbrace{m+\omega(m)}})- M^{-1}(v_j)\\
&=M^{-1}(M(n) + v_j)-M^{-1}(v_j)=M^{-1}(M(n))=n.
\end{align*}
\end{itemize}

We claim that $\Phi$ is invertible. For $n\in \Z^d$, we define $j_n$ to be the index of the coset for which $n\in H_{j_n}$. Given $\omega_1,\dots \omega_{k}\in \Omega(A)$ and $n\in \Z^d$, we define
\[ \omega(n)=M(\omega_{j_n}(M^{-1}(n-v_{j_n})))\in M\cdot A ,\]
and $\Psi(\omega_0,\dots,\omega_{k-1})=\omega$. We observe that for any $j$, $\omega_j$ defines the restriction of  $\pi_\omega$ to the coset $H_j$. We may repeat the same arguments used in the first part of the proof (in reversed order) to show that this restriction is a permutation of the coset $H_j$. Thus $\pi_\omega$ is a permutation of $\Z^d$ and $\omega \in \Omega(M\cdot A)$. It is easy to verify that $\Psi$ is exactly the inverse function of $\Psi$, and thus $\Psi$ and $\Phi$ are bijections.

For $n\in \N^d$, consider the set $F_n$ defined by
\[ F_n=\bigcup_{j=1}^k\parenv{M\cdot [n]+v_j}.\]
Denote by $C_n$ the set of patterns that are obtained by restricting  elements in $\Omega(M\cdot A)$ to $F_n$. That is,
\[ C_n\eqdef \mathset{\omega(F_n)~:~\omega \in \Omega(M\cdot A)}.\]
 We may use the map defined above in order to find a bijection between $C_n$ and $B_n(A)^k$. For a pattern $w\in C_n$ let $\omega\in \Omega(M\cdot A)$ such that $w=\omega(F_n)$, let $\Phi(\omega)=(\omega_1,\dots,\omega_k)$. Define $\phi(w)=(w_1,\dots,w_k)$ where $w_j$ is the restriction of $\omega$ to $[n]$. The fact that $\Phi$ is invertible suggests that this operation is invertible - given $w_1,\dots,w_k\in B_n(A)$ we may define 
\[ \psi(w_1,\dots,w_k)\eqdef \Psi(\omega_1,\dots,\omega_n )(F_n)\]
where $\omega_j\in \Omega(A)$ is such that $w_j=\omega_j([n])$. Clearly, $\psi$ is the inverse function of $\phi$.

It is easy to see that $F_n$ is in fact the intersection of a $d$-dimensional parallelepiped (which is convex) with $\Z^d$,  containing $k\cdot \abs{[n]}$ points. Thus, by Theorem $A$ in \cite{BalBolQua02}, 
\begin{align*}
\topent (\Omega(M\cdot A))&=\lim_{n\to\infty}\frac{\log_2\abs{\abs{C_n}}}{|F_n|}\\
&=\lim_{n\to\infty}\frac{\log_2\abs{B_{n}(A)^k}}{|F_n|}\\
&=\lim_{n\to\infty}\frac{k\log_2\abs{B_{n}(A)}}{k|[n]|}=\topent (\Omega(A)).
\end{align*}
\end{proof}

\begin{corollary}
\label{cor:AffInvEnt}
Let $A\subseteq \Z^d$ be a finite set. For $d\geq 2$, $b\in \Z^d$ and a matrix $M\in \GL(d,\Z)$,
\[ \topent (\Omega(A))=\topent (\Omega (M\cdot A+b) ).\] 
\end{corollary}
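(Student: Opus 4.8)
The plan is to obtain this statement immediately by combining the two results already proved in this section, namely Proposition \ref{prop:MatEnt} and Fact \ref{fact:ShiftInv}, together with the conjugacy-invariance of entropy (Fact \ref{fact:conj}). No new construction is needed; the work is entirely bookkeeping.

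First I would split the affine map $x\mapsto Mx+b$ into its linear and translational parts. Applying Proposition \ref{prop:MatEnt} to the finite set $A\subseteq\Z^d$ and the invertible integer matrix $M$ gives $\topent(\Omega(A))=\topent(\Omega(M\cdot A))$, where the hypothesis $d\geq2$ is inherited from that proposition. Next, note that $M\cdot A+b=\sigma_b(M\cdot A)$ and that $M\cdot A$ is again a finite subset of $\Z^d$; hence Fact \ref{fact:ShiftInv} applies with $M\cdot A$ playing the role of $A$, producing a topological conjugacy between $\Omega(M\cdot A)$ and $\Omega(M\cdot A+b)$. By Fact \ref{fact:conj}, conjugate SFTs have the same topological entropy, so $\topent(\Omega(M\cdot A))=\topent(\Omega(M\cdot A+b))$. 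Chaining the two equalities yields the claim.

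There is essentially no obstacle; the only points that require a moment's care are that Fact \ref{fact:ShiftInv} is stated for an arbitrary finite set and so may legitimately be invoked with $M\cdot A$ rather than $A$, and that the translation must be performed \emph{after} the matrix multiplication, which is precisely the form in which the corollary is phrased. One could alternatively route the whole argument through Proposition \ref{prop:EntProp} in the special case $\det(M)=\pm1$, where it even yields an honest conjugacy between $\Omega(A)$ and $\Omega(M\cdot A+b)$; but for a general matrix $M\in\GL(d,\Z)$ one genuinely needs Proposition \ref{prop:MatEnt}, since $\Omega(A)$ and $\Omega(M\cdot A)$ are then in general not conjugate, only equal in entropy.
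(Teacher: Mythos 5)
Your argument is correct and is exactly the paper's proof, which simply cites Fact \ref{fact:ShiftInv} and Proposition \ref{prop:MatEnt}; you have merely spelled out the chaining of the two equalities via Fact \ref{fact:conj}, with the translation correctly applied to the set $M\cdot A$ after the linear step. Nothing further is needed.
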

\begin{proof}
Follows directly from Propositions \ref{fact:ShiftInv} and \ref{prop:MatEnt}.
\end{proof}

\begin{definition}
\label{def:Aff}
Let $A=\mathset{x_1,x_2,\dots,x_n}$ be a finite set of points, contained in some vector space $V$ over a field $F$. The affine dimension of $A$, denoted by $\dim_{\text{aff}}(A)$, is defined to be the dimension of the vector space $V_A$, where 
\[ V_A\eqdef \mathset{\sum_{i=1}^n \alpha_i x_i~:~\alpha_1,\alpha_2,\dots, \alpha_d\in \R \such
\sum_{i=1}^n \alpha_i =0}.\]
We say that $A$ has full affine dimension if $\dim_{\text{aff}}(A)+1=\abs{A}$ and that the vectors composing $A$ are affinely independent.
\end{definition}

\begin{theorem}
\label{th:AffEqEnt}
Let $d\geq 2$ and $A,B\subseteq \Z^d$ be finite sets with full affine dimension such that $\abs{B}=\abs{A}=d'\leq d+1$.
Then, $\topent(\Omega(A))=\topent(\Omega(B))$. Furthermore, If $d=2$ and $d'=3$
\[ \topent(\Omega(A))=\topent(\Omega(B))=\frac{1}{4\pi^2}\intop_0^{2\pi}\intop_0^{2\pi}\log \abs{1+e^{ix}+e^{iy}}dxdy. \]
\end{theorem}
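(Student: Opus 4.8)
The plan is to reduce the general statement to the already-solved case $A = A_L$ from Theorem \ref{th:SolL} by exploiting the affine invariance results just established. First I would observe that any two finite sets $A, B \subseteq \Z^d$ with the same cardinality $d' = |A| = |B|$ and full affine dimension are related by an affine bijection of the ambient space. Indeed, writing $A = \{x_1, \dots, x_{d'}\}$ and $B = \{y_1, \dots, y_{d'}\}$ with both tuples affinely independent, the assignment $x_i \mapsto y_i$ extends (after translating $x_1$ and $y_1$ to the origin) to a linear isomorphism between the $(d'-1)$-dimensional linear spans; since $d' - 1 \le d$, this can be completed to an element of $\GL(d, \R)$. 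The subtlety is that we need this affine map to be realizable by an \emph{integer} matrix in $\GL(d,\Z)$ together with an integer translation, so that Corollary \ref{cor:AffInvEnt} applies. This is where the main work lies.

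To handle the integrality issue, I would argue as follows. Translate so that $x_1 = y_1 = 0$; then $\{x_2, \dots, x_{d'}\}$ and $\{y_2, \dots, y_{d'}\}$ are linearly independent sets in $\Z^d$. The key step is to choose a suitable $\Z$-basis: extend $\{x_2 - x_1, \dots, x_{d'} - x_1\}$ to a basis of $\Z^d$ — this is not always possible for an arbitrary linearly independent set, so instead I would first pass, via an auxiliary $M_0 \in \GL(d,\Z)$, to a normalized representative. Concretely, using the Smith normal form (or Hermite normal form) of the integer matrix whose columns are $x_2, \dots, x_{d'}$, there is $M_0 \in \GL(d,\Z)$ so that $M_0 \cdot A$ has a particularly simple shape; the cleanest target is to show that \emph{any} $A$ with full affine dimension and $|A| = d'$ is $\GL(d,\Z)$-equivalent (up to translation) to the standard simplex $S_{d'} \eqdef \{0, e_1, e_2, \dots, e_{d'-1}\}$. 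This last equivalence, however, need \emph{not} hold: the vectors $x_i$ may span a finite-index but proper sublattice, or sit in a primitive sublattice in a twisted way. The honest route is therefore to invoke Proposition \ref{prop:MatEnt}, which crucially allows \emph{arbitrary} invertible integer matrices (not just unimodular ones) at the cost of no longer having a conjugacy but still preserving entropy. Using $M \in \GL(d,\Q)$ that sends $A - x_1$ to $B - y_1$, clear denominators: there is a nonzero integer $c$ with $cM$ and $cM^{-1}$ both having rational entries whose common denominator we can absorb; more carefully, pick integer matrices $M_1, M_2$ with $M_1 (A - x_1) = M_2(B - y_1)$ as subsets of $\Z^d$ (possible since both affine spans, scaled appropriately, land in $\Z^d$), and apply Proposition \ref{prop:MatEnt} twice to get
\[
\topent(\Omega(A)) = \topent(\Omega(M_1 \cdot (A-x_1))) = \topent(\Omega(M_2 \cdot (B - y_1))) = \topent(\Omega(B)),
\]
where the outer equalities also use Fact \ref{fact:ShiftInv} for the translations. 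The hard part is constructing $M_1, M_2$ as genuine invertible integer matrices: one must check that the rational linear map identifying the two affine hulls can be composed with an injection of $\Z^d$ into a large common lattice in a way that stays invertible over $\Q$ and integral, which is a matter of choosing bases of $\Z^d$ adapted to the (possibly non-primitive) sublattices generated by $A - x_1$ and $B - y_1$.

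For the second assertion, once the invariance $\topent(\Omega(A)) = \topent(\Omega(B))$ is established for all full-affine-dimension triples in $\Z^2$, I would simply note that $A_L = \{(0,0),(1,0),(0,1)\}$ is such a triple (its affine dimension is $2 = 3 - 1$), so every three-element $A \subseteq \Z^2$ with $\dim_{\text{aff}}(A) = 2$ has $\topent(\Omega(A)) = \topent(\Omega(A_L))$, and the explicit value
\[
\topent(\Omega(A_L)) = \frac{1}{4\pi^2}\intop_0^{2\pi}\intop_0^{2\pi}\log\abs{1+e^{ix}+e^{iy}}\,dx\,dy
\]
is exactly Theorem \ref{th:SolL}. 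I expect the main obstacle to be the bookkeeping in the integrality argument of the previous paragraph — specifically, producing invertible integer matrices rather than merely rational ones — since the entropy-preservation machinery (Proposition \ref{prop:MatEnt}) is stated only for integer matrices, and the naive affine map matching $A$ to $B$ is a priori only defined over $\Q$.
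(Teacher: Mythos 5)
Your overall strategy is the right one and uses exactly the paper's machinery (Proposition \ref{prop:MatEnt}, Fact \ref{fact:ShiftInv}/Corollary \ref{cor:AffInvEnt}, then Theorem \ref{th:SolL} for the numeric value), but your reduction differs from the paper's and you stop short at the one step you call ``the hard part''. The paper does not try to map $A$ onto $B$, or $A$ onto a normal form, at all: it maps the canonical set $C_{d'}=\mathset{0,e_1,\dots,e_{d'-1}}$ \emph{onto} $A-a$, by taking $M$ to be the integer matrix whose columns are $a_1,\dots,a_{d'-1}$ (the nonzero elements of $A-a$) completed by integer vectors to a basis of $\R^d$; integrality of $M$ is then automatic, $M\cdot C_{d'}=A-a$, and Corollary \ref{cor:AffInvEnt} gives $\topent(\Omega(A))=\topent(\Omega(C_{d'}))$, with the same for $B$ --- no inverse, no unimodularity, no denominators ever enter. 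This is precisely why your worry about the sublattice generated by $A-x_1$ being non-primitive or of index $>1$ is moot: Proposition \ref{prop:MatEnt} only requires an invertible integer matrix in the direction you actually use. Your variant (find integer invertible $M_1,M_2$ with $M_1(A-x_1)=M_2(B-y_1)$ and apply Proposition \ref{prop:MatEnt} twice) does work, and the existence you leave open is in fact a one-liner rather than the crux: extend the two linearly independent tuples to bases of $\Q^d$, let $T\in\GL(d,\Q)$ send one to the other so that $Ta_i=b_i$, and take $M_1=cT$, $M_2=cI$ for $c$ a common denominator of the entries of $T$; both are integer and invertible, and $M_1(A-x_1)=cB'=M_2(B-y_1)$. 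So either finish your argument with that explicit choice, or adopt the paper's shortcut of pulling everything back to $C_{d'}$; the second part of the theorem is handled identically in both treatments, since $C_3=A_L$ and Theorem \ref{th:SolL} supplies the double integral.
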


\begin{proof}
Let $a\in A$, and $A' \eqdef A-a$. It is easy to verify that the elements of $A'$ span the space $V_A$ from from Definition \ref{def:Aff}.  Let us enumerate the elements of $A'$ by $a_0,\dots,a_{d'-1}$, where $a_0=0$. By the assumption, $\dim(\Linspan(a_0,a_1,\dots,a_{d'-1}))=d'-1$ and therefore $a_1,\dots,a_{d'-1}$ are linearly independent. Thus, we can complete them to a basis of $\R^d$ with integer vectors $v_{d'},\dots, v_d$ in the case where $d'-1<d$. If $d'=d+1$, $\mathset{a_1,\dots,a_{d'-1}}$ is already a basis of $\R^d$.

Let $M$ be the unique $d\times d$ matrix that maps the ordered basis $(e_1,e_2, \dots, e_d)$  to the ordered basis $(a_1,a_2,\dots,a_{d'-1},v_{d'}, \dots, v_d)$ 
, where $e_i=(0,\dots,0,\overset{i}{1},0,\dots,0)$. Clearly $M$ has integer entries as the rows of $M$ are the vectors $a_1,a_2,\dots,a_{d'-1},v_{d'}, \dots, v_d$. Let $C_{d'}\eqdef \mathset{0,e_1,e_2,\dots,e_{d'-1}}$. from the construction of $M$ it follows that $M\cdot C_{d'}=A'=A-a$. Using Corollary \ref{cor:AffInvEnt} we obtain
 \[ \topent(\Omega(C_{d'}))=\topent(\Omega(M\cdot C_{d'}+a))=\topent(\Omega(A)).\]
If $d=2$ and $d'=3$ we note that $C_{3}=\mathset{(0,0),(0,1),(1,0)}$, which in the notation of Section \ref{A_L}, is the set $A_L$. By Theorem \ref{th:SolL}, 
\[  \topent(\Omega(A))=\topent(\Omega(A_L))=\frac{1}{4\pi^2}\intop_0^{2\pi}\intop_0^{2\pi}\log_2\abs{1+e^{ix}+e^{iy}}dxdy.\]
\end{proof}

	\section{The Entropy of $\Omega([-k,k])$ and Size of Balls in $\ell_\infty$ Metric on $S_n$ }
	\label{Closed_k}
In this part, we focus on one-dimensional restricted  permutations, that is, restricted permutations of $\Z$. We show the equivalence between closed restricted permutations and balls in $\ell_\infty$ metric on permutation spaces. We examine the relations between the closed and regular topological entropy.

Given a subset of of integer numbers, $F\subseteq \Z$, we consider the $\ell_\infty$ metric on  $S(F)$ (the set of permutations of $F$), given by 
\[ d_\infty(f,g)\eqdef \norm{f-g}_\infty=\sup_{n\in N}\abs{f(n)-g(n)},\]
and balls in $d_\infty$ metric, given by 
\[ B(f,k)\eqdef \mathset{g\in S(F) ~:~ d_\infty(f,g)\leq k}.\] 
As in \cite{DezHua98}, $d_\infty$ is a right invariant metric, that is $d_\infty(f,g)=d_\infty(fh,gh)$ for all $f,g,h\in S(F)$. Thus, for all $f,g\in S(F)$, 
\[d_\infty(f,g)=d_\infty(I,fg^{-1})=d_\infty(I,gf^{-1}), \]
where $I$ denotes the identity function on $F$. We conclude that the size of a ball in $\ell_\infty$ metric does not depend on the center of the ball. In particular, the size of a ball of radius $k$, denoted by $B(F,k)$, is given by 
\[ B(F,k)=\abs{B(I,k)}=\abs{\mathset{f\in S(F)~:~d_\infty(I,f)\leq k}}.\]

In our terminology, $B(I,k)$ is exactly the number of permutations of $F$, restricted by the set $[-k,k]$. Thus, for $F=[n]$, with the notation from Chapter \ref{CHA:preliminaries},  
\[ B([n],k)=\abs{\mathset{f\in S([n])~:~d_\infty(I,f)\leq k}}=\abs{B_n^f([-k,k])},\]
and the asymptotic ball size, defined to be $\limsup_{n\to\infty}\frac{\log (B([n],k))}{n}$, is exactly the closed entropy, $\topent_c (\Omega([-k,k]))$. The asymptotic and non asymptotic ball size in $\ell_\infty$ metric were studied in detail in \cite{Sch09,SchVon17,Klo08,Klo09,Lag62}. 

In this section, we will use the special structure of $\Omega([0,k])$, discovered by Schmidt and Strasser in \cite{SchStr17}, in order to show that the closed and regular topological entropy of $\Omega([-k,k])$ are equal. This will prove that the asymptotic ball size in the $\ell_\infty$ is given by the entropy of $\Omega([-k,k])$.

\begin{definition}
A one-dimensional SFT $\Omega \subseteq\Sigma^{\mathbb{Z}}$
is called irreducible  if for all $ n_{1},n_{2}\in\mathbb{N}$
and $a\in B_{n_{1}}\left(\Omega \right)$, $b\in B_{n_{2}}\left(\Omega \right)$
there exists some $n_{3}\in\mathbb{N}$ and $c\in B_{n_{3}}\left(\Omega \right)$
such that $abc\in B_{n_{1}+n_{2}+n_{3}}\left(\Omega \right)$,
where $abc$ is the concatenation of $a$, $b$ and $c$. Let $\Irre \left(\Omega \right)$
be the set of all irreducible SFTs included in $\Omega $.
An irreducible component of $\Omega $
is a maximal element in $\Irre \left(\Omega \right)$ with respect
to the inclusion order.
\end{definition}
\begin{fact} (\cite{LinMar85}, Theorem 4.4.4)
\label{fact:subEnt}
Any SFT $\Omega\subseteq\Sigma^{\mathbb{Z}} $ has a finite number of irreducible components $\Omega_{1},\Omega_{2},\dots,\Omega_{n}$ such that $\Omega=\bigcup_{i=1}^{n}\Omega_{i}$ and $\topent 
(\Omega)=\max_{1\leq i\leq n} \topent (\Omega_{i})$. 
\end{fact}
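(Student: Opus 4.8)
The plan is to reduce the statement to the combinatorics of a finite presenting graph and then extract it from Perron--Frobenius theory. After a standard higher-block recoding, $\Omega\subseteq\Sigma^{\Z}$ is topologically conjugate to the edge shift $X_G$ of a finite directed graph $G$; removing stranded vertices (those of zero in-degree or zero out-degree) does not change $X_G$, so we may assume $G$ is \emph{essential} and hence that every vertex lies on a bi-infinite path. A conjugacy is a homeomorphism commuting with the shift, so it carries sub-SFTs to sub-SFTs bijectively, preserves irreducibility, and preserves entropy (Fact~\ref{fact:conj}); it therefore suffices to prove the statement for $X_G$.

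Next I would decompose $G$ into its strongly connected components $C_1,\dots,C_m$, which are acyclically ordered by reachability. Ordering the vertices so that every edge between distinct components points forward, the adjacency matrix $A_G$ becomes block upper-triangular with diagonal blocks $A_{C_1},\dots,A_{C_m}$, where $A_{C_i}$ is the adjacency matrix of the subgraph $G_i$ induced on $C_i$. Let $G_{j_1},\dots,G_{j_k}$ be those $G_i$ containing a cycle (equivalently, at least one edge); each is strongly connected, so $X_{G_{j_\ell}}$ is an irreducible sub-SFT of $X_G$, and there are finitely many of them. To see these are exactly the irreducible components: if $Y\subseteq X_G$ is a nonempty irreducible sub-SFT, then any two edges of $G$ occurring in points of $Y$ are mutually reachable in $G$ (join two words of the language of $Y$ by a connecting word, using irreducibility), hence lie in one component $C_i$, which must then contain a cycle and satisfy $Y\subseteq X_{G_i}$; thus every irreducible sub-SFT sits inside some $X_{G_{j_\ell}}$, so the maximal ones are among the $X_{G_{j_\ell}}$. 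Conversely each $X_{G_{j_\ell}}$ is maximal: if $X_{G_{j_\ell}}\subseteq Y'\subseteq X_G$ with $Y'$ irreducible then $Y'\subseteq X_{G_i}$ for some $i$, and since distinct components are edge-disjoint while $G_{j_\ell}$ carries an edge, this forces $G_i=G_{j_\ell}$ and $Y'=X_{G_{j_\ell}}$. The covering statement $\Omega=\bigcup_\ell\Omega_\ell$ is then the graph-theoretic fact that a bi-infinite path in a finite graph is, in each direction, eventually absorbed into a single component, modulo the finitely many transient edges between components; this is a point I would treat with care and, ultimately, quote in the form given by \cite{LinMar85}.

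For the entropy identity I would use $\topent(X_G)=\log\rho(A_G)$, with $\rho$ the spectral radius, together with the fact that the spectrum of a block upper-triangular matrix is the union of the spectra of its diagonal blocks, so that $\rho(A_G)=\max_i\rho(A_{C_i})$. Cycle-free components contribute a zero block and drop out of the maximum, and when $\Omega\neq\emptyset$ some $G_{j_\ell}$ carries a cycle, whence $\rho(A_{G_{j_\ell}})\ge 1$; hence $\topent(\Omega)=\topent(X_G)=\max_{1\le\ell\le k}\log\rho(A_{G_{j_\ell}})=\max_{1\le\ell\le k}\topent(X_{G_{j_\ell}})$. A direct combinatorial argument is also available: a length-$n$ path in $G$ meets its components in acyclic order and so splits, across at most $m-1$ transition edges, into sub-paths lying inside the $G_i$'s whose lengths sum to at most $n$; counting the component sequence, the transition edges, and the length-partition shows $\abs{B_n(X_G)}$ is at most a polynomial in $n$ times $\max_\ell\abs{B_n(X_{G_{j_\ell}})}$, and the reverse inequality is immediate from $X_{G_{j_\ell}}\subseteq X_G$.

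The step I expect to carry the real content is the identification of the maximal irreducible sub-SFTs --- establishing that the ``support'' of an arbitrary irreducible sub-SFT lies inside one strongly connected component, so that these sub-SFTs are precisely the edge shifts of the cyclic components of $G$ --- together with the mild care needed to interpret $\Omega=\bigcup_\ell\Omega_\ell$; once the block-triangular picture is set up, the entropy equality is essentially a one-line consequence of Perron--Frobenius.
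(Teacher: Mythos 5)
The paper offers no proof of this Fact at all --- it is quoted verbatim from \cite{LinMar85} (Theorem 4.4.4) --- so the only meaningful comparison is with that standard textbook argument, and for the entropy identity your proposal is correct and essentially reproduces it: recode to an essential edge shift $X_G$, identify the irreducible components with the edge shifts of the strongly connected subgraphs containing at least one edge, and obtain $\topent(X_G)=\max_\ell \topent(X_{G_{j_\ell}})$ either from the block upper-triangular adjacency matrix or from the path-splitting count (the latter being the route taken in \cite{LinMar85}). The identification of maximal irreducible sub-SFTs with the cyclic strongly connected components is also argued correctly.

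The genuine gap is the covering claim $\Omega=\bigcup_{i}\Omega_i$, which you explicitly postpone (``treat with care and, ultimately, quote \cite{LinMar85}''). Under the paper's definition of irreducible component (a maximal irreducible sub-SFT), that claim is false in general, so no amount of care will close it: over $\Sigma=\{a,b\}$ let $\Omega$ be the SFT with the single forbidden word $ba$. Its irreducible components are the two fixed points $a^{\infty}$ and $b^{\infty}$, yet $\Omega$ also contains the points equal to $a$ on all negative coordinates and $b$ on all nonnegative ones; such a point lies in no irreducible subshift, since irreducibility would force a word leading from $b$ back to $a$. Your own heuristic --- that a bi-infinite path is eventually absorbed into a single component in each direction --- is precisely where this breaks, because the forward and backward components need not coincide; and outsourcing the step to the citation does not work either, since Theorem 4.4.4 of \cite{LinMar85} asserts only the entropy equality. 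Fortunately the entropy equality is the only part of the Fact the paper actually uses (in Section \ref{Closed_k}, in combination with Theorem \ref{th:SchIr}, where the specific decomposition of $\Omega([k+1])$ into the sets $\Omega([k+1])_l$ does exhaust the space). So your proposal establishes the substantive half of the statement, while the union clause is an artifact of the transcription that neither your argument nor the cited theorem supports --- if you want a complete proof of the Fact as written, you must either drop that clause or restrict to SFTs, like those in Theorem \ref{th:SchIr}, for which every point lies in some irreducible component.
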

\begin{fact} 
\label{fact:conjIr}
Let $\Omega_{1}\subseteq\Sigma_{1}^{\mathbb{Z}}$,
$\Omega_{2}\subseteq\Sigma_{2}^{\mathbb{Z}}$ be topologically conjugate SFTs and $\phi:\Omega_{1}\to\Omega_{2}$ be a conjugacy map. If $\Omega\subseteq\Omega_{1}$ is an irreducible component of $\Omega_{1}$, then $\phi\left(\Omega\right)$ is an irreducible component of $\Omega_{2}$ and $h\left(\Omega\right)=h\left(\phi\left(\Omega\right)\right)$.
\end{fact}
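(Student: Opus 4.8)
The plan is to transport each of the three assertions across $\phi$, using only that a conjugacy is a homeomorphism intertwining the two shift actions, hence preserves every intrinsic feature of a one-dimensional subshift. First I would record two standard facts from symbolic dynamics. (i)~For a subshift, the combinatorial irreducibility of the preceding definition (any two blocks appearing in the subshift can be joined by a connecting block) is equivalent to topological transitivity of the shift, i.e.\ to the property that for every pair of nonempty open sets $U,V$ there is $n$ with $\sigma^{n}U\cap V\neq\emptyset$; this follows by translating between cylinder sets and the finite blocks naming them (see \cite{LinMar85}). (ii)~A subshift that is topologically conjugate to a shift of finite type is itself a shift of finite type (\cite{LinMar85}, Chapter 2).

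Granting these, the argument runs as follows. Since $\Omega\subseteq\Omega_{1}$ is closed and shift-invariant and $\phi$ is a continuous, shift-commuting bijection, $\phi(\Omega)$ is compact and shift-invariant, hence a subshift of $\Sigma_{2}^{\mathbb{Z}}$, and the restriction $\phi|_{\Omega}\colon\Omega\to\phi(\Omega)$ is a conjugacy. Because $\Omega$ is an SFT, fact (ii) shows $\phi(\Omega)$ is an SFT; because topological transitivity is a conjugacy invariant, fact (i) shows $\phi(\Omega)$ is irreducible. Hence $\phi(\Omega)\in\Irre(\Omega_{2})$. The same reasoning applied to $\phi^{-1}$ shows that $\phi^{-1}$ carries members of $\Irre(\Omega_{2})$ to members of $\Irre(\Omega_{1})$.

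For maximality, suppose towards a contradiction that $\phi(\Omega)$ is not maximal in $\Irre(\Omega_{2})$, so that there is $\Omega'\in\Irre(\Omega_{2})$ with $\phi(\Omega)\subsetneq\Omega'$. Then $\phi^{-1}(\Omega')\in\Irre(\Omega_{1})$ by the previous paragraph, and since $\phi$ is a bijection, $\Omega=\phi^{-1}(\phi(\Omega))\subsetneq\phi^{-1}(\Omega')\subseteq\Omega_{1}$, contradicting maximality of $\Omega$ in $\Irre(\Omega_{1})$. Thus $\phi(\Omega)$ is an irreducible component of $\Omega_{2}$. Finally, $\Omega$ and $\phi(\Omega)$ are conjugate one-dimensional SFTs, so $\topent(\Omega)=\topent(\phi(\Omega))$ by Fact~\ref{fact:conj}.

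The whole argument is essentially bookkeeping about how a shift-commuting homeomorphism respects closures, inclusions, and the inclusion order on $\Irre$. The only ingredient that is not purely formal is (ii), the conjugacy-invariance of the finite-type property; and the one point requiring care is that ``irreducible component'' is defined relative to SFTs, so one must verify $\phi(\Omega)\in\Irre(\Omega_{2})$ --- in particular that $\phi(\Omega)$ is genuinely an SFT, not merely a subshift --- before speaking of its maximality. I do not anticipate a real obstacle.
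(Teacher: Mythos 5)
Your argument is correct and follows essentially the same route as the paper, which justifies this fact in one line by noting that irreducibility is equivalent to topological transitivity, a conjugacy invariant (citing \cite{LinMar85}); you simply spell out the accompanying bookkeeping (that $\phi(\Omega)$ is an SFT, the maximality transfer via $\phi^{-1}$, and the entropy equality from Fact~\ref{fact:conj}). No gaps.
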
 

Fact \ref{fact:conjIr} is followed from the equivalence between irreducibility and topological transitivity, which is invariant under conjugacy. See Example 6.3.2. in \cite{LinMar85} for further details.

Schmidt and Strasser studied the decomposition of $\Omega([k+1])$ into irreducible components, and the properties of these irreducible components.
\begin{theorem} (\cite{SchStr17}, Section 2)
\label{th:SchIr}
\begin{enumerate}
\item For every $\omega\in\Omega\left(\left[k+1\right]\right)$ there exists
an integer $a(\omega) \in [k+1]$ such that 
\[ \abs{\sum_{n=m}^{m+N-1} \omega_{n}-Na\left(\omega\right)} <k^{2} \]
 for every $m\in\mathbb{Z}$, $N\in\mathbb{N}$. This integer can
be viewed as the average shift of $\mathbb{Z}$, imparted by the
permutation $\pi_{\omega}$. Moreover, $a\left(\omega\right)$ is
given by 
\[ a\left(\omega\right)=\abs{\left\{ j\in\left[-k,-1\right]\,~:~\,\pi_{\omega}\left(j\right)\geq0\right\}} .\]
\item The irreducible components of $\Omega([k+1])$ are $\left\{ \Omega\left(\left[k+1\right]\right)_{l}\right\} _{l\in\left[k+1\right]}$,
where 
\[\Omega([k+1])_l\eqdef \mathset{\omega\in \Omega([k+1]) \ ~:~ \ a(\omega)=l} .\]
\item For any $l\in [k+1]$
, the subshifts $\Omega\left(\left[k+1\right]\right)_{l}$
and $\Omega\left(\left[k+1\right]\right)_{k-l}$ are topologically conjugate.
\item For $0\leq l<\frac{k}{2}$, $\topent\left(\Omega\left(\left[k+1\right]\right)_{l}\right)\leq\topent \parenv{\Omega\left(\left[k+1\right]\right)_{l+1}}$.
\item $\left|\Omega\left(\left[k+1\right]\right)_{0}\right|$=$\left|\Omega\left(\left[k+1\right]\right)_{k}\right|=1$,
and for $l\in\left[1,k-1\right]$, the topological entropy $h\left(\Omega\left(\left[k+1\right]\right)_{l}\right)$
satisfies 
\[ \left(1-\frac{l}{k}\right) \log\left(l+1\right)\leq h\left(\Omega\left(\left[k+1\right]\right)_{l}\right)\leq\log\left(l+1\right).\]
\item For any $l\in [1,k-1]$, let $\binom{[k]}{l}\subseteq 2^{[k]}$ denote the set of all subsets of $[k]$ containing exactly $l$ elements. Define $M_{k+1,l}$ to be the $\binom{k}{l}\times \binom{k}{l}$ matrix with entries from $\mathset{0,1}$, satisfying $M(A,B)=1$ if and only if one of the following condition is satisfied:
\begin{itemize}
\item $0\notin A$ and $B=A-1\eqdef \mathset{a-1~:~a\in A}$.
\item $0\in A$ and $B=(A'-1)\cup \mathset{j}$ for some $j\in [k]\setminus A'$, where $A'\eqdef A\setminus \mathset{0}$.
\end{itemize}
$\Omega([k+1])_l$ is topologically conjugated to $X_{k+1,l}$ defined by 
\[ X_{k+1,l}\eqdef \mathset{(A_n)_{n\in \Z}\in \binom{k}{l}^{\Z}~:~M(A_n,A_{n+1}) \text{ for every }n\in \Z  }.\] 
\end{enumerate}
\end{theorem}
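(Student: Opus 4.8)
The engine behind every part of the theorem is one structural fact about one‑dimensional restricted permutations. For $\pi=\pi_\omega\in\Omega([k+1])$ and $n\in\Z$ let $c_n(\omega):=\abs{\mathset{i\le n : \pi(i)>n}}$ count the orbit‑strands crossing the cut between $n$ and $n+1$ (in the path picture of Example~\ref{ex:GenToZd}, the number of polygonal paths separating $n$ from $n+1$). Since $0\le\pi(i)-i\le k$, every such $i$ lies in $[n-k+1,n]$, so $0\le c_n\le k$. Comparing $c_n$ with $c_{n-1}$, a crossing strand is lost at $n$ exactly when $\pi^{-1}(n)<n$ and one is gained exactly when $\pi(n)>n$; but $\pi(n)\ge n$, $\pi^{-1}(n)\le n$, and $\pi(n)=n\iff\pi^{-1}(n)=n$, so these events coincide and $c_n=c_{n-1}$. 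Hence $c_n$ is a constant $a(\omega)\in[k+1]$, and evaluating at $n=-1$ (where any relevant $i$ satisfies $i\ge\pi(i)-k\ge-k$) gives $a(\omega)=\abs{\mathset{j\in[-k,-1]:\pi_\omega(j)\ge0}}$, which is part~1. For the partial‑sum bound, write $\sum_{n=m}^{M}\omega_n=\sum_{v\in\pi([m,M])}v-\sum_{n=m}^{M}n$ and use $[m+k,M]\subseteq\pi([m,M])\subseteq[m,M+k]$, where $\pi([m,M])$ contains exactly $a(\omega)$ points above $M$ and misses exactly $a(\omega)$ points of $[m,m+k-1]$; the resulting two boundary terms are each $O(k^2)$ uniformly in $m$ and $N=M-m+1$, yielding $\abs{\sum_{n=m}^{M}\omega_n-Na(\omega)}<k^2$.

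Because $a(\omega)$ is determined by $\omega|_{[-k,-1]}$, the map $a:\Omega([k+1])\to[k+1]$ is locally constant, hence continuous, and it is visibly shift‑invariant; thus $\Omega([k+1])=\bigsqcup_{l=0}^{k}\Omega([k+1])_l$ is a partition into clopen shift‑invariant subsystems, each of which is itself an SFT. Any topologically transitive subshift $Y\subseteq\Omega([k+1])$ is $a$‑constant, since otherwise the sets $Y\cap a^{-1}(l)$ would split $Y$ into nonempty disjoint clopen shift‑invariant pieces; hence $Y\subseteq\Omega([k+1])_l$ for a single $l$, so the irreducible components of $\Omega([k+1])$ are exactly those of the pieces $\Omega([k+1])_l$, and part~2 reduces to: each $\Omega([k+1])_l$ is irreducible. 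Here I would use the strand picture: the uniform permutation $n\mapsto n+l$ lies in $\Omega([k+1])_l$, and since every crossing strand has remaining length $\le k$ and those lengths simply decrease by $1$ per cut (a strand's image is frozen once the strand is created), within at most $k$ cuts every strand present at a given cut has been replaced by a freshly created one; from any admissible local configuration one can therefore steer to the uniform one, and from it to any other admissible configuration, within $O(k)$ cuts. Splicing two given patterns of $\Omega([k+1])_l$ through a long block of the uniform permutation then gives irreducibility, which by Fact~\ref{fact:subEnt} identifies $\mathset{\Omega([k+1])_l}_{l\in[k+1]}$ as the irreducible‑component decomposition.

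The same bookkeeping yields the one‑step presentation of part~6. For $\omega\in\Omega([k+1])_l$, record the set $R_n(\omega):=\mathset{\pi(i)-n : i\le n,\ \pi(i)>n}\subseteq\mathset{1,\dots,k}$ of remaining lengths of the $l$ strands crossing the $n$‑th cut, and put $A_n:=R_{n-1}(\omega)-1\subseteq[k]$. A short case analysis — using that a strand dies at a cut iff a new one is born there, and that the new strand's image must avoid the $l-1$ surviving images — shows that $(A_n,A_{n+1})$ is always an admissible transition of $M_{k+1,l}$, that every bi‑infinite path in $M_{k+1,l}$ arises this way, and that $\omega_n$ is recovered from $(A_{n-1},A_n)$; as $A_n$ depends on finitely many coordinates of $\omega$, this is a conjugacy $\Omega([k+1])_l\cong X_{k+1,l}$. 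Part~5's degenerate cases are then immediate: $c_n\equiv0$ forces $\pi=\mathrm{id}$, and $c_n\equiv k$ forces all $k$ slots occupied at every cut, hence $\pi(n)=n+k$ for all $n$, so $\abs{\Omega([k+1])_0}=\abs{\Omega([k+1])_k}=1$. For part~3, the involution $A\mapsto[k]\setminus A$ carries transitions of $M_{k+1,l}$ to transitions of $M_{k+1,k-l}$, giving $X_{k+1,l}\cong X_{k+1,k-l}$ and hence $\Omega([k+1])_l\cong\Omega([k+1])_{k-l}$ (alternatively one reverses the spatial direction and translates the restricting set back to $[k+1]$ via Fact~\ref{fact:ShiftInv}). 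The entropy lower bound in part~5 I would get by exhibiting a product‑type subsystem of $X_{k+1,l}$ in which, over each window of $k$ cuts, one makes about $k-l$ independent choices each among roughly $l+1$ admissible continuations, producing at least $\bigl((l+1)^{k-l}\bigr)^{\lfloor N/k\rfloor}$ blocks of length $N$, i.e.\ rate $\ge(1-\tfrac{l}{k})\log(l+1)$. Monotonicity (part~4): when $l<k/2$ one has $2l<k$, leaving room to thread one extra strand disjointly from an existing $l$‑strand configuration; this defines a shift‑commuting injection $\Omega([k+1])_l\hookrightarrow\Omega([k+1])_{l+1}$ (most cleanly, an embedding $X_{k+1,l}\hookrightarrow X_{k+1,l+1}$ adjoining a suitable extra coordinate to each state), and an injective sliding‑block code cannot raise entropy.

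The one place where genuine work is needed is the upper bound $\topent(\Omega([k+1])_l)\le\log(l+1)$. The crude estimate from the presentation only gives $\log(\text{max out-degree})=\log(k-l+1)$, which is strictly weaker precisely when $l<k/2$ — the range where the bound matters. The mechanism making $\log(l+1)$ correct is that every branching state of $M_{k+1,l}$ (those containing $0$) is followed by a forced monotone descent whose length is controlled by the chosen successor, so the Perron eigenvalue of $M_{k+1,l}$ must be bounded not by row sums but by a weighted Collatz–Wielandt inequality with weights attenuating along descents; one checks this works with base $l+1$ (for $l=1$ it reduces to $\sum_{i=1}^{k}\rho^{-i}\le1$ at $\rho=2$, valid for all $k\ge2$). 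Pinning down $M_{k+1,l}$ exactly in the presentation and carrying out this eigenvalue estimate are the technical heart; everything else is routine bookkeeping built on the constant‑strand‑count observation.
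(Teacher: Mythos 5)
The paper itself contains no proof of this theorem: it is quoted verbatim from \cite{SchStr17}, so your attempt has to stand on its own. Your crossing-count framework is the right engine: the argument that $c_n=\abs{\mathset{i\le n:\pi(i)>n}}$ is constant, the identification $a(\omega)=\abs{\mathset{j\in[-k,-1]:\pi_\omega(j)\ge 0}}$, the partial-sum estimate for part 1, and the reduction of part 2 to irreducibility of each level set of $a$ are all sound. But several later steps are genuinely broken. For part 3, the involution $A\mapsto[k]\setminus A$ does not carry admissible transitions to admissible transitions: already for $k=3$ the transition $\mathset{0,2}\to\mathset{0,1}$ is admissible in $M_{4,2}$, while its image $\mathset{1}\to\mathset{2}$ is forbidden in $M_{4,1}$ (a state not containing $0$ must step to $A-1$); the two graphs even have different out-degree sequences, so no state bijection can work. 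Your fallback (spatial reversal plus a translation via Fact~\ref{fact:ShiftInv}) intertwines $\sigma_m$ with $\sigma_{-m}$, so it is only a flip-conjugacy, not a topological conjugacy. A map that does work is $\pi\mapsto\sigma_k\circ\pi^{-1}$: the inverse has displacements in $[-k,0]$, translating by $k$ returns them to $[k+1]$, the map commutes with the shift, and a crossing-count computation shows it sends $a=l$ to $a=k-l$. For part 4 the proposed mechanism fails outright: an extra strand threaded \emph{disjointly} can only terminate at fixed points of the given permutation, and $\omega\equiv 1$ (that is, $\pi(n)=n+1$) lies in $\Omega([k+1])_1$ and has no fixed points; the \quo{extra coordinate} embedding breaks on the same point, since over the constant state sequence $\mathset{0}$ the adjoined element would have to decrement forever without ever being reborn. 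So monotonicity is not proved by your argument and needs a different idea.

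For part 6, your bookkeeping actually yields, for states with $0\in A$, successors $B=(A'-1)\cup\mathset{j}$ with $j\in[k]\setminus(A'-1)$ (the new image must avoid the \emph{shifted} surviving images), not $j\in[k]\setminus A'$ as displayed in the statement; for $l\ge 2$ these differ — for $k=3$, $l=2$ the legal transition $\mathset{0,1}\to\mathset{0,1}$ is excluded by the displayed rule, whose graph would then have entropy $0$, contradicting part 5 — so the promised \quo{short case analysis} cannot verify the matrix literally as stated, and you must either correct the rule or reconcile conventions with \cite{SchStr17}. The irreducibility of each $\Omega([k+1])_l$ (the real content of part 2, equivalently strong connectivity of the transition graph) and the lower bound in part 5 remain plans rather than proofs; in the steering argument keep in mind that a strand can be born only at a position where one dies. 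Finally, the upper bound $\log(l+1)$, which you single out as the technical heart and leave to an unspecified weighted Collatz--Wielandt estimate, requires no eigenvalue analysis once parts 3 and 6 are in place: in the presentation of the conjugate component $k-l$, a state containing $0$ has out-degree $k-(k-l-1)=l+1$ and every other state has out-degree $1$, so $\topent(\Omega([k+1])_l)=\topent(\Omega([k+1])_{k-l})=\topent(X_{k+1,k-l})\le\log(l+1)$.
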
 
We use the conjugacy of $\Omega([-k,k])$ and $\Omega([0,2k+1])$ (see Proposition \ref{prop:EntProp}) and convert their results to $\Omega([-k,k])$.
\begin{definition}
for $l\in\left[2k+1\right]$, we define
\[  \Omega\left(\left[-k,k\right]\right)_{l}\eqdef \left\{ \omega\in\Omega\left(\left[-k,k\right]\right)\,~:~\,f\left(\omega\right)=l\right\}
,\]
where
\[ f\left(\omega\right)\eqdef \abs{\left\{ j\in\left[-2k,-1\right]\,~:~\,\pi_{\omega}\left(j\right)\geq-k\right\}}. \]
\end{definition}
\begin{proposition}

\label{prop:pro2k} \
\begin{itemize}
\item For $l\in\left[2k+1\right]$, the sets $\left\{ \Omega\left(\left[-k,k\right]\right)_{l}\right\} _{l\in\left[2k+1\right]}$
are the irreducible components of $\Omega\left(\left[-k,k\right]\right)$.
\item For $l\in\left[2k+1\right]$, the subshifts $\Omega\left(\left[-k,k\right]\right)_{l}$
and $\Omega\left(\left[-k,k\right]\right)_{2k-l}$ are topologically
conjugate.
\item For $0\leq l<k$, $\topent \left(\Omega\left(\left[-k,k\right]\right)_{l}\right)\leq \topent\parenv{\Omega\left(\left[-k,k\right]\right)_{l+1}}$.
\item $\left|\Omega\left(\left[-k,k\right]\right)_{0}\right|$=$\left|\Omega\left(\left[k\right]\right)_{2k}\right|=1$,
and for $l\in\left[1,2k-1\right]$, the topological entropy $\topent \left(\Omega\left(\left[-k,k\right]\right)_{l}\right)$
satisfies
\begin{align*}
\left(1-\frac{l}{2k}\right) & \log\left(l+1\right)\leq \topent \left(\Omega\left(\left[-k,k\right]\right)_{l}\right)\leq\log\left(l+1\right).\\
\end{align*}
\item $\Omega([-k,k])$ is topologically conjugated to $X_{2k+1,l}$ from Proposition \ref{th:SchIr}.6.  
\end{itemize}
\end{proposition}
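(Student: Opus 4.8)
The plan is to deduce every assertion from Schmidt and Strasser's structural analysis of $\Omega([k'+1])$ (Theorem~\ref{th:SchIr}) by transporting it through an explicit conjugacy, taking their parameter to be $k'=2k$. First I would exhibit a topological conjugacy $\Phi\colon \Omega([2k+1])\to\Omega([-k,k])$. On the alphabet embedding this is the constant shift $\Phi(\omega)=\omega-k$; equivalently, on permutations it sends $\pi$ (restricted by $[0,2k]$) to $\sigma_k^{-1}\circ\pi$ (restricted by $[-k,k]$), which is a bijection of $\Z$ since $\sigma_k$ is. This is an instance of Fact~\ref{fact:ShiftInv}, it commutes with the $\Z$-shift, and it is a homeomorphism because it and its inverse send cylinder sets to cylinder sets.

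The one step with genuine content is the observation that $\Phi$ carries the average-shift statistic $a$ of Theorem~\ref{th:SchIr}.1 to the statistic $f(\omega)=\abs{\{j\in[-2k,-1]:\pi_\omega(j)\ge -k\}}$ appearing just before the proposition. Using the closed form $a(\omega)=\abs{\{j\in[-2k,-1]:\pi_\omega(j)\ge 0\}}$ together with $\pi_{\Phi(\omega)}=\sigma_k^{-1}\circ\pi_\omega$, one has $\pi_\omega(j)\ge 0 \iff \pi_{\Phi(\omega)}(j)\ge -k$, so $a(\omega)=f(\Phi(\omega))$ for every $\omega\in\Omega([2k+1])$. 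Consequently $\Phi\bigl(\Omega([2k+1])_l\bigr)=\Omega([-k,k])_l$ for each $l\in[2k+1]$.

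With this dictionary the five bullets follow mechanically. Since $\Phi$ is a conjugacy of one-dimensional SFTs, Fact~\ref{fact:conjIr} shows it maps irreducible components bijectively to irreducible components; combined with Fact~\ref{fact:subEnt} and Theorem~\ref{th:SchIr}.2 (which identify the components of $\Omega([2k+1])$ as the $\Omega([2k+1])_l$), the sets $\Omega([-k,k])_l$ are exactly the irreducible components of $\Omega([-k,k])$. The second bullet is Theorem~\ref{th:SchIr}.3 ($\Omega([2k+1])_l$ conjugate to $\Omega([2k+1])_{2k-l}$) post-composed with $\Phi$; the monotonicity statement and the cardinality/entropy bounds are Theorem~\ref{th:SchIr}.4--5 together with conjugacy-invariance of entropy (Fact~\ref{fact:conj}), where the range $0\le l<k'/2$ becomes $0\le l<k$ and the range $l\in[1,k'-1]$ becomes $l\in[1,2k-1]$; and the last bullet is Theorem~\ref{th:SchIr}.6, giving $\Omega([-k,k])_l$ conjugate to $X_{2k+1,l}$.

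The argument is essentially bookkeeping, and the only place that warrants care is keeping the index translations consistent: Schmidt and Strasser's ``$k$'' must be read throughout as our $2k$, and the direction of the constant symbol-shift (hence whether one composes with $\sigma_k$ or $\sigma_{-k}$) must be chosen so that the identity $a=f\circ\Phi$ comes out with the correct inequality $\pi_\omega(j)\ge -k$ rather than the reverse. Beyond this I do not anticipate any real difficulty.
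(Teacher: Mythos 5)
Your proposal is correct and takes essentially the same route as the paper: the paper also uses the alphabet-shift conjugacy $\Phi_{-k}(\omega)=\sigma_{-k}\circ\omega$ from $\Omega([2k+1])$ to $\Omega([-k,k])$, verifies $a(\omega)=f(\Phi_{-k}(\omega))$ so that $\Phi_{-k}(\Omega([2k+1])_l)=\Omega([-k,k])_l$, and then transports Theorem~\ref{th:SchIr} (read with parameter $2k$) through Fact~\ref{fact:conjIr} to obtain all the bullets.
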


\begin{proof}
By Proposition \ref{prop:EntProp}, $\Omega\left(\left[2k+1\right]\right)$ and $\Omega\left(\left[-k,k\right]\right)$ are topologically conjugated.
It is easy to verify that the function $\Phi_{-k}:\Omega\left(\left[2k+1\right]\right)\to\Omega\left(\left[-k,k\right]\right)$ defined by $\Phi_{-k}(\omega)\eqdef \sigma_{-k}\circ \omega$ is a conjugacy map.
By the definitions of $a\left(\omega\right)$ and $f\left(\omega\right)$,
it is easy to see that $a\left(\omega\right)=f\left(\Phi_{-k}\left(\omega\right)\right)$
for all $\omega\in\Omega\left(\left[2k+1\right]\right)$. Thus, $\Phi_{-k}\left(\Omega\left(\left[2k+1\right]\right)_{l}\right)=\Omega\left(\left[-k,k\right]\right)_{l}$
for any $l\in\left[2k+1\right]$. Since $\left\{ \Omega\left(\left[2k+1\right]\right)_{l}\right\} _{l\in\left[2k+1\right]}$
are the irreducible components of $\Omega\left(\left[2k+1\right]\right)$
(Theorem \ref{th:SchIr}), and $\Phi_{-k}$ is an conjugacy, by Fact \ref{fact:conjIr}, it follows that 
\[ \left\{ \Phi_{-k}\left(\Omega\left(\left[2k+1\right]\right)_{l}\right)\right\} _{l\in\left[2k+1\right]}=\left\{ \Omega\left(\left[-k,k\right]\right)_{l}\right\} _{l\in\left[2k+1\right]} \]
are the irreducible components of $\Omega\left(\left[-k,k\right]\right)$.
The rest follows immediately from fact \ref{fact:conjIr}. 
\end{proof}

\begin{corollary}
\label{cor:AvMeaning}
For all $\omega\in \Omega\parenv{[-k,k]}$ and for all $m\in\mathbb{Z}$,
\[ \abs{\mathset{j\in[m,m+2k-1]~:~\pi_\omega (j) \geq m+k}}=f(\omega).\]
\end{corollary}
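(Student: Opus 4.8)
The plan is to reduce the claim to showing that the integer
\[ N_m(\omega)\eqdef\abs{\mathset{j\in[m,m+2k-1]~:~\pi_\omega(j)\geq m+k}} \]
is independent of $m$. Indeed, taking $m=-2k$ turns the window $[m,m+2k-1]$ into $[-2k,-1]$ and the threshold $m+k$ into $-k$, so $N_{-2k}(\omega)=f(\omega)$; once constancy in $m$ is established, the corollary follows. Throughout, write $\pi\eqdef\pi_\omega$, which is a bijection of $\Z$ with $\abs{\pi(j)-j}\leq k$ for every $j\in\Z$, and fix the cut $c\eqdef m+k$.

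First I would localize the crossings of the cut $c$. If $j\leq m-1$ then $\pi(j)\leq j+k\leq m+k-1<c$; if $j\geq m+2k$ then $\pi(j)\geq j-k\geq m+k=c$. Consequently, the set of left-to-right crossings $A_c\eqdef\mathset{j<c~:~\pi(j)\geq c}$ satisfies $A_c\subseteq[m,m+k-1]$, and, by the same displacement bound, the set of right-to-left crossings $B_c\eqdef\mathset{j\geq c~:~\pi(j)<c}$ satisfies $B_c\subseteq[m+k,m+2k-1]$. Splitting the window $[m,m+2k-1]$ into the two halves $[m,m+k-1]$ and $[m+k,m+2k-1]$, each of size $k$, the first half contributes exactly the elements of $A_c$ to $N_m(\omega)$, while the second half contributes $k-\abs{B_c}$ of its $k$ elements; hence
\[ N_m(\omega)=\abs{A_c}+\bigl(k-\abs{B_c}\bigr)=k+\bigl(\abs{A_c}-\abs{B_c}\bigr). \]

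The core of the argument is a flux-conservation lemma: for any bijection $\pi$ of $\Z$ with bounded displacement, the difference $\abs{A_c}-\abs{B_c}$ takes the same value for every $c\in\Z$. I would prove this by comparing the cut $c$ with the cut $c+1$. Only the index $j=c$ and the unique index with $\pi(j)=c$ change role, giving
\[ \abs{A_{c+1}}-\abs{A_c}=\Ind{\pi(c)\geq c+1}-\abs{\mathset{j\leq c-1~:~\pi(j)=c}} \]
and
\[ \abs{B_{c+1}}-\abs{B_c}=\abs{\mathset{j\geq c+1~:~\pi(j)=c}}-\Ind{\pi(c)\leq c-1}. \]
Subtracting these and using the two identities $\Ind{\pi(c)\leq c-1}+\Ind{\pi(c)=c}+\Ind{\pi(c)\geq c+1}=1$ and $\abs{\mathset{j\leq c-1~:~\pi(j)=c}}+\Ind{\pi(c)=c}+\abs{\mathset{j\geq c+1~:~\pi(j)=c}}=1$ (the second because, $\pi$ being a bijection, exactly one index maps to $c$), every correction term cancels and $\abs{A_{c+1}}-\abs{B_{c+1}}=\abs{A_c}-\abs{B_c}$. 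Plugging this into the displayed formula for $N_m(\omega)$ shows that $N_m(\omega)=k+\bigl(\abs{A_c}-\abs{B_c}\bigr)$ does not depend on $m$, so $N_m(\omega)=N_{-2k}(\omega)=f(\omega)$ for every $m$.

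I expect the flux-conservation lemma to be the only real obstacle; the rest is elementary counting driven by the inequality $\abs{\pi(j)-j}\leq k$. As a sanity check, the flux $\abs{A_c}-\abs{B_c}$ is precisely the average shift studied in the first part of Theorem \ref{th:SchIr}, transported through the conjugacy $\Phi_{-k}$ and shifted by $k$ as in the proof of Proposition \ref{prop:pro2k}, so one could alternatively deduce constancy from that theorem; but the direct argument above is self-contained and shorter.
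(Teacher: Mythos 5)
Your proof is correct, but it takes a genuinely different route from the paper. The paper's argument is a two-line reduction using machinery already in place: by Proposition \ref{prop:pro2k} the component $\Omega([-k,k])_{f(\omega)}$ containing $\omega$ is shift invariant, so $f(\sigma_{m+2k}(\omega))=f(\omega)$, and the count over $[m,m+2k-1]$ with threshold $m+k$ is literally the count defining $f$ applied to the shifted point $\sigma_{m+2k}(\omega)$ — that is, the paper pushes the window back to $[-2k,-1]$ by a shift and invokes the structure of the irreducible components (ultimately Theorem \ref{th:SchIr} transported through the conjugacy $\Phi_{-k}$). You instead prove the statement from scratch: you localize the crossings of the cut $c=m+k$ using the displacement bound $\abs{\pi_\omega(j)-j}\le k$, write $N_m(\omega)=k+\bigl(\abs{A_c}-\abs{B_c}\bigr)$, and establish by an exact cancellation at $c\mapsto c+1$ (using that exactly one index maps to $c$ and that $\pi_\omega(c)$ falls in exactly one of the three ranges) that the flux $\abs{A_c}-\abs{B_c}$ is cut independent; I checked the bookkeeping in both difference formulas and the two cancellation identities, and it is sound, with the finiteness of $A_c,B_c$ guaranteed by their containment in intervals of length $k$. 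What each approach buys: yours is self-contained and elementary, needing only that $\pi_\omega$ is a bounded-displacement bijection of $\Z$ — in effect it re-derives the constancy underlying the ``average shift'' $a(\omega)$ of Theorem \ref{th:SchIr}.1 rather than quoting it — while the paper's is much shorter in context and keeps the corollary explicitly tied to the component decomposition that the surrounding results use. Your closing remark correctly identifies this relationship, so the only caveat is redundancy relative to the cited structure theory, not correctness.
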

\begin{proof}
By proposition \ref{prop:pro2k}, $\omega$ is contained in the irreducible component $\Omega\parenv{[-k,k]}_{f(\omega)}$. Consider the shift left by $m+2k$ of $\omega$, denoted by $\sigma_{m+2k}(\omega)$. Since irreducible components are shift invariant, $\sigma_{m+2k}(\omega)\in \Omega\parenv{[-k,k]}_{f(\omega)}$ as well. Hence,
\begin{align*}
\abs{\mathset{j\in[m,m+2k-1]~:~\pi_\omega (j) \geq m+k}}&=\abs{\mathset{j\in[-2k,-1]~:~\pi_{\sigma_{m+2k}(\omega)} (j) \geq -k}}\\
&=f(\sigma_{m+2k}(\omega))=f(\omega).
\end{align*}
\end{proof}

\begin{theorem}
\label{th:eqd1}
For any $n\in\mathbb{N}$ such that $n>2k$, 
\[ \left|B_{n}\left(\Omega\left(\left[-k,k\right]\right)_{k}\right)\right|\leq\left|B_{n+2k}^{f}\left(\left[-k,k\right]\right)\right|,\]
Where $B_{n}\parenv{\Omega([-k,k])_k}$ is the set of patterns of length $n$ which appear in elements of $\Omega\left(\left[-k,k\right]\right)_k$.
\end{theorem}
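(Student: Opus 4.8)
The plan is to inject $B_n(\Omega([-k,k])_k)$ into $B^f_{n+2k}([-k,k])$ by completing every length-$n$ pattern to a closed permutation with trivial boundary behaviour. Fix $w\in B_n(\Omega([-k,k])_k)$, choose $\omega\in\Omega([-k,k])_k$ with $\omega|_{\mathset{0,\dots,n-1}}=w$, and write $\pi=\pi_\omega$; recall $f(\omega)=k$. I will build a bijection $\hat\pi$ of the $(n+2k)$-element set $S=\mathset{-k,\dots,n+k-1}$ with $\hat\pi(j)-j\in[-k,k]$ for all $j$ and $\hat\pi(j)=\pi(j)$ on $I=\mathset{0,\dots,n-1}$. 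Translating $S$ by $+k$ turns $\hat\pi$ into an element of $B^f_{n+2k}([-k,k])$, and $w\mapsto\hat\pi$ is automatically injective, since $w$ is recovered as the tuple of displacements of $\hat\pi$ on $I$. So the whole problem is to extend the partial injection $\pi|_I$ to such a $\hat\pi$.

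First I describe what must be filled in. Because $n>2k$, every value $v\in\mathset{k,\dots,n-k-1}$ has $\pi^{-1}(v)\in[v-k,v+k]\subseteq I$, hence $\mathset{k,\dots,n-k-1}\subseteq\pi(I)$; so the uncovered image set $T\eqdef S\setminus\pi(I)$, of size $\abs{S}-\abs{I}=2k$, lies in the two boundary blocks $\mathset{-k,\dots,k-1}$ and $\mathset{n-k,\dots,n+k-1}$. Using $\pi^{-1}(v)\le 2k-1<n$ for $v$ in the left block, one checks $T\cap\mathset{-k,\dots,k-1}=A$, where $A\eqdef\mathset{v\in\mathset{-k,\dots,k-1}:\pi^{-1}(v)\le-1}$; and the bijection $v\mapsto\pi^{-1}(v)$ identifies $A$ with $\mathset{j\in[-2k,-1]:\pi(j)\ge-k}$, which by Corollary \ref{cor:AvMeaning} (applied with $m=-2k$) has size $f(\omega)=k$. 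Hence $T=A\uplus T_R$ with $\abs{A}=\abs{T_R}=k$ and $T_R\subseteq\mathset{n-k,\dots,n+k-1}$. On the domain side, $S\setminus I=D_L\uplus D_R$ with $D_L=\mathset{-k,\dots,-1}$, $D_R=\mathset{n,\dots,n+k-1}$, and $\abs{D_L}=\abs{D_R}=k$.

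It remains to extend $\hat\pi$ by displacement-respecting bijections $D_L\to A$ and $D_R\to T_R$. Here I use order statistics of $k$-subsets of a $2k$-interval: writing $A=\mathset{a_0<a_1<\dots<a_{k-1}}$, one has $a_i\le i$, so setting $\hat\pi(-k+i)=a_i$ gives displacement $a_i+k-i\in[-k,k]$; symmetrically, writing $T_R=\mathset{b_0>b_1>\dots>b_{k-1}}$, one has $b_i\ge n-1-i$, so setting $\hat\pi(n+k-1-i)=b_i$ gives displacement $b_i-(n+k-1-i)\in[-k,k]$. Since $\pi(I)$, $A$, $T_R$ partition $S$ and $\hat\pi$ is injective on each of $I$, $D_L$, $D_R$, the resulting $\hat\pi$ is a permutation of $S$ restricted by $[-k,k]$, which finishes the construction.

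The one genuinely non-mechanical step is the equality $\abs{A}=k$: this is exactly where the hypothesis that $w$ comes from the component $\Omega([-k,k])_k$ (equivalently, that $\pi_\omega$ has average shift zero) is used, via Corollary \ref{cor:AvMeaning}; without it the left and right boundary ``fluxes'' need not balance and the completion can fail. I expect the most delicate part to write carefully is the bookkeeping that $S=\pi(I)\uplus A\uplus T_R$ and that the order-statistic bounds hold, both elementary but easy to get off by one. A cleaner alternative packaging is to run the same idea on the conjugate SFT $X_{2k+1,k}$ of Proposition \ref{prop:pro2k}: $w$ becomes a length-$n$ path between two $k$-element states, to which one prepends and appends canonical length-$k$ ``ramp'' paths joining those states to the empty boundary state, and the existence of these ramps is again just the order-statistic estimate for $k$-subsets of a $2k$-set.
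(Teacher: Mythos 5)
Your proof is correct and is essentially the paper's own argument: place the pattern in the middle of an interval of length $n+2k$, use Corollary \ref{cor:AvMeaning} (equivalently $f(\omega)=k$) to show each boundary block has exactly $k$ uncovered values, fill them by the order-preserving assignment whose displacements the order-statistic bounds keep in $[-k,k]$, and recover the pattern from the middle displacements for injectivity. The only differences are cosmetic: you shift coordinates at the end and compute the left block's size first, while the paper works directly on $[n+2k]$ and applies the corollary to the right-hand window first.
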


\begin{proof}
We want to find an injection $B_{n}\left(\Omega\left[-k,k\right]_{k}\right)\to B_{n+2k}^{f}\left(\left[-k,k\right]\right)$.
Let $a=\left(a_{0},\dots,a_{n-1}\right)\in B_{n}\left(\Omega\left(\left[-k,k\right]\right)_{k}\right)$,
by the definition of $B_{n}\left(\Omega\left(\left[-k,k\right]\right)_{k}\right)$, there exists $\omega\in\Omega\left(\left[-k,k\right]\right)_{k}$ for which $a=\left(\omega_{0},\omega_{2},\dots,\omega_{n-1}\right)$.
Our goal now is to define a permutation $\pi_a:\left[n+2k\right]\to\left[n+2k\right]$ which is restricted by $[-k,k]$, using $a$.\\
\textsl{Step 1} - Defining $\pi_a':[k,n+k-1]\to [n+2k]$, which we later extend to a permutation of $[n+2k]$.  For $m\in\left[k,n+k-1\right]$ define
\[ \pi_a'(m)= m+a_{m-k}= \pi_{\omega}\left(m-k\right)+k.\]
Clearly, $\pi_a':[k,n+k-1]\to [n+2k]$ 
defined by so far is injective and restricted by $[-k,k]$ since $\pi_{\omega}$ is a permutation of $\mathbb{Z}$, and in particular its restriction to $[n]$ is injective.\\
\textsl{Step 2} - Extending $\pi_a'$  to $\pi_a'':[n+k,n+2k-1]\to [n+2k]$. Let $H\subseteq [n+2k]$ be the set of all indices in $[n+2k]$ which are uncovered by the image of $\pi_a'$. Formally,
\[ H  \eqdef \left\{ j\in\left[n+2k\right]\,~:~\,\forall i\in\left[k,n+k-1\right],\pi_a'\left(i\right)\neq j\right\}=[n+2k]\setminus \ima(\pi_a'). \]
Clearly, 
\[|H|=|[n+2k]|-|\ima(\pi_a')|=|[n+2k]|-|[k,n+k-1]|=2k\]
since $\pi_a$ is injective and $\ima(\pi_a')\subseteq [n+2k]$. Consider the sets
\begin{align*}
S_r & \eqdef \left\{ j\in\left[n-k,n+k-1\right]\,~:~\,\pi_a'\left(j\right)\geq n\right\}=(\pi_a')^{-1}([n,n+2k-1]) \\
 & \text{and}\\
D_r & \eqdef \left\{ j\in\left[n,n+2k-1\right]\,~:~\,\exists i\in\left[k,n+k-1\right]\,\text{s.t. }\pi_a\left(i\right)=j\right\} \\
 & =\text{Im}\left(\pi_a\right)\cap\left[n,n+2k-1\right]
\end{align*}
Recall that $\omega\in B_{n}\left(\Omega\left(\left[-k,k\right]\right)_{k}\right)$
thus, by Corollary \ref{cor:AvMeaning}, $\left|S_r\right|=k$. Since $\pi_a'$ is restricted by $[-k,k]$ ($\pi_a'\left(m\right)-m\in\left[-k,k\right]$ for all $m\in\left[k,n+k-1\right]$), we have $(\pi_a')^{-1}\parenv{D_r}=S_r$. By injectivity of $\pi_a'$ we have 
\[\abs{S_r}=\abs{D_r}=k.\] 
Let  $H_r$ be the set of  indices in $[n,n+2k-1]$ which are uncovered by the image of $\pi_a'$, that is 
\[ H_r=H\cap\left[n,n+2k-1\right].\] 
We note that $H_r=\left[n,n+2k-1\right]\setminus D_{r}$, and $D_r\cap H_r=\emptyset$.
Therefore , 
\[ \left|H_r\right|=|[n,n+2k-1]|- |D_r|=2k-k=k.\]
Let $\left\{ j_{0},j_{1},\dots,j_{k-1}\right\} $ be the elements of $H_r$ ordered such that 
$n\leq j_{0}<j_{1}<\cdots<j_{k-1}\leq n+2k-1$ . It is easy to check
that for all $n+k\leq i\leq n+2k-1$ we have that $\left|i-j_{i-n-k}\right|\leq k$, as in the worst case $H_r=[n,n+k-1]$ and equality holds.
We can now define $\pi_a'':[k,n+2k-1]\to [n+2k]$ by
\[ \pi_a''\left(i\right)=\begin{cases}
j_{i-n-k} & \text{ if } i\in [n+k,n+2k-1]\\
\pi_a'(i) & \text{ otherwise}
\end{cases}.\] 
By the construction, it is clear that the extended $\pi_a''$ defined
so far on $\left[k,n+2k-1\right]$ is injective and restricted by $[-k,k]$. See Figure \ref{fig:ClOpK} for a visualization of step 2.\\ 
\textsl{Step 3} Extending $\pi_a''$ to $\pi_a:[n+2k]\to [n+2k]$: in a similar way to Step 2, we denote, $H_l=H_{\sigma}\cap\left[2k\right]$
and we claim that $H=H_l\uplus H_r$.  If it wouldn't be true, we would have $H_l\uplus H_r\subsetneq H $, and there exists $i\in [n+k,n-k]$ which is not covered by the image of $\pi_a'$. That would be a contradiction to the fact that $\pi_\omega$ used to define $\pi_a
$ is a permutation of $\Z$ which is restricted by $[-k,k]$. We obtain  
\[ |H_l|=|H|-|H_r|=2k-k=k.\]

Let $H_l=\left\{ l_{0},l_{1},\dots,l_{k-1}\right\} $ where
$0\leq l_{0}<l_{1}<\cdots<l_{k-1}\leq2k-1$. By similar arguments as in the previous step, for all $0\leq i\leq k-1$, we have $\left|i-l_{i}\right|\leq k$.
We define 
\[ \pi_a\left(i\right)=\begin{cases}
j_{i-n-k} & \text{ if } i\in [k]\\
\pi_a''(i) & \text{ otherwise}
\end{cases}.\] 
Now we have $\pi_a$ which is completely defined on $\left[n+2k\right]$
and it is injective, so it is a permutation. The map $a\to\pi_a$ is
obviously 1-1 as $a\left(l\right)=\pi_a\left(l+k\right)-\left(l+k\right)$
for all $l\in\left[n\right]$ so $a$ can be completely restored from
$\pi_a$. 
\begin{figure}
 \centering
  \
  \includegraphics[width=150mm, scale=0.6]{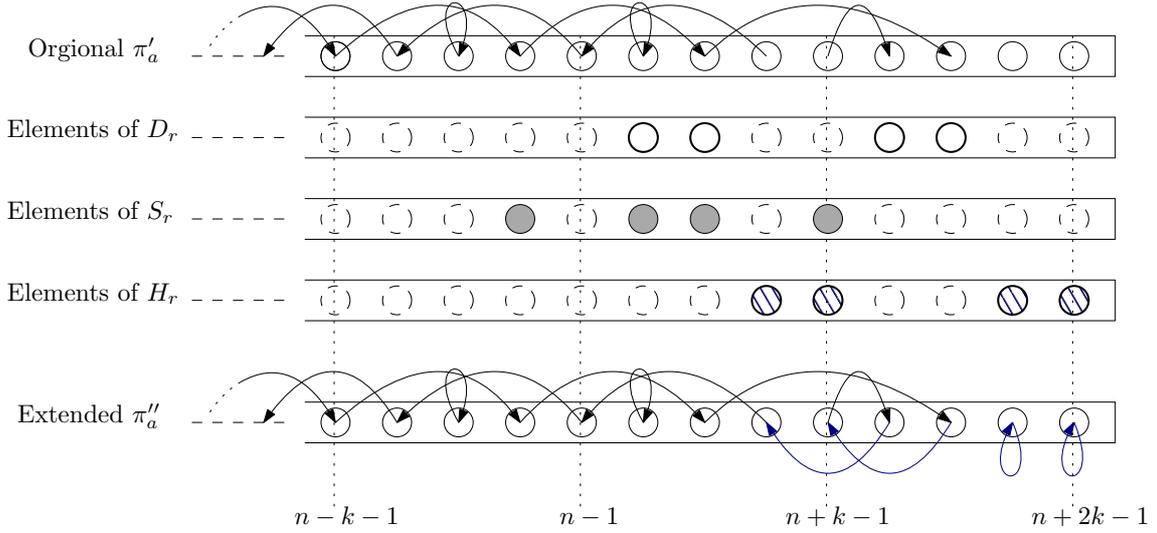}
  \caption{A demonstration of the construction of $\pi_a''$ from $\pi_a'$ where $k=4$. }
  \label{fig:ClOpK}
\end{figure}
\end{proof}

\begin{fact} (\cite{LinMar85}, Theorem 4.4.4)
\label{fact:SpectralRad}
Let $\Sigma$ be some finite set, $X\subseteq \Sigma^{\Z}$ be an irreducible SFT, and $M$ be a $\abs{\Sigma}\times \abs{\Sigma}$ matrix with  entries form $\mathset{0,1}$. If 
\[ \Omega \eqdef \mathset{(a_n)_{n\in \Z}\in \Sigma^{\Z}~:~M(a_n,a_{n+1}) \text{ for every }n\in \Z  },\]
then the topological entropy is given by $\topent (\Omega) =\log(\lambda_M)$ where $\lambda_M$ is the spectral radius of $M$. 
\end{fact}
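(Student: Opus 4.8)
The plan is to count length-$n$ patterns of $\Omega$ explicitly as entry-sums of powers of $M$, and then read off their exponential growth rate from the Perron--Frobenius eigenvalue of $M$.

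First I would record the combinatorial description of $B_n(\Omega)$. Since $\Omega$ is irreducible, $M$ is an irreducible matrix (its associated digraph on vertex set $\Sigma$ is strongly connected), so every vertex has an out-neighbour and an in-neighbour and hence any finite path prolongs to a bi-infinite path; consequently a word $\parenv{w_0,\dots,w_{n-1}}\in\Sigma^{[n]}$ lies in $B_n(\Omega)$ if and only if it is a path, i.e.\ $M(w_i,w_{i+1})=1$ for $0\le i\le n-2$. Using the standard fact that the $(a,b)$ entry of $M^{k}$ counts paths of length $k$ from $a$ to $b$, this gives
\[ \abs{B_n(\Omega)} \;=\; \sum_{a,b\in\Sigma}\parenv{M^{n-1}}_{a,b} \;=\; \mathbf 1^{\top} M^{n-1}\mathbf 1, \]
where $\mathbf 1$ denotes the all-ones column vector of length $\abs{\Sigma}$.

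Next I would invoke the Perron--Frobenius theorem for the nonnegative irreducible matrix $M$: the spectral radius $\lambda_M$ is an eigenvalue and it admits a strictly positive right eigenvector $v$, $Mv=\lambda_M v$; moreover $\lambda_M\ge 1>0$ since an irreducible $0/1$ matrix is not the zero matrix. Strict positivity of $v$ yields constants $0<c_1\le c_2$ (e.g.\ $c_1=1/\max_a v_a$, $c_2=1/\min_a v_a$) with $c_1 v\le \mathbf 1\le c_2 v$ entrywise. Because $M^{n-1}$ has nonnegative entries it preserves these inequalities, and $M^{n-1}v=\lambda_M^{\,n-1}v$, so
\[ c_1\,\lambda_M^{\,n-1}\,\mathbf 1^{\top}v \;\le\; \mathbf 1^{\top}M^{n-1}\mathbf 1 \;\le\; c_2\,\lambda_M^{\,n-1}\,\mathbf 1^{\top}v, \]
with $\mathbf 1^{\top}v=\sum_a v_a>0$. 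Taking logarithms, dividing by $n$, and letting $n\to\infty$ (the limit exists by Fact~\ref{prop:fekete}) gives $\topent(\Omega)=\log\lambda_M$.

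The only genuine subtlety -- hence the step I expect to be the main obstacle -- is the imprimitive case, where $M$ is irreducible but of period larger than $1$: then the rescaled powers $\lambda_M^{-k}M^{k}$ do not converge, so one cannot argue via convergence to a rank-one projector. The two-sided sandwich by the positive Perron eigenvector $v$ is exactly the device that circumvents this, as it uses only the eigenvalue equation and entrywise nonnegativity, not any limiting behaviour of $M^{k}$. An equivalent and perhaps slicker route, which I would mention as an alternative, is that $\mathbf 1^{\top}M^{n-1}\mathbf 1$ is comparable up to the fixed factor $\abs{\Sigma}^{2}$ to any submultiplicative matrix norm $\norm{M^{n-1}}$, so that Gelfand's spectral-radius formula $\lim_{n}\norm{M^{n}}^{1/n}=\rho(M)=\lambda_M$ finishes the proof without appealing to positivity of an eigenvector.
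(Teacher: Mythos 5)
The paper offers no proof of this statement---it is quoted as a Fact with a citation to Lind and Marcus---so there is nothing internal to compare against; judged on its own, your argument is correct and is essentially the textbook proof: identifying $\abs{B_n(\Omega)}=\mathbf 1^{\top}M^{n-1}\mathbf 1$ and sandwiching $\mathbf 1$ between positive multiples of the Perron eigenvector (or, as you note, applying Gelfand's formula to the entrywise sum, which for nonnegative $M$ is itself a submultiplicative norm equal to $\mathbf 1^{\top}M^{n-1}\mathbf 1$) yields $\topent(\Omega)=\log\lambda_M$, and your observation that the eigenvector sandwich sidesteps the imprimitive case is exactly the right point. One caveat deserves a sentence: irreducibility of the subshift $\Omega$ does not by itself make $M$ irreducible, since the Fact as stated allows symbols that never occur in any point of $\Omega$ (vertices with no outgoing or no incoming edge); for such an $M$ the identity $\abs{B_n(\Omega)}=\mathbf 1^{\top}M^{n-1}\mathbf 1$ also fails, because paths through stranded vertices are counted without being admissible words. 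The standard remedy is to pass first to the essential subgraph: deleting a vertex whose row (or column) of $M$ is zero changes neither $B_n(\Omega)$ nor the spectral radius (in a suitable ordering the deleted block sits in a triangular decomposition with zero diagonal block), and after finitely many such deletions irreducibility of $\Omega$ does give strong connectivity of what remains, so your argument applies verbatim. With that one-line reduction, or under the usual Lind--Marcus convention that the graph is essential, your proof is complete.
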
 

\begin{corollary}
\[ \topent_c(\Omega([-k,k]))=\topent(\Omega([-k,k]))=\log(\lambda_{M_{2k+1,k}}). \]
\end{corollary}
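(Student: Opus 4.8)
The plan is to assemble the statement from three ingredients already available: the irreducible decomposition of $\Omega([-k,k])$ in Proposition \ref{prop:pro2k}, the comparison inequality of Theorem \ref{th:eqd1}, and the general principle that the entropy of a one-dimensional SFT is the maximum of the entropies of its irreducible components (Fact \ref{fact:subEnt}).

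First I would pin down $\topent(\Omega([-k,k]))$. By Proposition \ref{prop:pro2k} the irreducible components of $\Omega([-k,k])$ are $\Omega([-k,k])_0,\dots,\Omega([-k,k])_{2k}$. The monotonicity $\topent(\Omega([-k,k])_l)\le\topent(\Omega([-k,k])_{l+1})$ for $0\le l<k$, combined with the conjugacy $\Omega([-k,k])_l\cong\Omega([-k,k])_{2k-l}$ (which forces $\topent(\Omega([-k,k])_{k+j})=\topent(\Omega([-k,k])_{k-j})$), shows that the middle component $\Omega([-k,k])_k$ attains the largest entropy among all components. Hence by Fact \ref{fact:subEnt}, $\topent(\Omega([-k,k]))=\topent(\Omega([-k,k])_k)$. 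Since $\Omega([-k,k])_k$ is topologically conjugate to the vertex shift $X_{2k+1,k}$ determined by the $0/1$ matrix $M_{2k+1,k}$ (Proposition \ref{prop:pro2k}, item 5, via Theorem \ref{th:SchIr}), Fact \ref{fact:SpectralRad} yields $\topent(\Omega([-k,k])_k)=\log(\lambda_{M_{2k+1,k}})$, and therefore $\topent(\Omega([-k,k]))=\log(\lambda_{M_{2k+1,k}})$.

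It then remains to show $\topent_c(\Omega([-k,k]))=\topent(\Omega([-k,k]))$. The inequality $\topent_c(\Omega([-k,k]))\le\topent(\Omega([-k,k]))$ was already established in Chapter \ref{CHA:preliminaries}. For the reverse inequality I would invoke Theorem \ref{th:eqd1}: for every $n>2k$ we have $\abs{B_n(\Omega([-k,k])_k)}\le\abs{B_{n+2k}^f([-k,k])}$. Taking logarithms, dividing by $n$, and passing to the $\limsup$,
\begin{align*}
\topent(\Omega([-k,k])_k)&=\limsup_{n\to\infty}\frac{\log\abs{B_n(\Omega([-k,k])_k)}}{n}\\
&\le\limsup_{n\to\infty}\frac{\log\abs{B_{n+2k}^f([-k,k])}}{n}=\topent_c(\Omega([-k,k])),
\end{align*}
where the last equality holds because replacing the normalisation $n$ by $n+2k$ multiplies the ratio by the factor $\tfrac{n+2k}{n}\to 1$, so both $\limsup$'s equal $\limsup_m\frac{\log\abs{B_m^f([-k,k])}}{m}=\topent_c(\Omega([-k,k]))$. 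Combining with the previous paragraph, $\topent_c(\Omega([-k,k]))\ge\topent(\Omega([-k,k])_k)=\topent(\Omega([-k,k]))$, so all three quantities coincide and equal $\log(\lambda_{M_{2k+1,k}})$.

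The genuine content is already contained in Theorem \ref{th:eqd1} (the explicit injection $a\mapsto\pi_a$) and Proposition \ref{prop:pro2k}, so I do not expect a real obstacle; the only points requiring care are the elementary observations that the entropy of the subshift $\Omega([-k,k])_k$ is $\limsup_n\log\abs{B_n(\Omega([-k,k])_k)}/n$, and that the index shift $n\mapsto n+2k$ inside the $\limsup$ is harmless since the entropy is finite.
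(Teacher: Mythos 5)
Your proposal is correct and follows essentially the same route as the paper: identify $\topent(\Omega([-k,k]))$ with $\topent(\Omega([-k,k])_k)$ via Proposition \ref{prop:pro2k} and Fact \ref{fact:subEnt}, get the reverse inequality $\topent_c\ge\topent$ from the injection of Theorem \ref{th:eqd1} with the harmless index shift, and read off $\log(\lambda_{M_{2k+1,k}})$ from the conjugacy with $X_{2k+1,k}$ and Fact \ref{fact:SpectralRad}. No gaps to report.
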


\begin{proof}
since $\topent_c (\Omega([-k,k])) \leq \topent(\Omega([-k,k]))$ (see Chapter \ref{CHA:preliminaries}), is it sufficient to show that $\topent_c(\Omega([-k,k])) \geq\topent(\Omega([-k,k]))$.
By Proposition \ref{prop:pro2k}, $\left\{ \Omega\left(\left[-k,k\right]\right)_{l}\right\} _{l\in\left[2k\right]}$, the irreducible components of $\Omega\left(\left[-k,k\right]\right)$
satisfy
\begin{align*}
\topent\left(\Omega\left(\left[-k,k\right]\right)_{0}\right) & \leq \topent\left(\Omega\left(\left[-k,k\right]\right)_{1}\right)\leq\cdots\leq \topent\left(\Omega\left(\left[-k,k\right]\right)_{k}\right).\\
\end{align*}
and 
\begin{align*}
\topent\left(\Omega\left(\left[-k,k\right]\right)_{l}\right) =  \topent\left(\Omega\left(\left[-k,k\right]\right)_{2k-l}\right).
\end{align*}
Thus, by Theorem \ref{fact:subEnt}, $h\left(\Omega\left(\left[-k,k\right]\right)\right)=\max_{l\in\left[2k+1\right]}h\left(\Omega\left(\left[-k,k\right]\right)_{l}\right)=h\left(\Omega\left(\left[-k,k\right]\right)_{k}\right)$.
Now, we use Fact \ref{th:eqd1} to deduce:
\begin{align*}
\topent_c(\Omega([-k,k])) & =\limsup_{n\to\infty}\frac{\log\left(\left|B_{n}^{f}\left(\left[-k,k\right]\right)\right|\right)}{n}\\
 & \geq\limsup_{n\to\infty}\frac{\log\left(\left|B_{n-2k}\left(\Omega\left(\left[-k,k\right]\right)_{k}\right)\right|\right)}{n}\\
 & =\limsup_{n\to\infty}\underset{\to1}{\underbrace{\left(\frac{n-2k}{n}\right)}}\cdot\underset{\to \topent \parenv{\Omega\parenv{[-k,k]}_k} }{\underbrace{\frac{\log\left(\left|B_{n-2k}\left(\Omega\left(\left[-k,k\right]\right)_{k}\right)\right|\right)}{n-2k}}}\\
 & =\topent\left(\Omega\left(\left[-k,k\right]\right)_{k}\right)\\
 & =\topent\left(\Omega\left(\left[-k,k\right]\right)\right).
\end{align*}
On the other hand, the conjugacy of $\Omega([-k,k])_k$ and $X_{2k+1,k}$ and Fact \ref{fact:SpectralRad} implies that
\[ \topent_c(\Omega([-k,k]))=\topent(\Omega([-k,k])_k=\topent(X_{2k+1,k})=\log(\lambda_{M_{2k+1,k}}). \] 
\end{proof}
\begin{example}
For $k=3$, the asymptomatic ball size in $\ell_\infty$ distance,
\[ \limsup_{n\to \infty}\frac{\log(B([n],3))}{n}= \topent_c(\Omega([-k,k]))=\log(\lambda_{M_{7,3}}),\]
where $\lambda_{M_{7,3}}$ is the largest root of the polynomial
\begin{align*}
p(x)&= 1    -x    -3x^2    -5x^3    -9x^4   -17x^5   -21x^6    +20x^7    +28x^8     +8x^9    -4x^{10}    \\
&+12x^{11}    +16x^{12}    -4x^{13}   -13x^{14}   -11x^{15}     +3x^{16}     +x^{17}    -3x^{18}     +x^{19}     +x^{20}.
\end{align*}
\end{example}
		
			\section{Local and Global Admissibility}
	\label{LocGlob}
Given an $SFT$, $\Omega\subseteq \Sigma^{\Z^d}$, a finite set $U\subseteq \Sigma^d$, and a pattern $v\in \Sigma^U$, a natural question is weather this pattern is globally admissible, i.e., whether there exists $\omega\in \Omega$ such that the restriction of $\omega$ to $U$ is the pattern $v$. Generally, this question does not have a simple answer. It is proved in \cite{Rob71} that in the general case, it is not decidable whether a finite pattern is globally admissible, i.e., there is no algorithm that can decide  whether a finite pattern is globally admissible or not.

If $\Omega$ is defined by the set of forbidden patterns $F$,  a necessary condition for global admissibility is local admissibility. We say that a pattern $v\in \Sigma^U$ is locally admissible if it does not contain any of the forbidden patterns in $F$. That is, for any forbidden pattern $p \in F \cap \Sigma^{U'}$ and $n\in \Z^d$ such that $U'\subseteq \sigma_n(U)$, $\parenv{\sigma_n(v)}(U')\neq p$. 
Clearly, if a pattern is globally admissible, it is also locally admissible. However, local admissibility does not imply global admissibility. See Example \ref{Ex:LocAd} for a pattern which is locally admissible but not globally admissible in the context of restricted permutations.

In the context of restricted permutations, for a finite restricting set $A\subseteq \Z^d$, a pattern $v\in A^U$ is identified with a function $f_v: U\to U+A$, defined by $f_v(n)=n+v(n)$. The pattern $v$ is globally admissible if it is the restriction of some $\omega\in \Omega(A)$. For such $\omega\in \Omega$, we have that $f_v$ is the restriction of the permutation $\pi_\omega\in \Omega(A)$ to the set $U$. Thus, global admissibility of $v$ is equivalent to the existence of a permutation $\pi\in S(\Z^d)$, restricted by $A$, extending $f_v$.

In Proposition \ref{prop:NecessExt} we present a description of the conditions for local admissibility. In Proposition \ref{prop:compli2} we show that local admissibility of rectangular patterns is sufficient for global admissibility in two cases of restricting sets. We use these results for bounding entropy in Section \ref{Bounds} and for counting rectangular patterns in in Section \ref{A_L}.

\begin{definition}
Let $A,U\subseteq \Z^d$ be some finite sets. The boundary of $U$ with respect to $A$, denoted by $\partial (U,A)$, is defined to be the set of all indices $u\in U$ for which $u-A\not\subseteq U$. The interior of $U$ with respect to $A$ is defined to be $\Int(U,A)\eqdef U\setminus \partial (U,A)$
\end{definition}

\begin{proposition}
\label{prop:NecessExt}
Let $A\subseteq \Z^d$ be a finite non-empty restricting set and $U\subseteq \Z^d$ be some set. If a pattern $v\in A^U$ is globally admissible, then $f_v:U\to U+A$ defined by $f(n)=n+v(n)$ is injective and $\Int(U,A)\subseteq \ima(f_v)$. 
\end{proposition}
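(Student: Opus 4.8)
The plan is to unwind the definition of global admissibility and exploit the fact that $f_v$ is literally a restriction of an honest permutation. By hypothesis there is some $\omega\in\Omega(A)$ whose restriction to $U$ is $v$; equivalently, the associated permutation $\pi_\omega\in S(\Z^d)$, which is restricted by $A$, satisfies $\pi_\omega(n)=n+v(n)=f_v(n)$ for every $n\in U$. So $f_v=\pi_\omega|_U$, and both required properties should follow by pulling information back from $\pi_\omega$.

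For injectivity I would simply observe that $\pi_\omega$ is a permutation of $\Z^d$, hence injective, and therefore so is its restriction $f_v=\pi_\omega|_U$; this needs no further argument. For the containment $\Int(U,A)\subseteq\ima(f_v)$ I would fix an arbitrary $u\in\Int(U,A)$ and construct a preimage of $u$ lying in $U$. Since $\pi_\omega$ is surjective, pick $m\in\Z^d$ with $\pi_\omega(m)=u$. Because $\pi_\omega$ is restricted by $A$, we have $u-m=\pi_\omega(m)-m\in A$, so $m\in u-A$. Now invoke the definition of the interior: $u\in\Int(U,A)=U\setminus\partial(U,A)$ precisely says $u-A\subseteq U$, whence $m\in U$. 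Consequently $f_v(m)=m+v(m)=\pi_\omega(m)=u$, so $u\in\ima(f_v)$, which is what we wanted.

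I do not expect a real obstacle here; the argument is three lines once the setup is in place. The one point that needs care is the bookkeeping of sign conventions: the boundary $\partial(U,A)$ is defined via $u-A$ rather than $u+A$, and this must match the direction in which $\pi_\omega$ is restricted, namely that if $u=\pi_\omega(m)$ then $m$ ranges over $u-A$ (since $\pi_\omega(m)-m\in A$). Verifying that these two conventions are aligned is the only thing standing between the statement and its proof.
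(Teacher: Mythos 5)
Your argument is correct and is essentially the paper's proof: both take $\pi_\omega$ extending $f_v$, get injectivity by restriction, and get $\Int(U,A)\subseteq\ima(f_v)$ by noting that a $\pi_\omega$-preimage of $u\in\Int(U,A)$ lies in $u-A\subseteq U$ (the paper phrases this as $\pi_\omega^{-1}(\Int(U,A))\subseteq\Int(U,A)-A\subseteq U$, which is your pointwise argument in set form). Your sign-convention check is exactly the alignment the paper's definition of $\partial(U,A)$ provides, so there is no gap.
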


\begin{proof}
Assume that $v$ is globally admissible, and let $\omega\in \Omega(A)$ be such that $\omega([n])=v$. We note that $f_v$ is the restriction of $\pi_\omega$ to $U$, where $\pi_\omega:\Z^2\to \Z^2$ is the permutation defined by $\pi_\omega(m)=m+\omega(m)$. Clearly, $f_v$ is injective since $\pi_\omega$ is injective (as a permutation). Since $\pi_\omega$ is surjective, $\Int(U,A)\subseteq \ima( \pi_\omega)$. On the other hand, by the definition if $\Int(U,A)$ 
\[ \pi_\omega^{-1}(\Int(U,A)) \subseteq \Int(U,A)- A \subseteq U,\]
as $\pi_\omega$ is restricted by $A$.  Thus, 
\[ \Int(U,A)\subseteq \pi_\omega(U)=\ima(f_v). \]
\end{proof}

The local admissibility conditions presented in Proposition \ref{prop:NecessExt} are necessary for global admissibility. The following example shows that they are not sufficient.

\begin{example}
\label{Ex:LocAd}
Consider the restricting set $A_+=\mathset{(0,\pm 1),(\pm 1,0)}$ reviewed in Section \ref{A_+} and the set $U\eqdef [3]\times [5]\setminus \mathset{(1,2)}$. It is easy to see that restricted function $f:U\to U+A_+$ presented  in Figure \ref{fig:LocAdmiss} is injective. Furthermore, $\Int(U,A_+)$ is an empty set and therefore it is contained in the image of $f$ in a trivial way. So $f$ is locally admissible. Assume to the contrary that there exists $\pi\in\Omega(A_+))$ extending $f$. We note that for both $(1,1)$ and $(1,3)$, the only possible pre-image is $(1,2)$. Hence, $\pi$ cannot be surjective which is a contradiction. This shows that $f$ is not globally admissible.

\begin{figure}
 \centering
  \
  \includegraphics[width=115mm, scale=0.1]{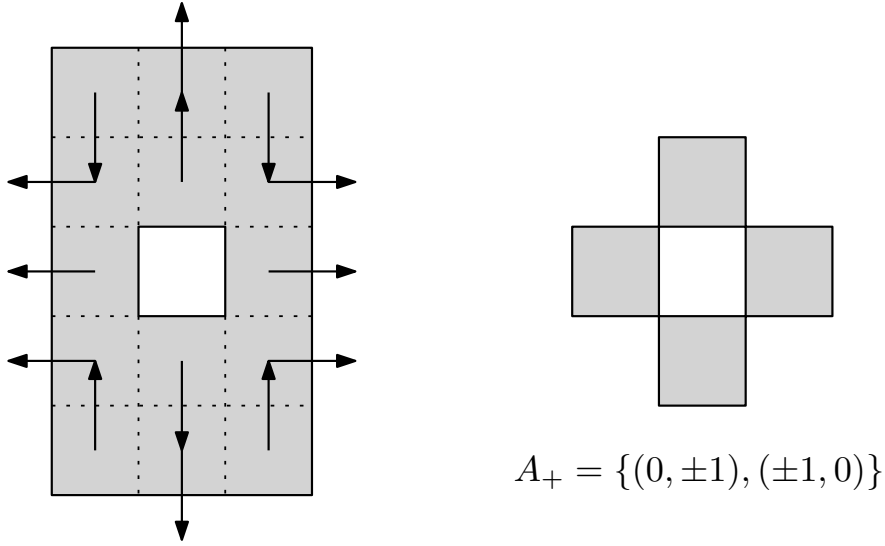}
  \caption{A locally admissible pattern which is not globally admissible where the restricting set is $A_+$.  }
  \label{fig:LocAdmiss}
\end{figure}
\end{example}

\begin{proposition}
\label{prop:compli2}
For $A\in \mathset{A_\oplus, A_L}$ (where $A_L$ and $A_\oplus$ are defined in Chapter \ref{CHA:preliminaries} and Section \ref{Bounds} respectively), let $(n_1,n_2)=n\in (\N^2\setminus [3]\times [3])$ and $v\in A^{[n]}$ be a rectangular pattern. Then $v$ is globally admissible  if and only if $f_v:U\to U+A$ defined by $f_v(n)=n+v(n)$ is injective and $\Int(U,A)\subseteq \ima(f_v)$. In the notation of Chapter \ref{CHA:preliminaries}, that is, $v\in B_n(\Omega(A)))$ if and only if $f_v$ is injective and  $\Int(U,A)\subseteq \ima(f_v)$.
\end{proposition}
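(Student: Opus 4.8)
The ``only if'' direction is precisely Proposition \ref{prop:NecessExt}, so the substance is the converse: given $f_v\colon U\to U+A$ with $U=[n]$ that is injective and satisfies $\Int(U,A)\subseteq\ima(f_v)$, one must build a permutation $\pi\in S(\Z^2)$ restricted by $A$ with $\pi|_{[n]}=f_v$. The plan is to take $\pi=f_v$ on $[n]$, extend the ``loose ends'' of $f_v$ along pairwise disjoint rays that escape to infinity, and let $\pi$ be the identity everywhere else.

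Concretely, set $\mathcal{E}\eqdef\{m\in[n]:f_v(m)\notin[n]\}$ (cells pushed out of the window) and $\mathcal{U}\eqdef[n]\setminus\ima(f_v)$ (cells with no preimage in the window); since $\abs{\ima(f_v)}=\abs{[n]}$ we have $\abs{\mathcal{E}}=\abs{\mathcal{U}}$, and both sets consist of cells lying on the edges of the rectangle $[n]$ (here $\mathcal{U}\subseteq\partial([n],A)$ by hypothesis, while a cell of $\mathcal{E}$ is pushed out across an edge). The key preliminary observation is that, for any restricted permutation $\pi$ extending $f_v$, the value of $\pi$ on $\mathcal{E}$ and of $\pi^{-1}$ on $\mathcal{U}$ is forced to point outward: if $m\in\mathcal{U}$, no cell of $[n]$ may map to $m$ (otherwise $m$ would be covered), so $\pi^{-1}(m)=m-a$ with $a\in A$ and $m-a\notin[n]$, and symmetrically $\pi(m)=m+a\notin[n]$ for $m\in\mathcal{E}$. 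I would then, for each $m\in\mathcal{E}$ with push direction $a$, set $\pi(m+ka)\eqdef m+(k+1)a$ for $k\ge 1$; for each $m\in\mathcal{U}$, attach an incoming half-line, ending at $m$, that lives in the exterior and moves by a single admissible step (allowing one right-angle turn for corner cells, and --- only when $A=A_\oplus$ --- closing a cell in $\mathcal{E}\cap\mathcal{U}$ into a transposition with its pushed-out neighbour); and set $\pi(m)=m$ for every remaining cell.

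It remains to check that $\pi$ is a well-defined permutation restricted by $A$. The restriction condition is immediate, since every step along every ray and every identity step moves by a vector of $A$. For bijectivity one verifies that every cell of $\Z^2$ has exactly one preimage and exactly one image; this is automatic inside $[n]$ (injectivity of $f_v$ together with $\Int([n],A)\subseteq\ima(f_v)$, the missing preimages being supplied by the incoming rays), and the only delicate region is the thin seam $([n]+A)\setminus[n]$ around the window together with the exterior cells adjacent to a ray. This is exactly where the hypotheses bite: because $[n]$ is a rectangle, the four strips running off its four sides are pairwise disjoint and disjoint from $[n]$, so the outgoing rays (heading away from the window) and the incoming rays (heading in) can all be placed pairwise disjointly without re-entering $[n]$. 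Contrast Example \ref{Ex:LocAd}, where the non-rectangular window forces two uncovered cells to demand a preimage from the same exterior cell, so no disjoint routing exists. The same picture handles $A_\oplus$ with a slightly wider seam and the extra transposition option; the finitely many tiny windows with $n\in[3]\times[3]$ are excluded only to keep this boundary bookkeeping uniform, and are checked directly.

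The main obstacle, and the one step that genuinely uses rectangularity, is the global bijectivity check --- concretely, arranging the rays so that they are provably pairwise disjoint and avoid $[n]$ while respecting the forced outward directions at $\mathcal{E}$ and $\mathcal{U}$. Everything else (the forced-outward observation, the restriction condition, and the reduction to Proposition \ref{prop:NecessExt} for necessity) is routine once the four-strip decomposition of the exterior is in hand.
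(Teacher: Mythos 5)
Your proposal is correct and takes essentially the same route as the paper's proof: necessity is delegated to Proposition \ref{prop:NecessExt}, and sufficiency is shown by the same explicit extension --- straight exterior rays to/from infinity attached to the pushed-out and uncovered boundary cells, the immediate fill/transposition when a pushed-out cell is itself uncovered (this is exactly the paper's first bullet for points of $O_v=\ima(f_v)\setminus[n]$), identity elsewhere, with disjointness of the rays guaranteed by the rectangular window. The only cosmetic difference is bookkeeping: the paper indexes the outgoing rays by the outside image points and their unique inward direction $e_m$, while you index them by the pushed cells and their push directions, and you are slightly more explicit than the paper about the corner cells and about why the four exterior strips keep all rays pairwise disjoint.
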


\begin{proof}
We will show the proof for $A=A_\oplus$, the proof in the case that $A=A_L$ follows a similar idea.
The first direction (global admissibility implies injective $f_v$ and $\Int(U,A)\subseteq \ima(f_v)$) is is true by Proposition \ref{prop:NecessExt}. 

For the other direction, we first note that $\Int([n],A_\oplus)=[1,n_1-2]\times [1,n_2-2]$. 
Assume that $\pi_v$ is injective and $[1,n_1-2]\times [1,n_2-2]\subseteq \ima(\pi_v)$, we need to find a restricted permutation $\pi\in (\Omega(A_\oplus))$ such that the restriction $\pi([n])$ is $\pi_v$. Consider the sets 
\[ O_v=\ima(\pi_v)\setminus([n]),\]
and 
\[ H_v=[n]\setminus \ima(\pi_v). \]
We observe that $v\in A_\oplus^{[n]}$, which implies that $\ima(\pi_v)\subseteq [n]+A_\oplus$. Thus, $O_v\subseteq ([n]+A_\oplus)\setminus [n]$. We denote $([n]+A_\oplus)\setminus [n]$ by $\partial^{A_\oplus} [n]$. By the assumption, $\pi_v$ satisfies the second condition. Hence, $H_v\subseteq \partial([n],A_\oplus)=[n]\setminus [[1,n_1-2]\times [1,n_2-2]]$. We observe that for $m\in \partial^{A_\oplus} [n] $, there exists a unique vector $e_m\in {\mathset{(0,\pm 1), (\pm 1,0)}}$ such that $m+e_m\in [n]$. Similarly, for $m\in \partial ([n],A_\oplus)$, there exists a unique vector $e_m\in {\mathset{(0,\pm 1), (\pm 1,0)}}$ such that $m+e_m\notin [n]$. We now define $\pi$ on $\Z^2\setminus [n]$. 
\begin{itemize}
\item For $m\in O_v$ such that $m+e_m\in H_v$, define $\pi(m)=m+e_m$.
\item For $m\in O_v$ such that $m+e_m\notin H_v$, we define $\pi(m)=m-e_m$. Furthermore, for all $k\in \N$ we define $\pi(m-k\cdot e_m)= m-(k+1)\cdot e_m$.
\item For $m\in H_v$ such that $\pi(m+e_m)\notin O_v$, we define $\pi(m+k\cdot e_m)=m+(k-1)\cdot e_m$ for all $k\in \N$.
\item For any index $m\in \Z^2\setminus [n]$ not defined in the first three items, we define $\pi(m)=m$.
\end{itemize} 
Clearly, $\pi$ is restricted by $A_\oplus$ as $e_m\in A_\oplus$ for any $m\in H_v \cup O_v$. It may be verified that $\pi$ is bijective.
\begin{figure}
 \centering
  \
  \includegraphics[width=130mm, scale=0.1]{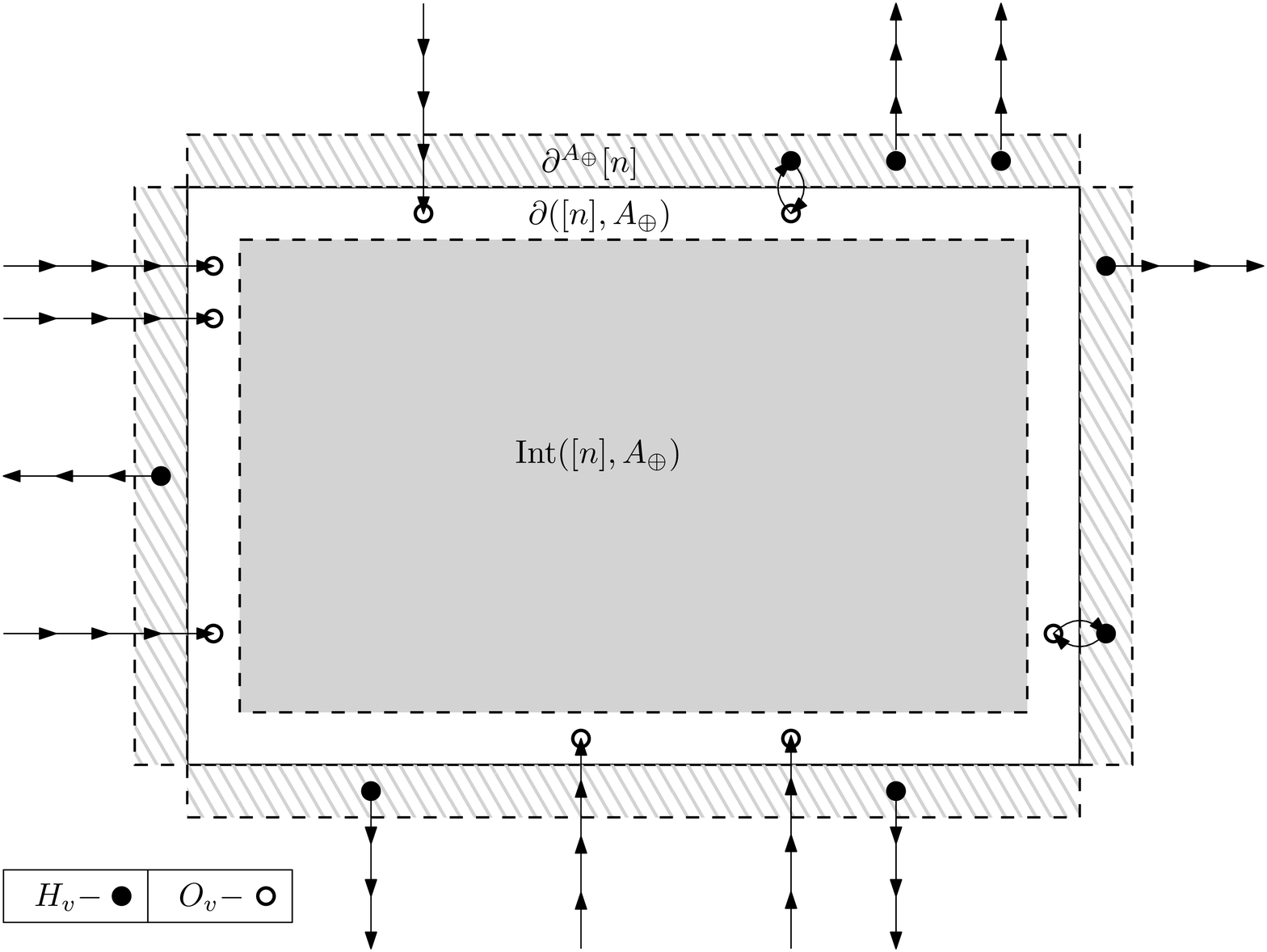}
  \caption{The extension of $\pi_v$ to a $\pi\in \Omega(A_\oplus)$. }
  \label{fig:Ext}
\end{figure}
\end{proof}
See Figure \ref{fig:Ext} for a demonstration of the procedure of defining $\pi$.

			\section{Entropy Bounds}
	\label{Bounds}
In Chapter \ref{CHA:PM} we found analytical expressions of the entropy of restricted permutations of $\Z^2$ in two cases, by using the theory of perfect matching of bipartite $\Z^2$-periodic planar graph. Such a calculation of the entropy is possible for a restricting set, $A\subseteq \Z^2$, if $G_A$ (defined in Chapter \ref{CHA:preliminaries}) or the corresponding graph from Theorem \ref{th:GenCan}, $G_A'$, are bipartite $\Z^2$-periodic and planar. Unfortunately, this is usually not the case. In this section, we will bound the entropy for such example. Having the case where $|A|=3$ solved (see Theorem \ref{th:AffEqEnt}), and a solution for one case where $\abs{A}=4$ (see Section \ref{A_+}), we will focus on an elementary example where $\abs{A}=5$.

Consider the set $A_\oplus\eqdef \mathset{(0,0),(0,\pm 1),(\pm 1, 0)}$. Note that the corresponding graph from Theorem \ref{th:GenCan} it has a $\Z^2$-periodic representation that has intersecting edges (see Figure \ref{fig:Gen_AO+}). Furthermore, the graph $G_{A_\oplus}$ contains self loops (as $(0,0)\in A_\oplus$), and therefore we cannot use the alternative correspondence from  \ref{th:PerToPM}.   
\begin{figure}
 \centering
  \
  \includegraphics[width=150mm, scale=0.5]{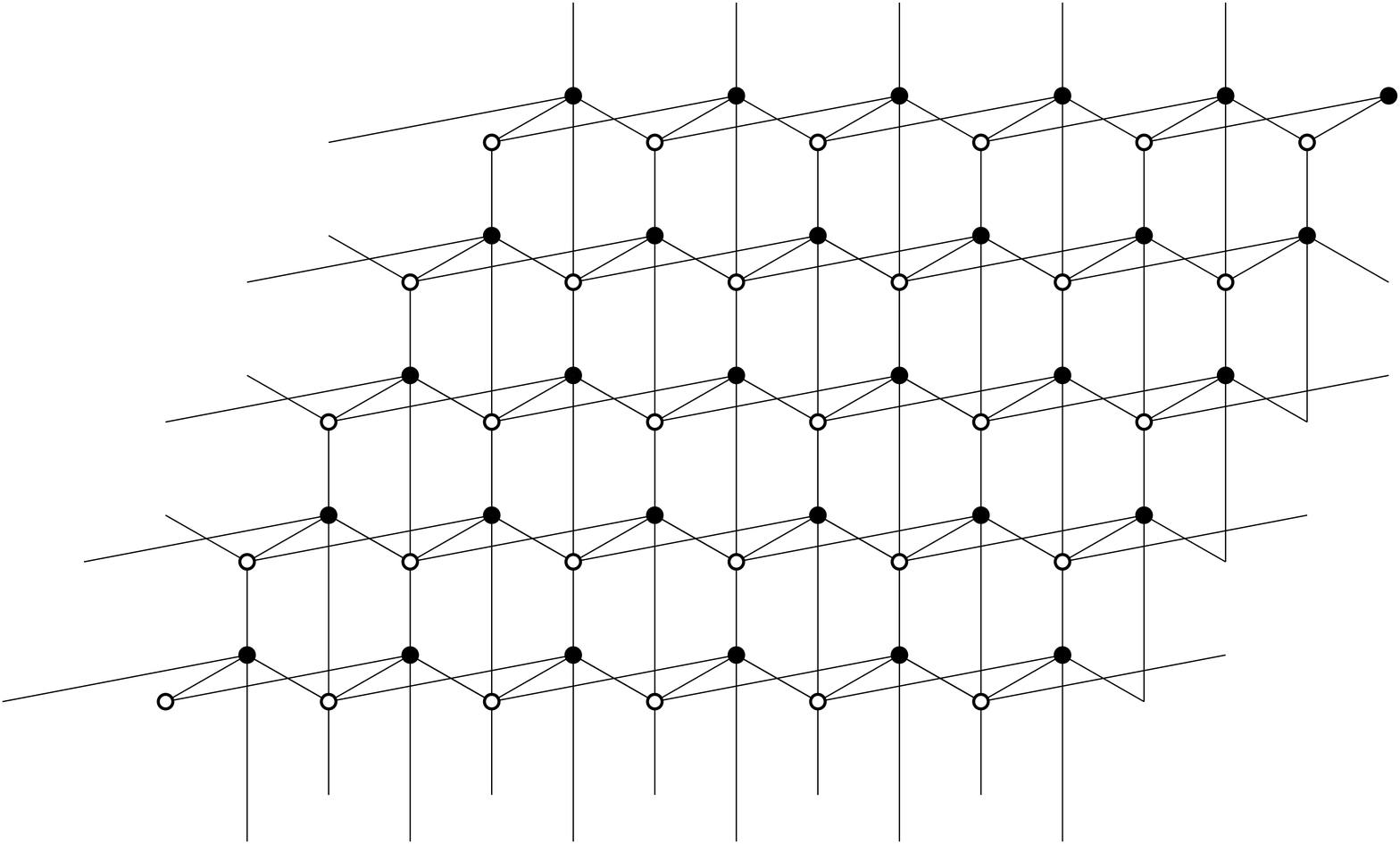}
  \caption{The corresponding graph for $G_{A_\oplus}$ from Theorem \ref{th:GenCan}. }
  \label{fig:Gen_AO+}
\end{figure}
We will bound $\topent(\Omega(A_\oplus))$ by the entropy of one-dimensional SFTs. 

\begin{definition}
Let $G=(V,E)$ be some finite directed graph. The adjacency matrix of $G$ is defined to be $|V|\times |V|$ matrix, $M$, with entries from $\mathset{0,1}$ such $M_G(v,u)=1$ if and only if $(v,u)\in E$. The vertex shift of $G$ is defined to be
\[ X_G\eqdef \mathset{(v_n)_{n\in \Z}\subseteq V^{\Z}: (v_n,v_n+1)\in E \text { for all }n\in \Z}.\] 
\end{definition}

\begin{fact} \label{fact:VerShift}(\cite{LinMar85}, Proposition 2.3.9, Theorem 4.4.4) Any vertex shift, $X_G$ is a one dimensional SFT and its entropy is given by 
\[ \topent(X_G)=\lambda_{M_G},\]
where $\lambda_{M_G}$ is the spectral radius of $M_G$.
\end{fact}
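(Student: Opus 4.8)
The plan is to check the two assertions in turn. That $X_G$ is a one-dimensional SFT is immediate from the definitions: a sequence $(v_n)_{n\in\Z}\in V^{\Z}$ lies in $X_G$ exactly when none of the length-two patterns $(v_n,v_{n+1})$ belongs to the finite set $F\eqdef\mathset{(u,w)\in V\times V : (u,w)\notin E}$, so $X_G$ is the SFT with forbidden set $F$. For the entropy I would establish the identity $\topent(X_G)=\log\lambda_{M_G}$ (the displayed right-hand side should carry a logarithm, consistently with Fact~\ref{fact:SpectralRad}).

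First I would reduce to the essential case. Deleting from $G$, repeatedly, any vertex of in-degree $0$ or of out-degree $0$ changes neither $X_G$ (such a vertex appears on no bi-infinite walk) nor the spectral radius $\lambda_{M_G}$ (a cofactor expansion shows that removing a zero row, together with the corresponding column, deletes only a zero eigenvalue); since $V$ is finite this process terminates, so I may assume every vertex of $G$ has in- and out-degree at least $1$. Then a word $v_0v_1\cdots v_{n-1}$ belongs to $B_n(X_G)$ if and only if $(v_i,v_{i+1})\in E$ for every $i$: one direction is trivial, and conversely any such walk extends both to the left and to the right to a bi-infinite walk, hence occurs in a point of $X_G$. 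Consequently $\abs{B_n(X_G)}=\sum_{u,w\in V}(M_G^{n-1})_{u,w}$, since the $(u,w)$-entry of $M_G^m$ counts walks of length $m$ from $u$ to $w$.

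The second step is to read off the growth rate of $a_m\eqdef\sum_{u,w}(M_G^m)_{u,w}$. Since $M_G$ has nonnegative entries, $a_m$ differs from $\max_{u,w}(M_G^m)_{u,w}$ by at most the factor $\abs{V}^2$, which is independent of $m$; the $m$-th root of the latter quantity converges to $\lambda_{M_G}$ by Gelfand's spectral-radius formula (the max-entry functional being equivalent to a submultiplicative matrix norm). Hence $\tfrac1m\log a_m\to\log\lambda_{M_G}$, and since $\abs{B_n(X_G)}=a_{n-1}$ and the defining limit of the entropy exists by Fact~\ref{prop:fekete}, we conclude $\topent(X_G)=\log\lambda_{M_G}$.

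The only genuinely delicate point is the reducible case: a general adjacency matrix is neither primitive nor irreducible, so Perron--Frobenius in its basic form does not apply directly. Appealing to Gelfand's formula, as above, avoids this altogether. An alternative route — closer in spirit to the surrounding text — is to decompose $G$ into strongly connected components: $X_G$ is then built from the (irreducible) vertex shifts of these components together with transient words, $\topent(X_G)$ equals the maximum of the component entropies by Fact~\ref{fact:subEnt}, each component contributes the logarithm of its Perron eigenvalue, and $\lambda_{M_G}$ is the maximum of the spectral radii of the diagonal blocks in the Frobenius normal form of $M_G$.
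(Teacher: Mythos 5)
Your proof is correct, and you are right to flag the typo: the displayed formula should read $\topent(X_G)=\log(\lambda_{M_G})$, exactly as in Fact~\ref{fact:SpectralRad}. Note that the thesis itself gives no proof of this Fact — it is quoted from Lind and Marcus — and the argument behind that citation is your ``alternative route'': decompose $G$ into strongly connected components, apply Perron--Frobenius to each irreducible block, and take the maximum of the component entropies (your Fact~\ref{fact:subEnt}), which equals the largest spectral radius of the diagonal blocks in the Frobenius normal form. Your primary route is genuinely different and arguably cleaner for this statement: after trimming vertices of in- or out-degree zero (which changes neither $X_G$ nor $\lambda_{M_G}$, by the cofactor argument you give), you identify $\abs{B_n(X_G)}$ with the total entry sum of $M_G^{n-1}$ and read off the growth rate from Gelfand's formula, which applies to any nonnegative matrix and so sidesteps reducibility issues entirely; what it buys is a short, self-contained proof needing no structure theory of nonnegative matrices, while the textbook route yields finer information (which components carry the entropy) that the thesis actually uses elsewhere, e.g.\ in Theorem~\ref{th:SchIr} and Proposition~\ref{prop:pro2k}. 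The only residual degeneracy in your argument is the case where trimming empties the graph ($M_G$ nilpotent, $X_G=\emptyset$), which is vacuous for the uses made of this Fact and can be dismissed in a sentence.
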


For $m\in \N$, the horizontal infinite stripe of width $m$ is defined to be 
\[ S_m\eqdef \mathset{n=(n_1,n_2)\in \Z^2 : n_2\in [m]}=\Z\times [m]\subseteq \Z^2. \] 
Denote by $\Omega_{\oplus,m}$ the set of all permutations of $S_m$ restricted by $A_\oplus$. In the usual manner, we identify it as a subset of $A_\oplus^{S_m}$. We aim to show that $\Omega_{\oplus,m}$, with the one dimensional shift operation of $\Z$ given by $\sigma_m(\omega)(n)=\omega(n+(0,m))$, is a one-dimensional SFT over $(A_\oplus^m)^\Z$. Furthermore, we will show that $\frac{\topent(\Omega_{\oplus,m})}{m}\leq \topent(\Omega(A_\oplus))$.
\begin{proposition}
\label{prop:StripeSFT1} 
$\Omega_{\oplus,m}$ is a one-dimensional SFT for any $m\in \N$.
\end{proposition}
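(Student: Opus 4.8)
The plan is to exhibit $\Omega_{\oplus,m}$ explicitly as a subshift of $(A_\oplus^m)^{\Z}$ cut out by a finite list of forbidden patterns over pairs of adjacent columns, and then invoke the characterization of one-dimensional SFTs. First I would fix the encoding: an element $\omega\in A_\oplus^{S_m}$ is read column by column, where the $n$-th column is the vector $\omega|_{\{n\}\times[m]}\in A_\oplus^m$; this identifies $A_\oplus^{S_m}$ with $(A_\oplus^m)^{\Z}$ equivariantly for the horizontal shift $\sigma_m$. The task is to show that the subset $\Omega_{\oplus,m}$ of configurations that encode a genuine permutation of $S_m$ restricted by $A_\oplus$ is closed and shift-invariant, and is defined by constraints that involve only two consecutive columns.

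The key observation is that "$\pi_\omega$ is a permutation of $S_m$ restricted by $A_\oplus$" is equivalent to the conjunction of: (i) $\pi_\omega$ maps $S_m$ into $S_m$ — since $A_\oplus=\{(0,0),(0,\pm1),(\pm1,0)\}$ and the second coordinate of $S_m$ ranges over $[m]$, this is a constraint on a single column forbidding the arrow $(0,1)$ at row $m-1$ and the arrow $(0,-1)$ at row $0$; (ii) $\pi_\omega$ is injective; and (iii) $\pi_\omega$ is surjective onto $S_m$. For (ii), two distinct points $n_1,n_2$ with $\pi_\omega(n_1)=\pi_\omega(n_2)$ must, because each moves by a vector in $A_\oplus$ (i.e.\ by $\ell_1$-distance at most $1$), lie within $\ell_1$-distance $2$ of each other, hence in the same column or in two adjacent columns; so non-injectivity is always witnessed inside a window of two consecutive columns, and forbidding all such local collision patterns captures (ii). For (iii), I would use the same bounded-range idea in reverse: a point $p\in S_m$ is in the image of $\pi_\omega$ iff one of its $\ell_1$-neighbours $q$ (all lying in $p$'s column or an adjacent one) satisfies $\pi_\omega(q)=p$; so "$p$ is not covered" is again a property of the two columns flanking $p$, and forbidding those patterns enforces surjectivity. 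Because $A_\oplus$ is finite and $[m]$ is finite, there are only finitely many column vectors and finitely many two-column patterns, so the resulting forbidden set $F_m\subseteq \bigcup_{B\in\fin(\Z)}(A_\oplus^m)^B$ is finite.

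Assembling these, I would define $\Omega_{\oplus,m}'$ to be the subshift of $(A_\oplus^m)^{\Z}$ determined by $F_m$ (the single-column boundary constraints together with the two-column collision and non-coverage constraints), note it is shift-invariant and closed by construction, and then prove $\Omega_{\oplus,m}=\Omega_{\oplus,m}'$ by the argument of the previous paragraph: a configuration avoids every pattern in $F_m$ iff $\pi_\omega$ stays in $S_m$, is injective, and is surjective, iff $\pi_\omega\in S(S_m)$ is restricted by $A_\oplus$. This is essentially the same closedness analysis already carried out in Chapter \ref{CHA:preliminaries} for $\Omega(G)$, now localized to a strip. I expect the main (though modest) obstacle to be the bookkeeping in step (iii): one must check carefully that every obstruction to surjectivity at a point $p$ really is detectable within a fixed-width window — this is where local finiteness of $A_\oplus$ does the work, but it requires stating precisely which finite neighbourhood of $p$ to inspect and verifying that a point of $S_m$ uncovered by $\pi_\omega$ always produces a forbidden two-column pattern rather than merely a global defect. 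Once that locality is nailed down, the identification with a one-dimensional SFT is immediate.
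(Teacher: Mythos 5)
Your overall strategy---encode elements of $\Omega_{\oplus,m}$ column by column over the finite alphabet $A_\oplus^m$ and cut the space out by finitely many local constraints---is the same in spirit as the paper's, which realizes $\Omega_{\oplus,m}$ as a vertex shift on a finite graph $G_m$ whose vertices are the admissible pairs of adjacent columns and whose edges encode conditions on triples of consecutive columns, and then invokes Fact \ref{fact:VerShift}. However, your claimed window width of two consecutive columns is genuinely too small, for both injectivity and surjectivity. From $\pi_\omega(n_1)=\pi_\omega(n_2)$ you correctly get $\norm{n_1-n_2}_1\le 2$, but you then conclude the two points lie in the same or adjacent columns; this is false, since the $\ell_1$-ball of radius $2$ allows a horizontal difference of $2$. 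Concretely, $n_1=(c,j)$ with arrow $(1,0)$ and $n_2=(c+2,j)$ with arrow $(-1,0)$ collide at $(c+1,j)$, yet neither two-column window $\mathset{c,c+1}$ nor $\mathset{c+1,c+2}$ contains any violation---the collision is only visible in the width-three window $\mathset{c,c+1,c+2}$. Similarly, whether a point $p=(n_1,j)$ is covered depends on the arrows at $p$ itself (the $(0,0)$ and vertical moves), and at its horizontal neighbours in columns $n_1-1$ and $n_1+1$, i.e.\ on three consecutive columns, not on two. As written, the subshift defined by your two-column forbidden set strictly contains $\Omega_{\oplus,m}$, so the claimed equality with $\Omega_{\oplus,m}$ fails. (This is exactly why the paper's injectivity argument treats the case $l_1=n_1+2$ separately, using the edge condition on triples.)

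The repair is routine: take forbidden patterns supported on three consecutive columns (still a finite list, since $A_\oplus$ and $[m]$ are finite), or equivalently recode over the alphabet of admissible adjacent-column pairs so that all constraints become nearest-neighbour---which is precisely the paper's construction, where vertices are pairs $(v,u)$ with $\varphi_{v,u}$ injective and $\ima(\varphi_{v,u})\subseteq S_m$, and edges are triples $((v,u),(u,w))$ with $\varphi_{v,u,w}$ injective and covering the middle column. With the window widened to three columns, your observations that the ``into $S_m$'' condition is a single-column constraint and that injectivity and surjectivity are locally checkable do go through and yield the proposition.
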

\begin{proof}
In order to show that $\Omega_{\oplus,m}$ is an SFT, we will find a  finite directed graph such that the elements of $\Omega_{\oplus,m}$  (when considered as elements $(A_\oplus^m)^\Z$)  are exactly the set of bi-infinite paths on that graph. This will show that $\Omega_{\oplus,m}$ is a vertex shift, and by Fact \ref{fact:VerShift}, it is an SFT.  Given $k$ column vectors of length $m$, $v_0,\dots, v_{k-1}\in A_\oplus^m$, we can identify them with a function $\varphi_{v_,\dots,v_{k-1}}:([k]\times [m]\to[k]\times [m])+A_\oplus$ by $\varphi_{v_0,\dots,v_{k-1}}(n_1,n_2)=(n_1,n_2)+v_{n_1}(n_2)$. Consider $G_m=(V,E)$ where 
\[ V=\mathset{(v,u)\in A_\oplus^m\times A_\oplus^m:\ima(\varphi_{v,u})\subseteq S_m \text{ and }\varphi_{v,u}\text{ is injective }}\]
and 
\[ E=\mathset{((v,u),(u,w))\in V^2:\varphi_{v,u,w}\text{ is injective and }\mathset{1}\times [m]\subseteq \ima(\varphi_{v,u,w})}.\]
States in $G$ are pairs of vectors corresponding with an injective function, mapping the $[2]\times[m]$ to the horizontal stripe of width $m$. Edges are just triples of vectors corresponding with an injective function such that its image covers the middle column of $[3]\times [m]$.

Now, we will show that bi-infinite paths in $G_m$ encodes bijectively element in $\Omega_{\oplus,m}$. For an infinite sequence $w=(v_n,u_n)_{n\in \Z}\subseteq V^{\Z}$, define $\pi_w:S_m\to S_m$ by
\[ \pi_w(n_1,n_2)\eqdef (n_1,0)+\varphi_{v_{n_1},u_{n_1}}(0,n_2)=(n_1,n_2)+v_{n_1}(n_2). \] 
\begin{itemize}
\item  From the construction of $w$ we can see that \[\pi_w(n_1,n_2)=\varphi_{v_{n_1},u_{n_1}}\in S_m+(n_1,0)=S_m\]. 
\item $\pi_w$ is injective: Assume to the contrary that $\pi_w(n_1,n_2)=\pi_w(l_1,l_2)$ for $(n_1,n_2)\neq (l_1,l_2)$. The movements of $\pi_w$ are restricted in $A_\oplus$ by its definition, thus  $(n_1,n_2)-(l_1,l_2)\in A_\oplus - A_\oplus\subseteq [-2,2]^2 $, in particular $\abs{n_1-l_1}\leq 2$, without loss of generality, $n_1\leq l_1$. If $l_1=n_1+1$ we have that $(v_{n_1},v_{l_1})\in V$ and by the construction of $w$,
\begin{align*}
\overset{\varphi_{v_{n_1},v_{l_1}}(0,n_2)+(n_1,0)}{\overbrace{(n_1,n_2)+v_{n_1}(n_2)}}=\pi_w(n_1,n_2)&\underset{\Downarrow}{=}\pi_w(l_1,l_2)=\overset{\varphi_{v_{n_1},v_{l_1}}(1,l_2)+(n_1,0)}{\overbrace{(l_1,l_2)+v_{l_1}(l_2)}}\\
\varphi_{v_{n_1},v_{l_1}}(0,n_2)&=\varphi_{v_{n_1},v_{l_1}}(1,l_2). 
\end{align*} 
Meaning that $\varphi_{v_{n_1},v_{l_1}}$ is not injective, which is a contradiction.  If $n_1=l_1$ we will similarly obtain $\varphi_{v_{n_1},v_{n_1+1}}(0,n_2)=\varphi_{v_{n_1},v_{n_1+1}}(0,l_2)$ in contradiction. The remaining case is where $l_1=n_1+2$. In this case we note that $((v_{n_1},v_{n_1+1}),(v_{n_1+1},v_{l_1}))\in E$ and we have 
\begin{align*}
\varphi_{v_{n_1},v_{n_1+1},v_{l_1}}(0,n_2)+(n_1,0)=\pi_w(n_1,n_2)&\underset{\Downarrow}{=}\pi_w(l_1,l_2)=\varphi_{v_{n_1},v_{n_1+1},v_{l_1}}(2,l_2)+(n_1,0)\\
\varphi_{v_{n_1},v_{n_1+1},v_{l_1}}(0,n_2)&=\varphi_{v_{n_1},v_{n_1+1},v_{l_1}}(2,l_2). 
\end{align*} 
Meaning that $\varphi_{v_{n_1},v_{n_1+1},v_{l_1}}$ is not injective, which is a contradiction.
\item $\pi_w$ is onto $S_m$: It is sufficient to show that for any $n\in \Z$, the column $\mathset{n}\times [m]$ is contained in $\ima(\pi_w)$. By the construction, the restriction of $\pi_w$ to coordinates $(i,j)\in \mathset{{n-1,n,n+1}}\times [m]$ is given by 
\[ \pi_w(i,j) = \varphi_{v_{n-1},v_n,v_{n+1}}(i-n+1,j)+(n-1,0)\]
and
\begin{align*}
\mathset{n}\times [m]=\mathset{1}\times [m]+(n-1,0)\subseteq \varphi_{v_{n-1},v_n,v_{n+1}}([2]\times[m])+(n-1,0),
\end{align*}
as $((v_{n-1},v_n)(v_n,v_{n+1}))\in E$.
We note that 
\[  \varphi_{v_{n-1},v_n,v_{n+1}}([2]\times[m])+(n-1,0) =\pi_w([n-1,n+1]\times [m]), \] 
and we have
\[\mathset{n}\times [m]\subseteq \pi_w([n-1,n+1]\times [m]).\] 
\end{itemize}

We have so far shown that the map $(v_n,u_n)_{n\in \Z}\to w \in \Omega_{\oplus,m}$ is well defined. Clearly it is bijective, as its inverse is given by $w\to (v'_n,u'_n)_{n\in \Z} $ where $v'_n$ and $u'_n$   are just the columns indexed by $n$ and $n+1$  in $w$ respectively. 
\end{proof}
\begin{theorem}
\label{th:PlusLo}
For any $m\in \N$, 
\[ \topent(\Omega(A_\oplus))\geq \frac{\topent(\Omega_{\oplus,m})}{m}.\]
\end{theorem}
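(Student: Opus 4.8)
The plan is to manufacture, for every $k\in\N$, a large family of globally admissible $km\times km$ patterns for $\Omega(A_\oplus)$ by stacking $k$ independent horizontal windows of the width-$m$ strip system $\Omega_{\oplus,m}$ on top of one another, and then to compare exponential growth rates by letting $k\to\infty$.

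First I would describe the stacking construction. Fix $k$ and set $N\eqdef km$. Partition the strip $\Z\times[N]$ into the $k$ sub-strips $\Z\times[jm,(j+1)m)$, $j=0,\dots,k-1$, each of which is a vertical translate of $S_m=\Z\times[m]$. Given $k$ permutations $\pi_0,\dots,\pi_{k-1}$ of $S_m$ that are restricted by $A_\oplus$ (the permutation forms of $k$ elements of $\Omega_{\oplus,m}$), let $F\colon\Z\times[N]\to\Z\times[N]$ act on the $j$-th sub-strip as the corresponding translate of $\pi_j$; since each $\pi_j$ maps $S_m$ onto $S_m$ and the sub-strips are pairwise disjoint, $F$ is a bijection of $\Z\times[N]$ all of whose displacements lie in $A_\oplus$. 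Because $(0,0)\in A_\oplus$, extending $F$ by the identity on $\Z^2\setminus(\Z\times[N])$ yields a permutation $\widehat{\pi}$ of $\Z^2$ still restricted by $A_\oplus$, hence an element of $\Omega(A_\oplus)$. By construction, the restriction of $\widehat{\pi}$ to the window $[N]\times[N]$ is exactly the vertical concatenation of the $[N]\times[m]$-windows of $\pi_0,\dots,\pi_{k-1}$.

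Second, I would extract the counting inequality. The window of $\widehat{\pi}$ depends only on the $k$ length-$N$ windows $b_0,\dots,b_{k-1}\in B_N(\Omega_{\oplus,m})$ of $\pi_0,\dots,\pi_{k-1}$, and from the stacked pattern each $b_j$ can be read off, so distinct tuples give distinct windows. Thus $(b_0,\dots,b_{k-1})\mapsto b_0\cdots b_{k-1}$ injects $B_N(\Omega_{\oplus,m})^k$ into $B_{N,N}(\Omega(A_\oplus))$, so that
\[ \abs{B_{km,km}(\Omega(A_\oplus))}\;\geq\;\abs{B_{km}(\Omega_{\oplus,m})}^{\,k}, \]
and, taking logarithms and dividing by $(km)^2$,
\[ \frac{\log\abs{B_{km,km}(\Omega(A_\oplus))}}{(km)^2}\;\geq\;\frac{1}{m}\cdot\frac{\log\abs{B_{km}(\Omega_{\oplus,m})}}{km}. \]

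Finally I would let $k\to\infty$. The left-hand side is the value, along the subsequence of indices $(km,km)$, of the sequence defining $\topent(\Omega(A_\oplus))$, which converges to $\topent(\Omega(A_\oplus))$ by Fact \ref{prop:fekete}; the right-hand factor $\tfrac{1}{km}\log\abs{B_{km}(\Omega_{\oplus,m})}$ converges to $\topent(\Omega_{\oplus,m})$ because $\Omega_{\oplus,m}$ is a one-dimensional SFT (Proposition \ref{prop:StripeSFT1}) and the limit defining its entropy exists. Hence $\topent(\Omega(A_\oplus))\geq\topent(\Omega_{\oplus,m})/m$. I do not expect a genuine obstacle beyond bookkeeping: the only points that really use the hypotheses are that the identity extension stays in $\Omega(A_\oplus)$, which is where $(0,0)\in A_\oplus$ enters, and that the window of the stack is the concatenation of the individual windows; alternatively one may replace the identity extension by an appeal to Proposition \ref{prop:compli2}, noting that the stacked pattern is injective and, $F$ being surjective with displacements in $A_\oplus$, its image contains $\Int([km]\times[km],A_\oplus)$.
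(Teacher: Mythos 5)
Your proposal is correct and follows essentially the same route as the paper: stack strip permutations vertically, observe that tuples of strip windows inject into rectangular windows of $\Omega(A_\oplus)$, and pass to the limit using the existence of the limits defining the entropies. The only (immaterial) differences are that you extend the stacked band by the identity off $\Z\times[km]$ (valid since $(0,0)\in A_\oplus$) whereas the paper repeats the $k$ strips periodically over all of $\Z^2$, and that you work along the square windows $km\times km$ rather than general $n\times km$ windows.
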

\begin{proof}
If for all $k,n\in \N$ we can show that $\abs{B_n(\Omega_{\oplus,m})}^k\geq \abs{B_{n,m\cdot k}(\Omega(A_\oplus)}$, by Fact \ref{prop:fekete},  we will obtain
\begin{align*}
\topent(\Omega(A_\oplus))&=\lim_{n_1,n_2\to\infty } \frac{\log_2\abs{B_{n_1,n_2}(\Omega(A_\oplus)))}}{n_1n_2}\\
&=\lim_{k,n\to\infty}\frac{\log_2\abs{B_{n,m\cdot k}(\Omega(A_\oplus))}}{k m n}\\
&\geq\lim_{k,n\to\infty}\frac{\log_2\abs{B_n(\Omega_{\oplus,m})}^k}{k m  n}\\
&=\lim_{k,n\to\infty}\frac{\log_2\abs{B_n(\Omega_{\oplus,m})}^k}{ m  n}=\frac{\topent(\Omega_{\oplus,m})}{m}.
\end{align*}

It remains to show that for given $k,n\in \N$, $\abs{B_n(\Omega_{\oplus,m})}^k\geq \abs{B_{n,m\cdot k}(\Omega(A_\oplus)}$. Let $(w_0,w_2,\dots,w_{k-1})\in B_n(\Omega_{\oplus,m})^k$. By the definition of $B_n(\Omega_{\oplus,m})$ each $w_i$ is a path of length $n$ in $G_m'$, where $G_m'$ is the graph generating $\Omega_{\oplus,m}$, defined in the proof of Proposition \ref{prop:StripeSFT1}. Denote it by $\parenv{v_k^{(i)},u_k^{(i)}}_{k=1}^n$. As we saw in the proof of Proposition \ref{prop:StripeSFT1}, for all $i=1,2,\dots,k$, the path repenting $w_i$ is the restriction of for some permutation $f_i\in \Omega_{A_\oplus,m}$ to the rectangle $[n]\times [m]$.

Now we construct a permutation of $\Z^2$, $\pi\in \Omega(A_\oplus)$ using $f_1,\dots, f_k$. Note that $\bigcup_{l\in \Z}\parenv{S_m+(0,l\cdot m)}=\Z^2$. Hence if we define $\pi$ on $\parenv{S_m+(0,l\cdot m)}_{l\in \Z}$ and show that the restriction of $\pi$ to $S_m+(0,l\cdot m)$ is a permutation of $S_m+(0,l\cdot m)$ we can conclude that $\pi \in \Omega(A_\oplus)$. Given $l\in\Z$ and $n=(n_1,n_2)\in S_m$ define  
\[ \pi((0,l\cdot m)+n)\eqdef f_{l\bmod k} (n)+(0,l\cdot m).\] 
Clearly, the restriction of $\pi$ to $S_m+(0,l\cdot m)$ is indeed a permutation of $S_m+(0,l\cdot m)$ since it is only a shift by $(0,l\cdot m)$ of $f_{l\bmod k} $ which is a permutation of $S_m$. Note that $\pi$ is restricted by $A_\oplus$ as it has the same displacements as $f_0,f_1,\dots,f_{k-1}$, which are restricted by $A_\oplus$. That is, $\pi\in \Omega(A_\oplus)$. We note that the restriction of $\omega_\pi$ to $[n]\times [m\cdot k]$ is exactly $ \begin{pmatrix}
w_{k-1} & \dots & w_1 & w_0 \end{pmatrix}^t $, which is the $n\times (m \cdot k)$ array defined by
\[ \begin{pmatrix}
w_{k-1} & \dots & w_1 & w_0 \end{pmatrix} ^t\eqdef  \begin{pmatrix}
w_{k-1}\\
\vdots\\
w_1\\
w_0
\end{pmatrix}.  \]
Thus, the embedding 
\[ (w_0,w_1,\dots,w_{k-1})\longrightarrow(w_{k-1},\dots,w_1,w_0)^t \in B_{n,m\cdot k}(\Omega(A_\oplus))\] 
is well defined. Obviously it is injective as $(w_0,w_1,\dots,w_{k-1})$ can be reconstructed from $(w_{k-1},\dots,w_1,w_0)^t $. This completes the proof.
\end{proof}

We use similar method of approximating the entropy by the entropy one-dimensional stripe like SFTs in order to derive an upper bound. In the part, we will use the same notation as in the proof of the lower bound. 
For $m\in \N$, let $\Omega_{\oplus,m}'$ be the set of all injective functions $f:S_m\to S_m+A_\oplus$, restricted by $A_\oplus$ such that $\Z\times [1,m-2] \subseteq \ima(f)$. That is, the set of all $A_\oplus$-restricted  functions from the stripe, which are injective and having image which covers the interior of the stripe $S_m$.  As before, we identify such function with elements in $A_\oplus^{S_m}$ in the usual manner. 
\begin{proposition}
\label{prop:1dSFT2}
$\Omega_{\oplus,m}'$ is a one dimensional SFT for all $m\in \N$.
\end{proposition}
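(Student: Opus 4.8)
The plan is to mimic the proof of Proposition~\ref{prop:StripeSFT1} essentially verbatim: I would construct a finite directed graph whose bi-infinite vertex paths are in bijection with the elements of $\Omega_{\oplus,m}'$, and then invoke Fact~\ref{fact:VerShift}. Only two modifications of the graph used for $\Omega_{\oplus,m}$ are needed. First, since a function in $\Omega_{\oplus,m}'$ is only required to map into $S_m+A_\oplus$ rather than into $S_m$, the condition $\ima(\varphi_{v,u})\subseteq S_m$ on the vertex set must be dropped; a column vector in $A_\oplus^m$ merely records displacements in $A_\oplus$, so overshooting $S_m$ by one row is automatically allowed. Second, the covering requirement on edges should ask only that the \emph{interior} of the middle column be covered, i.e.\ $\mathset{1}\times[1,m-2]\subseteq\ima(\varphi_{v,u,w})$, rather than the whole column $\mathset{1}\times[m]$ --- this being dictated by the definition of $\Omega_{\oplus,m}'$ together with the computation $\Int(S_m,A_\oplus)=\Z\times[1,m-2]$.

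Concretely I would set $\tilde G_m=(\tilde V,\tilde E)$ with
\[ \tilde V\eqdef\mathset{(v,u)\in A_\oplus^m\times A_\oplus^m : \varphi_{v,u}\text{ is injective}} \]
and
\[ \tilde E\eqdef\mathset{((v,u),(u,w))\in\tilde V^2 : \varphi_{v,u,w}\text{ is injective and }\mathset{1}\times[1,m-2]\subseteq\ima(\varphi_{v,u,w})}, \]
and, exactly as in the proof of Proposition~\ref{prop:StripeSFT1}, associate to a bi-infinite path $w=(v_n,u_n)_{n\in\Z}$ in $\tilde G_m$ the function $\pi_w:S_m\to S_m+A_\oplus$ given by $\pi_w(n_1,n_2)\eqdef(n_1,n_2)+v_{n_1}(n_2)$, with inverse sending $f$ to the sequence of consecutive column pairs of its displacement array. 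Injectivity of $\pi_w$ is checked by the same $\abs{n_1-l_1}\leq 2$ case analysis as there, and $\Z\times[1,m-2]\subseteq\ima(\pi_w)$ follows from the edge covering condition. Conversely, any $f\in\Omega_{\oplus,m}'$, restricted to any three consecutive columns, is injective and covers the interior of the middle column, hence yields an edge of $\tilde G_m$; so $f$ traces a bi-infinite path, and the correspondence commutes with the shift. Fact~\ref{fact:VerShift} then identifies $\Omega_{\oplus,m}'$ with a vertex shift and concludes the proof.

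The only genuinely new point --- and the step I would be most careful about --- is the localization of \emph{both} defining conditions of $\Omega_{\oplus,m}'$ to windows of three consecutive columns. For injectivity this is immediate, as in Proposition~\ref{prop:StripeSFT1}: an $A_\oplus$-restricted map has displacements in $[-1,1]^2$, so a collision $f(n)=f(n')$ with $n\neq n'$ forces $\abs{n_1-n_1'}\leq 2$, confining the two points to a window of three columns. For the covering condition one uses $A_\oplus=\mathset{(0,0),(0,\pm1),(\pm1,0)}$: if $n_2\in[1,m-2]$ then every point of $(n_1,n_2)-A_\oplus$ has second coordinate in $[0,m-1]$ and first coordinate in $\mathset{n_1-1,n_1,n_1+1}$, so it already lies in $S_m$ and in one of three consecutive columns; hence whether $(n_1,n_2)$ lies in $\ima(f)$ is determined by $f$ restricted to columns $n_1-1,n_1,n_1+1$ (and this is exactly why $\Int(S_m,A_\oplus)$, rather than the full column, is the correct set to impose on edges). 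Equivalently, one may phrase the conclusion by saying that $\Omega_{\oplus,m}'$ is the subshift of $(A_\oplus^m)^{\Z}$ obtained by forbidding the finitely many length-three words $(a,b,c)$ for which $\varphi_{a,b,c}$ is not injective or fails to cover $\mathset{1}\times[1,m-2]$; the localization argument above is precisely what shows this finite forbidden set cuts out exactly $\Omega_{\oplus,m}'$.
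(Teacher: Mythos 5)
Your proposal is correct and follows essentially the same route as the paper: the paper's proof of Proposition~\ref{prop:1dSFT2} uses exactly the graph you describe (vertices requiring only injectivity of $\varphi_{v,u}$, edges requiring injectivity of $\varphi_{v,u,w}$ and coverage of $\mathset{1}\times[1,m-2]$) and the same column-pair correspondence with bi-infinite paths, reducing to a vertex shift via Fact~\ref{fact:VerShift}. Your explicit localization argument for why both defining conditions are checkable on three consecutive columns is a point the paper leaves implicit, but it is the same underlying argument.
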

\begin{proof}
The proof of this proposition is very similar to the proof of Proposition \ref{prop:StripeSFT1}. Consider the graph $G_m'=(V,E)$ defined by 
\[ V=\mathset{(v,u)\in A_\oplus^m\times A_\oplus^m: \varphi_{v,u}\text{ is injective }}\]
and 
\[ E=\mathset{((v,u),(u,w))\in V^2:\varphi_{v,u,w}\text{ is injective and }\mathset{1}\times [1,m-2]\subseteq \ima(\varphi_{v,u,w})},\]
where $\varphi_{v,u,w}$ and $\varphi_{v,u}$ are the functions defined by the vectors $v,u$ and $w$ the functions, as decried in the previous section. We show that any bi-infinite path in $G_m'$ corresponds bijectively to a function in $\pi\in \Omega_{\oplus,m}'$. Let $(v_n,u_n)_{n\in \Z}$ be such path. By the definition of $E$, $u_n=v_{n-1}$ for all $n$. We define $w\in A_\oplus^{S_m}$ by $w(n_1,n_2)=v_{n_1}(n_2)$ for all $n_1\in \Z$ and $n_2\in [m]$. First, let us show that $\pi_w:S_m\to S_m+A_\oplus$ identified with $w$ by $\pi_w(n_1,n_2)=(n_1,n_2)+w(n_1,n_2)$ is indeed an element in $\Omega_{\oplus,m}'$.

Injectivity is proved exactly the same as in the proof of Proposition \ref{prop:StripeSFT1}.
It remains to show that the image of $\pi_w$ covers the interior of $S_m$. It is sufficient to show that for any $n\in \Z$, the column $\mathset{n}\times [1,m-2]$ is contained in $\ima(\pi_w)$. By the construction, the restriction of $\pi_w$ to coordinates $(i,j)\in \mathset{{n-1,n,n+1}}\times [1,m-1]$ is given by 
\[ \pi_w(i,j) = \varphi_{v_{n-1},v_n,v_{n+1}}(i-n+1,j)+(n-1,0)\]
and  
\[ \mathset{n}\times [1,m-2]=\mathset{1}\times [1,m-2]+(n-1,0)\subseteq \varphi_{v_{n-1},v_n,v_{n+1}}([2]\times[m])+(n-1,0), \]
as $((v_{n-1},v_n)(v_n,v_{n+1}))\in E$.
From the definition of $\pi_w$ we have
\[ \pi_w([n-1,n+1]\times [m])=\varphi_{v_{n-1},v_n,v_{n+1}}([2]\times[m])+(n-1,0),\]
and therefore
\[ \mathset{n}\times [1,m-2]\subseteq \varphi_{v_{n-1},v_n,v_{n+1}}([2]\times[m])+(n-1,0)= \pi_w([n-1,n+1]\times [m]).\] 

 We have proved by now that the map $(v_n,u_n)_{n\in \Z}\to w\in \Omega'_{+,m}$ is well defined. Clearly it is bijective, as its inverse is given by $w\to (v'_n,u'_n)_{n\in \Z} $ where $v'_n$ and $u'_n$   are just the columns indexed by $n$ and $n+1$  in $w$ respectively.
\end{proof}
\begin{theorem}
\label{th:PlusUp}
For any $m\in \N$, 
\[ \topent(\Omega(A_\oplus))\leq \frac{\topent(\Omega'_{\oplus,m})}{m}.\]
\end{theorem}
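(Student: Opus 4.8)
The plan is to run the argument of Theorem~\ref{th:PlusLo} with the inequality reversed: rather than tiling the plane by copies of a stripe permutation, I slice a large rectangular pattern into horizontal stripes of height $m$ and read off a stripe pattern from each slice. The essential point is that a slice of a \emph{global} permutation is no longer a permutation of the stripe, but it is an injective $A_\oplus$-restricted function of the stripe whose image contains the interior of the stripe --- that is, precisely an element of the relaxed system $\Omega'_{\oplus,m}$ from Proposition~\ref{prop:1dSFT2}. It therefore suffices to establish the combinatorial inequality
\[ \abs{B_{n,m\cdot k}(\Omega(A_\oplus))}\le\abs{B_n(\Omega'_{\oplus,m})}^{k}\qquad\text{for all }n,k\in\N, \]
after which the computation at the end of the proof of Theorem~\ref{th:PlusLo}, together with Fact~\ref{prop:fekete} (which lets us evaluate $\topent(\Omega(A_\oplus))$ along the cofinal subsequence $(n,m k)$, $n,k\to\infty$), yields
\begin{align*}
\topent(\Omega(A_\oplus))&=\lim_{k,n\to\infty}\frac{\log_2\abs{B_{n,m\cdot k}(\Omega(A_\oplus))}}{k m n}\\
&\le\lim_{k,n\to\infty}\frac{k\log_2\abs{B_n(\Omega'_{\oplus,m})}}{k m n}=\frac{\topent(\Omega'_{\oplus,m})}{m}.
\end{align*}

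To prove the inequality, fix $n,k$ and let $v\in B_{n,m\cdot k}(\Omega(A_\oplus))$; pick $\omega\in\Omega(A_\oplus)$ with $\omega([n]\times[m\cdot k])=v$. Partition $[n]\times[m\cdot k]$ into the $k$ horizontal stripes $R_j\eqdef[n]\times[jm,(j+1)m-1]$, $j=0,\dots,k-1$, and let $v^{(j)}\in A_\oplus^{[n]\times[m]}$ be the restriction of $v$ to $R_j$ translated down by $(0,jm)$; equivalently, $v^{(j)}$ is the restriction of $\sigma_{(0,jm)}(\omega)\in\Omega(A_\oplus)$ to $[n]\times[m]$. The map $v\mapsto(v^{(0)},\dots,v^{(k-1)})$ is injective, because $v$ is the vertical concatenation of the blocks $v^{(j)}$. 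It remains to check that each $v^{(j)}$ lies in $B_n(\Omega'_{\oplus,m})$.

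For this I would first record that for \emph{any} $\eta\in\Omega(A_\oplus)$ the restriction of the permutation $\pi_\eta$ to the stripe $S_m$ is an element of $\Omega'_{\oplus,m}$: since $A_\oplus\subseteq[-1,1]^2$, a point $x\in S_m$ is sent to $x+\eta(x)\in S_m+A_\oplus$; this restriction is injective as a restriction of an injective map and it is restricted by $A_\oplus$; and its image contains the interior $\Z\times[1,m-2]$ of $S_m$, because if $q$ is an interior point then $x\eqdef\pi_\eta^{-1}(q)$ satisfies $x=q-\eta(x)\in q-A_\oplus=q+A_\oplus$ (using $A_\oplus=-A_\oplus$), so $x$ still lies in $S_m$ and $q\in\ima(\pi_\eta|_{S_m})$. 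Applying this with $\eta=\sigma_{(0,jm)}(\omega)$, and recalling from Proposition~\ref{prop:1dSFT2} that $\Omega'_{\oplus,m}$ is identified with a subset of $(A_\oplus^{m})^{\Z}$ whose $l$-th column records the displacements $q\mapsto\pi_\eta(l,q)-(l,q)$, the length-$n$ initial block of columns of $\pi_\eta|_{S_m}$ is exactly $v^{(j)}$; hence $v^{(j)}\in B_n(\Omega'_{\oplus,m})$. This proves the inequality, and the theorem follows.

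I do not expect a genuine obstacle. The only points needing care are the index bookkeeping for the stripes and the verification that the ``covers the interior'' clause in the definition of $\Omega'_{\oplus,m}$ is preserved when a global permutation is restricted to a stripe --- which is exactly where the symmetry $A_\oplus=-A_\oplus$ is used. The limiting step is formally identical to that in Theorem~\ref{th:PlusLo}.
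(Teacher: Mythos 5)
Your proposal is correct and is essentially the paper's own argument: slice a globally admissible $[n]\times[mk]$ pattern into $k$ height-$m$ stripes, observe that the restriction of a restricted permutation of $\Z^2$ to a (shifted) stripe is injective, $A_\oplus$-restricted and covers the stripe's interior (hence lies in $\Omega'_{\oplus,m}$), deduce $\abs{B_{n,mk}(\Omega(A_\oplus))}\le\abs{B_n(\Omega'_{\oplus,m})}^k$ from injectivity of the slicing map, and pass to the limit as in Theorem \ref{th:PlusLo}. The paper verifies the interior-covering step via surjectivity of $\pi_\omega$ and $A_\oplus\subseteq[-1,1]^2$, which is the same computation you carry out with the symmetry $A_\oplus=-A_\oplus$.
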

\begin{proof}
Repeating the calculation from the proof of Theorem \ref{th:PlusLo}, if we show that for all $n,k\in \N$,  $\abs{B_{n,m\cdot k}(\Omega(A_\oplus))}\leq \abs{B_n(\Omega_{\oplus,m}')}^k$ we conclude
\[ \topent(\Omega(A_\oplus))=\lim_{k,n\to\infty}\frac{\log \abs{B_{n,m\cdot k}(\Omega(A_\oplus))}}{nmk}\leq \frac{\topent(\Omega'_{\oplus,m})}{m}. \]
Let $w\in B_{n,m\cdot k}(\Omega(A_\oplus))$. By the definition of $B_{n,m\cdot k}(\Omega(A_\oplus))$, there exists $\omega\in \Omega(A_\oplus))$ (representing a permutation of $\Z^2$, denoted by $\pi_\omega$ ),   such that its restriction to $[n]\times [k\cdot m]$ is $w$. Now we define functions $f_0,f_1,\dots,f_{k-1}:S_m\to S_m+A_\oplus$ by 
\[ f_l(i,j)\eqdef (i,j)+\omega\parenv{(i,j)+(0,l\cdot m)}=\pi_w(i,j+l\cdot m)-(0,l\cdot m) \] 
where the equality on the is followed by the definition $ \pi_\omega(i',j')\eqdef (i',j')+\omega(i',j')$.
We claim that $f_l\in \Omega_{\oplus,m}'$ for all $l$. Clearly, $f_l$ is restricted by $A_\oplus$ by its definition and the fact that $\omega$ is an element in $A_\oplus^{\Z^2}$. We observe that $f_l$ is obtained by shifting $\pi_\omega$ by $(0,l\cdot m)$ and restricting it to $S_m$. Thus, injectivity is followed immediately from the injectivity of $\pi_\omega$. Followed by this observation, we note that 
\[ f_l(S_m)=\pi_\omega(S_m+(0,l\cdot m))-(0,l\cdot m).\]
Since $\pi_\omega$ is restricted by $A_\oplus\subset [-1,1]^2$, and it is a onto $\Z^2$, we have that 
\begin{align*}
\Z \times [1,m-2] +(0,l\cdot m)&\subseteq \pi_\omega (\parenv{\Z \times [1,m-2] +(0,l\cdot m)}- \overset{[-1,1]^2}{\overset{\cup}{A_\oplus}})\\ 
&\subseteq \pi_\omega \parenv{\parenv{\Z \times [1,m-2] +(0,l\cdot m)}- [-1,1]^2}\\ 
&= \pi_\omega \parenv{\overset{S_m}{\overbrace{\Z \times [0,m-1]}} +(0,l\cdot m)}\\
&= \pi_\omega\parenv{S_m+(0,l\cdot m)}=f_l(S_m)+(0,l\cdot m).
\end{align*}
This shows that $f_l\in \Omega_{\oplus,m}'$. When we consider $f_l$ as an element in $A_\oplus^{S_m}$ we note that 
\[ f_l\parenv{[n]\times [m]}=\omega\parenv{[n]\times [m]+(0,l\cdot m)}=w\parenv{[n]\times [m]+(0,l\cdot m)}. \]
As shown in the proof of Proposition \ref{prop:1dSFT2}, $f_l\parenv{[n]\times [m]}$ represents a path of length $n$ in the graph describing $\Omega_{\oplus,m}'$, which is a word in $B_n(\Omega_{\oplus,m}')$. 
Thus, the map 
\[ w\longrightarrow \phi(w) \eqdef \begin{pmatrix}
w\parenv{[n]\times [m]}\\
w\parenv{[n]\times [m]+(0, m)}\\
w\parenv{[n]\times [m]+(0,2 m)}\\
\vdots\\
w\parenv{[n]\times [m]+(0,(k-1)  m)}
\end{pmatrix}\in B_n(\Omega_{\oplus,m}')^k\]
is well defined. It is also injective as $w$ can be trivially reconstructed from $\phi(w)$.
\end{proof}

The lower and bounds provided in Theorem \ref{th:PlusLo} and \ref{th:PlusUp}  may be computed for any $m\in \N$, as by fact \ref{fact:VerShift}, $\topent(\Omega_{\oplus,m})=\log(\lambda_{M_{G_m}})$ and $\topent(\Omega_{\oplus,m}')=\log(\lambda_{M_{G'_m}})$, where $\lambda_{M_{G_m}}$ is the spectral radius of the adjacency matrix of $G_m$ (and similarly for $\lambda_{M_{G'_m}}$).  The Achilles' heel of these bounds, is in the complexity of computing them. The dimension of the adjacency matrix $M_{G_m}$ is the number of vertices in $G_m$, which increase proportionately to $5^{cm}$ for some $c>0$. For example, for $m=5$, $M_{G_5}$ is a $66572\times 66572$ matrix, and $\lambda_{M_{G_4}}$ is not computable using standard computational power. We have the same problem with the upper bound. Computing the lower bound for $m=4$ and the lower bound for $m=3$ we obtain
\[ 1.01904 \leq \topent(\Omega(A_{\oplus})\leq 1.63029.\] 

			\section{The Entropy of Injective and Surjective Functions}
	\label{InjSurj}
So far, we have explored restricted permutations of graphs which are bijective functions. In this part of the work we examine the related models of restricted injective and surjective functions on graphs. We will show that under similar assumptions as in the case of permutations, the spaces of restricted injective and surjective  functions also have the structure of topological dynamical system. Finally, we examine the entropy of restricted injective / surjective functions on $\Z^d$, compared to the entropy of restricted permutations.

Let $G=(V,E)$ be some locally finite and countable directed graph. Similarly to  Chapter \ref{CHA:preliminaries}, a function $f:V\to V$ is said to be restricted by $G$ if $(v,f(v))\in E$ for all $v\in V$. We define the spaces of restricted injective and surjective functions of $G$ to be 
\[ \Omega_I(G)\eqdef \mathset{\varphi:V\to V: \varphi \text{ is injective and restricted by }G},\]
and
\[ \Omega_S(G)\eqdef \mathset{\varphi:V\to V: \varphi \text{ is surjective and restricted by }G}\]
respectively.

The spaces $\Omega_I(G)$ and $\Omega_S(G)$ are compact topological spaces, when equipped with the product topology (when $V$ has the discrete topology).  If $H$ is a group acting on $G$ by graph isomorphisms, it induces a homeomorphic group action on $\Omega_I(G)$ and  $\Omega_S(G)$ by conjugation. This is proven in a similar fashion as in the case of restricted permutations (see Chapter \ref{CHA:preliminaries}). Therefore, we will leave the details to the reader.

Throughout most of our work, we focused on  $\Z^d$-permutations, restricted by some finite set $A\subseteq \Z^d$. That is, permutations of $G=(\Z^d,E_A)$, where $E_A=\mathset{(n,n+a):n\in \Z^d, a\in A}$ and $\Z^d$ is acting on itself by translations.  In that case, the dynamical system $(\Omega(G_A))$, when considered as a subset of $A^{\Z^d}$,  is an SFT. That is also true in the case of $\Omega_I(G_A)$ and $\Omega_S(G_A)$. Similarly as in the case of permutations, we use the shorter notation of $\Omega_I(A)$ for $\Omega_I(G_A)$ and  $\Omega_S(A)$ for $\Omega_S(G_A)$.

\begin{proposition}
For any finite non-empty $A\subseteq \Z^d$, $\Omega_I(A)$ and $\Omega_S(A)$ are SFTs, when we identify a restricted function  $\varphi$ with an elements $A^{\Z^d}$ by $\omega_\varphi(n)=\varphi(n)-n$. 
\end{proposition}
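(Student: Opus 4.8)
The plan is to exhibit, for each of $\Omega_I(A)$ and $\Omega_S(A)$, a finite set of forbidden patterns $F$ over the alphabet $A$ such that an element $\omega\in A^{\Z^d}$ lies in the space exactly when no translate of $\omega$ contains a pattern from $F$. Since ``restricted by $G_A$'' already holds automatically once we work inside $A^{\Z^d}$ (the constraint $\varphi(n)-n\in A$ is built into the alphabet), the only conditions to encode are injectivity and surjectivity of the map $\pi_\omega(n)\eqdef n+\omega(n)$. The key observation is that both conditions are \emph{local} with respect to a fixed finite window determined solely by $A$: let $D\eqdef A-A=\mathset{a-a' : a,a'\in A}$, which is finite, and let $B\eqdef [r]^d$ be any box large enough that $D\subseteq B-B$ (equivalently, $r$ exceeds the diameter of $A$ in each coordinate). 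All forbidden patterns will live on translates of $B$.

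For injectivity: if $\pi_\omega$ fails to be injective, there exist $n\neq m$ with $n+\omega(n)=m+\omega(m)$, hence $n-m=\omega(m)-\omega(n)\in D$, so $n$ and $m$ both lie in a common translate of $B$. Thus I would set
\[
F_I\eqdef\mathset{P\in A^B : \exists\, n\neq m\in B \text{ with } n+P(n)=m+P(m)}.
\]
This is a finite set, and $\omega$ has $\pi_\omega$ injective iff no translate of $\omega$ restricted to $B$ lies in $F_I$ — because any injectivity violation is witnessed inside a single translated copy of $B$, and conversely any such local violation is a genuine violation. Hence $\Omega_I(A)=\mathset{\omega\in A^{\Z^d} : \forall n,\ \sigma_n(\omega)(B)\notin F_I}$, an SFT. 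I would also remark that $\Omega_I(A)$ is non-empty (the identity pattern $\omega\equiv 0$ works when $0\in A$; in general one can take any constant $\omega\equiv a$, since $n\mapsto n+a$ is a bijection), so the definition is not vacuous, though non-emptiness is not needed for the SFT claim.

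For surjectivity the argument is slightly more delicate because surjectivity of an injective-or-not self-map of $\Z^d$ is not in general a finite condition; however, combined with the $A$-restriction it is. The clean route is: a restricted function $\varphi$ is surjective iff it is injective (an $A$-restricted map that is not injective cannot be onto $\Z^d$, and an injective one with bounded displacement is automatically onto $\Z^d$ — this is exactly the content of the complement/extension lemmas used earlier, e.g.\ the surjectivity half of Proposition~\ref{prop:compli2} run on all of $\Z^d$). Actually the honest statement is: for $A$-restricted $\varphi$, surjective $\iff$ bijective $\iff$ injective, because an injective map with displacements in a finite set $A$ must be onto — if $m$ were omitted, then the preimages under $\varphi$ of the finite box $m+(-B)$ would, by the pigeonhole/counting argument on large boxes, fail to cover that box, contradicting injectivity on a large enough window. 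Consequently $\Omega_S(A)=\Omega_I(A)$ as subsets of $A^{\Z^d}$, and the same forbidden set $F_I$ works; in particular $\Omega_S(A)$ is an SFT. I would present this as: first prove the set-theoretic identity $\Omega_S(G_A)=\Omega_I(G_A)$ for finite $A$, then invoke the injective case. The main obstacle is making precise the pigeonhole argument that injectivity plus bounded displacement forces surjectivity on $\Z^d$ — one must rule out ``escape to infinity,'' which is handled by comparing, for a large box $Q$, the sizes of $\varphi^{-1}(Q)\subseteq Q+(-A)$ and $Q$ itself and using injectivity, letting $Q$ exhaust $\Z^d$; everything else is the routine verification that local violations capture global ones.
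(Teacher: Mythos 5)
Your treatment of $\Omega_I(A)$ is fine and is essentially the paper's argument: a failure of injectivity is witnessed inside a bounded window determined by $A$ (the paper uses the window $-A$ and forbids patterns $w\in A^{-A}$ with more than one $n\in -A$ satisfying $n+w(n)=0$; your box $B$ does the same job), so $\Omega_I(A)$ is cut out by a finite set of forbidden patterns.

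The surjective half, however, contains a genuine error. The claimed equivalence \emph{injective $\iff$ surjective for $A$-restricted functions}, and hence the identity $\Omega_S(A)=\Omega_I(A)$, is false. Take $d=1$, $A=\mathset{0,1}$, and $\varphi(n)=n$ for $n<0$, $\varphi(n)=n+1$ for $n\geq 0$: this is $A$-restricted and injective but its image misses $0$; symmetrically, $\psi(n)=n+1$ for $n<0$, $\psi(n)=n$ for $n\geq 0$ is surjective but not injective. Your pigeonhole argument cannot rule this out: for a large box $Q$, injectivity only gives $\abs{\varphi^{-1}(Q)}\leq\abs{Q+(-A)}$, and the discrepancy between $\abs{Q}$ and $\abs{\varphi^{-1}(Q)}$ in such examples is of boundary order, so no contradiction ever appears — ``escape to infinity'' really happens. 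Indeed the paper states explicitly that $\Omega(A)$ is \emph{strictly} contained in both $\Omega_I(A)$ and $\Omega_S(A)$, and the entire point of Proposition~\ref{prop:SuppMeasure} and Theorem~\ref{th:InjSurj} is that these strictly larger spaces nonetheless carry no extra entropy; if your identity held, that section would be vacuous. The repair is simpler than your detour: surjectivity is itself a local condition, because any preimage of $m$ under an $A$-restricted map must lie in $m-A$. So one forbids the patterns
\[
F_S\eqdef\mathset{w\in A^{-A} : \forall n\in -A,\ n+w(n)\neq 0},
\]
i.e.\ windows in which $0$ (equivalently, after shifting, the point $m$) has no preimage; this exhibits $\Omega_S(A)$ directly as an SFT, which is exactly what the paper does.
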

\begin{proof}
A function  $\varphi:\Z^d\to \Z^d$ is injective if and only if the pre-image of any singleton is empty or a singleton. Note that if $\varphi$ is restricted by $A$, then $\varphi^{-1}(\mathset{m})\subseteq m-A$ for any $m\in \Z^d$. Thus, in order to check if a restricted function is injective, it is sufficient to check a local condition. Consider the set of patterns
\[ F_I\eqdef \mathset {w\in A^{-A}: \abs{\mathset{ n\in -A:  w(n)+n=0}}>1}\subseteq A^{\fin(Z^d)}.\]
We observe that $\abs{\varphi^{-1}(\mathset{m})}\leq 1$ if and only if $(\sigma_{m}\omega_\varphi)(-A)\notin F_I$. Thus, $\Omega_I(A)$ is the SFT defined by the set of forbidden patterns $F_I$. Similarly it is proven that  $\Omega_S(A)$ is an SFT, defined by the set of forbidden patterns 
\[F_S\eqdef  \mathset {w\in A^{-A}: \forall n\in -A, w(n)+n\neq 0}\subseteq A^{\fin(Z^d)}. \]
\end{proof}

For any finite non-empty set $A\subseteq\Z^d$, we note that $\Omega(A)$ is exactly the intersection of $\Omega_I(A)$ and $\Omega_S(A)$. Furthermore, we observe that $\Omega(A)$ is strictly contained in both  $\Omega_I(A)$ and $\Omega_S(A)$. Thus, $\topent(\Omega(A))\leq \min\mathset{\topent(\Omega_I(A)),\topent(\Omega_S(A))}$. This give rise to the natural question, whether the entropy of restricted permutations can be strictly smaller. Theorem \ref{th:InjSurj} provides a negative solution to the above question.

\begin{definition}
Given a measurable space $(X,\cF)$ and a group $H$ acting on $X$ by measurable transformations. A probability measure $\mu:\cF\to [0,1]$ is said to be invariant under the action of $H$ if for any measurable set $A\subseteq \cF$ and $h\in H$ we have 
\[\mu\parenv{h^{-1}(A)}=\mu(A). \]
 we define $M_H(X)$ to be the set of all probability measures on $X$ which are invariant under the action of $H$. A measure $\mu \in M_H(X)$ is called ergodic if it assigns invariant sets with $0$ or $1$. That is, $h^{-1}A=A$ for all $h\in H$  implies that $\mu(A)\in \mathset{0,1}$.
\end{definition}

\begin{proposition}
\label{prop:SuppMeasure}
Consider the measurable spaces $(\Omega_I(A),\cB_I)$ and  $(\Omega_I(A),\cB_S)$ with $\Z^d$ acting by shifts, where $ \cB_I$ and $\cB_S$ are the Borel $\Sigma$-algebras on $\Omega_I(A)$ and $\Omega_S(A)$ respectively. For any $\mu \in M_{\Z^d}(\Omega_I(A))$ and $\nu\in M_{\Z^d}(\Omega_S(A))$, 
\[ \Pro_\mu[\Omega(A)]=\Pro_\nu[\Omega(A)]=1. \] 
\end{proposition}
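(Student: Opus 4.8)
The plan is to prove the two equalities separately by a first-moment argument (in the spirit of mass transport): a single shift-invariant measure on $\Omega_I(A)$, respectively $\Omega_S(A)$, cannot charge the non-permutations, because the failure of surjectivity, resp.\ of injectivity, is, inside any box $[n]$, confined to a region of size $o(\abs{[n]})$ --- a ``boundary'' of the box. For $\omega\in A^{\Z^d}$ I write $\pi_\omega(k)\eqdef k+\omega(k)$ for the associated $A$-restricted map, and I will use that $A$ is finite: with $r\eqdef\max_{a\in A}\norm{a}_\infty$ one has $\mathset{k:k+A\subseteq[n]}\supseteq\prod_{i=1}^{d}[r,n_i-1-r]$ and $[n]-A\subseteq\prod_{i=1}^{d}[-r,n_i-1+r]$, so both $\abs{[n]}-\abs{\mathset{k:k+A\subseteq[n]}}$ and $\abs{[n]-A}-\abs{[n]}$ are $o(\abs{[n]})$ as $n\to\infty$ (all coordinates to infinity). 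Note $\Omega(A)=\Omega_I(A)\cap\Omega_S(A)$ is closed, hence Borel, in either ambient space, so the probabilities in the statement make sense.

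For the injective case I would fix $\mu\in M_{\Z^d}(\Omega_I(A))$ and consider $g(\omega)\eqdef\Ind{0\in\ima(\pi_\omega)}$. Because $\pi_\omega$ is $A$-restricted, $0\in\ima(\pi_\omega)$ iff $\omega(k)=-k$ for some $k\in-A$, so $g$ depends on finitely many coordinates and $(g\circ\sigma_m)(\omega)=\Ind{m\in\ima(\pi_\omega)}$, whence (pointwise) $\sum_{m\in[n]}(g\circ\sigma_m)=\abs{\ima(\pi_\omega)\cap[n]}$. Injectivity gives $\abs{\ima(\pi_\omega)\cap[n]}=\abs{\pi_\omega^{-1}([n])}\geq\abs{\mathset{k:k+A\subseteq[n]}}$, since every such $k$ maps into $[n]$. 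Averaging over $[n]$ and invoking invariance of $\mu$,
\[ \E_\mu[g]=\E_\mu\!\left[\frac{\abs{\ima(\pi_\omega)\cap[n]}}{\abs{[n]}}\right]\geq\frac{\abs{\mathset{k:k+A\subseteq[n]}}}{\abs{[n]}}\underset{n\to\infty}{\longrightarrow}1, \]
so $\E_\mu[g]=1$; as $0\leq g\leq1$, $g=1$ $\mu$-a.s. By invariance $g\circ\sigma_m=1$ $\mu$-a.s.\ too, i.e.\ $\mu(\mathset{m\in\ima(\pi_\omega)})=1$, for every $m\in\Z^d$, and intersecting over the countable set $\Z^d$ shows $\pi_\omega$ is onto $\Z^d$ for $\mu$-a.e.\ $\omega$; being already injective, such $\pi_\omega$ is a permutation, i.e.\ $\omega\in\Omega(A)$, so $\Pro_\mu[\Omega(A)]=1$.

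The surjective case is dual. I would fix $\nu\in M_{\Z^d}(\Omega_S(A))$ and consider $h(\omega)\eqdef\abs{\pi_\omega^{-1}(\mathset{0})}$. Since $\pi_\omega^{-1}(\mathset{0})\subseteq-A$, $h$ depends on finitely many coordinates, and $1\leq h\leq\abs{A}$ on $\Omega_S(A)$, the lower bound being exactly surjectivity. Here $(h\circ\sigma_m)(\omega)=\abs{\pi_\omega^{-1}(\mathset{m})}$, so $\sum_{m\in[n]}(h\circ\sigma_m)=\abs{\pi_\omega^{-1}([n])}\leq\abs{[n]-A}$ because $\pi_\omega(k)\in[n]$ forces $k\in[n]-A$. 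Averaging and using invariance of $\nu$,
\[ \E_\nu[h]=\E_\nu\!\left[\frac{\abs{\pi_\omega^{-1}([n])}}{\abs{[n]}}\right]\leq\frac{\abs{[n]-A}}{\abs{[n]}}\underset{n\to\infty}{\longrightarrow}1, \]
so $\E_\nu[h]\leq1$; combined with $h\geq1$ this forces $h=1$ $\nu$-a.s. Shifting and intersecting over $m\in\Z^d$ as before shows every fibre of $\pi_\omega$ is a singleton $\nu$-a.s., so $\pi_\omega$ is injective, hence a permutation, and $\Pro_\nu[\Omega(A)]=1$.

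I do not expect a serious obstacle here: once the setup is in place the argument is the routine passage from a pointwise expectation to a spatial average afforded by shift-invariance. The only genuine content is the two boundary estimates --- that $[n]$ and $[n]-A$ (resp.\ $[n]$ and $\mathset{k:k+A\subseteq[n]}$) differ by $o(\abs{[n]})$ elements --- which is precisely where finiteness of $A$ and the meaning of $n\to\infty$ (all coordinates to infinity) enter; the mild care needed is in fixing sign conventions for the shift and checking that these sets are contained in (resp.\ contain) a box of side $n_i\pm 2r$. Ergodicity plays no role: the argument applies verbatim to every invariant measure, so the claim for ergodic measures is a special case.
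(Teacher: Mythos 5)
Your proof is correct, and it takes a genuinely different, more elementary route than the paper. The paper works with the same local observable (the pre-image count $P_0(\omega)=\abs{\pi_\omega^{-1}(\mathset{0})}$; your $g$ coincides with $P_0$ on $\Omega_I(A)$ since there $P_0\in\mathset{0,1}$), but it establishes $\E_\mu[P_0]=1$ by first assuming $\mu$ ergodic and applying the pointwise ergodic theorem for amenable groups along the F{\o}lner sequence of cubes, and then extends to arbitrary invariant measures via the ergodic decomposition theorem together with weak-$*$ approximation against the closed set $\Omega(A)$. You instead exchange the expectation with the spatial average $\frac{1}{\abs{[n]}}\sum_{m\in[n]}g\circ\sigma_m$ directly by shift-invariance, bound that average deterministically by the boundary estimate (the same sandwich $\mathset{k:k+A\subseteq[n]}\subseteq\pi_\omega^{-1}([n])\subseteq[n]-A$ that underlies the paper's pointwise limit $A_{n_k}\to 1$), and let $n\to\infty$; the rest (forcing $g\equiv 1$, resp.\ $h\equiv 1$, a.s., shifting, and intersecting over the countable group) is the same in both arguments. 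What your approach buys is brevity and fewer external inputs: no ergodic theorem, no ergodic decomposition, no portmanteau step, and the statement for ergodic measures is subsumed rather than treated separately, since only finiteness of $A$, boundedness of the observable, and invariance of the measure are used. The paper's route is heavier but makes the probabilistic content explicit (the a.s.\ spatial density of pre-images equals its expectation), which is a viewpoint that can be reused when one wants almost-sure density statements rather than just first moments. Your two asymmetric observables (indicator of being in the image for the injective case, fibre cardinality for the surjective case) are a clean way to package the two inequalities; just make sure, as you note, that the shift convention $(\sigma_m\omega)(k)=\omega(k+m)$ is fixed once and used consistently when verifying $(g\circ\sigma_m)(\omega)=\Ind{m\in\ima(\pi_\omega)}$ and $(h\circ\sigma_m)(\omega)=\abs{\pi_\omega^{-1}(\mathset{m})}$.
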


\begin{proof}
First, we prove the theorem for ergodic measures in $M_{\Z^d}(\Omega_I(A))$.  Recall that each element $\omega\in A^{\Z^d}$ is identified with a restricted function $\Z^d \to \Z^d$  by $f_\omega(n)\eqdef n+\omega(n)$. For $n\in \Z^d$ consider the function $P_n:A^{\Z^d}\to \N\cup \mathset{0}$ which assigns each $\omega\in A^{\Z^d}$ the number of pre-images of $n$. That is,
\[ P_n(\omega)\eqdef \abs{f_\omega^{-1}(\mathset{n})}=\abs{\mathset{m\in \Z^d: m+\omega(m)=n}}.\]  
Since $A$ is finite, there exists $M\in \N$ such that $A\subseteq [-M,M]^d$. For $(n_1,n_2,\dots,n_d)=n\in \N^d\setminus [-2M,2M]^d $ and $\omega\in A^{\Z^d}$ we examine the average of the functions $(P_m)_{m\in [n]}$, denoted by $A_n(\omega)$,
\begin{align*}
A_n(\omega)=\frac{\sum_{m\in [n]}P_m(\omega)}{\abs{[n]}}=\frac{\sum_{m\in [n]} \abs{f_\omega^{-1}(\mathset{m})}}{\abs{[n]}}=\frac{ \abs{f_\omega^{-1}([n])}}{\abs{[n]}}.
\end{align*}
Since $A$ is bounded in $[-M,M]^d$ and the movements of $f_\omega$ are restricted by $A$, we have  
\[[n_1-2M]\times \cdots \times [n_d-2M] \subseteq  f_\omega^{-1}([n]) \subseteq [n_1+2M]\times \cdots \times [n_d+2M].\] 
Thus, if we choose $n_k\eqdef (k,k,\dots,k)$ for $k\in \N$, for sufficiently large $k$, 
\[\frac{(k-2M)^d}{k^d}\leq A_{n_k}(\omega)\leq \frac{(k+2M)^d}{k^d}, \]
and in particular, $\lim_{k\to \infty} A_{n_k}(\omega)=1$.

Consider the measurable space $(\Omega_I,\mathcal{B_I})$, where $\mathcal{B_I}$ is the Borel $\Sigma$-algebra on $\Omega_I$. We note that for any $n\in \Z^d$ and $\omega\in \Omega_I$, 
\[ P_n(\omega)=P_0\circ \sigma_n(\omega)=P_0(n\omega)\] 
where $\sigma_n$ is the shift operation on $\Z^d$ and $n\omega$ denotes the group action of $n$ on $\omega$.

Consider the sequence of cubes, $([n_k])_{k=1}^{\infty}$. It is easy to check that it is a Følner sequence. That is, for any $m\in \Z^d$, 
\[ \lim_{k\to \infty}\frac{\abs{[n_k] \triangle \sigma_m([n_k])}}{\abs{[n_k]}}=0. \] 
By the Pointwise Ergodic Theorem \cite{Lin01}, for an ergodic $\mu \in M_{\Z^d}(\Omega_I(A))$ the sequence $(A_{n_k})_k$ converge almost everywhere to $\E_\mu[P_0]$ (the expectation of $P_0$ with respect to the measure $\mu$). On the other hand, we saw that the sequence $(A_{n_k})_k$ converge pointwise to the constant function $1$. We conclude that $\E_\mu[P_0]=1$.

We observe that $P_0$ can take only the values $0$ and $1$ on $\Omega_I(A)$, as any function defined by an element in $\Omega_I(A)$ is injective. Hence, 
\[ 1=\E_\mu[P_0]=1\cdot \Pro_\mu[P_0=1]+0\cdot \Pro_\mu[P_0=0]=\Pro_\mu[P_0=1].\]
Since $\mu$ is invariant under the action of $\Z^d$, for all  $n\in \Z^d$, 
\[ \Pro_\mu[P_0=1]=\Pro_\mu[P_0\circ \sigma_n=1]=\Pro_\mu[P_n=1].\]

We note that for $\omega\in \Omega_I(A)$, we have that $\omega\in \Omega(A)$ (i.e., $\omega$ represents a permutation), if and only if $f_\omega$ is also surjective. That is, any $n\in \N$ has a unique pre-image, which in the notation of this proof, is equivalent to $P_n(\omega)=1$ for all $n\in \Z^d$. We conclude that 
\[ \Pro_\mu[\Omega(A)]=\Pro_\mu\sparenv{\bigcap_{n\in \Z^d}\mathset{P_n=1}}=1.\] 

Now we turn to prove the claim for general $\mu\in \Omega_I(A)$. If $\mu$ is a convex combination of ergodic measures, then the claim follows immediately from the first case. For a general $\mu\in M_{\Z^d}(\Omega_I(A))$, By the ergodic decomposition theorem (\cite{EinWar13}, Theorem 4.8), $\mu$ is in the closed convex hull of the ergodic measures. That is, there is a sequence of measures $(\mu_n)_n\subseteq M_{\Z^d}(\Omega_I(A))$ which converge to  $\mu$ in the weak-* topology, and $\mu_n$ is a convex combination of a ergodic measures for each $n$. We obtain,
\[ \mu(\Omega(A))=\lim_{n\to\infty}\mu_n(\Omega(A))=1. \]

The proof for $\Omega_S(A)$ is very similar. Considering the restriction of the functions $(P_n)_{n\in \Z^d}$ to $\Omega_S(A)$, we observe that they can only take values greater or equal $1$ as for any element $\omega\in \Omega_S(A)$, as $f_\omega$ is surjective. By similar arguments as in the previous case, for any ergodic invariant probability measure $\nu\in M_{\Z^d}(\Omega_S(A))$, we have $\E_\nu[P_n]=1$ for all $n\in \Z^d$. Since $P_n\geq 1$, we conclude that $\Pro_\nu [P_n=1]=1$ for all $n$ and therefore $\Pro_\nu [\Omega(A)]=1$. For a non ergodic measure, we continue in a similar fashion as in the proof of the claim for $\Omega_I(A)$. 
\end{proof}

\begin{theorem}
\label{th:InjSurj}
For any finite non-empty set  $A\subseteq\Z^d$, 
\[ \topent(\Omega(A))=\topent(\Omega_I(A))=\topent(\Omega_S(A)). \] 
\end{theorem}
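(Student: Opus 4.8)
The plan is to derive the theorem from Proposition \ref{prop:SuppMeasure} together with the variational principle for actions of the amenable group $\Z^d$. Recall that for a non-empty compact $\Z^d$-SFT $X$ the variational principle (see \cite{KerHan11}) gives
\[ \topent(X)=\sup_{\mu\in M_{\Z^d}(X)}\topent_\mu(X), \]
where $\topent_\mu(X)$ denotes the measure-theoretic entropy of the shift action with respect to $\mu$, and moreover $M_{\Z^d}(X)\neq\emptyset$. I would first note that $\Omega(A)$, $\Omega_I(A)$ and $\Omega_S(A)$ are all non-empty: for any $a\in A$ the translation $n\mapsto n+a$ is a permutation of $\Z^d$ restricted by $A$, so it belongs to $\Omega(A)\subseteq\Omega_I(A)\cap\Omega_S(A)$.

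Next I would handle $\Omega_I(A)$. Given $\mu\in M_{\Z^d}(\Omega_I(A))$, Proposition \ref{prop:SuppMeasure} says $\Pro_\mu[\Omega(A)]=1$. Since $\Omega(A)$ is a closed shift-invariant subset of $\Omega_I(A)$ of full $\mu$-measure, $\mu$ is canonically identified with an element of $M_{\Z^d}(\Omega(A))$, with unchanged entropy (the measure-preserving system is literally the same off a null set); conversely any $\nu\in M_{\Z^d}(\Omega(A))$ gives, via the inclusion $\Omega(A)\hookrightarrow\Omega_I(A)$, an element of $M_{\Z^d}(\Omega_I(A))$ with the same entropy. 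Hence the two sets of attainable entropies coincide, and applying the variational principle to $\Omega_I(A)$ and to $\Omega(A)$ yields $\topent(\Omega_I(A))=\topent(\Omega(A))$. The same argument, using the corresponding assertion of Proposition \ref{prop:SuppMeasure} for measures in $M_{\Z^d}(\Omega_S(A))$, gives $\topent(\Omega_S(A))=\topent(\Omega(A))$, which together prove the theorem.

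The essential content is all in Proposition \ref{prop:SuppMeasure}, so the main point to get right here is purely technical: invoking the variational principle in the multidimensional $\Z^d$-action setting, and checking that transporting a measure between $\Omega(A)$ and $\Omega_I(A)$ (or $\Omega_S(A)$) preserves entropy. If one prefers to lean on the variational principle only once, it suffices to combine it with the elementary monotonicity $B_n(\Omega(A))\subseteq B_n(\Omega_I(A))$ and $B_n(\Omega(A))\subseteq B_n(\Omega_S(A))$, which already gives $\topent(\Omega(A))\leq\min\mathset{\topent(\Omega_I(A)),\topent(\Omega_S(A))}$; then one only needs the reverse inequalities $\topent(\Omega_I(A))\leq\topent(\Omega(A))$ and $\topent(\Omega_S(A))\leq\topent(\Omega(A))$, each of which is immediate once every invariant measure on $\Omega_I(A)$ (resp. $\Omega_S(A)$) is seen to live on $\Omega(A)$.
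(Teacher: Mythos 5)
Your proposal is correct and follows essentially the same route as the paper: the variational principle for the $\Z^d$-shift action combined with Proposition \ref{prop:SuppMeasure} (every invariant measure on $\Omega_I(A)$ or $\Omega_S(A)$ is supported on $\Omega(A)$), plus the trivial inclusion $\Omega(A)\subseteq\Omega_I(A)\cap\Omega_S(A)$ for the easy inequality. The only cosmetic difference is that the paper invokes a measure of maximal entropy on $\Omega_I(A)$ and restricts it to $\Omega(A)$, whereas you compare the full sets of attainable measure entropies, which avoids needing the supremum to be attained; both arguments are sound.
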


\begin{proof}
By the variational principle \cite{Mis67}, 
\[ \topent(\Omega_I(A))=\max_{\mu\in M_{\Z^d}(\Omega_I(A))}H(\mu), \] 
where $H(\mu)$ is the measure theoretical entropy of $\mu$. Let $\mu_0\in M_{\Z^d}(\Omega_I(A))$ be a measure such that $ \topent(\Omega_I(A))=H(\mu_0)$. Proposition \ref{prop:SuppMeasure} suggest that $\Pro_{\mu_0}[\Omega(A)]=1$, therefore the restriction of $\mu_0$ to the subspace $\Omega(A)$ is an invariant probability measure on $\Omega(A)$. Using the  variational principle once again, 
\[ \topent(\Omega(A))=\max_{\mu\in M_{\Z^d}(\Omega(A))}H(\mu)\geq  H(\mu_0)=\topent(\Omega_I(A)).\] 
The other direction of inequality is trivial as $\Omega(A)\subseteq \Omega_I(A)$, so equality holds. 
The proof for $\Omega_S(A)$ is exactly the same.

\end{proof}


\chapter{Conclusion and Future Work}
\label{CHA:conclusion}

In this work, we studied restricted permutations of sets having a geometrical form. We suggested a generalization of the model of $\Z^d$-restricted movement permutations, presented in \cite{SchStr17},  to restricted permutations of graphs. We showed that in some settings, the space of restricted permutations has the structure of a dynamical system. This generalization lead us to the observation on the natural correspondence  between restricted permutations and perfect matchings. Using this correspondence and the theory of perfect matchings of $\Z^2$-periodic planar graphs, we gained a deep understanding of restricted $\Z^2$-permutations in some specific elementary cases.

In the second part of the work, we focused on the entropy of restricted $\Z^d$-permutations. We proved an important invariance property, which was later used for finding the entropy $\Z^2$-permutations, restricted by sets consisting of $3$ elements. We studied the entropy in the related models of restricted injective and surjective functions. Finally, We discussed the relation between global and local admissibility (mostly for rectangular patterns). We used the findings of this part in order to derive upper and lower  bounds on the entropy in another elementary two-dimensional case, in which we could not use the theory of perfect matchings.

The most precise results on restricted permutations, presented in our work, were achieved using the correspondence to perfect matchings of $\Z^2$-periodic planar graphs. Unfortunately, the corresponding graphs are usually non-planar. Very little is known to us in those cases. The difficulty in studying  non-planar cases can be viewed in the entropy bounds given in Section \ref{Bounds}. Despite significant efforts and many bounding techniques attempted, there is still a major gap between our upper and lower bound.

Estimating the entropy of multidimensional SFTs is known to be a difficult problem in general, and we have witnessed that the case of restricted permutation is not an exception. This motivates us to question whether we can use the theory of perfect matchings for estimating the entropy in non-planar cases as well. Results from \cite{Tes00} concerning the counting of perfect matchings of non-planar graphs indicate that the answer might be positive. 

It seems that using similar methods as in \cite{KenOkoShe06}, combined with the results in \cite{Tes00}, we may lower bound the entropy of $\Z^d$ restricting permutations by the Mahler measure of a polynomial in $d$ variables. The Mahler measure of a complex polynomial in $d$ variables, $p(x)$, is defined to be 
\[ M(p)\eqdef \frac{1}{(2\pi)^d}\intop_{[0,2\pi]^d}\log\parenv{\abs{p(e^{i\theta_1},\dots,e^{i\theta_d}})} d\bar{\theta}. \]
Relations between Mahler measures of polynomials and the entropy of certain families of SFTs, already reviewed in \cite{LinSch18}, are also demonstrated in our work. Theorem \ref{th:SolL} and Theorem \ref{th:Sol+} suggest that the entropy of $\Z^2$-permutations restricted by $A_+$ and $A_L$ are the Mahler measures of polynomials with $2$ variables. The general connection between the entropy of restricted $\Z^d$-permutations and Mahler measures is an open subject for future research.

Another important issue we discussed in details, was the relations between closed, periodic and regular (topological) entropy. From  the perspective of coding theory, this subject has great significance, as coding related applications use permutations of finite sets. We revealed the nature of those relations in the two-dimensional cases  of permutations restricted by $A_L$ and $A_+$, and in the one-dimensional case of permutations restricted by $[-k,k]$. However, in the general case we have no information, and we leave it for future work.



\bibliographystyle{IEEEtran}

\end{spacing}


\end{document}